\definecolor{Myblue}{rgb}{0,0,0.6}
\tikzset{
    string/.style={draw=#1, postaction={decorate}, decoration={markings,mark=at position .51 with {\arrow[draw=#1]{>}}}},
    costring/.style={draw=#1, postaction={decorate}, decoration={markings,mark=at position .51 with {\arrow[draw=#1]{<}}}},
    ostring/.style={draw=#1, postaction={decorate}, decoration={markings,mark=at position .47 with {\arrow[draw=#1]{>}}}},
    ustring/.style={draw=#1, postaction={decorate}, decoration={markings,mark=at position .56 with {\arrow[draw=#1]{>}}}},
    oostring/.style={draw=#1, postaction={decorate}, decoration={markings,mark=at position .43 with {\arrow[draw=#1]{>}}}},
    uustring/.style={draw=#1, postaction={decorate}, decoration={markings,mark=at position .59 with {\arrow[draw=#1]{>}}}},
    directed/.style={string=blue!50!black}, 
    odirected/.style={ostring=blue!50!black}, 
    udirected/.style={ustring=blue!50!black}, 
    oodirected/.style={oostring=blue!50!black}, 
    uudirected/.style={uustring=blue!50!black},     
    redirected/.style={costring= blue!50!black},
    redirectedgreen/.style={costring= green!50!black},
    directedgreen/.style={string= green!50!black},
}
\tikzset{-dot-/.style={decoration={
  markings,
  mark=at position 0.5 with {\fill circle (2pt);}},postaction={decorate}}}
\tikzset{
	Fdot/.style={circle, draw, fill, inner sep=0pt}, 
	Odot/.style={circle, draw, inner sep=0.1pt, minimum size=0.1cm}
	}
\def\nicedashedcolourscheme{\shadedraw[top color=blue!22, bottom color=blue!22, draw=gray, dashed]}
\def\nicedashedpalecolourscheme{\shadedraw[top color=blue!12, bottom color=blue!12, draw=gray, dashed]}
\def\nicecolourscheme{\shadedraw[top color=blue!22, bottom color=blue!22, draw=blue!22]}
\def\nicepalecolourscheme{\shadedraw[top color=blue!12, bottom color=blue!12, draw=white]}
\newcommand{\raisemath}[1]{\mathpalette{\raisem@th{#1}}}
\newcommand{\raisem@th}[3]{\raisebox{#1}{$#2#3$}}
\newcommand{\E}{\text{e}}
\newcommand{\I}{\text{i}}
\newcommand{\B}{\mathcal{B}}
\newcommand{\Borb}{\B_{\mathrm{orb}}}
\newcommand{\Beq}{\B_{\mathrm{eq}}}
\newcommand{\C}{\mathds{C}}
\newcommand{\N}{\mathds{N}}
\newcommand{\Q}{\mathds{Q}}
\newcommand{\R}{\mathds{R}}
\newcommand{\Z}{\mathds{Z}}
\def\1{\ifmmode\mathrm{1\!l}\else\mbox{\(\mathrm{1\!l}\)}\fi}
\newcommand{\be}{\begin{equation}}
\newcommand{\ee}{\end{equation}}
\newcommand{\bes}{\begin{equation*}}
\newcommand{\ees}{\end{equation*}}
\newcommand{\Hom}{\operatorname{Hom}}
\newcommand{\End}{\operatorname{End}}
\newcommand{\modu}{\operatorname{mod}}
\def\LG{\mathcal{LG}}
\def\LGorb{\mathcal{LG}_{\mathrm{orb}}}
\def\LGeq{\mathcal{LG}_{\mathrm{eq}}}
\newcommand{\hmf}{\operatorname{hmf}}
\newcommand{\HMF}{\operatorname{HMF}}
\newcommand{\ev}{\operatorname{ev}}
\newcommand{\tev}{\widetilde{\operatorname{ev}}}
\newcommand{\coev}{\operatorname{coev}}
\newcommand{\tcoev}{\widetilde{\operatorname{coev}}}
\def\lra{\longrightarrow}
\def\lmt{\longmapsto}
\DeclareMathOperator{\tr}{tr}
\DeclareMathOperator{\str}{str}
\DeclareMathOperator{\Jac}{Jac}
\def\Re{R^{\operatorname{e}}}
\DeclareMathOperator{\Res}{Res}
\newcommand{\diml}{\dim_{\mathrm{l}}}
\newcommand{\dimr}{\dim_{\mathrm{r}}}
\newcommand{\dX}{{}^\dagger\hspace{-1.8pt}X}
\newcommand{\deqX}{{}^\star\hspace{-1.8pt}X} 
\newcommand{\dY}{{}^\dagger\hspace{-0.3pt}Y}
\newcommand{\dphi}{{}^\dagger\hspace{-0.9pt}\phi}
\newcommand\arxiv[2]      {\href{http://arXiv.org/abs/#1}{#2}}
\newcommand\doi[2]        {\href{http://dx.doi.org/#1}{#2}}
\newcommand\httpurl[2]    {\href{http://#1}{#2}}
\theoremstyle{definition}
\newtheorem{definition}{Definition}
\newtheorem{proposition}[definition]{Proposition}
\newtheorem{theorem}[definition]{Theorem}
\newtheorem{lemma}[definition]{Lemma}
\newtheorem{corollary}[definition]{Corollary}
\newtheorem{remark}[definition]{Remark}
\newtheorem{example}[definition]{Example}
\numberwithin{equation}{section}
\numberwithin{definition}{section}
\numberwithin{figure}{section}
\newcommand\void[1]{}
\begin{document}

\title{Orbifold completion \\ of defect bicategories}
\author{Nils Carqueville$^*$ \quad Ingo Runkel$^\dagger$
\\[0.5cm]
 \normalsize{\tt \href{mailto:nils@carqueville.net}{nils@carqueville.net}} \quad
  \normalsize{\tt \href{mailto:ingo.runkel@uni-hamburg.de}{ingo.runkel@uni-hamburg.de}}\\[0.1cm]
  {\normalsize\slshape $^*$Arnold Sommerfeld Center for Theoretical Physics, }\\[-0.1cm]
  {\normalsize\slshape LMU M\"unchen, Theresienstra\ss e~37, D-80333 M\"unchen}\\[-0.1cm]
  {\normalsize\slshape $^*$Excellence Cluster Universe, Boltzmannstra\ss e~2, D-85748 Garching}\\[-0.1cm]
  {\normalsize\slshape $^*$Hausdorff Research Institute for Mathematics, }\\[-0.1cm]
  {\normalsize\slshape Poppelsdorfer Allee 45, D-53115 Bonn}\\[0.1cm]
  {\normalsize\slshape $^\dagger$Department Mathematik, Universit\"{a}t Hamburg, }\\[-0.1cm]
  {\normalsize\slshape Bundesstra\ss e 55, D-20146 Hamburg}\\[-0.1cm]
}
\date{}
\maketitle

\vspace{-12.8cm}
\hfill {\scriptsize Hamburger Beitr\"age zur Mathematik 453}

\vspace{-1.0cm}

\hfill {\scriptsize ZMP-HH/12-20}

\vspace{12cm}

\begin{abstract}
Orbifolds of two-dimensional quantum field theories have a natural formulation in terms of defects or domain walls. This perspective allows for a rich generalisation of the orbifolding procedure, which we study in detail for the case of topological field theories. Namely, a TFT with defects gives rise to a pivotal bicategory of ``worldsheet phases'' and defects between them. We develop a general framework which takes such a bicategory~$\B$ as input and returns its ``orbifold completion'' $\Borb$. The completion satisfies the natural properties $\B \subset \Borb$ and $(\Borb)_{\mathrm{orb}} \cong \Borb$, and it gives rise to various new equivalences and nondegeneracy results. When applied to TFTs, the objects in $\Borb$ correspond to generalised orbifolds of the theories in~$\B$. In the example of Landau-Ginzburg models we recover and unify conventional equivariant matrix factorisations, prove when and how (generalised) orbifolds again produce open/closed TFTs, and give nontrivial examples of new orbifold equivalences.
\end{abstract}

\thispagestyle{empty}
\newpage

\tableofcontents

\section{Introduction and summary}\label{introduction}

Orbifolding is a basic construction~in (quantum) field theory, string theory, algebraic geometry, and representation theory. The conventional setup is some ``theory'' (about which we will be less vague soon enough) together with a symmetry group. Gauging this symmetry amounts to restricting to the invariant sectors while simultaneously adding new twisted sectors. In this way the orbifold theory is constructed from the original one, and it often inherits desirable properties from the symmetry group. 

A slightly different look at the usual orbifold procedure allows for an immediate generalisation. This alternate point of view arises in the framework of two-dimensional field theories with defects. Later we will deal with this notion rigorously, but in the next few paragraphs we shall argue heuristically and develop some intuition. In this vein, let us consider a set of theories $a_1, a_2, \ldots$ that govern various domains or phases of a two-dimensional worldsheet. The different phases are separated from one another by one-dimensional oriented manifolds. These are called domain walls or defect lines $X_1, X_2, \ldots$, and they come with data that encodes to what extent they allow transfers between the theories of neighbouring phases. A typical patch of worldsheet with defects and field insertions (at the endpoints and junctions of defect lines) looks as follows: 
\be\label{eq:worldsheetwithdefects}
\begin{tikzpicture}[very thick,scale=0.5,color=blue!50!black, baseline]
\clip (0,0) ellipse (6cm and 3cm);
\nicedashedcolourscheme (0,0) ellipse (6cm and 3cm);

\draw (-4.15,1.3) [black] node {{\scriptsize $a_1$}};
\draw (-5.1,-0.3) [black] node {{\scriptsize $a_2$}};
\draw (-1.3,2.2) [black] node {{\scriptsize $a_3$}};
\draw (2.2,2.0) [black] node {{\scriptsize $a_4$}};
\draw (1,0) [black] node {{\scriptsize $a_5$}};

\draw (-5.1,-1) node {{\scriptsize $X_1$}};
\draw (-4.7,0.9) node {{\scriptsize $X_2$}};
\draw (-2.5,1.3) node {{\scriptsize $X_3$}};
\draw (-3.6,-1.1) node {{\scriptsize $X_4$}};
\draw (-2.2,0.6) node {{\scriptsize $X_5$}};
\draw (3.5,1.8) node {{\scriptsize $X_6$}};
\draw (-0.6,0) node {{\scriptsize $X_7$}};
\draw (1.6,-1.7) node {{\scriptsize $X_8$}};

\draw[->, very thick, out=60, in=260] (-5,-2) to (-4.5,-1);
\draw[very thick, out=80, in=220] (-4.5,-1) to (-4,0);
\filldraw (-4,0) circle (2.5pt);
\draw[->,very thick, out=150, in=350] (-4,0) to (-5,0.5);
\draw[very thick, out=170, in=350] (-5,0.5) to (-6,0.75);

\draw[->,very thick, out=-30, in=110] (-4,0) to (-3.2,-1);
\draw[very thick, out=-70, in=170] (-3.2,-1) to (-2.5,-2);
\filldraw (-2.5,-2) circle (2.5pt);

\draw[->,very thick, out=70, in=230] (-2.5,-2) to (-1.5,0.5);
\draw[very thick, out=50, in=200] (-1.5,0.5) to (1,2);
\filldraw (1,2) circle (2.5pt);

\draw[very thick, out=-30, in=180] (1,2) to (2,1.5);
\draw[->,very thick, out=0, in=270] (2,1.5) to (3,2);
\draw[very thick, out=90, in=0] (3,2) to (2,2.5);
\draw[very thick, out=180, in=30] (2,2.5) to (1,2);

\draw[very thick] (1,0) circle (1);
\draw[->,very thick, out=90, in=270] (0,0) to (0,0);

\filldraw (-1.5,-2.5) circle (2.5pt);

\filldraw (-0.5,-2.6) circle (2.5pt);
\draw[->,very thick, out=0, in=220] (-0.5,-2.6) to (2,-2);
\draw[very thick, out=40, in=260] (2,-2) to (4.5,0.5);
\filldraw (4.5,0.5) circle (2.5pt);

\draw[->,very thick, out=40, in=260] (-4,0) to (-3,1.5);
\draw[very thick, out=80, in=270] (-3,1.5) to (-2.5,3);
\end{tikzpicture}
\ee

What a field theory wants to do is to compute correlators, i.\,e.~the expectation values $\langle \ldots \rangle$ of fields inserted on points of worldsheets with defects. For simplicity, let us restrict to \textsl{topological} theories, meaning that the value of the correlators depends only on the isotopy classes of defect lines and field insertions. The precise functorial definition of such 2d TFTs with defects \cite{rs0808.1419, dkr1107.0495} is reviewed in Section~\ref{subsec:TFTwdefects}. 

To make contact with orbifolds let us consider two theories~$a$ and~$b$, and a defect $X: a\rightarrow b$ between them. Our goal is to compute all correlators in theory~$b$ only from knowledge of theory~$a$ and the defect~$X$. To achieve this, we make the additional assumption that~$X$ has invertible quantum dimension, which means that
\be\label{eq:islandins}
\left\langle
\begin{tikzpicture}[thick,scale=0.4,color=blue!50!black, baseline]
\clip (0,0) ellipse (6cm and 3cm);
\nicedashedcolourscheme (0,0) ellipse (6cm and 3cm);

\draw (1,-2) [black] node {{\scriptsize $b$}};

\fill (-2.85,-0.75) circle (3.3pt) node[below] {{\scriptsize$\phi_1$}};
\fill (2.85,0.75) circle (3.3pt) node[above] {{\scriptsize$\phi_2$}};

\end{tikzpicture}
\right\rangle
\quad \text{and} \quad
\left\langle
\begin{tikzpicture}[thick,scale=0.4,color=blue!50!black, baseline]
\clip (0,0) ellipse (6cm and 3cm);
\nicedashedcolourscheme (0,0) ellipse (6cm and 3cm);

\nicepalecolourscheme (0,0) circle (1);
\draw[
	decoration={markings, mark=at position 0.25 with {\arrow{<}}}, postaction={decorate}
	] 
	(0,0) circle (1);
\draw (0,1.6) node {{\scriptsize $X$}};

\fill (-2.85,-0.75) circle (3.3pt) node[below] {{\scriptsize$\phi_1$}};
\fill (2.85,0.75) circle (3.3pt) node[above] {{\scriptsize$\phi_2$}};

\draw (0,0) [black] node {{\scriptsize $a$}};
\draw (1,-2) [black] node {{\scriptsize $b$}};

\end{tikzpicture}
\right\rangle
\ee
are equal up to a nonzero factor for all correlators. After an appropriate rescaling we may assume that the two correlators in~\eqref{eq:islandins} are actually equal. Of course we can also insert more than one ``island'' of theory~$a$ in the ``sea'' of theory~$b$, bounded by copies of the defect~$X$. 
Since the defects are topological we may let the islands expand until their boundaries nearly meet. What once were $a$-islands in $b$-sea is now $a$-land partitioned by $b$-rivers, and the correlators in~\eqref{eq:islandins} are equal to
\be
\left\langle
\begin{tikzpicture}[thick,scale=0.4,color=blue!50!black, baseline]
\clip (0,0) ellipse (6cm and 3cm);
\nicedashedcolourscheme (0,0) ellipse (6cm and 3cm);

\coordinate (circ1) at ($ (-2,1) $);
\nicepalecolourscheme (circ1) circle (1);
\draw[
	decoration={markings, mark=at position 0.25 with {\arrow{<}}}, postaction={decorate}
	] 
	(circ1) circle (1);
\coordinate (circ1) at ($ (2,-1) $);
\nicepalecolourscheme (circ1) circle (1);
\draw[
	decoration={markings, mark=at position 0.25 with {\arrow{<}}}, postaction={decorate}
	] 
	(circ1) circle (1);
\coordinate (circ2) at ($ (1.3,1.4) $);
\nicepalecolourscheme (circ2) circle (0.8);
\draw[
	decoration={markings, mark=at position 0.25 with {\arrow{<}}}, postaction={decorate}
	] 
	(circ2) circle (0.8);
\coordinate (circ2) at ($ (-1.3,-1.4) $);
\nicepalecolourscheme (circ2) circle (0.8);
\draw[
	decoration={markings, mark=at position 0.25 with {\arrow{<}}}, postaction={decorate}
	] 
	(circ2) circle (0.8);

\fill (-2.85,-0.75) circle (3.3pt) node[left] {};
\fill (2.85,0.75) circle (3.3pt) node[right] {};

\draw (0,0) [black] node {};
\draw (0,-2.25) [black] node {};

\end{tikzpicture}
\right\rangle
=
\left\langle
\begin{tikzpicture}[thick,scale=0.4,color=blue!50!black, baseline, rounded corners=0.1pt]
\clip (0,0) ellipse (6cm and 3cm);
\nicedashedpalecolourscheme (0,0) ellipse (6cm and 3cm);

\coordinate (circ1) at ($ (-150:0.5) + (-2.85,-0.75) $);
\coordinate (circ2) at ($ (-150:0.5) + (-5.85,-0.75) $);
\coordinate (circ3) at ($ (150:0.5) + (-2.85,-0.75) $);
\coordinate (circ4) at ($ (150:0.5) + (-5.85,-0.75) $);
\filldraw
	[
	fill=blue!22, 
	decoration={markings, mark=at position 0.23 with {\arrow{>}},
					mark=at position 0.745 with {\arrow{>}}
					}, postaction={decorate}
	] 
	(circ2) -- (circ1) arc (-150:150:0.5) -- (circ3) -- (circ4) -- (circ2);
\coordinate (circ1) at ($ (30:0.5) + (2.85,0.75) $);
\coordinate (circ2) at ($ (30:0.5) + (5.85,0.75) $);
\coordinate (circ3) at ($ (330:0.5) + (2.85,0.75) $);
\coordinate (circ4) at ($ (330:0.5) + (5.85,0.75) $);
\filldraw
	[
	fill=blue!22, 
	decoration={markings, mark=at position 0.23 with {\arrow{>}},
					mark=at position 0.745 with {\arrow{>}}
					}, postaction={decorate}
	] 
	(circ2) -- (circ1) arc (30:330:0.5) -- (circ3) -- (circ4) -- (circ2);
\filldraw
	[
	fill=blue!22, 
	decoration={markings, mark=at position 0.03 with {\arrow{>}},
					mark=at position 0.13 with {\arrow{>}},
					mark=at position 0.31 with {\arrow{>}},
					mark=at position 0.40 with {\arrow{>}},
					mark=at position 0.46 with {\arrow{>}},
					mark=at position 0.534 with {\arrow{>}},
					mark=at position 0.63 with {\arrow{>}},
					mark=at position 0.81 with {\arrow{>}},
					mark=at position 0.906 with {\arrow{>}},
					mark=at position 0.9635 with {\arrow{>}}
					}, postaction={decorate}
	] 
	(-0.25,3.2) -- (-0.25,1.25) -- (-6,1.25) -- (-6,0.75) -- (-0.25,0.75) -- (-0.25,-3.2) -- (0.25,-3.2) -- (0.25,-1.25) -- (6,-1.25) -- (6,-0.75) -- (0.25,-0.75) -- (0.25,3.2) -- (-0.25,3.2);

\fill (-2.85,-0.75) circle (3.3pt) node[left] {};
\fill (2.85,0.75) circle (3.3pt) node[right] {};

\draw (0,0) [black] node {};
\draw (0,-2.25) [black] node {};

\end{tikzpicture}
\right\rangle
. 
\ee
Note that whenever two parallel defect lines are close to each other, they have opposite orientation. Denoting the image of the field~$\phi_i$ under the action of the defect~$X$ by~$\Phi_i$, the orientation-flipped defect by~$X^\dagger$ and the fusion product of defects by~$\otimes$, we find that correlators in theory~$b$ can be computed as correlators in theory~$a$, together with a network of defects $A := X^\dagger \otimes X$ (that we draw in green) and trivalent junction fields: 
\be\label{eq:bfromaA}
\left\langle
\begin{tikzpicture}[thick,scale=0.4,color=blue!50!black, baseline]
\clip (0,0) ellipse (6cm and 3cm);
\nicedashedcolourscheme (0,0) ellipse (6cm and 3cm);

\draw (0,-2.25) [black] node {};

\fill (-2.85,-0.75) circle (3.3pt) node[below] {{\scriptsize$\phi_1$}};
\fill (2.85,0.75) circle (3.3pt) node[above] {{\scriptsize$\phi_2$}};

\draw (1,-2) [black] node {{\scriptsize $b$}};

\end{tikzpicture}
\right\rangle
=
\left\langle
\begin{tikzpicture}[very thick,scale=0.4,color=green!50!black, baseline, rounded corners=0.1pt]
\clip (0,0) ellipse (6cm and 3cm);
\nicedashedpalecolourscheme (0,0) ellipse (6cm and 3cm);

\draw
	[
	decoration={markings, mark=at position 0.45 with {\arrow{>}}}, postaction={decorate}
	] 
	(-2.85,-0.75) -- (-5.85,-0.75);
\draw
	[
	decoration={markings, mark=at position 0.45 with {\arrow{>}}}, postaction={decorate}
	] 
	(2.85,0.75) -- (5.85,0.75);
\draw
	[
	decoration={markings, mark=at position 0.2 with {\arrow{>}},
					mark=at position 0.5 with {\arrow{>}},
					mark=at position 0.84 with {\arrow{>}}
					}, postaction={decorate}
	] 
	(0,-3.2) -- (0,3.2);
\draw
	[
	decoration={markings, mark=at position 0.5 with {\arrow{<}}}, postaction={decorate}
	] 
	(0,-1) -- (6,-1);
\draw
	[
	decoration={markings, mark=at position 0.5 with {\arrow{>}}}, postaction={decorate}
	] 
	(0,1) -- (-6,1);

\fill (-2.85,-0.75) circle (4.3pt) node[below] {{\scriptsize$\Phi_1$}};
\fill (2.85,0.75) circle (4.3pt) node[above] {{\scriptsize$\Phi_2$}};
\fill (0,1) circle (4.3pt) node[below] {};
\fill (0,-1) circle (4.3pt) node[below] {};

\draw (1,-2) [black] node {{\scriptsize $a$}};

\end{tikzpicture}
\right\rangle \, .
\ee

This construction can also be turned around \cite{ffrs0909.5013}: one can start from a defect~$A$ together with two junctions, subject to certain properties detailed in Section~\ref{sec:TFT-orbifold}, and \textsl{define} the correlator on the left of~\eqref{eq:bfromaA} by the correlator on the right. The collection of correlators obtained in this way will be called the \textsl{generalised orbifold} of theory~$a$ by the defect~$A$ (with junctions). 

If a group of symmetries of theory $a$ is implemented by the action of defects, these can be assembled into a ``symmetry defect'' $A$. Together with a choice of junctions one recovers in this way ordinary orbifolds (see Section~\ref{subsec:equiMF} for an example). But in general~$A$ does not have to arise from a group, thus indeed generalising the concept of orbifolds; concrete examples of this phenomenon will be discussed in Sections~\ref{subsec:Knoerrer} and~\ref{subsec:minimalmodels}. 

\medskip

In the present paper we mould the above ideas into precise terms and study some of their consequences. To set the stage for a summary, we first organise theories, defects, and fields as the objects, 1-morphisms, and 2-morphisms of a bicategory~$\B$. Orientation reversal endows this bicategory with adjoints for all 1-morphisms as well as a pivotal structure. In Section~\ref{subsec:bicatadj} we recall the relevant definitions, and in Section~\ref{subsec:worldsheetbicategory} we review how to extract such a bicategory from the data of the functorial description of a 2d TFT with defects. 

Let now~$\B$ be any pivotal bicategory whose 1-morphism categories are idempotent complete (a technical assumption we need). In the categorical language the relevant properties of the defect $A: a\rightarrow a$ above will lead us to consider a certain kind of \textsl{algebra objects} $A\in\B(a,a)$, namely separable symmetric Frobenius algebras (see Section~\ref{subsec:algebrasbimodules}). 

In the motivational paragraphs above we considered the special case $A=X^\dagger \otimes X$ for a defect $X: a\rightarrow b$ with invertible quantum dimension (cf.~\eqref{eq:traces} below). This allowed us to obtain theory~$b$ from the pair $(a,A)$. Generalising even further, we construct a new bicategory $\Borb$ whose objects are pairs $(a,A)$ with $a\in\B$ and $A\in\B(a,a)$ a separable symmetric Frobenius algebra. 1- and  2-morphisms in $\Borb$ are defined to be 1- and 2-morphisms in~$\B$ with suitable extra structure (namely bimodules and bimodule maps, see Section~\ref{subsec:algebrasbimodules}). As shown in Section~\ref{subsec:Beq}, $\Borb$ is again pivotal. 

We can think of $\Borb$ as the theory of generalised orbifolds of~$\B$. As expected~$\B$ fully embeds into $\Borb$ since unit 1-morphisms~$I_a$ are naturally endowed with the structure to make $(a,I_a)$ an object in $\Borb$ for each $a\in\B$. Typically~$\B$ is not equivalent to $\Borb$, but in Proposition~\ref{prop:eqeqeq} and~\eqref{eq:orborborb} we will show that the full embedding $\Borb \subset (\Borb)_{\mathrm{orb}}$ gives an equivalence
\be
\Borb \cong (\Borb)_{\mathrm{orb}} \, . 
\ee
Thus $\Borb$ deserves the name \textsl{orbifold completion}: while the set of objects (=~theories) in~$\B$ may not be large enough to close under taking generalised orbifolds, the bicategory $\Borb$ is complete in this sense. 

We can now state one of the central results in the theory of generalised orbifolds (Propositions~\ref{prop:algebraXdaggerX}, \ref{prop:XXdaggerInverseInBeq} and Theorem~\ref{thm:algebraXdaggerX-summary}). 
Let~$\B$ be a pivotal bicategory as before, and let $X\in\B(a,b)$ have invertible quantum dimension. Then
\be\label{eq:aXXbI}
X: (a, X^\dagger \otimes X) \lra (b,I_b)
\ee
is an isomorphism in $\Borb$. Put differently and in terms of our TFT interpretation, theory~$b$ is equivalent to the $(X^\dagger \otimes X)$-orbifold of theory~$a$ -- just as we argued in~\eqref{eq:bfromaA}. This result is a defect-inspired variant of the monadicity theorem. 

The equivalence~\eqref{eq:aXXbI} holds in an even bigger bicategory $\Beq$ which is obtained from $\Borb$ by relaxing the conditions on the objects $(a,A)$; to wit, $A$ does not necessarily have to be symmetric. We call $\Beq$ the \textsl{equivariant completion} of~$\B$ since in the examples discussed later, $\Beq$ is already sufficient to recover ordinary equivariant constructions. In fact this construction works for \textsl{any} bicategory~$\B$ with idempotent complete 1-morphism categories (but without assuming adjunctions or pivotality). Furthermore, if~$\B$ is pivotal, then in Proposition~\ref{prop:Beq-has-adjoints} we will show that $\Beq$ has adjoints. 

A result that only holds in $\Borb$ concerns the existence of nondegenerate pairings. This is a structure that has to be present in the original bicategory~$\B$ if it is to describe a 2d TFT with defects. More precisely, let us assume that there are linear maps $\langle-\rangle_a: \End_{\B}(I_a) \rightarrow \C$ (the ``one-point correlators on a sphere''). They induce pairings on $\End_{\B}(I_a)$ which we interpret as two-point bulk correlators of theory~$a$. Furthermore, for any $X\in\B(a,b)$ we define the ``defect pairing''
\be
\langle \Psi_1, \Psi_2 \rangle _X = 
\left\langle \, 
\begin{tikzpicture}[very thick,scale=0.6,color=blue!50!black, baseline]
\draw (0,0) circle (1.5);
\draw[<-, very thick] (0.100,-1.5) -- (-0.101,-1.5) node[above] {}; 
\draw[<-, very thick] (-0.100,1.5) -- (0.101,1.5) node[below] {}; 
\fill (-22.5:1.5) circle (3.3pt) node[left] {{\small$\Psi_2$}};
\fill (22.5:1.5) circle (3.3pt) node[left] {{\small$\Psi_1$}};
\fill (270:1.5) circle (0pt) node[above] {{\small$X$}};
\end{tikzpicture} 
\,  \right\rangle_{\raisemath{10pt}{\!\!\!a}} 
\ee
where we employ standard string diagram notation as reviewed in Section~\ref{subsec:bicatadj}. 
In Corollary~\ref{prop:traceflipgivesnondeg} we will prove that if the symmetry property
\be\label{eq:klappe}
\left\langle \, 
\begin{tikzpicture}[very thick,scale=0.6,color=blue!50!black, baseline]
\draw (0,0) circle (1.5);
\draw[<-, very thick] (0.100,-1.5) -- (-0.101,-1.5) node[above] {}; 
\draw[<-, very thick] (-0.100,1.5) -- (0.101,1.5) node[below] {}; 
\fill (0:1.5) circle (3.3pt) node[left] {{\small$\Psi$}};
\fill (270:1.5) circle (0pt) node[above] {{\small$X$}};
\end{tikzpicture} 
\, \right\rangle_{\raisemath{10pt}{\!\!\!a}}
=
\left\langle \, 
\begin{tikzpicture}[very thick,scale=0.6,color=blue!50!black, baseline]
\draw (0,0) circle (1.5);
\draw[->, very thick] (0.100,-1.5) -- (-0.101,-1.5) node[above] {}; 
\draw[->, very thick] (-0.100,1.5) -- (0.101,1.5) node[below] {}; 
\fill (180:1.5) circle (3.3pt) node[right] {{\small$\Psi$}};
\fill (270:1.5) circle (0pt) node[above] {{\small$X$}};
\end{tikzpicture} 
\, \right\rangle_{\raisemath{10pt}{\!\!\!b}}
\ee
holds for all 2-morphisms $\Psi:X \rightarrow X$ in~$\B$, then nondegeneracy of $\langle-,-\rangle_X$ in~$\B$ implies nondegeneracy of the induced pairing in $\Borb$. 

The condition~\eqref{eq:klappe} appears naturally in the setting of topological field theory. In particular, we will see that if $a\in\B$ gives rise to an open/closed TFT in the way explained in \cite[Sect.\,9]{cm1208.1481} and Section~\ref{subsec:ocTFT}, then $(a,X^\dagger \otimes X)\in \Borb$ also gives rise to an open/closed TFT; this in particular entails a Calabi-Yau category of boundary conditions, and that the Cardy condition is satisfied. 

\medskip

Let us turn to a brief discussion of applications of the general theory outlined so far. This means that we have to identify interesting pivotal bicategories~$\B$ with idempotent complete 1-morphism categories. As already mentioned one obvious class of examples can be constructed from functors defining 2d TFTs with defects. More generally this construction also works for topological defects in non-topological 2d QFTs \cite{dkr1107.0495}, or, for that matter, in QFTs of any dimension by inflating defect bubbles until the worldvolume is filled with a defect foam. 

The examples that we will study in some detail are \textsl{Landau-Ginzburg models}. They form a bicategory $\LG$ whose objects are potentials~$W$ (i.\,e.~certain polynomials), and 1-morphisms are matrix factorisations of potential differences \cite{br0707.0922, bfk1105.3177, krs0810.5415, cm1208.1481}. In \cite{cm1208.1481} it was established that $\LG$ has all the properties we need, including in particular a simple residue formula to easily compute quantum dimensions (even by hand if need be). 

Given a finite group~$G$ that acts on polynomials and leaves~$W$ invariant, one can try to gauge this symmetry. This is the conventional theory of orbifold Landau-Ginzburg models and equivariant matrix factorisations. We will show in Section~\ref{subsec:equiMF} that one naturally recovers this theory by considering a particular orbifold $(W,A_G)\in \LGeq$, where $A_G$ is the sum of all $G$-twists of the identity defect~$I_W$. 

Assume now that $(W,A_G)$ is in $\LGorb$ and not only in $\LGeq$, i.\,e.~$A_G$ is symmetric. In addition to reformulating ordinary Landau-Ginzburg orbifolds in terms of defects, we also present a general proof that equivariant matrix factorisations form a Calabi-Yau category in this framework. Even better, by applying the general result (Theorem~\ref{thm:LGorbiTFT}) that every $(W,A)\in\LGorb$ gives rise to an open/closed TFT, we find (Theorem~\ref{thm:WAGocTFT}) that the unorbifolded Kapustin-Li pairing \cite{kl0305, hl0404} induces a nondegenerate pairing on $G$-equivariant matrix factorisations. Similarly, we give a conceptual, non-technical proof of the $G$-equivariant Cardy condition, independent of the proof in \cite[Thm.\,4.2.1]{pv1002.2116}. 

It is clear from our general discussion that the procedure of orbifold completion goes beyond ordinary orbifolds. In the case of Landau-Ginzburg models we will illustrate this by giving two examples of equivalences of type~\eqref{eq:aXXbI}: in Section~\ref{subsec:Knoerrer} we explain how to prove Kn\"orrer periodicity as a generalised orbifold equivalence, and in Section~\ref{subsec:minimalmodels} we discuss defects between the categories of matrix factorisations of A- and D-type singularities. In particular, we construct a matrix factorisation $A_d\in \LG(W^{(\mathrm{A}_{2d-1})}, W^{(\mathrm{A}_{2d-1})})$ where $W^{(\mathrm{A}_{2d-1})} = x^{2d} - y^2$ such that its modules are equivalent to matrix factorisations of $W^{(\mathrm{D}_{d+1})} = x^d - xy^2$: 
\be
\hmf(\C[x,y], W^{(\mathrm{D}_{d+1})})^\omega \cong \modu(A_d) \, . 
\ee
We expect that many other such equivalences can be found as a generalised orbifold construction. 

Another class of examples to which our orbifold theory can be immediately applied are B-twisted sigma models. The relevant bicategory is that of spaces and Fourier-Mukai kernels, which by the work of \cite{cw1007.2679} has all the properties we need. Similarly, one would expect A-twisted sigma models to provide another manifestation of orbifold completion. The relevant bicategories are studied in 
\cite{ww0708.2851, MWWunpublished}, 
but it is presently not known if or how they are pivotal.\footnote{We thank Sheel Ganatra for a guide to the A-model literature.} On the other hand, by including defects in the discussion of homological mirror symmetry, one would expect an equivalence of (the orbifold completion of) A- and B-models as monoidal pivotal bicategories with additional enrichments generalising the Calabi-Yau $A_\infty$-structure. 

Finally, a class of non-supersymmetric theories to which orbifold completion is applicable are bosonic sigma models with symmetry defects. The classical action on a worldsheet with defect network can be defined in terms of gerbes and 1- and 2-morphisms between them \cite{s0004117, w0702652, fsw0703145, rs0808.1419}. For invertible defects one can formulate defect fusion via composition of (invertible) 1-morphisms. In this way one obtains a 2-groupoid which can serve as the input for our orbifold construction (after completion with respect to direct sums).

\subsubsection*{Acknowledgements}

We are grateful for helpful discussions with Ilka Brunner, Daniel Plencner and Rafa\l\ Suszek, and it is a particular pleasure to thank Daniel Murfet for his strong interest in and valuable feedback on the contents of this paper. 
Furthermore, we are most grateful to the two referees, whose numerous suggestions and additional explanations helped to improve the paper significantly.
Both authors were supported by the German Science Foundation (DFG) within the Collaborative Research Center 676 ``Particles, Strings and the Early Universe'', and by ITGP (Interactions of Low-Dimensional Topology and Geometry with Mathematical Physics), an ESF RNP.

\section{Algebraic Background}

In this section we review several types of algebra objects and their (bi)modules, in the setting of bicategories with adjoints. Throughout we employ the efficient language of string diagrams, which make manifest the natural interpretation of modules as boundary conditions, and algebras and bimodules as defect lines. 

\subsection{Bicategories with adjoints}\label{subsec:bicatadj}

We begin by recalling the basic definitions and fix our notation. The data of a \textsl{bicategory}~$\B$ is as follows. There is a class of objects~$a$, for which we write $a\in\B$. For all pairs $a,b\in\B$ there is a category $\B(a,b)$ whose objects and arrows are called \textsl{1-morphisms} and \textsl{2-morphisms}, respectively. 1-morphisms can be composed using the functors
\be\label{eq:cabc}
\kappa_{abc} : \B(b,c) \times \B(a,b) \lra \B(a,c) 
\ee
for every $a,b,c\in\B$. For $X,X'\in\B(a,b)$, $Y,Y'\in\B(b,c)$ and 2-morphisms $\phi:X\rightarrow X'$, $\psi: Y\rightarrow Y'$ we write $Y\otimes X = \kappa_{abc}(Y,X)$ and $\psi\otimes\phi = \kappa_{abc}(\psi,\phi)$. This product is associative and unital in the following sense: for any triple of composable 1-morphisms $X,Y,Z$ there is a 2-isomorphism $\alpha_{XYZ}: (X\otimes Y)\otimes Z \rightarrow X\otimes(Y\otimes Z)$, called the \textsl{associator}, which is natural with respect to 2-morphisms in all three arguments. Furthermore, for every $a\in\B$ there is the \textsl{unit 1-morphism} $I_a\in\B(a,a)$ together with natural isomorphisms
\be
\lambda_X: I_b \otimes X \lra X
\, , \quad
\rho_X: X\otimes I_a \lra X
\ee
for every $X\in\B(a,b)$, called (left and right) \textsl{unit actions}. These data satisfy two coherence axioms which are e.\,g.~written out in~\cite[(7.18),\,(7.19)]{bor94}. 

We are interested in bicategories which have additional structure such as duals on the level of 1-morphisms. More precisely, an \textsl{adjunction} $Y \dashv X$ in a bicategory~$\B$ is a pair of 1-morphisms $X\in \B(a,b)$ and $Y \in \B(b,a)$ together with 2-morphisms (called \textsl{adjunction maps}) $\varepsilon: Y\otimes X \rightarrow I_a$ and $\eta: I_b \rightarrow X \otimes Y$ which satisfy the constraints 
\begin{align}
& \rho_X \circ (1_X \otimes \varepsilon) \circ \alpha_{X Y X} \circ (\eta \otimes 1_X) \circ \lambda_X^{-1} = 1_X \, , \label{eq:adjZ1} \\
& \lambda_{Y} \circ( \varepsilon \otimes 1_{Y}) \circ \alpha_{Y X Y}^{-1} \circ (1_{Y} \otimes \eta) \circ \rho_{Y}^{-1} = 1_{Y} \, . \label{eq:adjZ2}
\end{align}
In this situation we say that~$Y$ is \textsl{left adjoint} to~$X$, and~$X$ is \textsl{right adjoint} to~$Y$. 
Note that if~$\B$ is the bicategory of categories 
one recovers the usual notion of adjoint functors.\footnote{We stress that the choice of adjunction maps $\varepsilon, \eta$ is part of the data of an adjunction $Y\dashv X$; `twisting' $\varepsilon, \eta$ to $\varepsilon \circ (\psi \otimes \varphi), (\varphi^{-1} \otimes \psi^{-1}) \circ \eta$ by any 2-automorphisms~$\phi$ of~$X$ and~$\psi$ of~$Y$ produces another adjunction.\label{fn:twisting}} 

We say that~$\B$ is a \textsl{bicategory with left adjoints} if every $X\in\B(a,b)$ comes with $\dX \in\B(b,a)$ and a choice of adjunction $\dX \dashv X$. 
In this case we reserve special names for the adjunction maps $\varepsilon, \eta$, respectively, 
\be
\ev_X: \dX  \otimes X \lra I_a
\, , \quad
\coev_X: I_b \lra X \otimes \dX 
\ee
such that the constraints~\eqref{eq:adjZ1}, \eqref{eq:adjZ2} now read
\begin{align}
& \rho_X \circ (1_X \otimes \ev_X) \circ \alpha_{X {}^\dagger\!X X} \circ (\coev_X \otimes 1_X) \circ \lambda_X^{-1} = 1_X \, , \label{eq:Z1} \\
& \lambda_{\dX} \circ( \ev_X \otimes 1_{\dX}) \circ \alpha_{\dX X \dX}^{-1} \circ (1_{\dX} \otimes \coev_X) \circ \rho_{\dX}^{-1} = 1_{\dX} \, . \label{eq:Z2}
\end{align}
Similarly, $\B$ \textsl{is a bicategory with right adjoints} if every $X\in \B(a,b)$ comes with a choice of $X^\dagger\in \B(b,a)$ and an adjunction $X \dashv X^\dagger$. We denote the adjunction maps by
\be
\tev_X: X \otimes X^\dagger \lra I_b
\, , \quad
\tcoev_X: I_a \lra X^\dagger \otimes X \, . 
\ee
If $\B$ has left and right adjoints, we say it is a \textsl{bicategory with adjoints}.


The conditions imposed on the evaluation and coevaluation maps are conveniently presentable in the diagrammatic notation introduced in \cite{JSGoTCII}. We recall that (in obvious analogy to punctured worldsheets with defects) for this purpose objects in~$\B$ are associated to two-dimensional regions on the plane, 1-morphisms label lines separating these regions, and 2-morphisms correspond to vertices in the resulting network of lines. In this way any 2-morphism can be represented by such a \textsl{string diagram}, for which we adopt the convention that composition and tensoring are denoted vertically and horizontally, respectively, and we always read diagrams from bottom to top and from right to left. For a detailed discussion of string diagrams we refer e.\,g.~to \cite{ladia}. 

Using the diagrammatic language, the adjunction maps are given by 
\be
\ev_X = \!
\begin{tikzpicture}[very thick,scale=1.0,color=blue!50!black, baseline=.4cm]
\draw[line width=0pt] 
(3,0) node[line width=0pt] (D) {{\small$X\vphantom{X^\dagger}$}}
(2,0) node[line width=0pt] (s) {{\small$\dX$}}; 
\draw[directed] (D) .. controls +(0,1) and +(0,1) .. (s);
\end{tikzpicture}
, \quad
\coev_X =  \!
\begin{tikzpicture}[very thick,scale=1.0,color=blue!50!black, baseline=-.4cm,rotate=180]
\draw[line width=0pt] 
(3,0) node[line width=0pt] (D) {{\small$X\vphantom{X^\dagger}$}}
(2,0) node[line width=0pt] (s) {{\small$\dX$}}; 
\draw[redirected] (D) .. controls +(0,1) and +(0,1) .. (s);
\end{tikzpicture}
, \quad
\tev_X =  \!
\begin{tikzpicture}[very thick,scale=1.0,color=blue!50!black, baseline=.4cm]
\draw[line width=0pt] 
(3,0) node[line width=0pt] (D) {{\small$X^\dagger$}}
(2,0) node[line width=0pt] (s) {{\small$X\vphantom{X^\dagger}$}}; 
\draw[redirected] (D) .. controls +(0,1) and +(0,1) .. (s);
\end{tikzpicture}
, \quad
\tcoev_X =  \!
\begin{tikzpicture}[very thick,scale=1.0,color=blue!50!black, baseline=-.4cm,rotate=180]
\draw[line width=0pt] 
(3,0) node[line width=0pt] (D) {{\small$X^\dagger$}}
(2,0) node[line width=0pt] (s) {{\small$X\vphantom{X^\dagger}$}}; 
\draw[directed] (D) .. controls +(0,1) and +(0,1) .. (s);
\end{tikzpicture}
\ee
where we follow the rule to typically not display the units $I_a, I_b$. The defining properties~\eqref{eq:Z1}, \eqref{eq:Z2} for $\ev, \coev$ translate to
\be\label{eq:LeftZorroMoves}
\begin{tikzpicture}[very thick,scale=0.85,color=blue!50!black, baseline=0cm]
\draw[line width=0] 
(-1,1.25) node[line width=0pt] (A) {{\small $X$}}
(1,-1.25) node[line width=0pt] (A2) {{\small $X$}}; 
\draw[directed] (0,0) .. controls +(0,-1) and +(0,-1) .. (-1,0);
\draw[directed] (1,0) .. controls +(0,1) and +(0,1) .. (0,0);
\draw (-1,0) -- (A); 
\draw (1,0) -- (A2); 
\end{tikzpicture}
=
\begin{tikzpicture}[very thick,scale=0.85,color=blue!50!black, baseline=0cm]
\draw[line width=0] 
(0,1.25) node[line width=0pt] (A) {{\small $X$}}
(0,-1.25) node[line width=0pt] (A2) {{\small $X$}}; 
\draw (A2) -- (A); 
\end{tikzpicture}
\, , \quad
\begin{tikzpicture}[very thick,scale=0.85,color=blue!50!black, baseline=0cm]
\draw[line width=0] 
(1,1.25) node[line width=0pt] (A) {{\small $\dX$}}
(-1,-1.25) node[line width=0pt] (A2) {{\small $\dX$}}; 
\draw[directed] (0,0) .. controls +(0,1) and +(0,1) .. (-1,0);
\draw[directed] (1,0) .. controls +(0,-1) and +(0,-1) .. (0,0);
\draw (-1,0) -- (A2); 
\draw (1,0) -- (A); 
\end{tikzpicture}
=
\begin{tikzpicture}[very thick,scale=0.85,color=blue!50!black, baseline=0cm]
\draw[line width=0] 
(0,1.25) node[line width=0pt] (A) {{\small $\dX$}}
(0,-1.25) node[line width=0pt] (A2) {{\small $\dX$}}; 
\draw (A2) -- (A); 
\end{tikzpicture} 
\ee
and their analogues for $\tev, \tcoev$ read
\be\label{eq:otherZorro}
\begin{tikzpicture}[very thick,scale=0.85,color=blue!50!black, baseline=0cm]
\draw[line width=0] 
(1,1.25) node[line width=0pt] (A) {{\small $X$}}
(-1,-1.25) node[line width=0pt] (A2) {{\small $X$}}; 
\draw[redirected] (0,0) .. controls +(0,1) and +(0,1) .. (-1,0);
\draw[redirected] (1,0) .. controls +(0,-1) and +(0,-1) .. (0,0);
\draw (-1,0) -- (A2); 
\draw (1,0) -- (A); 
\end{tikzpicture}
=
\begin{tikzpicture}[very thick,scale=0.85,color=blue!50!black, baseline=0cm]
\draw[line width=0] 
(0,1.25) node[line width=0pt] (A) {{\small $X$}}
(0,-1.25) node[line width=0pt] (A2) {{\small $X$}}; 
\draw (A2) -- (A); 
\end{tikzpicture}
\, , \qquad
\begin{tikzpicture}[very thick,scale=0.85,color=blue!50!black, baseline=0cm]
\draw[line width=0] 
(-1,1.25) node[line width=0pt] (A) {{\small $X^\dagger$}}
(1,-1.25) node[line width=0pt] (A2) {{\small $X^\dagger$}}; 
\draw[redirected] (0,0) .. controls +(0,-1) and +(0,-1) .. (-1,0);
\draw[redirected] (1,0) .. controls +(0,1) and +(0,1) .. (0,0);
\draw (-1,0) -- (A); 
\draw (1,0) -- (A2); 
\end{tikzpicture}
=
\begin{tikzpicture}[very thick,scale=0.85,color=blue!50!black, baseline=0cm]
\draw[line width=0] 
(0,1.25) node[line width=0pt] (A) {{\small $X^\dagger$}}
(0,-1.25) node[line width=0pt] (A2) {{\small $X^\dagger$}}; 
\draw (A2) -- (A); 
\end{tikzpicture}
\, . 
\ee
Note that in these \textsl{Zorro moves} \cite{ZorroStory} we do not label the cups and caps; rather, which adjunction map they depict must be read off from the labels~$X$,~$\dX$ or $X^\dagger$ of the arc, and the orientation of the associated arrow. We will follow this convention for most string diagrams; the only deviation that we allow is the case of closed loops in string diagrams, which as in~\eqref{eq:traces} below we simply label by the 1-morphism associated to their upward-oriented part. 

It is natural to ask for the relation between left and right adjoints. One case of interest is when they coincide, i.\,e.~$\dX  = X^\dagger$. Under this assumption we say that a bicategory~$\B$
is \textsl{pivotal} if the chosen adjunctions satisfy
\be\label{eq:pivotal}
\begin{tikzpicture}[very thick,scale=0.85,color=blue!50!black, baseline=0cm]
\draw[line width=0] 
(-1,1.25) node[line width=0pt] (A) {{\small $Z^\dagger$}}
(1,-1.25) node[line width=0pt] (A2) {{\small $X^\dagger$}}; 
\fill (0,0) circle (2.5pt) node[left] {{\small $\phi$}};
\draw[redirected] (0,0) .. controls +(0,-1) and +(0,-1) .. (-1,0);
\draw[redirected] (1,0) .. controls +(0,1) and +(0,1) .. (0,0);
\draw (-1,0) -- (A); 
\draw (1,0) -- (A2); 
\end{tikzpicture}
\! = \!
\begin{tikzpicture}[very thick,scale=0.85,color=blue!50!black, baseline=0cm]
\draw[line width=0] 
(1,1.25) node[line width=0pt] (A) {{\small $Z^\dagger$}}
(-1,-1.25) node[line width=0pt] (A2) {{\small $X^\dagger$}}; 
\draw[directed] (0,0) .. controls +(0,1) and +(0,1) .. (-1,0);
\draw[directed] (1,0) .. controls +(0,-1) and +(0,-1) .. (0,0);
\fill (0,0) circle (2.5pt) node[right] {{\small $\phi$}};
\draw (-1,0) -- (A2); 
\draw (1,0) -- (A); 
\end{tikzpicture}
, \quad
\begin{tikzpicture}[very thick,scale=0.65,color=blue!50!black, baseline=-0.2cm, rotate=180]
\draw[line width=1pt] 
(2,-1.5) node[line width=0pt] (Y) {{\small $Y^\dagger$}}
(3,-1.5) node[line width=0pt] (X) {{\small $X^\dagger$}}
(-1,2) node[line width=0pt] (XY) {{\small $(Y\otimes X)^\dagger$}}; 
\draw[redirected] (1,0) .. controls +(0,1) and +(0,1) .. (2,0);
\draw[redirected] (0,0) .. controls +(0,2) and +(0,2) .. (3,0);
\draw[redirected] (-1,0) .. controls +(0,-1) and +(0,-1) .. (0.5,0);
\draw (2,0) -- (Y);
\draw (3,0) -- (X);
\draw[dotted] (0,0) -- (1,0);
\draw (-1,0) -- (XY);
\end{tikzpicture}
\! =  \!
\begin{tikzpicture}[very thick,scale=0.65,color=blue!50!black, baseline=-0.2cm, rotate=180]
\draw[line width=1pt] 
(-2,-1.5) node[line width=0pt] (X) {{\small $Y^\dagger$}}
(-1,-1.5) node[line width=0pt] (Y) {{\small $X^\dagger$}}
(2,2) node[line width=0pt] (XY) {{\small $(Y\otimes X)^\dagger$}}; 
\draw[redirected] (0,0) .. controls +(0,1) and +(0,1) .. (-1,0);
\draw[redirected] (1,0) .. controls +(0,2) and +(0,2) .. (-2,0);
\draw[redirected] (2,0) .. controls +(0,-1) and +(0,-1) .. (0.5,0);
\draw (-1,0) -- (Y);
\draw (-2,0) -- (X);
\draw[dotted] (0,0) -- (1,0);
\draw (2,0) -- (XY);
\end{tikzpicture}
\ee
whenever these diagrams make sense. One can show that in a pivotal bicategory the adjunctions determine natural monoidal isomorphisms $\{ \delta_X \}$ between the functor $(-)^{\dagger\dagger}$ and the identity on $\B(a,b)$, see e.\,g.~\cite[Sect.\,2.3]{cr1006.5609}.\footnote{We presented pivotality as a property of a bicategory with adjoints. An equivalent way to define pivotal bicategories is to start from a bicategory with only right adjoints, say, and to endow it with the extra structure of natural monoidal isomorphisms $\{ \delta_X \}$ as above (which may or may not exist).}
Given a 1-morphism $X\in\B(a,b)$ with $\dX  = X^\dagger$ and a 2-morphism $\phi: X\rightarrow X$, we define the latter's \textsl{left} and \textsl{right trace} to be the 2-morphisms
\be\label{eq:traces}
\tr_{\mathrm{l}}(\phi) = 
\begin{tikzpicture}[very thick,scale=0.45,color=blue!50!black, baseline]
\draw (0,0) circle (2);
\draw[<-, very thick] (0.100,-2) -- (-0.101,-2) node[above] {}; 
\draw[<-, very thick] (-0.100,2) -- (0.101,2) node[below] {}; 
\fill (0:2) circle (3.9pt) node[left] {{\small$\phi$}};
\fill (270:2) circle (0pt) node[above] {{\small$X$}};
\end{tikzpicture} 
\, , \quad 
\tr_{\mathrm{r}}(\phi) = 
\begin{tikzpicture}[very thick,scale=0.45,color=blue!50!black, baseline]
\draw (0,0) circle (2);
\draw[->, very thick] (0.100,-2) -- (-0.101,-2) node[above] {}; 
\draw[->, very thick] (0.100,2) -- (0.101,2) node[below] {}; 
\fill (180:2) circle (3.9pt) node[right] {{\small$\phi$}};
\fill (270:2) circle (0pt) node[above] {{\small$X$}};
\end{tikzpicture} 
\, , 
\ee
which are elements of $\End(I_a)$ and $\End(I_b)$, respectively. The special cases $\diml(X) := \tr_{\mathrm{l}}(1_X)$ and $\dimr(X) := \tr_{\mathrm{r}}(1_X)$ are the \textsl{left} and \textsl{right quantum dimensions} of~$X$.

\subsection{Algebras and bimodules}\label{subsec:algebrasbimodules}

Let $\mathcal C$ be a monoidal category, whose unit we denote~$I$. In our later discussions~$\mathcal C$ will be $\B(a,a)$ for some bicategory~$\B$ and some $a\in \B$. 

An object $A\in \mathcal C$ is an \textsl{algebra} if it comes with an associative product and a unit, i.\,e.~with maps
\be
\mu = 
\begin{tikzpicture}[very thick,scale=0.75,color=green!50!black, baseline=0.4cm]
\draw[-dot-] (3,0) .. controls +(0,1) and +(0,1) .. (2,0);
\draw (2.5,0.75) -- (2.5,1.5); 
\end{tikzpicture} 
: A\otimes A \lra A
\, , \quad 
\eta = 
\begin{tikzpicture}[very thick,scale=0.75,color=green!50!black, baseline]
\draw (0,-0.5) node[Odot] (D) {}; 
\draw (D) -- (0,0.6); 
\end{tikzpicture} 
: I \lra A
\ee
which satisfy
\be\label{eq:algcond}
\begin{tikzpicture}[very thick,scale=0.75,color=green!50!black, baseline=0.6cm]
\draw[-dot-] (3,0) .. controls +(0,1) and +(0,1) .. (2,0);
\draw[-dot-] (2.5,0.75) .. controls +(0,1) and +(0,1) .. (3.5,0.75);
\draw (3.5,0.75) -- (3.5,0); 
\draw (3,1.5) -- (3,2.25); 
\end{tikzpicture} 
=
\begin{tikzpicture}[very thick,scale=0.75,color=green!50!black, baseline=0.6cm]
\draw[-dot-] (3,0) .. controls +(0,1) and +(0,1) .. (2,0);
\draw[-dot-] (2.5,0.75) .. controls +(0,1) and +(0,1) .. (1.5,0.75);
\draw (1.5,0.75) -- (1.5,0); 
\draw (2,1.5) -- (2,2.25); 
\end{tikzpicture} 
\, , \quad
\begin{tikzpicture}[very thick,scale=0.75,color=green!50!black, baseline=-0.2cm]
\draw (-0.5,-0.5) node[Odot] (unit) {}; 
\fill (0,0.6) circle (2pt) node (meet) {};
\draw (unit) .. controls +(0,0.5) and +(-0.5,-0.5) .. (0,0.6);
\draw (0,-1) -- (0,1); 
\end{tikzpicture} 
=
\begin{tikzpicture}[very thick,scale=0.75,color=green!50!black, baseline=-0.2cm]
\draw (0,-1) -- (0,1); 
\end{tikzpicture} 
=
\begin{tikzpicture}[very thick,scale=0.75,color=green!50!black, baseline=-0.2cm]
\draw (0.5,-0.5) node[Odot] (unit) {}; 
\fill (0,0.6) circle (2pt) node (meet) {};
\draw (unit) .. controls +(0,0.5) and +(0.5,-0.5) .. (0,0.6);
\draw (0,-1) -- (0,1); 
\end{tikzpicture} 
\, . 
\ee
Note that we reserve a distinguished appereance for algebras in string diagrams. This allows us to refrain from displaying labels for arcs. Since we will never have to display more than one algebra per object $a\in \B$ at a time, this will be no source of confusion. 

Dually, we call~$A$ a \textsl{coalgebra} if it comes with maps
\be
\Delta = 
\begin{tikzpicture}[very thick,scale=0.75,color=green!50!black, baseline=-0.6cm, rotate=180]
\draw[-dot-] (3,0) .. controls +(0,1) and +(0,1) .. (2,0);
\draw (2.5,0.75) -- (2.5,1.5); 
\end{tikzpicture} 
: A \lra A \otimes A
\, , \quad
\varepsilon = 
\begin{tikzpicture}[very thick,scale=0.75,color=green!50!black, baseline, rotate=180]
\draw (0,-0.5) node[Odot] (D) {}; 
\draw (D) -- (0,0.6); 
\end{tikzpicture} 
: A \lra I
\ee
that satisfy the conditions~\eqref{eq:algcond} turned upside-down. 

\begin{definition}\label{def:algebra-properties}
Let $A \in \mathcal C$ have both an algebra and a coalgebra structure. 
\begin{enumerate}
\item 
$A$ is \textsl{Frobenius} if
\be\label{eq:Frob}
\begin{tikzpicture}[very thick,scale=0.85,color=green!50!black, baseline=0cm]
\draw[-dot-] (0,0) .. controls +(0,-1) and +(0,-1) .. (-1,0);
\draw[-dot-] (1,0) .. controls +(0,1) and +(0,1) .. (0,0);
\draw (-1,0) -- (-1,1.5); 
\draw (1,0) -- (1,-1.5); 
\draw (0.5,0.8) -- (0.5,1.5); 
\draw (-0.5,-0.8) -- (-0.5,-1.5); 
\end{tikzpicture}
=
\begin{tikzpicture}[very thick,scale=0.85,color=green!50!black, baseline=0cm]
\draw[-dot-] (0,1.5) .. controls +(0,-1) and +(0,-1) .. (1,1.5);
\draw[-dot-] (0,-1.5) .. controls +(0,1) and +(0,1) .. (1,-1.5);
\draw (0.5,-0.8) -- (0.5,0.8); 
\end{tikzpicture}
=
\begin{tikzpicture}[very thick,scale=0.85,color=green!50!black, baseline=0cm]
\draw[-dot-] (0,0) .. controls +(0,1) and +(0,1) .. (-1,0);
\draw[-dot-] (1,0) .. controls +(0,-1) and +(0,-1) .. (0,0);
\draw (-1,0) -- (-1,-1.5); 
\draw (1,0) -- (1,1.5); 
\draw (0.5,-0.8) -- (0.5,-1.5); 
\draw (-0.5,0.8) -- (-0.5,1.5); 
\end{tikzpicture}
\, . 
\ee
\item 
$A$ is \textsl{$\Delta$-separable} if 
\be\label{eq:special}
\begin{tikzpicture}[very thick,scale=0.85,color=green!50!black, baseline=0cm]
\draw[-dot-] (0,0) .. controls +(0,-1) and +(0,-1) .. (1,0);
\draw[-dot-] (0,0) .. controls +(0,1) and +(0,1) .. (1,0);
\draw (0.5,-0.8) -- (0.5,-1.2); 
\draw (0.5,0.8) -- (0.5,1.2); 
\end{tikzpicture}
= 
\begin{tikzpicture}[very thick,scale=0.85,color=green!50!black, baseline=0cm]
\draw (0.5,-1.2) -- (0.5,1.2); 
\end{tikzpicture}
\, . 
\ee
By slight misuse of language, in the following we will refer to this property simply as \textsl{separable}. 
\item 
Suppose~$\mathcal C$ is pivotal. Then we call~$A$ \textsl{symmetric} if
\be\label{eq:Asymmetric}
\begin{tikzpicture}[very thick,scale=0.85,color=green!50!black, baseline=0cm]
\draw[-dot-] (0,0) .. controls +(0,1) and +(0,1) .. (-1,0);
\draw[directedgreen, color=green!50!black] (1,0) .. controls +(0,-1) and +(0,-1) .. (0,0);
\draw (-1,0) -- (-1,-1.5); 
\draw (1,0) -- (1,1.5); 
\draw (-0.5,1.2) node[Odot] (end) {}; 
\draw (-0.5,0.8) -- (end); 
\end{tikzpicture}
= 
\begin{tikzpicture}[very thick,scale=0.85,color=green!50!black, baseline=0cm]
\draw[redirectedgreen, color=green!50!black] (0,0) .. controls +(0,-1) and +(0,-1) .. (-1,0);
\draw[-dot-] (1,0) .. controls +(0,1) and +(0,1) .. (0,0);
\draw (-1,0) -- (-1,1.5); 
\draw (1,0) -- (1,-1.5); 
\draw (0.5,1.2) node[Odot] (end) {}; 
\draw (0.5,0.8) -- (end); 
\end{tikzpicture}
\ee
as maps $A\rightarrow A^\dagger$. 
\end{enumerate}
\end{definition}

From now on we assume that we are given a bicategory~$\B$ and an algebra object~$A$ in $\mathcal C=\B(a,a)$ for some $a\in\B$. A \textsl{left $A$-module} is a 1-morphism $X\in \B(b,a)$ for some $b\in \B$, together with a left action of~$A$ compatible with multiplication: 
\be\label{eq:leftcomp}
\begin{tikzpicture}[very thick,scale=0.75,color=blue!50!black, baseline]

\draw (0,-1) node[right] (X) {{\small$X$}};
\draw (0,1) node[right] (Xu) {{\small$X$}};

\draw (0,-1) -- (0,1); 

\fill[color=green!50!black] (0,0.0) circle (2pt) node (meet) {};
\draw[color=green!50!black] (-0.5,-1) .. controls +(0,0.5) and +(-0.5,-0.5) .. (0,0.0);
\end{tikzpicture} 
: A \otimes X \lra X 
\, , \quad 
\begin{tikzpicture}[very thick,scale=0.75,color=blue!50!black, baseline]

\draw (0,-1) node[right] (X) {{\small$X$}};
\draw (0,1) node[right] (Xu) {{\small$X$}};

\draw (0,-1) -- (0,1); 

\fill[color=green!50!black] (0,-0.25) circle (2pt) node (meet) {};
\fill[color=green!50!black] (0,0.75) circle (2pt) node (meet2) {};
\draw[color=green!50!black] (-0.5,-1) .. controls +(0,0.25) and +(-0.25,-0.25) .. (0,-0.25);
\draw[color=green!50!black] (-1,-1) .. controls +(0,0.5) and +(-0.5,-0.5) .. (0,0.75);
\end{tikzpicture}
=
\begin{tikzpicture}[very thick,scale=0.75,color=blue!50!black, baseline]

\draw (0,-1) node[right] (X) {{\small$X$}};
\draw (0,1) node[right] (Xu) {{\small$X$}};

\draw (0,-1) -- (0,1); 

\fill[color=green!50!black] (0,0.75) circle (2pt) node (meet2) {};

\draw[-dot-, color=green!50!black] (-0.5,-1) .. controls +(0,1) and +(0,1) .. (-1,-1);

\draw[color=green!50!black] (-0.75,-0.2) .. controls +(0,0.5) and +(-0.5,-0.5) .. (0,0.75);
\end{tikzpicture} 
\, , \quad
\begin{tikzpicture}[very thick,scale=0.75,color=blue!50!black, baseline]
\draw (0,-1) -- (0,1); 
\draw (0,-1) node[right] (X) {{\small$X$}};
\draw (0,1) node[right] (Xu) {{\small$X$}};

\draw[color=green!50!black]  (-0.5,-0.5) node[Odot] (unit) {}; 
\fill[color=green!50!black]  (0,0.6) circle (2pt) node (meet) {};
\draw[color=green!50!black]  (unit) .. controls +(0,0.5) and +(-0.5,-0.5) .. (0,0.6);
\end{tikzpicture} 
=
\begin{tikzpicture}[very thick,scale=0.75,color=blue!50!black, baseline]
\draw (0,-1) -- (0,1); 
\draw (0,-1) node[right] (X) {{\small$X$}};
\draw (0,1) node[right] (Xu) {{\small$X$}};
\end{tikzpicture} 
. 
\ee
A 2-morphism $\phi:  X \rightarrow Y$ between left $A$-modules is called a \textsl{module map} if it satisfies
\be\label{eq:modmap}
\begin{tikzpicture}[very thick,scale=0.75,color=blue!50!black, baseline]

\draw (0,-1) node[right] (X) {{\small$X$}};
\draw (0,1) node[right] (Xu) {{\small$Y$}};

\draw (0,-1) -- (0,1); 

\fill (0,-0.3) circle (2pt) node[right] (phi) {{\small $\phi$}};

\fill[color=green!50!black] (0,0.3) circle (2pt) node (meet) {};
\draw[color=green!50!black] (-0.5,-1) .. controls +(0,0.5) and +(-0.5,-0.5) .. (0,0.3);
\end{tikzpicture} 
=
\begin{tikzpicture}[very thick,scale=0.75,color=blue!50!black, baseline]

\draw (0,-1) node[right] (X) {{\small$X$}};
\draw (0,1) node[right] (Xu) {{\small$Y$}};

\draw (0,-1) -- (0,1); 

\fill (0,0.3) circle (2pt) node[right] (phi) {{\small $\phi$}};

\fill[color=green!50!black] (0,-0.3) circle (2pt) node (meet) {};
\draw[color=green!50!black] (-0.5,-1) .. controls +(0,0.25) and +(-0.25,-0.25) .. (0,-0.3);
\end{tikzpicture} 
. 
\ee
We denote the subset in $\Hom_{\B(b,a)}(X,Y)$ of all module maps by $\Hom_A(X,Y)$. 

If~$A$ is also a coalgebra we can consider the map 
\be\label{eq:projA}
\pi_A : \phi \lmt
\begin{tikzpicture}[very thick,scale=0.75,color=blue!50!black, baseline=0cm]

\draw (1.5,-1.75) -- (1.5,1.75); 

\draw (1.5,-1.75) node[right] (X) {{\small$X$}};
\draw (1.5,1.75) node[right] (Xu) {{\small$Y$}};

\fill (1.5,0.7) circle (2pt) node[right] (phi) {{\small $\phi$}};

\fill[color=green!50!black] (1.5,0.3) circle (2pt) node (meet) {};
\fill[color=green!50!black] (1.5,1.1) circle (2pt) node (meet) {};

\draw[color=green!50!black] (1,-0.25) .. controls +(0.0,0.25) and +(-0.25,-0.25) .. (1.5,0.3);
\draw[color=green!50!black] (0.5,-0.25) .. controls +(0.0,0.5) and +(-0.25,-0.25) .. (1.5,1.1);

\draw[-dot-, color=green!50!black] (0.5,-0.25) .. controls +(0,-0.5) and +(0,-0.5) .. (1,-0.25);

\draw[color=green!50!black] (0.75,-1.2) node[Odot] (unit) {}; 
\draw[color=green!50!black] (0.75,-0.6) -- (unit);

\end{tikzpicture}
\ee
which acts on all 2-morphisms $\phi: X\rightarrow Y$ between left $A$-modules. Under the right circumstances this map projects to the set of module maps: 

\begin{lemma}\label{lem:piAproj}
If~$A$ is a separable Frobenius algebra then $\pi_A^2 = \pi_A$ and $\operatorname{im}(\pi_A) = \Hom_A(X,Y)$. 
\end{lemma}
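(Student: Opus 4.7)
My plan is to deduce both conclusions from two auxiliary claims: (a) $\pi_A(\phi)$ is a module map for every 2-morphism $\phi: X \to Y$, and (b) if $\phi$ is already a module map, then $\pi_A(\phi) = \phi$. Once these are in hand, $\operatorname{im}(\pi_A) \subseteq \Hom_A(X,Y)$ follows from (a), the reverse inclusion $\Hom_A(X,Y) \subseteq \operatorname{im}(\pi_A)$ from (b), and $\pi_A^2 = \pi_A$ is immediate: by (a) the image of $\pi_A$ lies in $\Hom_A(X,Y)$, and by (b) every module map is fixed by $\pi_A$.

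For (a) I would verify the module map axiom~\eqref{eq:modmap} by comparing $\pi_A(\phi)$ composed on top with an external left $A$-action on $Y$ against $\pi_A(\phi)$ pre-composed with a left $A$-action on $X$. On the $Y$-side, associativity of the $Y$-module structure merges the external action with the upper action already present in $\pi_A(\phi)$ into a single action, whose incoming $A$-data is produced by $(\mu \otimes 1_A) \circ (1_A \otimes \Delta) \circ (1_A \otimes \eta)$. The Frobenius identity~\eqref{eq:Frob} rewrites $(\mu \otimes 1_A) \circ (1_A \otimes \Delta)$ as $\Delta \circ \mu$, so this simplifies to $\Delta$ applied to the incoming $A$-strand. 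The mirror manipulation on the $X$-side uses associativity of the $X$-action together with the other half of~\eqref{eq:Frob}, namely $(1_A \otimes \mu) \circ (\Delta \otimes 1_A) = \Delta \circ \mu$, and yields the same resulting diagram: $\Delta$ applied to the external $A$-input, with the two outgoing strands acting on $Y$ and on $X$ respectively. The two sides therefore coincide, so $\pi_A(\phi)$ is a module map.

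For (b), suppose $\phi \in \Hom_A(X,Y)$. The module map property~\eqref{eq:modmap} allows me to slide $\phi$ past the lower action in the defining diagram of $\pi_A(\phi)$, pushing that $A$-strand above $\phi$. Associativity of the $Y$-action then combines the two strands emerging from $\Delta \circ \eta$ into a single action by $\mu \circ \Delta \circ \eta$. Separability~\eqref{eq:special} gives $\mu \circ \Delta = 1_A$, reducing this to an action by $\eta$, which the module unit axiom~\eqref{eq:leftcomp} eliminates, returning $\phi$.

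The whole argument is a routine sequence of string-diagram moves; there is no genuine obstacle. The only points requiring care are to invoke the correct half of the two-sided Frobenius identity~\eqref{eq:Frob} on each of the two sides in step (a), and to notice that separability enters only in step (b), whereas step (a) relies on the Frobenius relation and module-associativity alone.
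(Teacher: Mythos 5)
Your proof is correct and follows essentially the same route as the paper: it establishes exactly the same two claims (that $\pi_A$ fixes module maps, using separability and the unit axiom, and that every $\pi_A(\phi)$ is a module map, using the Frobenius identity and module associativity) and assembles the lemma from them in the same way. Your observation about which hypothesis enters where also matches the paper's proof.
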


\begin{proof}
$\pi_A$ acts as the identity on module maps. Indeed, for such a map~$\phi$ we have
\be
\pi_A(\phi) =
\begin{tikzpicture}[very thick,scale=0.75,color=blue!50!black, baseline=0cm]

\draw (1.5,-1.75) -- (1.5,1.75); 

\draw (1.5,-1.75) node[right] (X) {{\small$X$}};
\draw (1.5,1.75) node[right] (Xu) {{\small$Y$}};

\fill (1.5,0.7) circle (2pt) node[right] (phi) {{\small $\phi$}};

\fill[color=green!50!black] (1.5,0.3) circle (2pt) node (meet) {};
\fill[color=green!50!black] (1.5,1.1) circle (2pt) node (meet) {};

\draw[color=green!50!black] (1,-0.25) .. controls +(0.0,0.25) and +(-0.25,-0.25) .. (1.5,0.3);
\draw[color=green!50!black] (0.5,-0.25) .. controls +(0.0,0.5) and +(-0.25,-0.25) .. (1.5,1.1);

\draw[-dot-, color=green!50!black] (0.5,-0.25) .. controls +(0,-0.5) and +(0,-0.5) .. (1,-0.25);

\draw[color=green!50!black] (0.75,-1.2) node[Odot] (unit) {}; 
\draw[color=green!50!black] (0.75,-0.6) -- (unit);

\end{tikzpicture}
\stackrel{\eqref{eq:modmap}}{=}
\begin{tikzpicture}[very thick,scale=0.75,color=blue!50!black, baseline=0cm]

\draw (1.5,-1.75) -- (1.5,1.75); 

\draw (1.5,-1.75) node[right] (X) {{\small$X$}};
\draw (1.5,1.75) node[right] (Xu) {{\small$Y$}};

\fill (1.5,1.1) circle (2pt) node[right] (phi) {{\small $\phi$}};

\fill[color=green!50!black] (1.5,0.3) circle (2pt) node (meet) {};
\fill[color=green!50!black] (1.5,0.7) circle (2pt) node (meet) {};

\draw[color=green!50!black] (1,-0.25) .. controls +(0.0,0.25) and +(-0.25,-0.25) .. (1.5,0.3);
\draw[color=green!50!black] (0.5,-0.25) .. controls +(0.0,0.5) and +(-0.25,-0.25) .. (1.5,0.7);

\draw[-dot-, color=green!50!black] (0.5,-0.25) .. controls +(0,-0.5) and +(0,-0.5) .. (1,-0.25);

\draw[color=green!50!black] (0.75,-1.2) node[Odot] (unit) {}; 
\draw[color=green!50!black] (0.75,-0.6) -- (unit);

\end{tikzpicture}
\stackrel{\eqref{eq:leftcomp}}{=}
\begin{tikzpicture}[very thick,scale=0.75,color=blue!50!black, baseline=0cm]

\draw (1.5,-1.75) -- (1.5,1.75); 

\draw (1.5,-1.75) node[right] (X) {{\small$X$}};
\draw (1.5,1.75) node[right] (Xu) {{\small$Y$}};

\fill (1.5,1.1) circle (2pt) node[right] (phi) {{\small $\phi$}};

\fill[color=green!50!black] (1.5,0.7) circle (2pt) node (meet) {};

\draw[color=green!50!black] (0.75, 0.2) .. controls +(0.0,0.25) and +(-0.25,-0.25) .. (1.5,0.7);

\draw[-dot-, color=green!50!black] (0.5,-0.25) .. controls +(0,0.5) and +(0,0.5) .. (1,-0.25);
\draw[-dot-, color=green!50!black] (0.5,-0.25) .. controls +(0,-0.5) and +(0,-0.5) .. (1,-0.25);

\draw[color=green!50!black] (0.75,-1.2) node[Odot] (unit) {}; 
\draw[color=green!50!black] (0.75,-0.6) -- (unit);

\end{tikzpicture}
\stackrel{\eqref{eq:special}}{=}
\begin{tikzpicture}[very thick,scale=0.75,color=blue!50!black, baseline]

\draw (0,-1.75) node[right] (X) {{\small$X$}};
\draw (0,1.75) node[right] (Xu) {{\small$Y$}};

\draw (0,-1.75) -- (0,1.75); 

\fill[color=green!50!black] (0,0.7) circle (2pt) node (meet) {};

\fill (0,1.1) circle (2pt) node[right] (phi) {{\small $\phi$}};

\draw[color=green!50!black] (-0.5,-1) node[Odot] (unit) {}; 
\draw[color=green!50!black] (unit) .. controls +(0,0.25) and +(-0.25,-0.25) .. (0,0.7);

\end{tikzpicture} 
\stackrel{\eqref{eq:leftcomp}}{=}
\phi \, . 
\ee

It remains to show that every image under $\pi_A$ is a module map: 
\be
\begin{tikzpicture}[very thick,scale=0.75,color=blue!50!black, baseline=0cm]

\draw (1.5,-1.75) -- (1.5,2); 

\draw (1.5,-1.75) node[right] (X) {{\small$X$}};
\draw (1.5,2) node[right] (Xu) {{\small$Y$}};

\fill (1.5,0.7) circle (2pt) node[right] (phi) {{\small $\phi$}};

\fill[color=green!50!black] (1.5,0.3) circle (2pt) node (meet) {};
\fill[color=green!50!black] (1.5,1.1) circle (2pt) node (meet) {};
\fill[color=green!50!black] (1.5,1.5) circle (2pt) node (meet) {};

\draw[color=green!50!black] (1,-0.25) .. controls +(0.0,0.25) and +(-0.25,-0.25) .. (1.5,0.3);
\draw[color=green!50!black] (0.5,-0.25) .. controls +(0.0,0.5) and +(-0.25,-0.25) .. (1.5,1.1);

\draw[-dot-, color=green!50!black] (0.5,-0.25) .. controls +(0,-0.5) and +(0,-0.5) .. (1,-0.25);

\draw[color=green!50!black] (0.75,-1.2) node[Odot] (unit) {}; 
\draw[color=green!50!black] (0.75,-0.6) -- (unit);

\draw[color=green!50!black] (0,-1.75) .. controls +(0.0,2.75) and +(-0.25,-0.25) .. (1.5,1.5);

\end{tikzpicture}
\stackrel{\eqref{eq:leftcomp}}{=}
\begin{tikzpicture}[very thick,scale=0.75,color=blue!50!black, baseline=0cm]

\draw (1.5,-1.75) -- (1.5,2); 

\draw (1.5,-1.75) node[right] (X) {{\small$X$}};
\draw (1.5,2) node[right] (Xu) {{\small$Y$}};

\fill (1.5,0.7) circle (2pt) node[right] (phi) {{\small $\phi$}};

\fill[color=green!50!black] (1.5,0.3) circle (2pt) node (meet) {};
\fill[color=green!50!black] (1.5,1.1) circle (2pt) node (meet) {};
\fill[color=green!50!black] (0.6,0.12) circle (2pt) node (meet) {};

\draw[color=green!50!black] (1,-0.25) .. controls +(0.0,0.25) and +(-0.25,-0.25) .. (1.5,0.3);
\draw[color=green!50!black] (0.5,-0.25) .. controls +(0.0,0.5) and +(-0.25,-0.25) .. (1.5,1.1);

\draw[-dot-, color=green!50!black] (0.5,-0.25) .. controls +(0,-0.5) and +(0,-0.5) .. (1,-0.25);

\draw[color=green!50!black] (0.75,-1.2) node[Odot] (unit) {}; 
\draw[color=green!50!black] (0.75,-0.6) -- (unit);

\draw[color=green!50!black] (0,-1.75) .. controls +(0.0,1.75) and +(-0.25,-0.25) .. (0.6,0.12);

\end{tikzpicture}
\stackrel{\eqref{eq:Frob}}{=}
\begin{tikzpicture}[very thick,scale=0.75,color=blue!50!black, baseline=0cm]

\draw (1.5,-1.75) -- (1.5,2); 

\draw (1.5,-1.75) node[right] (X) {{\small$X$}};
\draw (1.5,2) node[right] (Xu) {{\small$Y$}};

\fill (1.5,1.1) circle (2pt) node[right] (phi) {{\small $\phi$}};

\fill[color=green!50!black] (1.5,0.7) circle (2pt) node (meet) {};
\fill[color=green!50!black] (1.5,1.5) circle (2pt) node (meet) {};

\draw[color=green!50!black] (1,0.25) .. controls +(0.0,0.25) and +(-0.25,-0.25) .. (1.5,0.7);
\draw[color=green!50!black] (0.5,0.25) .. controls +(0.0,0.5) and +(-0.25,-0.25) .. (1.5,1.5);

\draw[-dot-, color=green!50!black] (0.5,0.25) .. controls +(0,-0.5) and +(0,-0.5) .. (1,0.25);

\draw[-dot-, color=green!50!black] (0.5,-1) .. controls +(0,0.5) and +(0,0.5) .. (1,-1);

\draw[color=green!50!black] (1,-1.2) node[Odot] (unit) {}; 
\draw[color=green!50!black] (1,-1) -- (unit);
\draw[color=green!50!black] (0.5,-1) -- (0.5,-1.75);

\draw[color=green!50!black] (0.75,-0.6) -- (0.75,-0.1);

\end{tikzpicture}
=
\begin{tikzpicture}[very thick,scale=0.75,color=blue!50!black, baseline=0cm]

\draw (1.5,-1.75) -- (1.5,2); 

\draw (1.5,-1.75) node[right] (X) {{\small$X$}};
\draw (1.5,2) node[right] (Xu) {{\small$Y$}};

\fill (1.5,1.1) circle (2pt) node[right] (phi) {{\small $\phi$}};

\fill[color=green!50!black] (1.5,0.7) circle (2pt) node (meet) {};
\fill[color=green!50!black] (1.5,1.5) circle (2pt) node (meet) {};
\fill[color=green!50!black] (1.5,-0.5) circle (2pt) node (meet) {};

\draw[color=green!50!black] (1,0.25) .. controls +(0.0,0.25) and +(-0.25,-0.25) .. (1.5,0.7);
\draw[color=green!50!black] (0.5,0.25) .. controls +(0.0,0.5) and +(-0.25,-0.25) .. (1.5,1.5);

\draw[-dot-, color=green!50!black] (0.5,0.25) .. controls +(0,-0.5) and +(0,-0.5) .. (1,0.25);

\draw[color=green!50!black] (0.75,-0.6) node[Odot] (unit) {}; 
\draw[color=green!50!black] (0.75,-0.1) -- (unit);

\draw[color=green!50!black] (1,-1.75) .. controls +(0.0,0.25) and +(-0.25,-0.25) .. (1.5,-0.5);

\end{tikzpicture}
\ee
where we used \eqref{eq:algcond}, \eqref{eq:Frob} and \eqref{eq:leftcomp} in the last step. 
\end{proof}

Similarly one can work with \textsl{right $A$-modules} and their module maps. We do not spell out the details as they are obtained by simply reflecting all of the above diagrams at the line labelled by the module~$X$. 

A $B$-$A$-\textsl{bimodule} over two algebras $A\in\B(a,a)$ and $B\in\B(b,b)$ is a 1-morphism $X\in\B(a,b)$ that is simultaneously a right $A$-module and left $B$-module, together with the compatibility condition
\be
\begin{tikzpicture}[very thick,scale=0.75,color=blue!50!black, baseline]

\draw (0,-1) node[right] (X) {{\small$X$}};
\draw (0,1) node[right] (Xu) {{\small$X$}};

\draw (1.0,-1) node[right] (A) {{\small$A$}};
\draw (-0.5,-1) node[left] (B) {{\small$B$}};

\draw (0,-1) -- (0,1); 

\fill[color=green!50!black] (0,-0.3) circle (2pt) node (meet) {};
\fill[color=green!50!black] (0,0.3) circle (2pt) node (meet) {};

\draw[color=green!50!black] (-0.5,-1) .. controls +(0,0.25) and +(-0.5,-0.5) .. (0,-0.3);
\draw[color=green!50!black] (1.0,-1) .. controls +(0,0.5) and +(0.5,-0.5) .. (0,0.3);
\end{tikzpicture} 
=
\begin{tikzpicture}[very thick,scale=0.75,color=blue!50!black, baseline]

\draw (0,-1) node[left] (X) {{\small$X$}};
\draw (0,1) node[left] (Xu) {{\small$X$}};

\draw (0.5,-1) node[right] (A) {{\small$A$}};
\draw (-1,-1) node[left] (B) {{\small$B$}};

\draw (0,-1) -- (0,1); 

\fill[color=green!50!black] (0,-0.3) circle (2pt) node (meet) {};
\fill[color=green!50!black] (0,0.3) circle (2pt) node (meet) {};

\draw[color=green!50!black] (-1,-1) .. controls +(0,0.5) and +(-0.5,-0.5) .. (0,0.3);
\draw[color=green!50!black] (0.5,-1) .. controls +(0,0.25) and +(0.5,-0.5) .. (0,-0.3);
\end{tikzpicture} 
. 
\ee
Given two $B$-$A$-bimodules $X,Y$, a 2-morphism $\phi:X \rightarrow Y$ is called a \textsl{bimodule map} if it is both a map of left and right modules. We denote the subset in $\Hom_{\B(a,b)}(X,Y)$ of all bimodule maps by $\Hom_{BA}(X,Y)$. Analogously to~\eqref{eq:projA}, if $A,B$ are separable Frobenius there is a canonical projection to $\Hom_{BA}(X,Y)$ given by
\be\label{eq:bimodulemapprojector}
\pi_{BA}  : \phi \lmt
\begin{tikzpicture}[very thick,scale=0.75,color=blue!50!black, baseline=0cm]

\draw (1.5,-1.75) -- (1.5,1.95); 

\draw (1.5,-1.75) node[right] (X) {{\small$X$}};
\draw (1.5,1.95) node[right] (Xu) {{\small$Y$}};

\fill (1.5,0.7) circle (2pt) node[right] (phi) {{\small $\phi$}};

\fill[color=green!50!black] (1.5,0.3) circle (2pt) node (meet) {};
\fill[color=green!50!black] (1.5,1.1) circle (2pt) node (meet) {};
\fill[color=green!50!black] (1.5,1.5) circle (2pt) node (meet) {};
\fill[color=green!50!black] (1.5,-0.1) circle (2pt) node (meet) {};

\draw[color=green!50!black] (1,-0.25) .. controls +(0.0,0.25) and +(-0.25,-0.25) .. (1.5,0.3);
\draw[color=green!50!black] (0.5,-0.25) .. controls +(0.0,0.5) and +(-0.25,-0.25) .. (1.5,1.1);
\draw[-dot-, color=green!50!black] (0.5,-0.25) .. controls +(0,-0.5) and +(0,-0.5) .. (1,-0.25);

\draw[color=green!50!black] (0.75,-1.2) node[Odot] (unit) {}; 
\draw[color=green!50!black] (0.75,-0.6) -- (unit);

\draw[color=green!50!black] (2.25,-0.45) .. controls +(0.0,0.25) and +(0.25,-0.25) .. (1.5,-0.1);
\draw[color=green!50!black] (2.75,-0.45) .. controls +(0.0,1) and +(0.25,-0.25) .. (1.5,1.5);
\draw[-dot-, color=green!50!black] (2.25,-0.45) .. controls +(0,-0.5) and +(0,-0.5) .. (2.75,-0.45);

\draw[color=green!50!black] (2.5,-1.4) node[Odot] (unit) {}; 
\draw[color=green!50!black] (2.5,-0.8) -- (unit);

\end{tikzpicture}
\, . 
\ee

\subsection{Tensor products}

Let $A\in \B(a,a)$ be an algebra as before, and let $X\in\B(a,b)$, $Y\in \B(c,a)$ be right and left $A$-modules, respectively. We denote the actions of~$A$ by $\rho_X: X\otimes A\rightarrow X$ and $\rho_Y: A\otimes Y \rightarrow Y$. The \textsl{tensor product of~$X$ and~$Y$ over~$A$}, $X\otimes_A Y \in \B(c,b)$, is defined to be the coequaliser of $r = \rho_X \otimes 1_Y$ and $l = (1_X \otimes \rho_Y) \circ \alpha_{XAY}$. Recall that this means that $X\otimes_A Y$ is equipped with a map $\vartheta: X\otimes Y \rightarrow X\otimes_A Y$ with $\vartheta\circ l = \vartheta \circ r$ such that for all $\phi: X\otimes Y \rightarrow Z$ with $\phi \circ l = \phi \circ r$ there is a unique map $\zeta: X\otimes_A Y \rightarrow Z$ with $\zeta \circ \vartheta = \phi$: 
\be\label{eq:coequalisertensor}
\begin{tikzpicture}[
			     baseline=(current bounding box.base), 
			     >=stealth,
			     descr/.style={fill=white,inner sep=2.5pt}, 
			     normal line/.style={->}
			     ] 
\matrix (m) [matrix of math nodes, row sep=3em, column sep=2.5em, text height=1.5ex, text depth=0.25ex] {%
(X \otimes A) \otimes Y & & X \otimes Y & & X \otimes_A Y \\
&& && Z \\ };
\path[font=\scriptsize] (m-1-1) edge[->, transform canvas={yshift=0.5ex}] node[auto] {$ r $} (m-1-3)
				  (m-1-1) edge[->, transform canvas={yshift=-0.5ex}] node[auto, swap] {$ l $} (m-1-3); 
\path[font=\scriptsize] 
				 (m-1-3) edge[->] node[auto] {$ \vartheta $} (m-1-5)
				 (m-1-3) edge[->] node[auto, swap] {$ \phi $} (m-2-5);
\path[font=\scriptsize, dotted] (m-1-5) edge[->] node[auto] {$ \zeta $} (m-2-5);
\end{tikzpicture}
\ee

In general, the tensor product over a given algebra~$A$ may not exist. The following lemma provides a simple existence criterion, which will be sufficient for our purposes. 

\begin{lemma}\label{lem:tensorprojector}
Suppose that idempotent 2-morphisms split in~$\B$ and that~$A$ is separable Frobenius. Then $X\otimes_A Y$ exists for all modules $X,Y$ and can be written as the image of the idempotent
\be\label{eq:piA}
\pi_A^{X,Y} = 
\begin{tikzpicture}[very thick,scale=0.75,color=blue!50!black, baseline]

\draw (-1,-1) node[left] (X) {{\small$X$}};
\draw (1,-1) node[right] (Xu) {{\small$Y$}};

\draw (-1,-1) -- (-1,1); 
\draw (1,-1) -- (1,1); 

\fill[color=green!50!black] (-1,0.6) circle (2pt) node (meet) {};
\fill[color=green!50!black] (1,0.6) circle (2pt) node (meet) {};

\draw[-dot-, color=green!50!black] (0.35,-0.0) .. controls +(0,-0.5) and +(0,-0.5) .. (-0.35,-0.0);

\draw[color=green!50!black] (0.35,-0.0) .. controls +(0,0.25) and +(-0.25,-0.25) .. (1,0.6);
\draw[color=green!50!black] (-0.35,-0.0) .. controls +(0,0.25) and +(0.25,-0.25) .. (-1,0.6);

\draw[color=green!50!black] (0,-0.75) node[Odot] (down) {}; 
\draw[color=green!50!black] (down) -- (0,-0.35); 

\end{tikzpicture} 
. 
\ee
\end{lemma}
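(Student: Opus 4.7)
The plan is to define $X\otimes_A Y$ as the image of $\pi_A^{X,Y}$ obtained from the assumed splitting of idempotents in~$\B$, and then to verify directly that the associated projection $\vartheta\colon X\otimes Y \to X\otimes_A Y$ is a coequaliser of the pair $(r,l)$. The first step is to check that $\pi_A^{X,Y}$ is indeed idempotent. Composing two copies produces a diagram with two stacked $(\eta,\Delta)$-bubbles; applying the Frobenius relation \eqref{eq:Frob} together with associativity of the module actions \eqref{eq:leftcomp} collapses them into a single bubble labelled by $\mu\circ\Delta$, separability \eqref{eq:special} then straightens this to an unadorned $A$-line, and the residual unit is absorbed via the unitality axiom in \eqref{eq:leftcomp}. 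Since idempotents split in~$\B$ by hypothesis, this yields an object $X\otimes_A Y$ together with maps $\vartheta, \iota$ satisfying $\iota\circ\vartheta = \pi_A^{X,Y}$ and $\vartheta\circ\iota = 1$.

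The second step is to prove $\vartheta\circ r = \vartheta\circ l$. Because $\iota$ is split mono, this reduces to the equality $\pi_A^{X,Y}\circ r = \pi_A^{X,Y}\circ l$ in $\B(c,b)$. On the left-hand side, the extra $A$-strand coming from $r$ acts on $X$ just below the bubble and fuses, by associativity \eqref{eq:leftcomp} of the right action on $X$, with the left leg of $\Delta$ into a single multiplication; on the right-hand side, the $A$-strand coming from $l$ fuses analogously with the right leg of $\Delta$ through the left action on $Y$. The Frobenius relation \eqref{eq:Frob}, in the form $(\mu\otimes 1)\circ(1\otimes\Delta) = (1\otimes\mu)\circ(\Delta\otimes 1)$, then identifies the two resulting diagrams.

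Finally, for the universal property, given $\phi\colon X\otimes Y\to Z$ with $\phi\circ l = \phi\circ r$, I set $\zeta = \phi\circ\iota$; uniqueness of $\zeta$ is automatic since $\vartheta$ is split epi. To see $\zeta\circ\vartheta=\phi$, I would show $\phi\circ\pi_A^{X,Y}=\phi$: using $\phi\circ l = \phi\circ r$ I transport the right leg of $\Delta$ from the $Y$-strand across to the $X$-strand; the two resulting consecutive right actions on $X$ combine via \eqref{eq:leftcomp} into a single action labelled by $\mu\circ\Delta\circ\eta$, separability \eqref{eq:special} reduces $\mu\circ\Delta$ to the identity on~$A$, and the remaining unit disappears by unitality. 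The main obstacle is the idempotency verification, where the interplay of Frobenius, separability, and module-associativity must be orchestrated carefully; once this is in hand the remaining steps are forced by the interpretation of $\pi_A^{X,Y}$ as the standard projector onto $A$-balanced tensors.
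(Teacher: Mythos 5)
Your proposal is correct and takes essentially the same route as the paper: verify that $\pi_A^{X,Y}$ is idempotent using module associativity, the Frobenius relation and separability, split the idempotent to obtain $\vartheta$ and $\xi$, and set $\zeta = \phi\circ\xi$ for the universal property. You additionally spell out the checks that $\vartheta\circ l = \vartheta\circ r$ and that $\phi\circ\pi_A^{X,Y}=\phi$, which the paper leaves implicit; both of your arguments for these steps are sound.
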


\begin{proof}
We compute
\be
\begin{tikzpicture}[very thick,scale=0.75,color=blue!50!black, baseline]

\draw (-1,-1.6) node[left] (X) {{\small$X$}};
\draw (1,-1.6) node[right] (Xu) {{\small$Y$}};

\draw (-1,-1.6) -- (-1,1); 
\draw (1,-1.6) -- (1,1); 

\fill[color=green!50!black] (-1,0.8) circle (2pt) node (meet) {};
\fill[color=green!50!black] (1,0.8) circle (2pt) node (meet) {};

\draw[-dot-, color=green!50!black] (0.35,0.3) .. controls +(0,-0.5) and +(0,-0.5) .. (-0.35,0.3);

\draw[color=green!50!black] (0.35,0.3) .. controls +(0,0.25) and +(-0.25,-0.25) .. (1,0.8);
\draw[color=green!50!black] (-0.35,0.3) .. controls +(0,0.25) and +(0.25,-0.25) .. (-1,0.8);

\draw[color=green!50!black] (0,-0.45) node[Odot] (down) {}; 
\draw[color=green!50!black] (down) -- (0,-0.05);

\fill[color=green!50!black] (-1,-0.2) circle (2pt) node (meet) {};
\fill[color=green!50!black] (1,-0.2) circle (2pt) node (meet) {};

\draw[-dot-, color=green!50!black] (0.35,-0.7) .. controls +(0,-0.5) and +(0,-0.5) .. (-0.35,-0.7);

\draw[color=green!50!black] (0.35,-0.7) .. controls +(0,0.25) and +(-0.25,-0.25) .. (1,-0.2);
\draw[color=green!50!black] (-0.35,-0.7) .. controls +(0,0.25) and +(0.25,-0.25) .. (-1,-0.2);

\draw[color=green!50!black] (0,-1.45) node[Odot] (down) {}; 
\draw[color=green!50!black] (down) -- (0,-1.05); 

\end{tikzpicture} 
\stackrel{\eqref{eq:leftcomp}}{=}
\begin{tikzpicture}[very thick,scale=0.75,color=blue!50!black, baseline]

\draw (-1,-1.6) node[left] (X) {{\small$X$}};
\draw (1,-1.6) node[right] (Xu) {{\small$Y$}};

\draw (-1,-1.6) -- (-1,1); 
\draw (1,-1.6) -- (1,1); 

\fill[color=green!50!black] (-1,0.8) circle (2pt) node (meet) {};
\fill[color=green!50!black] (1,0.8) circle (2pt) node (meet) {};

\draw[-dot-, color=green!50!black] (0.35,0.3) .. controls +(0,-0.5) and +(0,-0.5) .. (-0.35,0.3);

\draw[color=green!50!black] (0.35,0.3) .. controls +(0,0.25) and +(-0.25,-0.25) .. (1,0.8);
\draw[color=green!50!black] (-0.35,0.3) .. controls +(0,0.25) and +(0.25,-0.25) .. (-1,0.8);

\draw[color=green!50!black] (0,-0.45) node[Odot] (down) {}; 
\draw[color=green!50!black] (down) -- (0,-0.05);

\fill[color=green!50!black] (-0.7,0.61) circle (2pt) node (meet) {};
\fill[color=green!50!black] (0.7,0.61) circle (2pt) node (meet) {};

\draw[-dot-, color=green!50!black] (0.35,-0.7) .. controls +(0,-0.5) and +(0,-0.5) .. (-0.35,-0.7);

\draw[color=green!50!black] (0.35,-0.7) .. controls +(0,0.25) and +(-0.25,-0.25) .. (0.7,0.61);
\draw[color=green!50!black] (-0.35,-0.7) .. controls +(0,0.25) and +(0.25,-0.25) .. (-0.7,0.61);

\draw[color=green!50!black] (0,-1.45) node[Odot] (down) {}; 
\draw[color=green!50!black] (down) -- (0,-1.05); 

\end{tikzpicture} 
\stackrel{\eqref{eq:Frob}}{=}
\begin{tikzpicture}[very thick,scale=0.75,color=blue!50!black, baseline]

\draw (-1,-1.6) node[left] (X) {{\small$X$}};
\draw (1,-1.6) node[right] (Xu) {{\small$Y$}};

\draw (-1,-1.6) -- (-1,1); 
\draw (1,-1.6) -- (1,1); 

\fill[color=green!50!black] (-1,0.8) circle (2pt) node (meet) {};
\fill[color=green!50!black] (1,0.8) circle (2pt) node (meet) {};

\draw[-dot-, color=green!50!black] (0.35,0.3) .. controls +(0,-0.5) and +(0,-0.5) .. (-0.35,0.3);

\draw[color=green!50!black] (0.35,0.3) .. controls +(0,0.25) and +(-0.25,-0.25) .. (1,0.8);
\draw[color=green!50!black] (-0.35,0.3) .. controls +(0,0.25) and +(0.25,-0.25) .. (-1,0.8);

\draw[color=green!50!black] (0,-0.35) -- (0,-0.05);

\draw[-dot-, color=green!50!black] (0.35,-0.7) .. controls +(0,0.5) and +(0,0.5) .. (-0.35,-0.7);
\draw[-dot-, color=green!50!black] (0.35,-0.7) .. controls +(0,-0.5) and +(0,-0.5) .. (-0.35,-0.7);

\draw[color=green!50!black] (0,-1.45) node[Odot] (down) {}; 
\draw[color=green!50!black] (down) -- (0,-1.05); 

\end{tikzpicture} 
\stackrel{\eqref{eq:special}}{=}
\begin{tikzpicture}[very thick,scale=0.75,color=blue!50!black, baseline]

\draw (-1,-1.6) node[left] (X) {{\small$X$}};
\draw (1,-1.6) node[right] (Xu) {{\small$Y$}};

\draw (-1,-1.6) -- (-1,1); 
\draw (1,-1.6) -- (1,1); 

\fill[color=green!50!black] (-1,0.8) circle (2pt) node (meet) {};
\fill[color=green!50!black] (1,0.8) circle (2pt) node (meet) {};

\draw[-dot-, color=green!50!black] (0.35,0.3) .. controls +(0,-0.5) and +(0,-0.5) .. (-0.35,0.3);

\draw[color=green!50!black] (0.35,0.3) .. controls +(0,0.25) and +(-0.25,-0.25) .. (1,0.8);
\draw[color=green!50!black] (-0.35,0.3) .. controls +(0,0.25) and +(0.25,-0.25) .. (-1,0.8);

\draw[color=green!50!black] (0,-1.45) node[Odot] (down) {}; 
\draw[color=green!50!black] (down) -- (0,-0.05); 

\end{tikzpicture} 
\ee
and thus find $(\pi_A^{X,Y})^2 = \pi_A^{X,Y}$. Hence there are splitting maps $\xi: X \otimes_A Y \rightarrow X\otimes Y$ and $\vartheta: X\otimes Y \rightarrow X\otimes_A Y$ with $\vartheta \xi = 1$ and $\xi\vartheta = \pi_A^{X,Y}$, and the epimorphism~$\vartheta$ satisfies the universal coequaliser property: for~$\phi$ as in~\eqref{eq:coequalisertensor} we set $\zeta = \phi\xi$. 
\end{proof}

\begin{remark}\label{rem:split-coequaliser}
For a separable Frobenius algebra the pair $l,r: (X\otimes A) \otimes Y \rightrightarrows X \otimes Y$ is \textsl{contractible}, i.\,e.~there exists $t: X\otimes Y \rightarrow (X\otimes A) \otimes Y$ such that $lt=1$ and $rtl = rtr$. Explicitly, 
\be
t = 
\begin{tikzpicture}[very thick,scale=1.0,color=blue!50!black, baseline=.8cm]
\draw[line width=0pt] 
(1.75,1.75) node[line width=0pt, color=green!50!black] (A) {}
(0.86,1.75) node[line width=0pt] (X1) {}
(0.86,-0.5) node[line width=0pt] (X2) {}
(2,1.75) node[line width=0pt] (Y1) {}
(2,-0.5) node[line width=0pt] (Y2) {}; 

\draw[color=green!50!black] (1.25,0.55) .. controls +(0.0,0.25) and +(0.25,-0.15) .. (0.86,0.95);
\draw[-dot-, color=green!50!black] (1.25,0.55) .. controls +(0,-0.5) and +(0,-0.5) .. (1.75,0.55);

\draw[color=green!50!black] (1.5,-0.1) node[Odot] (unit) {}; 
\draw[color=green!50!black] (1.5,0.15) -- (unit);

\fill[color=green!50!black] (0.86,0.95) circle (2pt) node (meet) {};

\draw[color=green!50!black] (1.75,0.55) -- (A);
\draw[color=blue!50!black] (X1) -- (X2);
\draw[color=blue!50!black] (Y1) -- (Y2);

\end{tikzpicture}
\, . 
\ee
(If we only assume that~$A$ is an algebra we can only deduce that $l,r$ is \textsl{reflexive}, i.\,e.~there is a map $s: X \otimes Y \rightarrow (X\otimes A) \otimes Y$ with $ls=1=rs$.) 
Contractability implies that every coequaliser is split, i.\,e.~that in addition to $t$ there is $\xi: X \otimes_A Y \rightarrow X \otimes Y$ with $\vartheta \xi=1$, $\xi \vartheta = rt$ and $lt=1$.
\end{remark}

\section{Two-dimensional topological field theory with defects}\label{sec:2dTFTwithDefects}

In this section we briefly review the functorial approach to two-dimensional topological field theories in the presence of defects \cite{dkr1107.0495} and the application of defects to the construction of orbifold models. Once formulated in terms of defects, the orbifold construction immediately generalises beyond the group case \cite{ffrs0909.5013}.\footnote{In fact, the only place where we will meet groups is the example of matrix factorisations which are equivariant with respect to a group action, to be discussed in Section~\ref{subsec:equiMF}.} The present section is mainly meant to describe the conceptual origins of our constructions in later sections. The rest of this paper can be read independently of the material presented here, though knowing the original motivations and intuition is surely useful.

\subsection{TFTs with defects as symmetric monoidal functors}\label{subsec:TFTwdefects}

We assume  that the reader has some familiarity with the formulation of a closed 2d TFT as a symmetric monoidal functor from two-dimensional bordisms to vector spaces \cite{Dijkgraaf-thesis, Abrams2dTQFTaFA, Kockbook}. Enlarging the bordism category to include surfaces with unparametrised (``free'') boundaries leads to open/closed 2d TFTs \cite{l0010269, an0202164, lp0510664, ms0609042}. Here we discuss a different enlargement of the bordism category in terms of defects \cite{rs0808.1419, dkr1107.0495}. The description of bordisms with defects is a bit lengthy, but we will need these details to explain the orbifold construction in Section \ref{sec:TFT-orbifold}.

\medskip

A typical patch of worldsheet with phases and domain walls is shown in~\eqref{eq:worldsheetwithdefects}. To describe the bordism category for such worldsheets precisely, we first introduce two label sets, then the objects and morphisms of the bordism category, and finally two maps $s,t$ on the label sets that constrain the allowed assignments of labels to different components of a worldsheet.

\medskip
\noindent
{\sl Sets of defect conditions:} Fix two sets $D_2$ and $D_1$. We refer to elements of $D_2$ as {\sl phases}, and to those of $D_1$ as {\sl domain wall types} or {\sl defect conditions}.

\medskip
\noindent
{\sl Objects of the bordism category:} Objects are one-dimensional, oriented, compact manifolds without boundary and with extra decoration. Concretely, an object~$U$ has underlying manifold $\emptyset$ or $S^1 \times \{1,\dots,n\}$ for some $n \geqslant 1$, i.\,e.~an ordered disjoint union of unit circles in $\R^2$. On each $S^1 \setminus \{1\}$ there is a finite number of marked points, each labelled by a pair $(x,\varepsilon)$, where $x \in D_1$ and $\varepsilon \in \{ \pm 1\}$. The open intervals between two marked points are labelled by elements $a \in D_2$. We write $|U|=n$ for the number of copies of $S^1$ contained in $U$ and $U(k)$ for the $k$-th copy together with its decoration. An example for an object with $n=1$ is
\be\label{eq:objectinBord}
\begin{tikzpicture}[scale=0.55,color=blue!50!black, baseline]
\draw[very thick, color=black] (0,0) circle (2);
\fill (50:2) circle (3.0pt) node[right] {{\scriptsize$(x_1,+)$}};
\fill (-8:2) circle (3.0pt) node[right] {{\scriptsize$(x_2,+)$}};
\fill (-50:2) circle (3.0pt) node[right] {{\scriptsize$(x_3,-)$}};
\fill[color=black] (18:1.9) circle (0pt) node[right] {{\scriptsize$a_1$}};
\fill[color=black] (-30:1.9) circle (0pt) node[right] {{\scriptsize$a_2$}};
\fill[color=black] (-180:1.9) circle (0pt) node[left] {{\scriptsize$a_3$}};
\end{tikzpicture} 
.
\ee

\medskip
\noindent
{\sl Morphisms of the bordism category:} Let $U,V$ be objects as above. A morphism $M : U \to V$ is either a permutation or a bordism:
\begin{itemize}
\item Permutation: Suppose $|U|=|V|=n$. Then $M$ can be a permutation $\sigma \in S_n$ such that $V(\sigma(k)) = U(k)$ for all $k\in \{1,\dots,n \}$.
\item Bordism: $M$ can be (the equivalence class of) a two-dimensional, oriented, compact manifold together with a parametrisation of its boundary\footnote{To be precise, $\phi$ and $\psi$ are germs of smooth injections. On each component $S^1$ of $U$ (resp.\ of $V$), the ``incoming (resp.\ outgoing) parametrisation'' $\phi$ (resp.\ $\psi$) is defined on some open neighbourhood of $S^1$ intersected with $|z| \geqslant 1$ (resp.\ $|z| \leqslant 1$).} 
by maps $\phi : U \to M$ and $\psi : V \to M$ and a defect graph. The defect graph consists of a one-dimensional oriented submanifold $M_1$ build from non-intersecting defect lines (which are circles or closed intervals), each labelled by an element of $D_1$. 
$M_1$ must meet the boundary of~$M$ transversally, and the boundary points of~$M_1$ must be precisely the marked points on the boundary of~$M$; the $D_1$-label of a defect line ending on $(x,\varepsilon)$ must be~$x$. 
If $\varepsilon = 1$, the defect line is oriented away from the boundary for in-going boundary components, and towards the boundary for out-going boundary components; for $\varepsilon = -1$ the situation is reversed, see \cite[Fig.\,3]{dkr1107.0495}.
The complement $M_2 = M \setminus M_1$ consists of two-dimensional connected patches, each of which is labelled by an element from~$D_2$; 
if such a patch intersects the boundary, the $D_2$-labels have to match those of the corresponding intervals in~$U$ and~$V$ (mapped to $\partial M$ via $\phi$ and $\psi$, respectively). 
\end{itemize}

\medskip
\noindent
{\sl The maps} $s,t${\sl :} Since the bordisms and the defect lines are oriented, we can speak of a region immediately to the left and to the right of a segment of defect line. The maps $s,t : D_1 \to D_2$ (``source'' and ``target'') describe which worldsheet phase is allowed to the left and right of a given defect type; our orientation convention is 
\be
\begin{tikzpicture}[very thick,scale=0.3,color=blue!50!black, baseline]
\clip (0,0) ellipse (6cm and 3cm);
\nicedashedcolourscheme (0,0) ellipse (6cm and 3cm);

\draw (2.5,0) [black] node {{\scriptsize $s(x)$}};
\draw (-2.5,0) [black] node {{\scriptsize $t(x)$}};

\draw (0.6,0) node {{\scriptsize $x$}};

\draw[->, very thick] (0,-3) to (0,0);
\draw[very thick] (0,-0.2) to (0,3);

\end{tikzpicture}
\, .
\ee
The labelling of objects and morphisms has to be compatible with $s,t$. 

Since for objects the marked points are labelled by pairs $(x,\varepsilon)$, where $\varepsilon$ encodes the orientation of the intersecting defect line, it is convenient to define
\begin{align}
  s(x,+) = s(x) \, ,~~
  t(x,+) = t(x) \quad \text{and} \quad
  s(x,-) = t(x) \, ,~~
  t(x,-) = s(x) \, .
\end{align}
Using this, we call a sequence of defect types $(x_1,\varepsilon_1),\dots,(x_n,\varepsilon_n)$ {\sl composable} if $s(x_k,\varepsilon_k) = t(x_{k+1},\varepsilon_{k+1})$ and {\sl cyclically composable} if in addition $s(x_n,\varepsilon_n) = t(x_1, \varepsilon_1)$. 
The labelling of the marked points and intervals in an object is now such that the interval in clockwise direction of a marked point $(x,\varepsilon)$ is labelled by $s(x,\varepsilon)$ and that in counter-clockwise direction by $t(x,\varepsilon)$. For example, in~\eqref{eq:objectinBord} this means that $a_1 = s(x_1,+) = s(x_1)$, $a_2 = t(x_3,-) = s(x_3)$, etc.

As usual, composition is given by gluing or composition with the permutation, the tensor product by disjoint union and the symmetric structure by the permutation morphisms. 
This completes the description of the bordism category $\mathrm{Bord}_{2,1}^\mathrm{def}(D_2,D_1)$ of bordisms with defects.\footnote{In \cite{dkr1107.0495} the bordism category includes 0-dimensional defects called junctions and labelled by a set $D_0$. We recover the present setting from \cite{dkr1107.0495} by choosing $D_0$ to be the empty set.}

\medskip

We can now state that a {\sl two-dimensional oriented topological field theory with defects} is a symmetric monoidal functor
\begin{align}
  \tau : \mathrm{Bord}_{2,1}^\mathrm{def}(D_2,D_1) \lra \operatorname{Vect} 
\end{align}
which depends on objects and morphisms only up to isotopy; for objects, the isotopy is restricted not to move marked points across the point $-1$ of each unit circle.

In a maybe more familiar variant of the bordism category one would not include permutations into the sets of morphisms. A technical subtlety in the present definition is that the identity morphism on an object $U$ is the identity permutation and {\sl not} the cylinder over $U$. As a consequence, $\tau$ maps such a cylinder to an idempotent on the state space $\tau(U)$ and not necessarily to the identity. We say that $\tau$ is {\sl nondegenerate} if this idempotent is the identity for all $U$. The distinction between degenerate and nondegenerate TFTs (rather than just excluding the former case) is useful in the description of orbifolds.

\begin{remark}\label{rem:trivialandproperdefectTFTs}
\begin{enumerate}
\item
A closed 2d TFT is a special case of a 2d TFT with defects in which $D_1 = \emptyset$ (no domain walls) and $D_2 = \{ \bullet \}$ has just a single element. Similarly,
an open/closed 2d TFT is a special case of a 2d TFT with defects where this time $D_1$ is the set of boundary conditions, and $D_2 = \{ \bullet, \circ \}$, where the phase $\circ$ stands for the trivial theory. The map $s$ maps all of $D_1$ to $\circ$ (say) and $t$ maps $D_1$ to $\bullet$. In Section~\ref{subsec:ocTFT} we will discuss the example of open/closed TFTs from Landau-Ginzburg models, 
see in particular Remark~\ref{rem:LGocTFT}. 
\item ``Proper'' examples of 2d TFTs with defects, i.\,e.~examples not of the form in part (i), can be obtained via a lattice construction, see  \cite[Sect.\,3]{dkr1107.0495}. There, the worldsheet phases are described by certain Frobenius algebras and the domain walls by bimodules. It is clear that the lattice construction does not cover all defect TFTs since it even fails to do so for closed or open/closed TFTs \cite{bp9205031, fhk9212154, lp0602047}. Conjecturally, Landau-Ginzburg models and matrix factorisations (see Section~\ref{sec:LGmodels}) give rise to a defect TFT; this defect TFT does in general not have a lattice description.
\end{enumerate}
\end{remark}

\subsection{Bicategory of worldsheet phases}\label{subsec:worldsheetbicategory}

A 2d TFT with defects $\tau$ gives rise to a strict bicategory (i.\,e.~a 2-category) with adjoints \cite[Sect.\,2.4]{dkr1107.0495}. Its objects and 1-morphisms are build from the sets $D_2$, $D_1$ and from the maps $s,t : D_1 \to D_2$, while the functor $\tau$ defines the 2-morphism spaces, compositions, and the adjunction maps. In detail, this bicategory $\mathcal{D}_\tau$ is defined as follows.

\medskip\noindent
{\sl Objects:} The objects of $\mathcal{D}_\tau$ are simply given by the set of worldsheet phases $D_2$.

\medskip\noindent
{\sl 1-morphisms:} 
Let $a,b \in \mathcal{D}_\tau$. The set of 1-morphisms from $a$ to $b$ consists of formal sums of composable sequences of defect conditions,
\begin{align}
  \mathcal{D}_\tau(a,b) &= \big\langle \, X =\big(  (x_1,\varepsilon_1),\dots,(x_n,\varepsilon_n)\big) \text{ composable} \,  \big|\,  \nonumber \\
   & \hspace{10em}n \geqslant 0 \,,\, s(x_n,\varepsilon_n) = a \,,\, t(x_1,\varepsilon_1) = b \, \big\rangle_\oplus \, ,
\end{align}
that is, elements of $\mathcal{D}_\tau(a,b)$ are finite formal sums $X_1 \oplus \cdots \oplus X_l$, where each $X_i$ is a list as above (of possibly varying length $l$, but with fixed source $a$ and target $b$).
Evaluating $\tau$ for such sums is defined by taking direct sums of state spaces for objects and by adding the values of $\tau$ for morphisms. The operations below (composition, adjoints, assigment of 2-morphism spaces, \dots) distribute similarly over direct sums; we will not make this explicit and treat only the case of a single summand.

The identity 1-morphism $I_a$ is the empty sequence ($n=0$). Horizontal composition is concatenation of sequences and will be written as~$\otimes$.

\medskip\noindent
{\sl 2-morphisms:} 
As above we write
$X \equiv (X,+)= \big( (x_1,\varepsilon_1),\dots,(x_n,\varepsilon_n)\big)$ for composable sequences. Define the {\sl adjoint} $X^\dagger$ of such a sequence as
\begin{align}
  X^\dagger  \equiv (X,-) = \big( (x_n,-\varepsilon_n),\dots,(x_1,-\varepsilon_1)\big) \, .
\end{align}
Let $Z$ be cyclically composable. By $O(Z)$ we mean the object of $\mathrm{Bord}_{2,1}^\mathrm{def}(D_2,D_1)$ which consists of a single $S^1$ with $n$ marked points labelled $(z_1,\varepsilon_1),\dots,(z_n,\varepsilon_n)$ starting in clockwise direction after $-1 \in S^1$.

Given $X, Y \in \mathcal{D}_\tau(a,b)$, the space of 2-morphisms $X \to Y$ is given by
\be \label{eq:2-morph-via-tau}
\tau(O(Y \otimes X^\dagger)) = 
\tau \left(
\begin{tikzpicture}[scale=0.65,color=blue!50!black, baseline]
\draw[very thick, color=black] (0,0) circle (2);
\fill (165:2) circle (3.0pt) node[left] {{\scriptsize$(y_1, \nu_1)$}};
\fill (140:2) circle (3.0pt) node[left] {{\scriptsize$(y_2, \nu_2)$}};
\fill (110:2) circle (3.0pt) node[left] {};
\fill (90:2) circle (0pt) node[above] {{\scriptsize$\dots$}};
\fill (70:2) circle (3.0pt) node[left] {};
\fill (40:2) circle (3.0pt) node[right] {{\scriptsize$(y_{m-1}, \nu_{m-1})$}};
\fill (15:2) circle (3.0pt) node[right] {{\scriptsize$(y_m, \nu_m)$}};
\fill (-150:2) circle (3.0pt) node[left] {{\scriptsize$(x_1, -\varepsilon_1)$}};
\fill (-130:2) circle (3.0pt) node[left] {{\scriptsize$(x_2, -\varepsilon_2)$}};
\fill (-105:2) circle (3.0pt) node[left] {};
\fill (-90:2) circle (0pt) node[below] {{\scriptsize$\dots$}};
\fill (-75:2) circle (3.0pt) node[left] {};
\fill (-50:2) circle (3.0pt) node[right] {{\scriptsize$(x_{n-1}, -\varepsilon_{n-1})$}};
\fill (-30:2) circle (3.0pt) node[right] {{\scriptsize$(x_n, -\varepsilon_n)$}};
\end{tikzpicture} 
\right) ,
\ee
i.\,e.~the state space for an $S^1$ labelled in clockwise direction starting after $-1 \in S^1$ by $(y_1,\nu_1),\dots,(y_m,\nu_m), (x_n,-\varepsilon_n),\dots,(x_1,-\varepsilon_1)$. The identity 2-morphism, and the horizontal and vertical composition of 2-morphisms are obtained by applying $\tau$ to the (expected) special bordisms given in \cite[Fig.\,6]{dkr1107.0495}. Using invariance of $\tau$ under isotopy it is straightforward to verify the properties of a strict bicategory.

\medskip\noindent
{\sl Adjunctions:} The four adjunction maps $\ev_X$, $\tev_X$, $\coev_X$, $\tcoev_X$ described in Section \ref{subsec:bicatadj} (where here $\dX  = X^\dagger$) are given by evaluating $\tau$ on the bordisms in \cite[Fig.\,7]{dkr1107.0495}. The Zorro moves hold by isotopy invariance of $\tau$. It is equally immediate that $\mathcal{D}_\tau$ is pivotal, in fact strictly pivotal as $X^{\dagger\dagger} = X$ and we choose $\delta_{X} = 1_{X}$. The identities in \eqref{eq:pivotal} again amount to isotopy invariance.

\medskip

The above construction is summarised in the following theorem.

\begin{theorem}
A 2d TFT with defects $\tau : \mathrm{Bord}_{2,1}^\mathrm{def}(D_2,D_1) \rightarrow \operatorname{Vect}$ gives rise to a strictly pivotal 2-category $\mathcal{D}_\tau$ with objects and morphism categories as above.
\end{theorem}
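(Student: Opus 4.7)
The proof is essentially a verification exercise: one must show that the data extracted from $\tau$ (objects, 1-morphisms, 2-morphisms, horizontal/vertical composition, unit 2-morphisms, and adjunction maps) satisfy the axioms of a strictly pivotal 2-category. The plan is to split this into four blocks, each of which reduces to a concrete property of~$\tau$: functoriality, symmetric monoidality, isotopy invariance, or a direct bordism identity.

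\textbf{Strict 2-category structure.} First I would check that $\mathcal{D}_\tau$ is a strict 2-category. Strict associativity and unitality on 1-morphisms hold by definition, since horizontal composition is just concatenation of lists and $I_a$ is the empty sequence. For 2-morphisms, the vertical and horizontal composition laws (gluing the bordisms depicted in \cite[Fig.\,6]{dkr1107.0495}) are associative and unital because (i)~$\tau$ is a functor, so vertical composition inherits associativity and unitality from composition of bordisms, and (ii)~horizontal composition is defined by a bordism which, upon gluing with another such bordism, is isotopic to the ``bigger'' horizontal composition bordism; since $\tau$ is invariant under isotopy (rel boundary), this gives strict associativity. The interchange law follows from isotoping the four-rectangle bordism representing $(\psi'\circ\psi)\otimes(\phi'\circ\phi)$ into $(\psi'\otimes\phi')\circ(\psi\otimes\phi)$.

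\textbf{Adjunctions and Zorro moves.} Next I would turn to the adjunction data. With $\dX = X^\dagger$ defined by list reversal and $\varepsilon$-flipping, the evaluation/coevaluation 2-morphisms are the images under $\tau$ of the explicit cap and cup bordisms of \cite[Fig.\,7]{dkr1107.0495}. The Zorro identities \eqref{eq:LeftZorroMoves} and \eqref{eq:otherZorro} then follow from the fact that the two composite bordisms appearing on each side are isotopic to the identity cylinder over $O(X)$ (resp.~$O(X^\dagger)$) through an isotopy that keeps the boundary parametrisation fixed; invariance of~$\tau$ under such isotopies then delivers the desired equalities of 2-morphisms.

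\textbf{Strict pivotality.} For pivotality one must check that $X^{\dagger\dagger} = X$ as 1-morphisms, which is immediate from the definition since reversing a reversed list with doubly flipped signs gives back the original list. One then takes $\delta_X = 1_X$ and verifies the two diagrams in \eqref{eq:pivotal}: the first says that the ``left'' and ``right'' mates of a 2-morphism $\phi : X \to Z$ agree, and the second says that the adjoint of a tensor product is (strictly) the reverse tensor of adjoints with the expected nested cup. Both identities again translate into statements that two bordisms into $O(Z^\dagger\otimes X^\dagger)$ (resp.\ $O((Y\otimes X)^\dagger\otimes Y^\dagger\otimes X^\dagger)$-type surfaces) are isotopic; strict pivotality is automatic precisely because we have not had to insert any nontrivial ``pivotal isomorphism'' to make $(-)^{\dagger\dagger}$ the identity.

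\textbf{Expected difficulty.} None of the individual checks is deep, but the combinatorial bookkeeping is the main source of friction: one must consistently track the signs $\varepsilon_i$ that encode defect orientations, the clockwise/counterclockwise convention for reading marked points off an $S^1$, and the choice of boundary parametrisations used to define vertical composition. In particular, the most error-prone step will be verifying the second diagram in \eqref{eq:pivotal}, which involves comparing two different orders of cups on a composite 1-morphism $Y\otimes X$; the point to be careful about is that after reversing and sign-flipping the concatenated list $YX$, the resulting list is \emph{literally} $X^\dagger \otimes Y^\dagger$ on the nose, so the bordism identity to be checked is an honest isotopy rather than a coherence-mediated equality. Once this bookkeeping is set up, every axiom verification reduces uniformly to isotopy invariance of~$\tau$.
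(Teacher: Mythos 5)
Your proposal matches the paper's argument: the paper treats the theorem as a summary of the preceding construction, with the strict bicategory axioms, the Zorro moves, and the two identities in \eqref{eq:pivotal} all reduced to isotopy invariance of $\tau$ applied to the explicit bordisms of \cite[Fig.\,6,\,7]{dkr1107.0495}, and strict pivotality following from $X^{\dagger\dagger}=X$ on the nose with $\delta_X=1_X$. Your decomposition into strict 2-category structure, adjunctions, and pivotality is the same route, just with the bookkeeping made more explicit.
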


\begin{remark}
An analogous theorem holds for non-topological two-dimensional field theories \cite[Sect.\,2.4]{dkr1107.0495}. In this case one has to restrict one's attention to topological defect types and the 2-morphism spaces are formed by families of translation and scale invariant states.
\end{remark}

Let us come back to the (patch of) worldsheet $\Sigma$ with phases and domain walls shown in~\eqref{eq:worldsheetwithdefects}. This worldsheet also involves points and defect junctions to be marked by fields. In the functorial formulation, a point marked by a field $\phi$ is described by cutting out a small disc around the point, resulting in an (incoming) boundary circle $O(X)$ for some sequence $X$. The field $\phi$ is an element of the state space $\tau(O(X))$ and is inserted in the corresponding tensor factor when evaluating $\tau$ on $\Sigma \setminus (\text{discs})$. In the orbifold construction we make plentiful use of such defect junctions labelled by elements of the corresponding state space.

\subsection{Orbifold TFTs}\label{sec:TFT-orbifold}

In the introduction we illustrated the procedure of blowing up bubbles filled with a phase~$a$ inside a worldsheet in phase~$b$. The result was a worldsheet filled with phase~$a$, together with a network of defect lines. Here we will mimic this procedure without a priori knowledge of the phase~$b$ and the domain wall separating~$b$ from~$a$. We will do so by describing a new closed 2d TFT $\tau^\mathrm{orb}_A : \mathrm{Bord}_{2,1} \rightarrow \operatorname{Vect}$ in terms of a tuple $(a,A,\mu,\Delta)$ where $a \in \mathcal{D}_\tau$, $A \in \mathcal{D}_\tau(a,a)$, and $\mu : A \otimes A \to A$ and $\Delta : A \to A \otimes A$, subject to certain conditions. This is the generalised orbifold construction of \cite{ffrs0909.5013}.

\medskip

By definition the two maps~$\mu$ and~$\Delta$ are elements in $\tau(O((A,+),(A,-),(A,-)))$ and $\tau(O((A,+),(A,+),(A,-)))$, respectively. They therefore label three-fold junctions of the defect~$A$ with two incoming and one outgoing line (for $\mu$) or one incoming and two outgoing lines (for $\Delta$): 
\be
\begin{tikzpicture}[very thick,scale=0.3,color=blue!50!black, baseline]
\clip (0,0) ellipse (6cm and 3cm);
\nicedashedcolourscheme (0,0) ellipse (6cm and 3cm);

\draw[
	decoration={markings, mark=at position 0.5 with {\arrow{>}}}, postaction={decorate},%
	color=green!50!black
	]
	 (-2.7,-2.7) -- (0,0);
\draw[
	decoration={markings, mark=at position 0.5 with {\arrow{>}}}, postaction={decorate},%
	color=green!50!black
	]
	 (2.7,-2.7) -- (0,0);
\draw[
	decoration={markings, mark=at position 0.5 with {\arrow{>}}}, postaction={decorate},%
	color=green!50!black
	]
	 (0,0) -- (0,3);

\fill[color=green!50!black] (0,0) circle (6pt) node[right] {{\scriptsize$\mu$}};

\fill[color=green!50!black] (0.7,1.25) circle (0pt) node {{\scriptsize$A$}};
\fill[color=green!50!black] (-2.3,-1.3) circle (0pt) node {{\scriptsize$A$}};
\fill[color=green!50!black] (2.3,-1.3) circle (0pt) node {{\scriptsize$A$}};

\draw (2.5,0.5) [black] node {{\scriptsize $a$}};
\draw (-2.5,0.5) [black] node {{\scriptsize $a$}};
\draw (0,-2.0) [black] node {{\scriptsize $a$}};

\end{tikzpicture}
\, , \quad
\begin{tikzpicture}[very thick,scale=0.3,color=blue!50!black, baseline]
\clip (0,0) ellipse (6cm and 3cm);
\nicedashedcolourscheme (0,0) ellipse (6cm and 3cm);

\draw[
	decoration={markings, mark=at position 0.5 with {\arrow{<}}}, postaction={decorate},%
	color=green!50!black
	]
	 (2.7,2.7) -- (0,0);
\draw[
	decoration={markings, mark=at position 0.5 with {\arrow{<}}}, postaction={decorate},%
	color=green!50!black
	]
	 (-2.7,2.7) -- (0,0);
\draw[
	decoration={markings, mark=at position 0.5 with {\arrow{<}}}, postaction={decorate},%
	color=green!50!black
	]
	 (0,0) -- (0,-3);

\fill[color=green!50!black] (0,0) circle (6pt) node[right] {{\scriptsize$\Delta$}};

\fill[color=green!50!black] (0.7,-1.25) circle (0pt) node {{\scriptsize$A$}};
\fill[color=green!50!black] (-2.3,1.3) circle (0pt) node {{\scriptsize$A$}};
\fill[color=green!50!black] (2.3,1.3) circle (0pt) node {{\scriptsize$A$}};

\draw (2.5,-0.5) [black] node {{\scriptsize $a$}};
\draw (-2.5,-0.5) [black] node {{\scriptsize $a$}};
\draw (0,2.0) [black] node {{\scriptsize $a$}};

\end{tikzpicture}
\, . 
\ee
We require~$\mu$ and~$\Delta$ to satisfy two sets of conditions of type ``bubble omission'' and ``crossing'': 
\be\label{eq:crossing-and-bubble}
\tau \Biggl(
\begin{tikzpicture}[very thick, scale=0.5,color=green!50!black, baseline]
\nicecolourscheme (0,0) circle (2);
\draw[very thick, color=blue!22] (0,0) circle (2);
\fill (90:2) circle (3.0pt) node[left] {};
\fill (-90:2) circle (3.0pt) node[left] {};
\draw (0,0.8) -- (90:2);
\draw[-dot-] (-1,0) .. controls +(0,1) and +(0,1) .. (1,0);
\draw[-dot-] (-1,0) .. controls +(0,-1) and +(0,-1) .. (1,0);
\draw (0,-0.8) -- (-90:2);
\end{tikzpicture}
\Biggl)
=
\tau \Biggl(
\begin{tikzpicture}[very thick, scale=0.5,color=green!50!black, baseline]
\nicecolourscheme (0,0) circle (2);
\draw[very thick, color=blue!22] (0,0) circle (2);
\fill (90:2) circle (3.0pt) node[left] {};
\fill (-90:2) circle (3.0pt) node[left] {};
\draw (90:2) -- (-90:2);
\end{tikzpicture}
\Biggl) ,
\quad 
\tau \Biggl(
\begin{tikzpicture}[very thick, scale=0.5,color=green!50!black, baseline]
\nicecolourscheme (0,0) circle (2);
\draw[very thick, color=blue!22] (0,0) circle (2);
\fill (70:2) circle (3.5pt) node[left] {};
\fill (110:2) circle (3.5pt) node[left] {};
\fill (-70:2) circle (3.5pt) node[left] {};
\fill (-110:2) circle (3.5pt) node[left] {};
\fill (0,1) circle (3.5pt) node[left] {};
\fill (0,-1) circle (3.5pt) node[left] {};
\draw (70:2) -- (0,1);
\draw (110:2) -- (0,1);
\draw (-70:2) -- (0,-1);
\draw (-110:2) -- (0,-1);
\draw (0,1) -- (0,-1);
\end{tikzpicture}
\Biggl)
=
\tau \Biggl( \!\!
\begin{tikzpicture}[very thick, scale=0.5,color=green!50!black, baseline]
\nicecolourscheme (0,0) circle (2);
\draw[very thick, color=blue!22] (0,0) circle (2);
\fill (150:2) circle (3.5pt) node[left] {};
\fill (-150:2) circle (3.5pt) node[left] {};
\fill (-30:2) circle (3.5pt) node[left] {};
\fill (30:2) circle (3.5pt) node[left] {};
\fill (-1,0) circle (3.5pt) node[left] {};
\fill (1,0) circle (3.5pt) node[left] {};
\draw (150:2) -- (-1,0);
\draw (-150:2) -- (-1,0);
\draw (-30:2) -- (1,0);
\draw (30:2) -- (1,0);
\draw (-1,0) -- (1,0);
\end{tikzpicture}
\Biggl) .
\ee
These identities are shorthand for the set of conditions obtained by putting arrows on the defect lines in all ways which allow the two junctions to be labelled by~$\mu$ or~$\Delta$. For example, this includes 
\begin{align}
& 
\tau \Biggl(
\begin{tikzpicture}[very thick, scale=0.5,color=green!50!black, baseline]
\nicecolourscheme (0,0) circle (2);
\draw[very thick, color=blue!22] (0,0) circle (2);
\fill (90:2) circle (3.0pt) node[left] {};
\fill (-90:2) circle (3.0pt) node[left] {};
\draw[
	decoration={markings, mark=at position 0.5 with {\arrow{>}}}, postaction={decorate},%
	color=green!50!black
	] 
	(0,0.8) -- (90:2);
\draw[-dot-] (-1,0) .. controls +(0,1) and +(0,1) .. (1,0);
\draw[-dot-] (-1,0) .. controls +(0,-1) and +(0,-1) .. (1,0);
\draw[
	decoration={markings, mark=at position 0.5 with {\arrow{>}}}, postaction={decorate},%
	color=green!50!black
	] 
	(-90:2) -- (0,-0.8);
\draw[
	decoration={markings, mark=at position 0.5 with {\arrow{>}}}, postaction={decorate},%
	color=green!50!black
	] 
	(-1,-0.01) -- (-1,0);
\draw[
	decoration={markings, mark=at position 0.5 with {\arrow{>}}}, postaction={decorate},%
	color=green!50!black
	] 
	(1,-0.01) -- (1,0);
\fill[color=green!50!black] (0,0.9) circle (0pt) node[below] {{\scriptsize$\mu$}};
\fill[color=green!50!black] (0,-0.9) circle (0pt) node[above] {{\scriptsize$\Delta$}};
\end{tikzpicture}
\Biggl)
= 
\tau \Biggl(
\begin{tikzpicture}[very thick, scale=0.5,color=green!50!black, baseline]
\nicecolourscheme (0,0) circle (2);
\draw[very thick, color=blue!22] (0,0) circle (2);
\fill (90:2) circle (3.0pt) node[left] {};
\fill (-90:2) circle (3.0pt) node[left] {};
\draw[
	decoration={markings, mark=at position 0.5 with {\arrow{>}}}, postaction={decorate},%
	color=green!50!black
	]
	(0,0.8) -- (90:2);
\draw[-dot-] (0,0.8) .. controls +(0,-0.5) and +(0,-0.5) .. (-0.75,0.8);
\draw[directedgreen, color=green!50!black] (-0.75,0.8) .. controls +(0,0.5) and +(0,0.5) .. (-1.5,0.8);
\draw[-dot-] (0,-0.8) .. controls +(0,0.5) and +(0,0.5) .. (-0.75,-0.8);
\draw[redirectedgreen, color=green!50!black] (-0.75,-0.8) .. controls +(0,-0.5) and +(0,-0.5) .. (-1.5,-0.8);
\draw[
	decoration={markings, mark=at position 0.5 with {\arrow{>}}}, postaction={decorate},%
	color=green!50!black
	] 
	(-90:2) -- (0,-0.8);
\draw (-1.5,0.8) -- (-1.5,-0.8);
\draw[
	decoration={markings, mark=at position 0.6 with {\arrow{>}}}, postaction={decorate},%
	color=green!50!black
	] 
	(-0.375,-0.4) -- (-0.375,0.4);
\fill[color=green!50!black] (-0.375,-0.35) circle (0pt) node[below] {{\scriptsize$\mu$}};
\fill[color=green!50!black] (-0.375,0.35) circle (0pt) node[above] {{\scriptsize$\Delta$}};
\end{tikzpicture}
\Biggl)
=
\tau \Biggl(
\begin{tikzpicture}[very thick, scale=0.5,color=green!50!black, baseline]
\nicecolourscheme (0,0) circle (2);
\draw[very thick, color=blue!22] (0,0) circle (2);
\fill (90:2) circle (3.0pt) node[left] {};
\fill (-90:2) circle (3.0pt) node[left] {};
\draw[
	decoration={markings, mark=at position 0.5 with {\arrow{>}}}, postaction={decorate},%
	color=green!50!black
	]
	(0,0.8) -- (90:2);
\draw[-dot-] (0,0.8) .. controls +(0,-0.5) and +(0,-0.5) .. (0.75,0.8);
\draw[directedgreen, color=green!50!black] (0.75,0.8) .. controls +(0,0.5) and +(0,0.5) .. (1.5,0.8);
\draw[-dot-] (0,-0.8) .. controls +(0,0.5) and +(0,0.5) .. (0.75,-0.8);
\draw[redirectedgreen, color=green!50!black] (0.75,-0.8) .. controls +(0,-0.5) and +(0,-0.5) .. (1.5,-0.8);
\draw[
	decoration={markings, mark=at position 0.5 with {\arrow{>}}}, postaction={decorate},%
	color=green!50!black
	]
	(-90:2) -- (0,-0.8);
\draw (1.5,0.8) -- (1.5,-0.8);
\draw[
	decoration={markings, mark=at position 0.6 with {\arrow{>}}}, postaction={decorate},%
	color=green!50!black
	]
	(0.375,-0.4) -- (0.375,0.4);
\fill[color=green!50!black] (0.375,-0.35) circle (0pt) node[below] {{\scriptsize$\mu$}};
\fill[color=green!50!black] (0.375,0.35) circle (0pt) node[above] {{\scriptsize$\Delta$}};
\end{tikzpicture}
\Biggl)
=
\tau \Biggl(
\begin{tikzpicture}[very thick, scale=0.5,color=green!50!black, baseline]
\nicecolourscheme (0,0) circle (2);
\draw[very thick, color=blue!22] (0,0) circle (2);
\fill (90:2) circle (3.0pt) node[left] {};
\fill (-90:2) circle (3.0pt) node[left] {};
\draw[
	decoration={markings, mark=at position 0.6 with {\arrow{>}}}, postaction={decorate},%
	color=green!50!black
	]
	(-90:2) -- (90:2);
\end{tikzpicture}
\Biggl) ,
 \label{eq:bubbleommision} \\
& 
\tau \Biggl(
\begin{tikzpicture}[very thick, scale=0.5,color=green!50!black, baseline]
\nicecolourscheme (0,0) circle (2);
\draw[very thick, color=blue!22] (0,0) circle (2);
\fill (70:2) circle (3.5pt) node[left] {};
\fill (110:2) circle (3.5pt) node[left] {};
\fill (-70:2) circle (3.5pt) node[left] {};
\fill (-110:2) circle (3.5pt) node[left] {};
\fill (0,1) circle (3.5pt) node[left] {{\scriptsize$\Delta$}};
\fill (0,-1) circle (3.5pt) node[left] {{\scriptsize$\mu$}};
\draw[
	decoration={markings, mark=at position 0.6 with {\arrow{>}}}, postaction={decorate},%
	color=green!50!black
	] 
	(0,1) -- (70:2);
\draw[
	decoration={markings, mark=at position 0.6 with {\arrow{>}}}, postaction={decorate},%
	color=green!50!black
	]
	(0,1) -- (110:2);
\draw[
	decoration={markings, mark=at position 0.6 with {\arrow{>}}}, postaction={decorate},%
	color=green!50!black
	]
	(-70:2) -- (0,-1);
\draw[
	decoration={markings, mark=at position 0.6 with {\arrow{>}}}, postaction={decorate},%
	color=green!50!black
	]
	(-110:2) -- (0,-1);
\draw[
	decoration={markings, mark=at position 0.6 with {\arrow{>}}}, postaction={decorate},%
	color=green!50!black
	]
	(0,-1) -- (0,1);
\end{tikzpicture}
\Biggl)
=
\tau \Biggl( \!\!
\begin{tikzpicture}[very thick, scale=0.5,color=green!50!black, baseline]
\nicecolourscheme (0,0) circle (2);
\draw[very thick, color=blue!22] (0,0) circle (2);
\fill (150:2) circle (3.5pt) node[left] {};
\fill (-150:2) circle (3.5pt) node[left] {};
\fill (-30:2) circle (3.5pt) node[left] {};
\fill (30:2) circle (3.5pt) node[left] {};
\fill (-1,0.2) circle (3.5pt) node[left] {};
\fill (1,-0.2) circle (3.5pt) node[left] {};
\fill (-0.85,0.6) circle (0pt) node {{\scriptsize$\mu$}};
\fill (0.9,-0.6) circle (0pt) node {{\scriptsize$\Delta$}};
\draw[
	decoration={markings, mark=at position 0.6 with {\arrow{>}}}, postaction={decorate},%
	color=green!50!black
	]
	(-1,0.2) -- (150:2);
\draw[
	decoration={markings, mark=at position 0.6 with {\arrow{>}}}, postaction={decorate},%
	color=green!50!black
	]
	(-150:2) -- (-1,0.2);
\draw[
	decoration={markings, mark=at position 0.6 with {\arrow{>}}}, postaction={decorate},%
	color=green!50!black
	]
	(-30:2) -- (1,-0.2);
\draw[
	decoration={markings, mark=at position 0.6 with {\arrow{>}}}, postaction={decorate},%
	color=green!50!black
	]
	(1,-0.2) -- (30:2);
\draw[
	decoration={markings, mark=at position 0.6 with {\arrow{>}}}, postaction={decorate},%
	color=green!50!black
	]
	(1,-0.2) -- (-1,0.2);
\end{tikzpicture}
\Biggl)
=
\tau \Biggl( \!\!
\begin{tikzpicture}[very thick, scale=0.5,color=green!50!black, baseline]
\nicecolourscheme (0,0) circle (2);
\draw[very thick, color=blue!22] (0,0) circle (2);
\fill (150:2) circle (3.5pt) node[left] {};
\fill (-150:2) circle (3.5pt) node[left] {};
\fill (-30:2) circle (3.5pt) node[left] {};
\fill (30:2) circle (3.5pt) node[left] {};
\fill (-1,-0.2) circle (3.5pt) node[left] {};
\fill (1,0.2) circle (3.5pt) node[left] {};
\fill (-0.95,-0.6) circle (0pt) node {{\scriptsize$\Delta$}};
\fill (0.9,0.55) circle (0pt) node {{\scriptsize$\mu$}};
\draw[
	decoration={markings, mark=at position 0.6 with {\arrow{>}}}, postaction={decorate},%
	color=green!50!black
	]
	(-1,-0.2) -- (150:2);
\draw[
	decoration={markings, mark=at position 0.6 with {\arrow{>}}}, postaction={decorate},%
	color=green!50!black
	]
	(-150:2) -- (-1,-0.2);
\draw[
	decoration={markings, mark=at position 0.6 with {\arrow{>}}}, postaction={decorate},%
	color=green!50!black
	]
	(-30:2) -- (1,0.2);
\draw[
	decoration={markings, mark=at position 0.6 with {\arrow{>}}}, postaction={decorate},%
	color=green!50!black
	]
	(1,0.2) -- (30:2);
\draw[
	decoration={markings, mark=at position 0.6 with {\arrow{>}}}, postaction={decorate},%
	color=green!50!black
	]
	(-1,-0.2) -- (1,0.2);
\end{tikzpicture}
\Biggl) 
\, . 
 \label{eq:crossing}
\end{align}

Define the two morphisms $\eta : I_a \to A$ and $\varepsilon : A \to I_a$ as
\begin{align}
  \eta & = 
  \tau \Biggl(
\begin{tikzpicture}[very thick, scale=0.5,color=green!50!black, baseline]
\nicecolourscheme (0,0) circle (2);
\draw[very thick, color=blue!22] (0,0) circle (2);
\fill (90:2) circle (3.0pt) node[left] {};
\draw (0,0.8) -- (90:2);
\draw[-dot-] (0,0.8) .. controls +(0,-0.5) and +(0,-0.5) .. (-0.75,0.8);
\draw[directedgreen, color=green!50!black] (-0.75,0.8) .. controls +(0,0.5) and +(0,0.5) .. (-1.5,0.8);
\draw[redirectedgreen, color=green!50!black] (-0.375,-0.4) .. controls +(0,-0.75) and +(0,-0.75) .. (-1.5,-0.4);
\draw (-1.5,-0.4) -- (-1.5,0.8);
\draw (-0.375,-0.4) -- (-0.375,0.4);
\end{tikzpicture}
\Biggl)
=
\tau \Biggl(
\begin{tikzpicture}[very thick, scale=0.5,color=green!50!black, baseline]
\nicecolourscheme (0,0) circle (2);
\draw[very thick, color=blue!22] (0,0) circle (2);
\fill (90:2) circle (3.0pt) node[left] {};
\draw (0,0.8) -- (90:2);
\draw[-dot-] (0,0.8) .. controls +(0,-0.5) and +(0,-0.5) .. (0.75,0.8);
\draw[directedgreen, color=green!50!black] (0.75,0.8) .. controls +(0,0.5) and +(0,0.5) .. (1.5,0.8);
\draw[redirectedgreen, color=green!50!black] (0.375,-0.4) .. controls +(0,-0.75) and +(0,-0.75) .. (1.5,-0.4);
\draw (1.5,-0.4) -- (1.5,0.8);
\draw (0.375,-0.4) -- (0.375,0.4);
\end{tikzpicture}
\Biggl)
\label{eq:eta-from-mu-Delta}
   \, , \\
  \varepsilon & = 
  \tau \Biggl(
\begin{tikzpicture}[very thick, scale=0.5,color=green!50!black, baseline, rotate=180]
\nicecolourscheme (0,0) circle (2);
\draw[very thick, color=blue!22] (0,0) circle (2);
\fill (90:2) circle (3.0pt) node[left] {};
\draw (0,0.8) -- (90:2);
\draw[-dot-] (0,0.8) .. controls +(0,-0.5) and +(0,-0.5) .. (0.75,0.8);
\draw[redirectedgreen, color=green!50!black] (0.75,0.8) .. controls +(0,0.5) and +(0,0.5) .. (1.5,0.8);
\draw[directedgreen, color=green!50!black] (0.375,-0.4) .. controls +(0,-0.75) and +(0,-0.75) .. (1.5,-0.4);
\draw (1.5,-0.4) -- (1.5,0.8);
\draw (0.375,-0.4) -- (0.375,0.4);
\end{tikzpicture}
\Biggl)
=
\tau \Biggl(
\begin{tikzpicture}[very thick, scale=0.5,color=green!50!black, baseline, rotate=180]
\nicecolourscheme (0,0) circle (2);
\draw[very thick, color=blue!22] (0,0) circle (2);
\fill (90:2) circle (3.0pt) node[left] {};
\draw (0,0.8) -- (90:2);
\draw[-dot-] (0,0.8) .. controls +(0,-0.5) and +(0,-0.5) .. (-0.75,0.8);
\draw[redirectedgreen, color=green!50!black] (-0.75,0.8) .. controls +(0,0.5) and +(0,0.5) .. (-1.5,0.8);
\draw[directedgreen, color=green!50!black] (-0.375,-0.4) .. controls +(0,-0.75) and +(0,-0.75) .. (-1.5,-0.4);
\draw (-1.5,-0.4) -- (-1.5,0.8);
\draw (-0.375,-0.4) -- (-0.375,0.4);
\end{tikzpicture}
\Biggl)
   \, . 
  \label{eq:eps-from-mu-Delta}
\end{align}
That $\tau$ gives the same answer for both defining bordisms is due to the bubble omission property. In more detail, in the case of~$\eta$ one first verifies that each choice is a one-sided unit for $\mu$; e.\,g.~for the first bordism given for~$\eta$ one computes
\be
  \tau \Biggl(
\begin{tikzpicture}[very thick, scale=0.5,color=green!50!black, baseline]
\nicecolourscheme (0,0) circle (2);
\draw[very thick, color=blue!22] (0,0) circle (2);
\fill (60:2) circle (3.0pt) node[left] {};
\fill (-60:2) circle (3.0pt) node[left] {};
\draw(60:2) -- (-60:2);
\fill (1.0,1.2) circle (3.0pt) node[left] {};
\draw (0,0.8) .. controls +(0,0.3) and +(0,-0.1) .. (1.0,1.2);
\draw[-dot-] (0,0.8) .. controls +(0,-0.5) and +(0,-0.5) .. (-0.75,0.8);
\draw[directedgreen, color=green!50!black] (-0.75,0.8) .. controls +(0,0.5) and +(0,0.5) .. (-1.5,0.8);
\draw[redirectedgreen, color=green!50!black] (-0.375,-0.4) .. controls +(0,-0.75) and +(0,-0.75) .. (-1.5,-0.4);
\draw (-1.5,-0.4) -- (-1.5,0.8);
\draw (-0.375,-0.4) -- (-0.375,0.4);
\end{tikzpicture}
\Biggl)
\stackrel{\eqref{eq:crossing}}{=}
\tau \Biggl(
\begin{tikzpicture}[very thick, scale=0.5,color=green!50!black, baseline]
\nicecolourscheme (0,0) circle (2);
\draw[very thick, color=blue!22] (0,0) circle (2);
\fill (90:2) circle (3.0pt) node[left] {};
\fill (-90:2) circle (3.0pt) node[left] {};
\draw (0,0.8) -- (90:2);
\draw[-dot-] (0,0.8) .. controls +(0,-0.5) and +(0,-0.5) .. (-0.75,0.8);
\draw[directedgreen, color=green!50!black] (-0.75,0.8) .. controls +(0,0.5) and +(0,0.5) .. (-1.5,0.8);
\draw[-dot-] (0,-0.8) .. controls +(0,0.5) and +(0,0.5) .. (-0.75,-0.8);
\draw[redirectedgreen, color=green!50!black] (-0.75,-0.8) .. controls +(0,-0.5) and +(0,-0.5) .. (-1.5,-0.8);
\draw (0,-0.8) -- (-90:2);
\draw (-1.5,0.8) -- (-1.5,-0.8);
\draw (-0.375,0.4) -- (-0.375,-0.4);
\end{tikzpicture}
\Biggl)
\stackrel{\eqref{eq:bubbleommision}}{=}
\tau \Biggl(
\begin{tikzpicture}[very thick, scale=0.5,color=green!50!black, baseline]
\nicecolourscheme (0,0) circle (2);
\draw[very thick, color=blue!22] (0,0) circle (2);
\fill (90:2) circle (3.0pt) node[left] {};
\fill (-90:2) circle (3.0pt) node[left] {};
\draw (90:2) -- (-90:2);
\end{tikzpicture}
\Biggl) .
\ee
Analogously, the second bordism for $\eta$ is verified to be a right unit. The two one-sided units then have to be identical (and in particular $\eta$ is a two-sided unit) since
\be
\tau \Biggl(
\begin{tikzpicture}[very thick, scale=0.5,color=green!50!black, baseline]
\nicecolourscheme (0,0) circle (2);
\draw[very thick, color=blue!22] (0,0) circle (2);
\fill (90:2) circle (3.0pt) node[left] {};
\draw (0,0.8) -- (90:2);
\draw[-dot-] (0,0.8) .. controls +(0,-0.5) and +(0,-0.5) .. (-0.75,0.8);
\draw[directedgreen, color=green!50!black] (-0.75,0.8) .. controls +(0,0.5) and +(0,0.5) .. (-1.5,0.8);
\draw[redirectedgreen, color=green!50!black] (-0.375,-0.4) .. controls +(0,-0.75) and +(0,-0.75) .. (-1.5,-0.4);
\draw (-1.5,-0.4) -- (-1.5,0.8);
\draw (-0.375,-0.4) -- (-0.375,0.4);
\end{tikzpicture}
\Biggl)
=
\tau \Biggl(
\begin{tikzpicture}[very thick, scale=0.5,color=green!50!black, baseline]
\nicecolourscheme (0,0) circle (2);
\draw[very thick, color=blue!22] (0,0) circle (2);
\fill (90:2) circle (3.0pt) node[left] {};
\draw (0,0.8) -- (90:2);
%
\draw[-dot-] (-0.25,0.5) .. controls +(0,0.5) and +(0,0.5) .. (0.25,0.5);
\draw[-dot-] (-0.25,0.5) .. controls +(0,-0.5) and +(0,-0.5) .. (-1,0.5);
\draw[directedgreen, color=green!50!black] (-1,0.5) .. controls +(0,0.5) and +(0,0.5) .. (-1.75,0.5);
\draw[redirectedgreen, color=green!50!black] (-0.625,-0.4) .. controls +(0,-0.75) and +(0,-0.75) .. (-1.75,-0.4);
\draw (-1.75,-0.4) -- (-1.75,0.5);
\draw (-0.625,-0.4) -- (-0.625,0.1);
%
\draw[-dot-] (0.25,0.5) .. controls +(0,-0.5) and +(0,-0.5) .. (1,0.5);
\draw[directedgreen, color=green!50!black] (1,0.5) .. controls +(0,0.5) and +(0,0.5) .. (1.75,0.5);
\draw[redirectedgreen, color=green!50!black] (0.625,-0.4) .. controls +(0,-0.75) and +(0,-0.75) .. (1.75,-0.4);
\draw (1.75,-0.4) -- (1.75,0.5);
\draw (0.625,-0.4) -- (0.625,0.1);
\end{tikzpicture}
\Biggl)
=
\tau \Biggl(
\begin{tikzpicture}[very thick, scale=0.5,color=green!50!black, baseline]
\nicecolourscheme (0,0) circle (2);
\draw[very thick, color=blue!22] (0,0) circle (2);
\fill (90:2) circle (3.0pt) node[left] {};
\draw (0,0.8) -- (90:2);
\draw[-dot-] (0,0.8) .. controls +(0,-0.5) and +(0,-0.5) .. (0.75,0.8);
\draw[directedgreen, color=green!50!black] (0.75,0.8) .. controls +(0,0.5) and +(0,0.5) .. (1.5,0.8);
\draw[redirectedgreen, color=green!50!black] (0.375,-0.4) .. controls +(0,-0.75) and +(0,-0.75) .. (1.5,-0.4);
\draw (1.5,-0.4) -- (1.5,0.8);
\draw (0.375,-0.4) -- (0.375,0.4);
\end{tikzpicture}
\Biggl)
\, .
\ee
Along the same lines one checks that the two bordisms defining~$\varepsilon$ give the same 2-morphism and that $\varepsilon$ is a two-sided counit for $\Delta$. The precise relation between the properties in \eqref{eq:crossing-and-bubble} and Frobenius algebras is stated in the next proposition.

\begin{proposition}\label{prop:orb-defect}
Let $a \in \mathcal{D}_\tau$, $A \in \mathcal{D}_\tau(a,a)$ and $\mu : A \otimes A \to A$, $\Delta : A \to A \otimes A$ be given. The following are equivalent:
\begin{enumerate}
\item $A$, $\mu$, $\Delta$ together with $\eta,\varepsilon$ as in \eqref{eq:eta-from-mu-Delta}, \eqref{eq:eps-from-mu-Delta} form a separable symmetric Frobenius algebra (see Definition~\ref{def:algebra-properties}).
\item $A$, $\mu$, $\Delta$ satisfy the conditions in \eqref{eq:crossing-and-bubble}.
\end{enumerate}
\end{proposition}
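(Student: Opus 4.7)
The plan is to establish the equivalence by interpreting each diagrammatic condition in \eqref{eq:crossing-and-bubble} as a specific Frobenius-algebra axiom, after enumerating all admissible orientations of the defect arrows (those consistent with labelling each trivalent vertex by either $\mu$ or $\Delta$). Both (i) and (ii) are finite conjunctions of equalities of 2-morphisms, so the task reduces to matching them item by item, using the strict pivotal structure of $\mathcal{D}_\tau$ described in Section~\ref{subsec:worldsheetbicategory} to move between different orientations.

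For the implication (i)$\Rightarrow$(ii), I would read off the bubble omission \eqref{eq:bubbleommision} in the orientation with all arrows pointing upward as precisely the separability condition \eqref{eq:special}; the remaining orientations of the bubble then follow from \eqref{eq:special} after rewriting the top and bottom vertices via the two Frobenius identities \eqref{eq:Frob}, together with pivotality to flip the arrows on the inner loop. For the crossing identity \eqref{eq:crossing}, the case in which all arrows point upward (both vertices labelled $\mu$) reads as associativity of $\mu$, the mirror case (both vertices $\Delta$, all arrows downward) is coassociativity of $\Delta$, and the mixed configurations are the two halves of \eqref{eq:Frob}. Symmetry \eqref{eq:Asymmetric} is used exactly once, namely in identifying two mixed orientations of the bubble that are related by a cyclic rotation of the boundary circle in \eqref{eq:2-morph-via-tau}; it makes both sides of the rotated equation coincide via the expressions \eqref{eq:eta-from-mu-Delta}--\eqref{eq:eps-from-mu-Delta} for the adjunction data in terms of $\mu,\Delta$.

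For the converse (ii)$\Rightarrow$(i), the text preceding the proposition has already verified that the two bordisms defining $\eta$ agree, that $\eta$ is a two-sided unit for $\mu$, and (by analogous computations) the corresponding statements for $\varepsilon$ and $\Delta$. What remains is to extract associativity, coassociativity, the Frobenius property \eqref{eq:Frob}, separability \eqref{eq:special} and symmetry \eqref{eq:Asymmetric} from \eqref{eq:crossing-and-bubble}. Associativity and coassociativity follow from the crossing identity in its uniformly upward-oriented and uniformly downward-oriented versions respectively; the two Frobenius relations follow from the two mixed orientations; separability follows from the bubble omission in the orientation that pairs $\mu$ below with $\Delta$ above. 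Symmetry is obtained by applying bubble omission to insert a pair $\mu\circ\Delta=\id_A$ into each side of \eqref{eq:Asymmetric} and then rearranging by a mixed-orientation crossing identity until the two sides match after using pivotality of $\mathcal{D}_\tau$.

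The main obstacle is the careful bookkeeping of orientations. Each condition in \eqref{eq:crossing-and-bubble} is really a family of equations indexed by admissible arrow assignments, and one must read off the corresponding 2-morphism from \eqref{eq:2-morph-via-tau} together with the adjunction maps of $\mathcal{D}_\tau$, and then confirm that the resulting family precisely matches the string-diagram presentation of the axioms of a separable symmetric Frobenius algebra in Definition~\ref{def:algebra-properties}. The subtlest instance is the symmetry condition \eqref{eq:Asymmetric}: unlike the other axioms it does not correspond to a single choice of orientation in \eqref{eq:crossing-and-bubble}, but rather to a compatibility between two orientations coming from a cyclic rotation of the marked boundary circle, and hence it is most naturally derived last, after separability and the Frobenius property are already available.
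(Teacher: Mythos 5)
Your overall strategy --- enumerating the admissible orientations of the conditions in \eqref{eq:crossing-and-bubble} and matching each one to an axiom of Definition~\ref{def:algebra-properties} --- is the same as the paper's, and your dictionary for associativity, coassociativity, the Frobenius property and separability is correct. The gap lies in how you handle the symmetry condition \eqref{eq:Asymmetric}, in both directions. In (ii)$\Rightarrow$(i) you propose to obtain symmetry by inserting $\mu\circ\Delta=\id_A$ (i.e.\ the all-arrows-up bubble omission, which is just separability) and then rearranging with mixed-orientation crossing identities and pivotality. This cannot work: a $\Delta$-separable Frobenius algebra in a pivotal bicategory need not be symmetric --- the paper's own $A_G$ with $\dimr({}_g I)\neq 1$ (Proposition~\ref{prop:AGisgoodalgebra} and the remark following Theorem~\ref{thm:WAGocTFT}) is separable Frobenius but not symmetric --- so symmetry is simply not a consequence of separability, the Frobenius property and pivotality. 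The ingredient you are missing is the bubble omission condition in the orientations where the inner loop carries a consistent clockwise (resp.\ anticlockwise) orientation; equivalently, the fact that the two bordisms in \eqref{eq:eps-from-mu-Delta} define the \emph{same} counit $\varepsilon$. The paper derives \eqref{eq:Asymmetric} precisely by trading one of these two expressions for $\varepsilon$ for the other, using associativity \eqref{eq:algcond} and isotopy invariance in between.

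Correspondingly, in (i)$\Rightarrow$(ii) your first claim --- that the remaining orientations of the bubble follow from \eqref{eq:special} after rewriting with \eqref{eq:Frob} and pivotality --- fails for the same reason: it is exactly the clockwise/anticlockwise-loop orientations that require \eqref{eq:Asymmetric} as an input (the paper's computation for these uses symmetry together with separability, the Frobenius property and a Zorro move \eqref{eq:otherZorro}). Your later sentence does assign symmetry a role in the bubble conditions, but mislocates it: symmetry is not used to identify two orientations with each other under a cyclic rotation of the boundary circle, it is used to show that each mixed-loop orientation separately reduces to the identity. Once you repair both directions by routing the symmetry axiom through the mixed-orientation bubble omission --- equivalently through the well-definedness of $\varepsilon$ in \eqref{eq:eps-from-mu-Delta} --- the rest of your matching goes through as in the paper.
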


\begin{proof}
(i)$\Rightarrow$(ii): The crossing conditions~\eqref{eq:crossing} are satisfied by (co)associativity and by the Frobenius property. The bubble omission with all arrows oriented to the top of the disc diagram amounts to separability. Bubble omission with arrows in the loop oriented clockwise follows from symmetry and separability:
\be
\begin{tikzpicture}[very thick, scale=0.5,color=green!50!black, baseline]
\draw (0,0.8) -- (90:2);
\draw[-dot-] (0,0.8) .. controls +(0,-0.5) and +(0,-0.5) .. (0.75,0.8);
\draw[directedgreen, color=green!50!black] (0.75,0.8) .. controls +(0,0.5) and +(0,0.5) .. (1.5,0.8);
\draw[-dot-] (0,-0.8) .. controls +(0,0.5) and +(0,0.5) .. (0.75,-0.8);
\draw[redirectedgreen, color=green!50!black] (0.75,-0.8) .. controls +(0,-0.5) and +(0,-0.5) .. (1.5,-0.8);
\draw (0,-0.8) -- (-90:2);
\draw (1.5,0.8) -- (1.5,-0.8);
\draw (0.375,0.4) -- (0.375,-0.4);
\end{tikzpicture}
\stackrel{\eqref{eq:Frob}}{=}
\begin{tikzpicture}[very thick, scale=0.5,color=green!50!black, baseline]
\draw (0,0.8) -- (90:2);
\draw[-dot-] (0,0.8) .. controls +(0,-0.5) and +(0,-0.5) .. (1,0.8);
\draw[directedgreen, color=green!50!black] (1,0.8) .. controls +(0,0.5) and +(0,0.5) .. (2,0.8);
\draw[-dot-] (0.5,-0.8) .. controls +(0,-0.5) and +(0,-0.5) .. (1,-0.8);
\draw[-dot-] (1.5,-0.8) .. controls +(0,0.5) and +(0,0.5) .. (1,-0.8);
\draw[,color=green!50!black] (1.5,-0.8) .. controls +(0,-0.5) and +(0,-0.5) .. (2,-0.8);
\draw[very thick, <-] (1.679,-1.17) -- (1.681,-1.17) node[above] {}; 
\draw (1.25,0.1) node[Odot] (D) {}; 
\draw (D) -- (1.25,-0.3); 
\draw (2,-0.8) -- (2,0.8);
\draw (0.5,-0.8) -- (0.5,0.5);
\draw (0.75,-1.1) -- (0.75,-2);
\end{tikzpicture}
\stackrel{\eqref{eq:Asymmetric}}{=}
\begin{tikzpicture}[very thick, scale=0.5,color=green!50!black, baseline]
\draw (0,0.8) -- (90:2);
\draw[-dot-] (0,0.8) .. controls +(0,-0.5) and +(0,-0.5) .. (0.75,0.8);
\draw[directedgreen, color=green!50!black] (0.75,0.8) .. controls +(0,0.5) and +(0,0.5) .. (1.5,0.8);
\draw[directedgreen, color=green!50!black] (1.5,0.8) .. controls +(0,-0.5) and +(0,-0.5) .. (2.25,0.8);
\draw[-dot-] (2.25,0.8) .. controls +(0,0.5) and +(0,0.5) .. (3,0.8);
\draw (2.625,1.7) node[Odot] (D) {}; 
\draw (D) -- (2.625,1.2); 
\draw (0.375,0.4) -- (0.375,-0.4);
\draw (3,0.8) -- (3,-0.4);
\draw[-dot-] (3,-0.4) .. controls +(0,-1.25) and +(0,-1.25) .. (0.375,-0.4);
\draw (1.6875,-1.3) -- (1.6875,-2);
\end{tikzpicture}
\stackrel{\eqref{eq:otherZorro}}{=}
\begin{tikzpicture}[very thick, scale=0.5,color=green!50!black, baseline]
\draw (0,0.8) -- (90:2);
\draw[-dot-] (0,0.8) .. controls +(0,-0.5) and +(0,-0.5) .. (0.75,0.8);
\draw[-dot-, color=green!50!black] (0.75,0.8) .. controls +(0,0.5) and +(0,0.5) .. (1.5,0.8);
\draw (1.125,1.7) node[Odot] (D) {}; 
\draw (D) -- (1.125,1.2); 
\draw (0.375,0.4) -- (0.375,-0.7);
\draw (1.5,0.8) -- (1.5,-0.7);
\draw[-dot-] (1.5,-0.7) .. controls +(0,-0.75) and +(0,-0.75) .. (0.375,-0.7);
\draw (0.9375,-1.2) -- (0.9375,-2);
\end{tikzpicture}
\stackrel{\eqref{eq:Frob}}{=}
\begin{tikzpicture}[very thick, scale=0.5,color=green!50!black, baseline]
\draw (0,1.1) -- (90:2);
\draw[-dot-] (-0.375,0.8) .. controls +(0,0.5) and +(0,0.5) .. (0.375,0.8);
\draw[-dot-] (1.125,0.8) .. controls +(0,-0.5) and +(0,-0.5) .. (0.375,0.8);
\draw (1.125,1.2) node[Odot] (D) {}; 
\draw (D) -- (1.125,0.8); 
\draw (-0.375,0.8) -- (-0.375,-0.7);
\draw (0.75,0.5) -- (0.75,-0.7);
\draw[-dot-] (0.75,-0.7) .. controls +(0,-0.75) and +(0,-0.75) .. (-0.375,-0.7);
\draw (0.1875,-1.2) -- (0.1875,-2);
\end{tikzpicture}
\stackrel{\eqref{eq:leftcomp}}{=}
\begin{tikzpicture}[very thick, scale=0.5,color=green!50!black, baseline]
\draw (0,0.8) -- (90:2);
\draw[-dot-] (-1,0) .. controls +(0,1) and +(0,1) .. (1,0);
\draw[-dot-] (-1,0) .. controls +(0,-1) and +(0,-1) .. (1,0);
\draw (0,-0.8) -- (-90:2);
\end{tikzpicture}
\stackrel{\eqref{eq:special}}{=}
\begin{tikzpicture}[very thick, scale=0.5,color=green!50!black, baseline]
\draw (90:2) -- (-90:2);
\end{tikzpicture}
\ee
where all equalities hold after application of~$\tau$. 
The argument for anti-clockwise oriented arrows is analogous.

(ii)$\Rightarrow$(i): The (co)unit property of $\eta$ and $\varepsilon$ was checked above. (Co)associativity, the Frobenius property and separability are immediate from crossing and bubble omission. For symmetry one computes
\be
\begin{tikzpicture}[very thick, scale=0.5,color=green!50!black, baseline]
\draw[-dot-] (0,-0.5) .. controls +(0,0.75) and +(0,0.75) .. (1,-0.5);
\draw[redirectedgreen] (1,-0.5) .. controls +(0,-0.75) and +(0,-0.75) .. (2,-0.5);
\draw (0.5,0.6) node[Odot] (D) {}; 
\draw (D) -- (0.5,0.1); 
\draw (0,-0.5) -- (0,-2);
\draw (2,-0.5) -- (2,2);
\end{tikzpicture}
\stackrel{\eqref{eq:eps-from-mu-Delta}}{=}
\begin{tikzpicture}[very thick, scale=0.5,color=green!50!black, baseline]
\draw[-dot-] (0,-0.5) .. controls +(0,0.75) and +(0,0.75) .. (1,-0.5);
\draw (0.5,0.6) -- (0.5,0.1); 
\draw[-dot-] (0.5,0.6) .. controls +(0,0.75) and +(0,0.75) .. (1.5,0.6);
\draw[redirectedgreen] (1.5,0.6) .. controls +(0,-0.75) and +(0,-0.75) .. (2.5,0.6);
\draw (1,1.3) -- (1,1.4);
\draw (2.5,0.6) -- (2.5,1.4);
\draw[directedgreen] (1,1.4) .. controls +(0,0.5) and +(0,0.5) .. (2.5,1.4);
\draw[redirectedgreen] (1,-0.5) .. controls +(0,-0.75) and +(0,-0.75) .. (3,-0.5);
\draw (0,-0.5) -- (0,-2);
\draw (3,-0.5) -- (3,2);
\end{tikzpicture}
\stackrel{\eqref{eq:algcond}}{=}
\begin{tikzpicture}[very thick, scale=0.5,color=green!50!black, baseline]
\draw[-dot-] (0.5,0.6) .. controls +(0,0.75) and +(0,0.75) .. (1.5,0.6);
\draw[-dot-] (1,-0.5) .. controls +(0,0.75) and +(0,0.75) .. (2,-0.5);
\draw (1.5,0.6) -- (1.5,0.1);
\draw (1.0,1.1) -- (1.0,1.4);
\draw (3,-0.5) -- (3.0,1.4);
\draw (0.5,0.6) -- (0.5,-2);
\draw (1,-0.5) -- (1,-1.0);
\draw[redirectedgreen] (2,-0.5) .. controls +(0,-0.75) and +(0,-0.75) .. (3,-0.5);
\draw[directedgreen] (1.0,1.4) .. controls +(0,0.5) and +(0,0.5) .. (3.0,1.4);
\draw[redirectedgreen] (1,-1.0) .. controls +(0,-0.75) and +(0,-0.75) .. (3.5,-1.0);
\draw (3.5,-1.0) -- (3.5,2);
\end{tikzpicture}
=
\begin{tikzpicture}[very thick, scale=0.5,color=green!50!black, baseline]
\draw[-dot-] (0.5,0.6) .. controls +(0,0.75) and +(0,0.75) .. (1.5,0.6);
\draw[-dot-] (1,-0.5) .. controls +(0,0.75) and +(0,0.75) .. (2,-0.5);
\draw (1.5,0.6) -- (1.5,0.1);
\draw (1.0,1.1) -- (1.0,1.4);
\draw (3,-0.5) -- (3.0,1.4);
\draw (1,-0.5) -- (1,-1.0);
\draw[redirectedgreen] (2,-0.5) .. controls +(0,-0.75) and +(0,-0.75) .. (3,-0.5);
\draw[directedgreen] (1.0,1.4) .. controls +(0,0.5) and +(0,0.5) .. (3.0,1.4);
\draw[redirectedgreen] (0.5,0.6) .. controls +(0,-0.75) and +(0,-0.75) .. (-0.5,0.6);
\draw (1,-1.0) -- (1,-2.0);
\draw (-0.5,0.6) -- (-0.5,2);
\end{tikzpicture}
\stackrel{\eqref{eq:algcond}}{=}
\begin{tikzpicture}[very thick, scale=0.5,color=green!50!black, baseline]
\draw[-dot-] (0.5,0.6) .. controls +(0,0.75) and +(0,0.75) .. (1.5,0.6);
\draw[-dot-] (1,-0.5) .. controls +(0,0.75) and +(0,0.75) .. (0,-0.5);
\draw (0.5,0.6) -- (0.5,0.1);
\draw (1,1.1) -- (1,1.4);
\draw (2.5,1.4) -- (2.5,0.6);
\draw[redirectedgreen] (1.5,0.6) .. controls +(0,-0.75) and +(0,-0.75) .. (2.5,0.6);
\draw[directedgreen] (1,1.4) .. controls +(0,0.5) and +(0,0.5) .. (2.5,1.4);
\draw[redirectedgreen] (0,-0.5) .. controls +(0,-0.75) and +(0,-0.75) .. (-1,-0.5);

\draw (-1,2) -- (-1,-0.5);
\draw (1,-0.5) -- (1,-2);
\end{tikzpicture}
\stackrel{\eqref{eq:eps-from-mu-Delta}}{=}
\begin{tikzpicture}[very thick, scale=0.5,color=green!50!black, baseline]
\draw[-dot-] (1,-0.5) .. controls +(0,0.75) and +(0,0.75) .. (0,-0.5);
\draw[redirectedgreen] (0,-0.5) .. controls +(0,-0.75) and +(0,-0.75) .. (-1,-0.5);
\draw (0.5,0.6) node[Odot] (D) {}; 
\draw (D) -- (0.5,0.1); 
\draw (-1,2) -- (-1,-0.5);
\draw (1,-0.5) -- (1,-2);
\end{tikzpicture}
\ee
where again application of~$\tau$ is implicit, and in the unmarked step we used isotopy invariance. 
\end{proof}

We will refer to a tuple $(a,A,\mu,\Delta)$ satisfying either condition in Proposition~\ref{prop:orb-defect} as an \textsl{orbifolding defect}, and we will abbreviate such a tuple by~$A$. 
Given an orbifolding defect $A \in \mathcal{D}_\tau(a,a)$, we can construct a nondegenerate closed 2d TFT without defects, i.\,e.~a functor
\begin{align}
  \tau^\mathrm{orb}_A : \mathrm{Bord}_{2,1} \longrightarrow \operatorname{Vect} \, ,
\end{align}
in two steps.
First, we define a possibly degenerate closed 2d TFT $\hat\tau^\mathrm{orb}_A$. For a general object in $\mathrm{Bord}_{2,1}$ we set
\begin{align}
  \hat\tau^\mathrm{orb}_A(S^1 \times \{1,\dots,n\}) = \tau(O(A,+)\times \{1,\dots,n\}) \, .
\end{align}
In words, the state space of the (possibly degenerate) orbifolded theory on a disjoint union of circles is given by evaluating the unorbifolded theory on the same set of circles, but with a single marked point $(A,+)$ placed on each circle, say at the point~$1$.
For a morphism $M : U \to V$ in $\mathrm{Bord}_{2,1}$ we define
\begin{align}
  \hat\tau^\mathrm{orb}_A(M) = \tau(M^{\text{$A$-network}}) \, .
  \label{eq:tau-orb-on-morph}
\end{align}
Here $M^{\text{$A$-network}}$ is $M$ together with a network of defect lines, all labelled by~$A$. The network is such that it only has three-valent junctions, and each junction has precisely one or two incoming lines so that they can be labelled by either~$\Delta$ or~$\mu$. Each boundary circle in the image of $U$ under the parametrisation map (i.\,e.~an incoming boundary circle) is the starting point of precisely one $A$-defect line, and each outgoing boundary circle has precisely one $A$-line ending on it. 
The network also has to be fine enough in the sense that the complement of the network in~$M$ consists of connected components homeomorphic to discs (if this is the case, any further refinements can be removed using $\Delta$-separability).

It is not too hard to convince oneself that the defining properties of an orbifolding defect~$A$ given in Proposition~\ref{prop:orb-defect} guarantee that $\tau(M^{\text{$A$-network}})$ is independent of the choice of defect network (thanks to the condition that it is fine enough), so that the assignment \eqref{eq:tau-orb-on-morph} is well-defined and that the following result holds true:

\begin{proposition}
Let $A \in \mathcal{D}_\tau(a,a)$ be an orbifolding defect. Then $\hat\tau^\mathrm{orb}_A : \mathrm{Bord}_{2,1} \rightarrow \operatorname{Vect}$ is a (possibly degenerate) closed 2d TFT.
\end{proposition}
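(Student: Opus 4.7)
The functoriality and symmetric monoidal structure will follow by bookkeeping once the nontrivial content is established: that the value $\hat\tau^\mathrm{orb}_A(M)=\tau(M^{\text{$A$-network}})$ does not depend on the choice of fine-enough $A$-network on $M$. So the whole proof reduces to verifying independence of this choice, followed by checking compatibility with gluing, disjoint unions, permutations, and isotopy.

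First I would argue combinatorially that any two fine-enough trivalent $A$-networks on a bordism $M$ with fixed behaviour on $\partial M$ can be related by a finite sequence of local elementary moves. Because the complement of the network in $M$ consists of open discs, the network can be viewed as the $1$-skeleton of the dual of a polygonal decomposition of $M$ compatible with the boundary data. A standard Pachner/Reidemeister-style argument (as used in the lattice TFT constructions of \cite{bp9205031, fhk9212154, lp0602047}) then shows that any two such decompositions are connected by (dual) edge-flips, $1$-to-$3$ and $3$-to-$1$ bubble moves, boundary bubble moves, and isotopies.

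Second I would verify that each elementary move leaves $\tau(M^{\text{$A$-network}})$ invariant. An edge-flip is precisely the crossing identity~\eqref{eq:crossing}, which by Proposition~\ref{prop:orb-defect} is equivalent to (co)associativity together with the Frobenius property of $A$. A bubble removal is exactly the bubble-omission identity~\eqref{eq:bubbleommision}, i.e.\ $\Delta$-separability. Isotopies of a trivalent vertex through a disc region correspond to the (co)unit properties of $\eta, \varepsilon$ derived in \eqref{eq:eta-from-mu-Delta}--\eqref{eq:eps-from-mu-Delta}, and the remaining arrow-orientation versions of crossing and bubble omission follow, as already used in the proof of Proposition~\ref{prop:orb-defect}, from the symmetry of $A$. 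Since all these identities hold by the hypothesis that $A$ is an orbifolding defect, and $\tau$ is invariant under isotopy, each elementary move leaves $\tau(M^{\text{$A$-network}})$ unchanged, so the value is well defined.

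Third, with well-definedness in hand, the remaining TFT axioms follow easily. For composition $M'\circ M$, choosing fine-enough networks on $M$ and $M'$ and matching up the defect lines meeting at the common boundary circles (there is precisely one outgoing $A$-line from each boundary circle of $M$ matching the incoming $A$-line of $M'$) produces a fine-enough network on $M'\circ M$, and functoriality of $\tau$ then gives $\hat\tau^\mathrm{orb}_A(M'\circ M) = \hat\tau^\mathrm{orb}_A(M')\circ \hat\tau^\mathrm{orb}_A(M)$. Disjoint union of bordisms and networks gives the tensor product structure via the monoidal structure of $\tau$, and permutations (including identity permutations, which are the identity morphisms in $\mathrm{Bord}_{2,1}$) are sent to their images under $\tau$, preserving the symmetric structure. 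Since nothing forces the image of a cylinder to be the identity, the resulting TFT is in general only degenerate-closed.

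\textbf{Main obstacle.} The nontrivial input is the combinatorial step in the first paragraph: a clean reduction to a finite list of local moves, with careful treatment of edge orientations so that every trivalent vertex can be labelled consistently by either $\mu$ or $\Delta$, and proper handling of the boundary components. This is standard folklore for lattice TFT but requires care; once formulated correctly, the match with the crossing and bubble-omission conditions is immediate.
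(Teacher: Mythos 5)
Your proposal is correct and follows exactly the route the paper intends: the paper itself gives no formal proof, only the preceding remark that the crossing and bubble-omission properties of an orbifolding defect guarantee independence of the choice of fine-enough $A$-network, which is precisely the content you elaborate via local moves. Your write-up is a reasonable (and more detailed) expansion of that sketch; the only minor caveat is that the networks as defined contain no univalent vertices, so the (co)unit properties of $\eta,\varepsilon$ enter only indirectly, through the derived identities already established in the discussion around \eqref{eq:eta-from-mu-Delta}--\eqref{eq:eps-from-mu-Delta}.
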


This completes the first step in the construction of the nondegenerate orbifold TFT. The second step consists of making $\hat\tau^\mathrm{orb}_A$ nondegenerate, for which there is a simple general procedure. Namely, for each object $U \in \mathrm{Bord}_{2,1}$, the cylinder over $U$ gets mapped to an idempotent
\begin{align}
   P_U = \hat\tau^\mathrm{orb}_A(\text{cylinder over $U$}) \, .
\end{align}
Let $e_U : \mathrm{im}(P_U) \to  \hat\tau^\mathrm{orb}_A(U)$ and $r_U : \hat\tau^\mathrm{orb}_A(U) \to \mathrm{im}(P_U)$ be the embedding of and the restriction to the image, respectively. It is straightforward to check that
\begin{align}
  \tau^\mathrm{orb}_A(U) = \mathrm{im}(P_U) \, , \quad
  \tau^\mathrm{orb}_A(U \xrightarrow{M} V) = r_V \circ \hat\tau^\mathrm{orb}_A(U \xrightarrow{M} V) \circ e_U
\end{align}
defines a nondegenerate closed 2d TFT.

\begin{remark}
If we think of orbifolding as gauging a discrete symmetry, the above procedure has a natural interpretation. The orbifolding defect~$A$ describes the ``gauge symmetry'', which however no longer has to be given by a group. The state space $\hat\tau^\mathrm{orb}_A(S^1)$ is the sum of all untwisted and twisted states on a circle. The amplitude $\hat\tau^\mathrm{orb}_A(M)$ amounts to ``averaging over the gauge symmetry'' in the sense that any two disc-shaped regions in the complement of the defect network in $M^{\text{$A$-network}}$ can only communicate through $A$-defects, which we can think of as implementing the averaging. Finally, in passing to the nondegenerate theory one has restricted the state space to gauge invariant states.
\end{remark}

\subsection{Domain walls between orbifolded theories}

In introducing the orbifolding procedure we have concentrated on defining a closed 2d TFT $\tau^\mathrm{orb}_A$ without defects as an orbifold of a 2d TFT with defects. However, one can also easily describe the domain walls between two orbifolded theories in terms of the orbifolding defects. This gives rise to a new and `larger' TFT with defects $\tau^\mathrm{orb}$ whose worldsheet phases are labelled by orbifolding defects, as we will now explain. 

\medskip

As before, let $\tau$ be a 2d TFT with defects.
Let $a,b$ be two worldsheet phases and let $A \in \mathcal{D}_\tau(a,a)$ and $B \in \mathcal{D}_\tau(b,b)$ be orbifolding defects. Then each $B$-$A$-bimodule $X \in \mathcal{D}_\tau(a,b)$ describes a domain wall from the $A$-orbifold of $a$ to the $B$-orbifold of $b$. 
More generally, we can define $D_2^\mathrm{orb}$ to be the set of pairs $(a,A)$ where $a \in D_2$ is arbitrary and $A \in \mathcal{D}_\tau(a,a)$ is an orbifolding defect. $D_2^\mathrm{orb}$ describes the theories which can be reached from $\tau$ by the orbifolding procedure, and we will refer to elements of $D_2^\mathrm{orb}$ as \textsl{orbifold phases}. 

The set of domain walls $D_1^\mathrm{orb}$ consists of triples $\big((b,B),X,(a,A)\big)$ where $(a,A)$ and $(b,B)$ are orbifold phases in $D_2^\mathrm{orb}$ and $X \in \mathcal{D}_\tau(a,b)$ is a $B$-$A$-bimodule. The source map $s : D_1^\mathrm{orb} \to D_2^\mathrm{orb}$ produces $(a,A)$ and the target map $t$ returns $(b,B)$.

We can now define a new 2d TFT with defects in terms of $\tau$, the {\sl orbifold completion of} $\tau$. Namely, we construct a functor
\begin{align}
  \tau^\mathrm{orb} : \mathrm{Bord}_{2,1}^\mathrm{def}(D_2^\mathrm{orb},D_1^\mathrm{orb})\longrightarrow \operatorname{Vect} 
\end{align}
analogously to the purely closed case discussed in the previous section. To an object $U$ of $\mathrm{Bord}_{2,1}^\mathrm{def}(D_2^\mathrm{orb},D_1^\mathrm{orb})$ it assigns the image of an idempotent $P_U$ in $\tau(U)$. If~$U$ is a single circle decorated with a $B$-$A$-bimodule~$X$ and an $A$-$B$-bimodule~$Y$ we have
\be
P_U = 
\tau \Biggl(
\begin{tikzpicture}[very thick, scale=0.8,color=green!50!black, baseline]
\nicecolourscheme (0,0) circle (2);
\draw[very thick, color=blue!22] (0,0) circle (2);

\draw[very thick, color=white, fill=white] (0,0) circle (0.6);

\fill[color=blue!50!black] (50:2) circle (3.0pt) node[left] {};
\fill[color=blue!50!black] (130:2) circle (3.0pt) node[left] {};
\fill[color=blue!50!black] (50:0.6) circle (3.0pt) node[left] {};
\fill[color=blue!50!black] (130:0.6) circle (3.0pt) node[left] {};
\draw[color=blue!50!black] (50:0.6) -- (50:2);
\draw[color=blue!50!black] (130:0.6) -- (130:2);
\fill (130:1.7) circle (3.0pt) node[left] {};
\fill (130:1.01) circle (3.0pt) node[left] {};
\fill (50:1.7) circle (3.0pt) node[left] {};
\fill (50:1.01) circle (3.0pt) node[left] {};
\draw (50:1.01) arc (50:130:1.01);
\draw (130:1.7) arc (130:410:1.7);
\draw (-90:0.85) [black] node {{\scriptsize $a$}};
\draw (90:1.8) [black] node {{\scriptsize $b$}};
\draw (90:1.22) node {{\scriptsize $B$}};
\draw (-90:1.48) node {{\scriptsize $A$}};
\draw (42:1.35) [blue!50!black] node {{\scriptsize $X$}};
\draw (139:1.35) [blue!50!black] node {{\scriptsize $Y$}};
\end{tikzpicture}
\Biggl)
\, ,
\ee
where circle segments labelled by~$A$ and~$B$ correspond to idempotents of type~\eqref{eq:piA}. For example, if $X$ is oriented inwards and $Y$ outwards, $P_U$ implements the projection~\eqref{eq:bimodulemapprojector} to bimodule maps in the space of 2-morphisms $X \to Y$, cf.\ \eqref{eq:2-morph-via-tau}.
If~$U$ has a different number of circles and defect decorations $P_U$ is constructed analogously. 
When writing $\tau(U)$ we implicitly use the forgetful functor $\mathrm{Bord}_{2,1}^\mathrm{def}(D_2^\mathrm{orb},D_1^\mathrm{orb}) \to \mathrm{Bord}_{2,1}^\mathrm{def}(D_2,D_1)$ which forgets the orbifolding defects and the bimodule actions in the labelling of objects and morphisms.

Similar to the purely closed case of Section~\ref{sec:TFT-orbifold}, 
for a morphism $M : U \to V$ in  $\mathrm{Bord}_{2,1}^\mathrm{def}(D_2^\mathrm{orb},D_1^\mathrm{orb})$  we set 
$\tau^\mathrm{orb}(M) = r_V \circ \tau(M^{\text{network}}) \circ e_U$, where a fine enough $A_i$-network is placed inside each phase labelled $(a_i,A_i)$, and the $A_i$-defect lines can end on a bounding domain wall via a junction labelled by the bimodule action. Again it is not hard to check that $\tau(M^{\text{network}})$ is independent of the choice of network and that $\tau^\mathrm{orb}(M)$ defines a nondegenerate 2d TFT with defects.

We summarise this somewhat sketchy discussion as the following result. 

\begin{theorem}
Each 2d TFT with defects $\tau : \mathrm{Bord}_{2,1}^\mathrm{def}(D_2,D_1) \to  \operatorname{Vect}$ gives rise to a nondegenerate 2d TFT with defects $\tau^\mathrm{orb} : \mathrm{Bord}_{2,1}^\mathrm{def}(D_2^\mathrm{orb},D_1^\mathrm{orb}) \to  \operatorname{Vect}$, called the orbifold completion of $\tau$.
\end{theorem}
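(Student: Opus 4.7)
The plan is to mimic the purely closed orbifold construction from Section~\ref{sec:TFT-orbifold} but now in the presence of walls, and to verify three things: (a) the assignment $M \mapsto \tau(M^{\text{network}})$ is independent of the chosen defect network, so that $\hat\tau^{\mathrm{orb}}(M) := \tau(M^{\text{network}})$ is a well-defined (possibly degenerate) functor; (b) the cylinder idempotents $P_U$ exist, satisfy $P_U^2 = P_U$, and are compatible with composition; (c) the projected theory is symmetric monoidal and nondegenerate.

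For (a), first I would fix for each orbifold phase $(a_i,A_i)$ appearing in $M$ a triangulation (or any dual defect graph) of the phase region, with all 1-skeleton edges labelled by $A_i$ and all trivalent vertices by $\mu$ or $\Delta$; where a phase region borders a wall labelled by a bimodule $X$ (or its dagger), the $A$-defect is allowed to end on the wall via the given left/right action, and similarly at wall-wall junctions (which are absent here since we have $D_0 = \emptyset$, but the boundary of the wall will meet the network at bimodule action vertices). The key observation is that any two fine enough such networks can be connected by a finite sequence of local moves: (i) insertion/removal of bubbles, (ii) crossing moves, (iii) moves that push an $A$-line past a bimodule action using associativity and the bimodule axioms, and (iv) removal of superfluous (co)units using $\Delta$-separability. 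Each move is justified by one of the identities assembled in Proposition~\ref{prop:orb-defect}, together with the defining axioms of a bimodule and the analogues for $B$-$A$-bimodule maps coming from~\eqref{eq:bimodulemapprojector}. Independence from all network choices then follows by the standard Pachner-type argument used in the closed case of Section~\ref{sec:TFT-orbifold}.

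For (b), the cylinder $\text{cyl}(U)$ carries any such network; taking two cylinders stacked and exploiting separability shows $P_U \circ P_U = P_U$ (this is the direct generalisation of Lemma~\ref{lem:tensorprojector} and of the projector identity used in the proof of Lemma~\ref{lem:piAproj}). Functoriality reduces to the compatibility $P_V \circ \hat\tau^{\mathrm{orb}}(M) = \hat\tau^{\mathrm{orb}}(M) = \hat\tau^{\mathrm{orb}}(M) \circ P_U$, which in turn follows because gluing a cylinder with its own network to $M$ produces another admissible network on $M$, and by (a) this does not change the value. The restriction/embedding construction $\tau^{\mathrm{orb}}(M) = r_V \circ \hat\tau^{\mathrm{orb}}(M) \circ e_U$ then automatically yields a functor whose action on the identity permutation is $1$, i.e.\ nondegeneracy holds by construction. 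Symmetric monoidality transports from $\tau$ because disjoint unions of bordisms carry disjoint networks and the projector $P_{U \sqcup V} = P_U \otimes P_V$ factors through the tensor product.

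The main obstacle I expect is part (a) at the interface between orbifolding defects and bimodule walls: one must check that the network can be slid across a wall freely, which ultimately boils down to the bimodule and bimodule-map axioms together with the Frobenius and separability properties of both $A$ and $B$. A clean way to organise this is to first prove a local lemma saying that, for any $B$-$A$-bimodule $X$, the analogue of the projector $\pi_{BA}$ from~\eqref{eq:bimodulemapprojector} on a cylindrical neighbourhood of a wall is an idempotent and equals the value of $\tau$ on the cylinder with the $A$- and $B$-networks attached to the wall from each side; once this is established, all network moves near a wall reduce to the moves already treated in Proposition~\ref{prop:orb-defect} applied inside each phase. The remaining verifications (cylinder gives an idempotent, compatibility with gluing, monoidality) are then essentially formal consequences, so the whole argument is a careful but routine bookkeeping exercise on top of Proposition~\ref{prop:orb-defect} and Lemmas~\ref{lem:piAproj} and~\ref{lem:tensorprojector}.
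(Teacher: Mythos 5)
Your proposal is correct and follows essentially the same route as the paper, which itself only sketches this argument (defining $P_U$ via the decorated cylinder, placing a fine enough $A_i$-network in each phase with lines ending on walls via the bimodule actions, and passing to the image of the cylinder idempotent to enforce nondegeneracy). Your elaboration of the wall-adjacent local moves and the identification of $P_U$ with the projector of type~\eqref{eq:bimodulemapprojector} simply makes explicit the verifications the paper declares ``not hard to check''.
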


\begin{remark}
As an instance of such an orbifold completion, we note that the lattice construction of defect TFTs presented in \cite[Sect.\,3]{dkr1107.0495} can be understood as an orbifold completion of the trivial closed 2d TFT $\tau_{\mathrm{triv}}$. For the trivial theory we have $D_2 = \{ \circ \}$ and $D_1 = \emptyset$, and before including formal sums (see the beginning of Section~\ref{subsec:worldsheetbicategory}), 
$\tau_{{\mathrm{triv}}}$ maps all objects to $\C$ and all morphisms to the identity map. The inclusion of formal sums means we have in addition the defects $(I_\circ)^{\oplus n}$ at our disposal. For example, the state space of a circle with a single marked point labelled by $(I_\circ)^{\oplus n}$ is $\C^n$. The orbifolding defects now correspond to symmetric separable Frobenius algebras over $\C$ and the domain walls to bimodules thereof.
\end{remark}

\section{Equivariant bicategory}\label{sec:equibicat}

Motivated by the discussion of Sections~\ref{introduction} and~\ref{sec:2dTFTwithDefects} we now begin our orbifold construction in the framework of bicategories. It turns out that many results such as a completeness property and the construction of certain equivalences can already be obtained without demanding the algebras involved to satisfy all the conditions of orbifolding defects as defined in Section~\ref{sec:TFT-orbifold}. This will be explained in the present section. 

\subsection[Definition of the equivariant completion $\Beq$]{Definition of the equivariant completion $\boldsymbol{\Beq}$}\label{subsec:Beq}

Let~$\B$ be a bicategory whose categories of 1-morphisms are idempotent complete. Motivated by the discussion of Section~\ref{sec:2dTFTwithDefects} we construct a new bicategory out of $\B$:

\begin{definition}
The \textsl{equivariant completion} $\Beq$ of~$\B$ consists of the following data: 
\begin{itemize}
\item
Objects in $\Beq$ are pairs $(a,A)$ with $a\in\B$ and $A\in\B(a,a)$ a separable Frobenius algebra. 
\item 
1-morphisms $(a,A) \rightarrow (b,B)$ in $\Beq$ are $X \in \B(a,b)$ with the structure of an $B$-$A$-bimodule. 
\item 
2-morphisms in $\Beq$ are 2-morphisms in~$\B$ that are bimodule maps. 
\item 
The composition of 1-morphisms $X: (a,A) \rightarrow (b,B)$ and $Y: (b,B) \rightarrow (c,C)$ is the tensor product $Y \otimes_B X : (a,A) \rightarrow (c,C)$ (which exists by the assumption of idempotent completeness, see Lemma~\ref{lem:tensorprojector}). The composition of 2-morphisms in $\Beq$ is that of~$\B$. The associator in $\Beq$ is the one induced from~$\B$, since by Remark~\ref{rem:split-coequaliser} the coequaliser defining $Y \otimes_B X$ is split and hence preserved by any functor, in particular by horizontal composition with another 1-morphism. 
\item The unit 1-morphism for $(a,A) \in \Beq$ is~$A$. The left and right unit action on $X: (a,A) \rightarrow (b,B)$ is given by the left and right action on the corresponding bimodule, respectively, and the inverse unit actions are given by 
\be
\begin{tikzpicture}[very thick,scale=1.0,color=blue!50!black, baseline=.8cm]
\draw[line width=0pt] 
(-0.03,1.75) node[line width=0pt, color=green!50!black] (A) {{\small$B$}}
(0.86,1.75) node[line width=0pt] (X1) {{\small$X$}}
(0.86,-0.5) node[line width=0pt] (X2) {{\small$X$}}; 

\draw[color=green!50!black] (0.47,0.55) .. controls +(0.0,0.25) and +(-0.25,-0.15) .. (0.86,0.95);
\draw[-dot-, color=green!50!black] (0.47,0.55) .. controls +(0,-0.5) and +(0,-0.5) .. (-0.03,0.55);

\draw[color=green!50!black] (0.22,-0.1) node[Odot] (unit) {}; 
\draw[color=green!50!black] (0.22,0.15) -- (unit);

\fill[color=green!50!black] (0.86,0.95) circle (2pt) node (meet) {};

\draw[color=green!50!black] (-0.03,0.55) -- (A);
\draw[color=blue!50!black] (X1) -- (X2);

\end{tikzpicture}
\, , \quad 
\begin{tikzpicture}[very thick,scale=1.0,color=blue!50!black, baseline=.8cm]
\draw[line width=0pt] 
(1.75,1.75) node[line width=0pt, color=green!50!black] (A) {{\small$A$}}
(0.86,1.75) node[line width=0pt] (X1) {{\small$X$}}
(0.86,-0.5) node[line width=0pt] (X2) {{\small$X$}}; 

\draw[color=green!50!black] (1.25,0.55) .. controls +(0.0,0.25) and +(0.25,-0.15) .. (0.86,0.95);
\draw[-dot-, color=green!50!black] (1.25,0.55) .. controls +(0,-0.5) and +(0,-0.5) .. (1.75,0.55);

\draw[color=green!50!black] (1.5,-0.1) node[Odot] (unit) {}; 
\draw[color=green!50!black] (1.5,0.15) -- (unit);

\fill[color=green!50!black] (0.86,0.95) circle (2pt) node (meet) {};

\draw[color=green!50!black] (1.75,0.55) -- (A);
\draw[color=blue!50!black] (X1) -- (X2);

\end{tikzpicture}
\, ,
\ee
where here and below all string diagrams are drawn in~$\B$, not in $\Beq$. 
\end{itemize}
\end{definition}

A first observation about the equivariant completion is that  the original bicategory~$\B$ fully embeds in $\Beq$. Indeed, it is straightforward to see that the left or right actions of the units in~$\B$ make~$I_a$ into a separable Frobenius algebra for any $a\in\B$. Thus $a \mapsto (a,I_a)$ is a full embedding as every 1-morphism $a\rightarrow b$ in~$\B$ is an $I_b$-$I_a$-bimodule, and 2-morphisms are bimodule maps. 

\medskip

The term ``equivariant'' in the name equivariant completion will be motivated in Section~\ref{subsec:equiMF} where we will show how the standard theory of equivariant Landau-Ginzburg models embeds into the general framework developed here. 
The term ``completion'' is justified because $\Beq$ is invariant under the equivariantisation procedure. We will prove this in Proposition \ref{prop:eqeqeq} below, but before getting there, we briefly illustrate the intuition behind the technical proof (even if this intuition is rooted in the stronger assumptions of Section~\ref{sec:orbibicat}). 

\medskip

Let us fix a theory $\mathcal T$ and an orbifolding defect~$A$ in it (cf.~Section~\ref{sec:TFT-orbifold}). The idea is that correlators in the $A$-orbifold theory $\mathcal T_A$ are computed from correlators in~$\mathcal T$ with a fine enough $A$-defect network. 

Now let~$B$ be an orbifolding defect in $\mathcal T_A$ (and consequently also in~$\mathcal T$). Correlators in $(\mathcal T_A)_B$ are correlators in $\mathcal T_A$ with a fine enough $B$-network, and hence correlators in~$\mathcal T$ with fine enough $A$-networks inside all phases of a fine enough $B$-network. But since the $B$-network is already fine enough we can take the $A$-network to be trivial, thus arriving at
\be
\langle \ldots \rangle_{(\mathcal T_A)_B} = \langle \ldots \rangle_{\mathcal T_B}
\ee
for all correlators. Analogously we find that $\Beq$ is already ``complete'': 

\begin{proposition}\label{prop:eqeqeq}
$(\Beq)_{\mathrm{eq}} \cong \Beq$. 
\end{proposition}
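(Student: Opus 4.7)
The plan is to construct a pseudofunctor $F: (\Beq)_{\mathrm{eq}} \to \Beq$ that serves as a quasi-inverse to the natural embedding $\iota: \Beq \hookrightarrow (\Beq)_{\mathrm{eq}}$, $(a,A) \mapsto ((a,A), A)$, where the second $A$ carries its tautological separable Frobenius structure as the unit 1-morphism on $(a,A)$.

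On objects, set $F((a,A), \mathcal{A}) = (a, \mathcal{A}_{\B})$, where $\mathcal{A}_{\B}$ is $\mathcal{A}$ equipped with the ``total'' separable Frobenius structure in $\B(a,a)$: the multiplication is $\mathcal{A} \otimes \mathcal{A} \xrightarrow{\vartheta} \mathcal{A} \otimes_A \mathcal{A} \xrightarrow{\mu^{\Beq}} \mathcal{A}$, the unit is $I_a \xrightarrow{\eta_A} A \xrightarrow{\eta^{\Beq}} \mathcal{A}$, and dually for comultiplication and counit, with $\vartheta$ as in Lemma~\ref{lem:tensorprojector}. On 1-morphisms, $F$ sends an $\mathcal{A}$-$\mathcal{B}$-bimodule $X$ in $\Beq$ to the same underlying $X$, now regarded as an $\mathcal{B}_{\B}$-$\mathcal{A}_{\B}$-bimodule in $\B$ via the analogous composition of its $\Beq$-actions with $\vartheta$; on 2-morphisms $F$ is the identity, since any $\mathcal{B}$-$\mathcal{A}$-bimodule map in $\Beq$ is in particular a $\mathcal{B}_{\B}$-$\mathcal{A}_{\B}$-bimodule map in $\B$.

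The main calculation is to verify that $\mathcal{A}_{\B}$ is genuinely separable Frobenius in $\B(a,a)$, and likewise the bimodule axioms for $F(X)$. Both reduce to string-diagram manipulations using (i) that the $\Beq$-structure maps on $\mathcal{A}$ are $A$-bimodule maps, (ii) the identities $\vartheta \xi = 1$ and $\xi \vartheta = \pi_A^{\mathcal{A},\mathcal{A}}$ from Lemma~\ref{lem:tensorprojector}, and (iii) $\Delta$-separability and the Frobenius relation for both $A$ and $\mathcal{A}$ in $\Beq$. The comparison needed for functoriality of $F$ is that $\otimes_{\mathcal{A}}^{\Beq}$, a coequaliser in the category of $A$-bimodules, agrees with $\otimes_{\mathcal{A}_{\B}}^{\B}$, a coequaliser directly in $\B$; expressing each as the image of the appropriate idempotent $\pi^{-,-}$ from \eqref{eq:piA}, one sees that the $A$-projection is absorbed into the $\mathcal{A}$-projection through the unit map $\eta^{\Beq}: A \to \mathcal{A}$, so the two idempotents coincide.

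To conclude, $F \circ \iota = \mathrm{id}_{\Beq}$ holds strictly, since the total structure induced on $A$ from its canonical unit structure in $\Beq((a,A),(a,A))$ is the original. For the other composite, a 1-morphism $((a,A), \mathcal{A}) \to ((b,B), \mathcal{B})$ in $(\Beq)_{\mathrm{eq}}$ is, by the absorption argument above, precisely a $\mathcal{B}_{\B}$-$\mathcal{A}_{\B}$-bimodule in $\B(a,b)$ — the $A$- and $B$-actions are determined by, and determine, their restrictions along the units $A \to \mathcal{A}$ and $B \to \mathcal{B}$ — so $F$ induces equivalences on hom-categories. Together with essential surjectivity (every $(a, A) \in \Beq$ arises as $F((a, I_a), A)$, treating $A$ as an algebra in $\Beq((a,I_a),(a,I_a)) = \B(a,a)$), this delivers the claimed biequivalence. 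The main obstacle is the bookkeeping in point (iii), where the two layers of Frobenius data on $A$ and $\mathcal{A}$ must be untangled simultaneously; here the reciprocal factorisations of $\pi_A^{\mathcal{A},\mathcal{A}}$ through $\vartheta, \xi$ are the key technical tool.
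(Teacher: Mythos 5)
Your proposal is correct, and its central construction --- collapsing the algebra $\mathcal{A}$ in $\Beq((a,A),(a,A))$ to a separable Frobenius algebra $\mathcal{A}_{\B}$ in $\B(a,a)$ by composing the structure maps with the splitting/projection maps $\xi,\vartheta$ of $\otimes_A$ and with $\eta_A$ --- is exactly the algebra $B$ built from $\mathbb{B}$ in the paper's proof. Where you diverge is in how the equivalence is then extracted. The paper observes that the embedding $\Beq\to(\Beq)_{\mathrm{eq}}$ is already full and faithful, so it only needs essential surjectivity on objects; it gets this by writing down an explicit adjoint equivalence $\mathbb{X}:\big((a,B),\mathbb{I}_B\big)\rightleftarrows\big((a,A),\mathbb{B}\big):\mathbb{Y}$ \emph{inside} $(\Beq)_{\mathrm{eq}}$, both 1-morphisms having underlying object $B$, with invertibility reduced to $B\otimes_B B\cong B$. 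You instead build a global quasi-inverse pseudofunctor $F$ and identify the hom-categories of $(\Beq)_{\mathrm{eq}}$ with those of $\Beq$ outright, via the correspondence between $\mathcal{A}$-modules over $\otimes_A$ and $\mathcal{A}_{\B}$-modules in $\B$ (restriction along the unit $A\to\mathcal{A}$ in one direction, factoring through $\vartheta$ in the other). Both arguments hinge on the same coequaliser/idempotent identification $\otimes_{\mathcal{A}}^{\Beq}=\otimes_{\mathcal{A}_{\B}}^{\B}$, which the paper also states explicitly at the end of its proof. Your route buys a complete description of $(\Beq)_{\mathrm{eq}}$ as a bicategory (all hom-categories, not just objects up to equivalence) at the price of checking pseudofunctoriality of $F$ on compositions; the paper's route is more economical but leaves the hom-category comparison implicit in the fullness of the embedding. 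One small remark: your essential-surjectivity clause at the end is redundant given $F\circ\iota=\mathrm{id}$, and the cleaner statement is simply that $F$ is surjective on objects because $F\big((a,A),A\big)=(a,A)$.
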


\begin{proof}
We will show that the full embedding $\Beq \to (\Beq)_{\mathrm{eq}}$ is essentially surjective, i.\,e.~for every object in $(\Beq)_{\mathrm{eq}}$ there is a 1-isomorphism to an object in the image of $\Beq$. The proof boils down to the statement that for an algebra~$B$ one has $B \otimes_B B \cong B$ as $B$-$B$-bimodules; the main difficulty is to not get lost in notation along the way.

Fix an object $\big((a,A),\mathbb{B}\big)$ in $(\Beq)_{\mathrm{eq}}$. Then $A$ is a separable Frobenius algebra in $\B(a,a)$, and we will denote its unit and multiplication maps as $\eta_A : I_a \to A$ and $\mu_A : A \otimes A \to A$.
$\mathbb{B}$ is an $A$-$A$-bimodule in $\B(a,a)$, together with unit $\eta_{\mathbb{B}} : A \to \mathbb{B}$ and multiplication $\mu_{\mathbb{B}} :  \mathbb{B} \otimes_A  \mathbb{B} \to  \mathbb{B}$, both of which are $A$-$A$-bimodule maps, and analogously for coproduct and counit. Write $\mathbb{B} = (B, \rho^l, \rho^r)$, where $B \in\B(a,a)$ is the underlying object of the bimodule, $\rho^l : A \otimes B \to B$ is the left action, and $\rho^r$ the right action.

Denote the canonical projection $B \otimes B \to \mathbb{B} \otimes_A  \mathbb{B}$ by $r$. Then $\mu_B := \mu_{\mathbb{B}} \circ r : B \otimes B \to B$ and $\eta_B := \eta_{\mathbb{B}} \circ \eta_A : I_a \to B$ turn $B$ into a unital algebra in $\B(a,a)$. Analogously, the coproduct and counit of $\mathbb{B}$ turn $B$ into a separable Frobenius algebra. $B$ is the unit of the category of $B$-$B$-bimodules in $\B(a,a)$ and we will write $\mathbb{I}_B$ when it is used in this function. The embedding of the object $(a,B)\in\Beq$  into $(\Beq)_{\mathrm{eq}}$ is $\big((a,B),\mathbb{I}_B\big)$.
The proposition is proved once we have established the following claim: 
\begin{quote}
$\big((a,A),\mathbb{B}\big)$ and $\big((a,B),\mathbb{I}_B\big)$ are isomorphic in $(\Beq)_{\mathrm{eq}}$.
\end{quote}

We start with a 1-morphism $\mathbb{X} : \big((a,B),\mathbb{I}_B\big) \to \big((a,A),\mathbb{B}\big)$. This means, first of all, an underlying 1-morphism $X : (a,B) \to (a,A)$, i.\,e.~an $A$-$B$-bimodule $X$, which as a 1-morphism in~$\B$ we take to be~$B$. Secondly, $X$ has to be equipped with a left action $\chi^l : \mathbb{B} \otimes_A X \to X$ which is also an $A$-$B$-bimodule map. The right action $\chi^r : X \otimes_B \mathbb{I}_B \to X$ is by definition the unit isomorphism. Altogether, $\mathbb{X} = (X , \chi^l , \chi^r)$. We choose the $A$-$B$-bimodule $X$ to be $(B,\rho^l,\mu_B)$, i.\,e.~the left action comes from the $A$-$A$-bimodule structure of $B$ and the right action is the multiplication of $B$. For the map $\chi^l$ we take the multiplication $\mu_{\mathbb{B}}$ of $\mathbb{B}$.

Analogously, we construct a 1-morphism $\mathbb{Y} : \big((a,A),\mathbb{B}\big) \to \big((a,B),\mathbb{I}_B\big)$ with underlying object $B$, but with interchanged left and right actions. 

Next we establish that $\mathbb{X}$ and $\mathbb{Y}$ are inverse to each other. Consider first $\mathbb{X} \otimes_{\mathbb{I}_B} \mathbb{Y}$. The tensor product over the tensor unit just produces the tensor product in the underlying category which is $B \otimes_B B \cong B$. The $B$ on the right-hand side is equipped with left and right $\mathbb{B}$-action, and is in fact $\mathbb{B}$ as a bimodule over itself. Finally, $\mathbb{Y} \otimes_{\mathbb{B}} \mathbb{X}$ is again equal to $B \otimes_B B$ since the coequaliser of the left and right $\mathbb{B}$-action $\mathbb{Y} \otimes_A \mathbb{B} \otimes_A \mathbb{X} \to \mathbb{Y} \otimes_A \mathbb{X}$ inside $A$-$A$-bimodules in $\B(a,a)$ is the same as the coequaliser $Y \otimes B \otimes X \to Y \otimes X$ of the left and right $B$-action in $\B(a,a)$.
\end{proof}

\subsection[Equivalences in $\Beq$ from adjunctions]{Equivalences in $\boldsymbol{\Beq}$ from adjunctions}\label{subsec:eqBeq}

The following result is a slight generalisation of a known construction of Frobenius algebras, see e.\,g.~\cite[Lem.\,3.4]{m0111204}, which provides a way of explicitly constructing orbifolding defects. 

In this section we denote the adjunction maps for 
$Y \dashv X$ by $\varepsilon : Y \otimes X \to I$ and $\eta : I \to X \otimes Y$, and those for $X \dashv Y$ by $\tilde\varepsilon$ and $\tilde\eta$.

\begin{proposition}\label{prop:algebraXdaggerX}
Let~$\B$ be a bicategory (which for the purpose of this proposition need not have idempotent complete 1-morphism categories).	
\begin{enumerate}
\item 
An adjunction $X \dashv Y$ in~$\B$ gives rise to an algebra structure on $Y\otimes X$ and to a coalgebra structure of $X\otimes Y$. 
\item 
Adjunctions $Y \dashv X \dashv Y$ give a Frobenius algebra structure on $A := Y\otimes X$. 
If $\tilde\varepsilon \circ \eta$ is the identity, 
$A$ is separable.
\item 
If $\tilde\varepsilon \circ \eta$ is invertible, we can twist the adjunction $Y \dashv X$ in (ii) by the automorphism $\varphi := \lambda_X \circ ((\tilde\varepsilon\circ \eta)^{-1} \otimes 1_X) \circ \lambda_X^{-1}$. This does not affect the algebra structure on $A = Y\otimes X$ but does turn $A$ into a separable Frobenius algebra. 
\end{enumerate}
\end{proposition}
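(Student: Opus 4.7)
The plan is to build all the algebra and coalgebra data directly out of the adjunction (co)evaluations and to verify the axioms by string-diagram manipulations using the Zorro moves \eqref{eq:Z1}--\eqref{eq:Z2} and the interchange law. For part~(i), writing the data of $X \dashv Y$ as $\tilde\varepsilon : X \otimes Y \to I$ and $\tilde\eta : I \to Y \otimes X$, the natural multiplication on $A = Y \otimes X$ is $\mu := 1_Y \otimes \tilde\varepsilon \otimes 1_X$ (modulo associators), with unit $\eta_A := \tilde\eta$. Associativity is immediate from the interchange law applied to the two $\tilde\varepsilon$'s in $YXYXYX$, and the two unit axioms are exactly the Zorro moves tensored with identities on either side. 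Dualising these diagrams top-to-bottom produces the coalgebra structure $\Delta_C := 1_X \otimes \tilde\eta \otimes 1_Y$, $\varepsilon_C := \tilde\varepsilon$ on $X \otimes Y$.

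For part~(ii), given $Y \dashv X \dashv Y$, I would keep the algebra of~(i) on $A = Y \otimes X$ and define a coalgebra on the same $A$ by $\Delta := 1_Y \otimes \eta \otimes 1_X$ and $\varepsilon_A := \varepsilon$, using this time the adjunction $Y \dashv X$. The Frobenius identity $(\mu \otimes 1_A) \circ (1_A \otimes \Delta) = \Delta \circ \mu = (1_A \otimes \mu) \circ (\Delta \otimes 1_A)$ becomes visually obvious in pictures: all three diagrams simplify to the same one by the interchange law alone, without invoking any Zorro move, since both $\mu$ and $\Delta$ act only on the two middle strands. A short direct computation then gives $\mu \circ \Delta = 1_Y \otimes (\tilde\varepsilon \circ \eta) \otimes 1_X$, which under the natural $\End(I_b)$-action on $\End(A)$ via $\lambda$ equals $(\tilde\varepsilon \circ \eta) \cdot 1_A$; separability therefore holds precisely when $\tilde\varepsilon \circ \eta = 1_{I_b}$.

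For part~(iii) the strategy is to exploit the freedom to twist adjunction maps (see the twist construction in Section~\ref{subsec:bicatadj}) in order to rescale the element $c := \tilde\varepsilon \circ \eta \in \End(I_b)$ computed in~(ii) to the identity. Since $c$ is invertible by assumption, the prescribed $\varphi$ is a well-defined 2-automorphism of $X$, and twisting the adjunction $Y \dashv X$ by the pair $(\varphi, 1_Y)$ yields new adjunction maps $\varepsilon', \eta'$ that still satisfy Zorro. The algebra on $A$ was built from $\tilde\varepsilon, \tilde\eta$ alone and is therefore untouched, while the new coalgebra $(\Delta', \varepsilon_A')$ fits into the Frobenius calculation of~(ii) verbatim, as that argument used only interchange. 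The one substantive step, and the only place where the specific form of $\varphi$ is used, is verifying the new separability identity $\tilde\varepsilon \circ \eta' = 1_{I_b}$: after rewriting $\eta' = (\varphi^{-1} \otimes 1_Y) \circ \eta$, the naturality identity $(g \cdot 1_{XY}) \circ \eta = \eta \circ g$ for $g \in \End(I_b)$ (a direct consequence of naturality of~$\lambda$) turns the left-hand side into a product in $\End(I_b)$ which collapses to $1_{I_b}$ by the very definition of $\varphi$ in terms of $c^{-1}$. The main bookkeeping hurdle throughout is simply to confirm that the two adjunctions $Y\dashv X$ and $X\dashv Y$ enter the algebra and coalgebra structures in a decoupled way, so that twisting only one of them leaves the other untouched.
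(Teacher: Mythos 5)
Your parts (i) and (ii) follow the paper's proof essentially step for step: the same (co)algebra structures $\mu = 1_Y \otimes \tilde\varepsilon \otimes 1_X$, unit $\tilde\eta$, $\Delta = 1_Y \otimes \eta \otimes 1_X$, counit $\varepsilon$; associativity and the Frobenius identity reduced to the interchange law; the unit/counit axioms reduced to the Zorro moves; and separability read off from $\mu \circ \Delta = 1_Y \otimes (\tilde\varepsilon \circ \eta) \otimes 1_X$. All of that is correct and is exactly the paper's argument.

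Part (iii), however, contains a step that fails as written. You twist $Y \dashv X$ by the pair $(\varphi, 1_Y)$ and correctly record, per the convention of the footnote in Section 2.1, that this gives $\eta' = (\varphi^{-1} \otimes 1_Y) \circ \eta$. But $\varphi$ is, by its definition in the statement, left multiplication by $c^{-1}$ where $c := \tilde\varepsilon \circ \eta$, so $\varphi^{-1}$ is multiplication by $c$; your own naturality argument then yields $\tilde\varepsilon \circ \eta' = c \circ c = c^2$, which is not $1_{I_b}$ unless $c = 1_{I_b}$. For the cancellation you need $\varphi$ itself, not $\varphi^{-1}$, inserted in $\eta'$: the paper takes $\eta' := (\varphi \otimes 1_Y) \circ \eta$ and $\varepsilon' := \varepsilon \circ (1_Y \otimes \varphi^{-1})$ (i.e.\ the twist by $(\varphi^{-1},1_Y)$ in the footnote's convention), which gives $\tilde\varepsilon \circ \eta' = (\tilde\varepsilon\circ\eta)^{-1} \circ (\tilde\varepsilon\circ\eta) = 1_{I_b}$. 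Once that inversion is fixed, the rest of your (iii) — the algebra being untouched because it is built from $\tilde\varepsilon, \tilde\eta$ alone, and the Frobenius identity surviving the twist because it used only interchange — goes through and agrees with the paper.
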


\begin{proof}
\begin{enumerate}
\item 
The algebra structure on $Y\otimes X$ is given by
\be\label{eq:algebra-on-YX}
\begin{tikzpicture}[very thick,scale=0.75,color=green!50!black, baseline=0.4cm]
\draw[-dot-] (3,0) .. controls +(0,1) and +(0,1) .. (2,0);
\draw (2.5,0.75) -- (2.5,1.5); 
\end{tikzpicture} 
=
\begin{tikzpicture}[very thick,scale=0.9,color=blue!50!black, baseline=.9cm]
\draw[line width=0pt] 
(3,0) node[line width=0pt] (D) {{\small$Y$}}
(2,0) node[line width=0pt] (s) {{\small$X\vphantom{Y}$}}; 
\draw[redirected] (D) .. controls +(0,1) and +(0,1) .. (s);
\draw[line width=0pt] (2.5,0.5) node[line width=0pt] (D) {{\footnotesize$\tilde\varepsilon$}};
\draw[line width=0pt] 
(3.45,0) node[line width=0pt] (re) {{\small$X\vphantom{Y}$}}
(1.55,0) node[line width=0pt] (li) {{\small$Y$}}; 
\draw[line width=0pt] 
(2.7,2) node[line width=0pt] (ore) {{\small$X\vphantom{Y}$}}
(2.3,2) node[line width=0pt] (oli) {{\small$Y$}}; 
\draw (li) .. controls +(0,0.75) and +(0,-0.25) .. (2.3,1.25);
\draw (2.3,1.25) -- (oli);
\draw (re) .. controls +(0,0.75) and +(0,-0.25) .. (2.7,1.25);
\draw (2.7,1.25) -- (ore);
\end{tikzpicture}
, \quad
\begin{tikzpicture}[very thick,scale=0.75,color=green!50!black, baseline]
\draw (0,-0.5) node[Odot] (D) {}; 
\draw (D) -- (0,0.6); 
\end{tikzpicture} 
=
\begin{tikzpicture}[very thick,scale=1.0,color=blue!50!black, baseline=-.4cm,rotate=180]
\draw[line width=0pt] 
(3,0) node[line width=0pt] (D) {{\small$Y$}}
(2,0) node[line width=0pt] (s) {{\small$X\vphantom{Y}$}}; 
\draw[directed] (D) .. controls +(0,1) and +(0,1) .. (s);
\draw[line width=0pt] (2.5,0.5) node[line width=0pt] (D) {{\footnotesize$\tilde\eta$}};
\end{tikzpicture}
\ee
while the coalgebra structure on $X \otimes Y$ is 
\be
\begin{tikzpicture}[very thick,scale=0.75,color=green!50!black, baseline=-0.6cm, rotate=180]
\draw[-dot-] (3,0) .. controls +(0,1) and +(0,1) .. (2,0);
\draw (2.5,0.75) -- (2.5,1.5); 
\end{tikzpicture} 
=
\begin{tikzpicture}[very thick,scale=0.9,color=blue!50!black, baseline=-0.9cm, rotate=180]
\draw[line width=0pt] 
(3,0) node[line width=0pt] (D) {{\small$Y\vphantom{Y}$}}
(2,0) node[line width=0pt] (s) {{\small${X}$}}; 
\draw[redirected] (s) .. controls +(0,1) and +(0,1) .. (D);
\draw[line width=0pt] (2.5,0.5) node[line width=0pt] (D) {{\footnotesize$\tilde\eta$}};
\draw[line width=0pt] 
(3.45,0) node[line width=0pt] (re) {{\small${X}$}}
(1.55,0) node[line width=0pt] (li) {{\small$Y\vphantom{Y}$}}; 
\draw[line width=0pt] 
(2.7,2) node[line width=0pt] (ore) {{\small${X}$}}
(2.3,2) node[line width=0pt] (oli) {{\small$Y\vphantom{Y}$}}; 
\draw (li) .. controls +(0,0.75) and +(0,-0.25) .. (2.3,1.25);
\draw (2.3,1.25) -- (oli);
\draw (re) .. controls +(0,0.75) and +(0,-0.25) .. (2.7,1.25);
\draw (2.7,1.25) -- (ore);
\end{tikzpicture}
, \quad
\begin{tikzpicture}[very thick,scale=0.75,color=green!50!black, baseline, rotate=180]
\draw (0,-0.5) node[Odot] (D) {}; 
\draw (D) -- (0,0.6); 
\end{tikzpicture} 
= 
\begin{tikzpicture}[very thick,scale=1.0,color=blue!50!black, baseline=.4cm]
\draw[line width=0pt] 
(3,0) node[line width=0pt] (D) {{\small$Y\vphantom{Y}$}}
(2,0) node[line width=0pt] (s) {{\small$X$}}; 
\draw[directed] (s) .. controls +(0,1) and +(0,1) .. (D);
\draw[line width=0pt] (2.5,0.55) node[line width=0pt] (D) {{\footnotesize$\tilde\varepsilon$}};
\end{tikzpicture}
\ee
where we write the adjunction maps of $X \dashv Y$ as
$
\begin{tikzpicture}[thick,scale=0.35,color=blue!50!black, baseline=0.05cm]
\draw[redirected] (3,0) .. controls +(0,1) and +(0,1) .. (2,0);
\draw[line width=0pt] (2.5,0.2) node[line width=0pt] (D) {{\scriptsize$\tilde\varepsilon$}};
\end{tikzpicture} 
$
and 
$
\begin{tikzpicture}[thick,scale=0.35,color=blue!50!black, baseline=-0.22cm]
\draw[redirected] (3,0) .. controls +(0,-1) and +(0,-1) .. (2,0);
\draw[line width=0pt] (2.5,-0.15) node[line width=0pt] (D) {{\scriptsize$\tilde\eta$}};
\end{tikzpicture} 
$. 
Checking the defining properties of  (co)algebras is straightforward; e.\,g.~for associativity one observes
\be
\begin{tikzpicture}[very thick,scale=0.75,color=green!50!black, baseline=0.6cm]
\draw[-dot-] (3,0) .. controls +(0,1) and +(0,1) .. (2,0);
\draw[-dot-] (2.5,0.75) .. controls +(0,1) and +(0,1) .. (3.5,0.75);
\draw (3.5,0.75) -- (3.5,0); 
\draw (3,1.5) -- (3,2.25); 
\end{tikzpicture} 
=
\begin{tikzpicture}[very thick, scale=0.8, color=blue!50!black, baseline=0.9cm]
\draw[line width=0pt] 
(3,0) node[line width=0pt] (D) {}
(2,0) node[line width=0pt] (s) {}; 
\draw[redirected] (D) .. controls +(0,1) and +(0,1) .. (s);
\draw[line width=0pt] (2.5,0.5) node[line width=0pt] (D) {{\scriptsize$\tilde\varepsilon$}};
\draw[line width=0pt] 
(3.25,0) node[line width=0pt] (re) {}
(1.75,0) node[line width=0pt] (li) {}; 
\draw (li) .. controls +(0,0.75) and +(0,-0.25) .. (2.4,1.15);
\draw (2.4,1.15) -- (2.4,1.4);
\draw (re) .. controls +(0,0.75) and +(0,-0.25) .. (2.6,1.15);
\draw (2.6,1.15) -- (2.6,1.4);
\draw[line width=0pt] 
(4.1,0) node[line width=0pt] (r1) {}
(4.35,0) node[line width=0pt] (r2) {};
\draw[directed] (2.6,1.4) .. controls +(0,1) and +(0,1) .. (4.1,1.4);
\draw (4.1,1.4) -- (r1);
\draw (4.35,1.4) -- (r2);
\draw[line width=0pt] (3.35,1.8) node[line width=0pt] (D) {{\scriptsize{$\tilde\varepsilon$}}};
\draw (2.4,1.4) .. controls +(0,0.75) and +(0,-0.35) .. (3.28,2.55);
\draw (4.35,1.4) .. controls +(0,0.75) and +(0,-0.35) .. (3.47,2.55);
\draw (3.28,2.55) -- (3.28,2.75);
\draw (3.47,2.55) -- (3.47,2.75);
\end{tikzpicture}
= \!\!
\reflectbox{%
\begin{tikzpicture}[very thick, scale=0.8, color=blue!50!black, baseline=0.9cm]
\draw[line width=0pt] 
(3,0) node[line width=0pt] (D) {}
(2,0) node[line width=0pt] (s) {}; 
\draw[directed] (D) .. controls +(0,1) and +(0,1) .. (s);
\draw[line width=0pt] (2.5,0.5) node[line width=0pt] (D) {{\scriptsize{\reflectbox{$\tilde\varepsilon$}}}};
\draw[line width=0pt] 
(3.25,0) node[line width=0pt] (re) {}
(1.75,0) node[line width=0pt] (li) {}; 
\draw (li) .. controls +(0,0.75) and +(0,-0.25) .. (2.4,1.15);
\draw (2.4,1.15) -- (2.4,1.4);
\draw (re) .. controls +(0,0.75) and +(0,-0.25) .. (2.6,1.15);
\draw (2.6,1.15) -- (2.6,1.4);
\draw[line width=0pt] 
(4.1,0) node[line width=0pt] (r1) {}
(4.35,0) node[line width=0pt] (r2) {};
\draw[redirected] (2.6,1.4) .. controls +(0,1) and +(0,1) .. (4.1,1.4);
\draw (4.1,1.4) -- (r1);
\draw (4.35,1.4) -- (r2);
\draw[line width=0pt] (3.35,1.8) node[line width=0pt] (D) {{\scriptsize{\reflectbox{$\tilde\varepsilon$}}}};
\draw (2.4,1.4) .. controls +(0,0.75) and +(0,-0.35) .. (3.28,2.55);
\draw (4.35,1.4) .. controls +(0,0.75) and +(0,-0.35) .. (3.47,2.55);
\draw (3.28,2.55) -- (3.28,2.75);
\draw (3.47,2.55) -- (3.47,2.75);
\end{tikzpicture}
}
=
\begin{tikzpicture}[very thick,scale=0.75,color=green!50!black, baseline=0.6cm]
\draw[-dot-] (3,0) .. controls +(0,1) and +(0,1) .. (2,0);
\draw[-dot-] (2.5,0.75) .. controls +(0,1) and +(0,1) .. (1.5,0.75);
\draw (1.5,0.75) -- (1.5,0); 
\draw (2,1.5) -- (2,2.25); 
\end{tikzpicture} 
.
\ee
\item 
Writing 
$
\begin{tikzpicture}[thick,scale=0.35,color=blue!50!black, baseline=0.05cm]
\draw[redirected] (2,0) .. controls +(0,1) and +(0,1) .. (3,0);
\draw[line width=0pt] (2.5,0.2) node[line width=0pt] (D) {{\scriptsize$\varepsilon$}};
\end{tikzpicture} 
$
and 
$
\begin{tikzpicture}[thick,scale=0.35,color=blue!50!black, baseline=-0.22cm]
\draw[redirected] (2,0) .. controls +(0,-1) and +(0,-1) .. (3,0);
\draw[line width=0pt] (2.5,-0.15) node[line width=0pt] (D) {{\scriptsize$\eta$}};
\end{tikzpicture} 
$ 
for the adjunction maps of $Y \dashv X$, the coalgebra structure on $A = Y \otimes X$ is 
\be
\begin{tikzpicture}[very thick,scale=0.75,color=green!50!black, baseline=-0.6cm, rotate=180]
\draw[-dot-] (3,0) .. controls +(0,1) and +(0,1) .. (2,0);
\draw (2.5,0.75) -- (2.5,1.5); 
\end{tikzpicture} 
=
\begin{tikzpicture}[very thick,scale=0.9,color=blue!50!black, baseline=-0.9cm, rotate=180]
\draw[line width=0pt] 
(3,0) node[line width=0pt] (D) {{\small$X\vphantom{Y}$}}
(2,0) node[line width=0pt] (s) {{\small${Y}$}}; 
\draw[redirected] (D) .. controls +(0,1) and +(0,1) .. (s);
\draw[line width=0pt] (2.5,0.5) node[line width=0pt] (D) {{\footnotesize$\eta$}};
\draw[line width=0pt] 
(3.45,0) node[line width=0pt] (re) {{\small${Y}$}}
(1.55,0) node[line width=0pt] (li) {{\small$X\vphantom{Y}$}}; 
\draw[line width=0pt] 
(2.7,2) node[line width=0pt] (ore) {{\small${Y}$}}
(2.3,2) node[line width=0pt] (oli) {{\small$X\vphantom{Y}$}}; 
\draw (li) .. controls +(0,0.75) and +(0,-0.25) .. (2.3,1.25);
\draw (2.3,1.25) -- (oli);
\draw (re) .. controls +(0,0.75) and +(0,-0.25) .. (2.7,1.25);
\draw (2.7,1.25) -- (ore);
\end{tikzpicture}
, \quad
\begin{tikzpicture}[very thick,scale=0.75,color=green!50!black, baseline, rotate=180]
\draw (0,-0.5) node[Odot] (D) {}; 
\draw (D) -- (0,0.6); 
\end{tikzpicture} 
= 
\begin{tikzpicture}[very thick,scale=1.0,color=blue!50!black, baseline=.4cm]
\draw[line width=0pt] 
(3,0) node[line width=0pt] (D) {{\small$X\vphantom{Y}$}}
(2,0) node[line width=0pt] (s) {{\small$Y$}}; 
\draw[directed] (D) .. controls +(0,1) and +(0,1) .. (s);
\draw[line width=0pt] (2.5,0.55) node[line width=0pt] (D) {{\footnotesize$\varepsilon$}};
\end{tikzpicture}
\ee
and the Frobenius condition is easily checked. If $\tilde\varepsilon \circ \eta = 1_I$
then~$A$ is separable: 
\be
\begin{tikzpicture}[very thick,scale=0.85,color=green!50!black, baseline=0cm]
\draw[-dot-] (0,0) .. controls +(0,-1) and +(0,-1) .. (1,0);
\draw[-dot-] (0,0) .. controls +(0,1) and +(0,1) .. (1,0);
\draw (0.5,-0.8) -- (0.5,-1.2); 
\draw (0.5,0.8) -- (0.5,1.2); 
\end{tikzpicture}
=
\begin{tikzpicture}[very thick,scale=0.45,color=blue!50!black, baseline]
\draw (0,0) circle (1);
\draw[->, very thick] (0.100,-1) -- (-0.101,-1) node[above] {}; 
\draw[->, very thick] (0.100,1) -- (0.101,1) node[below] {}; 
\draw[line width=0pt] 
(-1.5,-2.4) node[line width=0pt] (r1) {{\small $Y$}}
(1.5,-2.4) node[line width=0pt] (r2) {{\small $X\vphantom{Y}$}}
(-1.5,2.4) node[line width=0pt] (or1) {{\small $Y$}}
(1.5,2.4) node[line width=0pt] (or2) {{\small $X\vphantom{Y}$}};
\draw (r1) -- (or1); 
\draw (r2) -- (or2); 
\draw[line width=0pt] (0,0.4) node[line width=0pt] (D) {{\scriptsize$\tilde\varepsilon$}};
\draw[line width=0pt] (0,0-.4) node[line width=0pt] (D) {{\scriptsize$\eta$}};
\end{tikzpicture} 
= 
\begin{tikzpicture}[very thick,scale=0.85,color=green!50!black, baseline=0cm]
\draw (0.5,1.2) -- (0.5,-1.2); 
\end{tikzpicture} 
\, . 
\ee
\item 
Twisting by~$\varphi$ means replacing $\varepsilon$ and $\eta$ by 
$\varepsilon' := \varepsilon \circ (1_Y \otimes \varphi^{-1})$
and 
$\eta' := 
(\varphi \otimes 1_Y) \circ \eta$, see Footnote~\ref{fn:twisting}.
The algebra structure remains unchanged since it only depends $\tilde\varepsilon$ and $\tilde\eta$. The Frobenius algebra structure obtained from $\tilde\varepsilon$, $\tilde\eta$, $\varepsilon'$ and $\eta'$ is separable by (ii) since
\be
\tilde\varepsilon \circ \eta' = 
\begin{tikzpicture}[very thick,scale=0.45,color=blue!50!black, baseline]
\draw (0,0) circle (1);
\draw[->, very thick] (0.100,-1) -- (-0.101,-1) node[above] {}; 
\draw[->, very thick] (0.100,1) -- (0.101,1) node[below] {}; 
\fill (180:1) circle (3.9pt) node[left] {{\small$\varphi$}};
\draw[line width=0pt] (0,0.4) node[line width=0pt] (D) {{\scriptsize$\tilde\varepsilon$}};
\draw[line width=0pt] (0,0-.4) node[line width=0pt] (D) {{\scriptsize$\eta$}};
\end{tikzpicture} 
= (\tilde\varepsilon \circ \eta)^{-1} \circ (\tilde\varepsilon \circ \eta) = 1_I
\, . 
\ee
\end{enumerate}
\end{proof}

For the application to equivariant completions, part (iii) of the above proposition is the key point, since it states that the algebra $A$ is separable Frobenius and hence an object in $\Beq$. The next proposition describes this more precisely. Recall that we assumed $\B$ to have idempotent complete 1-morphism categories.

\begin{proposition}\label{prop:XXdaggerInverseInBeq}
Let $X \in \B(a,b)$ and $Y \in \B(b,a)$.
\begin{enumerate}
\item Suppose $X,Y$ form a biadjunction $Y \dashv X \dashv Y$ such that $\tilde\varepsilon \circ \eta$ is invertible. 
Then for $A = Y \otimes X$ there is an adjoint equivalence $X: (a,A) \rightleftarrows (b,I_b) : Y$ in $\Beq$. 
\item Conversely, if we are given an adjoint equivalence $X: (a,A) \rightleftarrows (b,I_b) : Y$ in $\Beq$, then $Y \dashv X \dashv Y$ in $\B$ with $\tilde\varepsilon \circ \eta = 1_{I_b}$, 
and $A \cong Y \otimes X$ as Frobenius algebras.
\end{enumerate}
\end{proposition}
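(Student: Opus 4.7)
My strategy for part (i) is the following. I would first use Proposition \ref{prop:algebraXdaggerX}(iii) to turn $A = Y \otimes X$ into a separable Frobenius algebra, twisting the unit of $Y \dashv X$ to $\eta' = (\varphi \otimes 1_Y) \circ \eta$ so that $\tilde\varepsilon \circ \eta' = 1_{I_b}$ and $(a,A) \in \Beq$. Then I would equip $X$ with the right $A$-action $\tilde\varepsilon \otimes 1_X \colon X \otimes Y \otimes X \to X$ and $Y$ with the left $A$-action $1_Y \otimes \tilde\varepsilon$, obtaining 1-morphisms $X \colon (a,A) \to (b,I_b)$ and $Y \colon (b,I_b) \to (a,A)$ in $\Beq$ whose module axioms reduce to the Zorro moves for $X \dashv Y$. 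Since $Y \otimes_{I_b} X = Y \otimes X = A$, one composite is automatically the unit $I_{(a,A)}$, and for the other the key step is to prove $\pi_A^{X,Y} = \eta' \circ \tilde\varepsilon$ by a string-diagram calculation: unfolding $\Delta_A \circ \eta_A = (1_Y \otimes \eta' \otimes 1_X) \circ \tilde\eta$ inside $\pi_A^{X,Y}$, the two $\tilde\varepsilon$-contractions from the module actions together with the $\tilde\eta$-cup collapse by Zorro to a single $\tilde\varepsilon$, leaving only the $\eta'$-cup at the top. This identity shows that $\tilde\varepsilon$ coequalises the two $A$-actions on $X \otimes Y$, so it descends to $\bar\varepsilon \colon X \otimes_A Y \to I_b$, whose inverse is $\vartheta \circ \eta'$: one composition is $\tilde\varepsilon \circ \eta' = 1_{I_b}$, the other is $\vartheta \circ \pi_A^{X,Y} \circ \xi = \vartheta \xi = 1$. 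Taking $(\eta^{\Beq}, \varepsilon^{\Beq}) = (1_A, \bar\varepsilon)$, the Zorro identities in $\Beq$ reduce to those of $X \dashv Y$ in $\B$.

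For part (ii) I would reverse-engineer the $\B$-adjunction data from the $\Beq$-equivalence, with counit $\varepsilon^{\Beq} \colon X \otimes_A Y \xrightarrow{\sim} I_b$ and unit $\eta^{\Beq} \colon A \xrightarrow{\sim} Y \otimes X$. Writing $\vartheta, \xi$ for the splitting of $\pi_A^{X,Y}$ from Lemma \ref{lem:tensorprojector} and Remark \ref{rem:split-coequaliser}, I would set
\[
\tilde\varepsilon := \varepsilon^{\Beq} \circ \vartheta, \quad \eta := \xi \circ (\varepsilon^{\Beq})^{-1}, \quad \tilde\eta := \eta^{\Beq} \circ \eta_A, \quad \varepsilon := \varepsilon_A \circ (\eta^{\Beq})^{-1},
\]
which instantly yield $\tilde\varepsilon \circ \eta = 1_{I_b}$ and $\eta \circ \tilde\varepsilon = \pi_A^{X,Y}$. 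The Zorro identities for $X \dashv Y$ and $Y \dashv X$ in $\B$ should then be obtained by substituting these splittings into the Zorro identities for $X \rightleftarrows Y$ in $\Beq$, using crucially that $\eta^{\Beq}$ and $\varepsilon^{\Beq}$ are bimodule maps. Finally, since $\eta^{\Beq}$ is an $A$-$A$-bimodule isomorphism intertwining units, it transports the Frobenius structure of $A$ to $Y \otimes X$, and a direct comparison will show that this coincides with the Frobenius structure supplied by Proposition \ref{prop:algebraXdaggerX}(ii) applied to the reconstructed biadjunction.

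The main obstacle will be the Zorro bookkeeping in part (ii): translating identities from $\Beq$, where compositions involve quotients by $\pi_A^{X,Y}$, into honest $\B$-level equalities requires careful chasing of $\vartheta$ and $\xi$ through both triangle identities and checking that no residual $\pi_A^{X,Y}$-factors obstruct the reconstruction. By contrast, the delicate computation in (i) has essentially been carried out once the identity $\pi_A^{X,Y} = \eta' \circ \tilde\varepsilon$ is in hand.
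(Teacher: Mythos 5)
Your proposal is correct and follows essentially the same route as the paper: part (i) hinges on the Zorro moves together with the (twisted) normalisation $\tilde\varepsilon\circ\eta'=1_{I_b}$ to identify $X\otimes_A Y$ with $I_b$ via $\tilde\varepsilon$, and part (ii) reconstructs the $\B$-level biadjunction by composing the $\Beq$-adjunction data with the splitting maps $\xi,\vartheta$, exactly as in the paper's sketch. The only (harmless) difference is presentational in (i): the paper verifies the universal property of the coequaliser directly, whereas you split the idempotent explicitly via the identity $\pi_A^{X,Y}=\eta'\circ\tilde\varepsilon$ — which is correct and if anything makes the adjoint-equivalence data more explicit than the paper does.
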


\begin{proof}
\begin{enumerate}
\item
We show that $I_b$ satisfies the universal property of the coequaliser.
In the diagram~\eqref{eq:coequalisertensor} we set $\vartheta = \tilde\varepsilon$ and
\be
l = 
\begin{tikzpicture}[very thick, scale=1.0, color=blue!50!black, baseline=0.9cm]
\draw[line width=0pt] 
(3,0) node[line width=0pt] (D) {{\small $Y$}}
(2,0) node[line width=0pt] (s) {{\small $X\vphantom{Y}$}}; 
\draw[redirected] (D) .. controls +(0,1) and +(0,1) .. (s);
\draw[line width=0pt] (2.5,0.55) node[line width=0pt] (D) {{\scriptsize$\tilde\varepsilon$}};
\draw[line width=0pt] 
(1.0,0) node[line width=0pt] (r1) {{\small $X\vphantom{Y}$}}
(1.5,0) node[line width=0pt] (r2) {{\small $Y$}}
(1.0,2) node[line width=0pt] (or1) {{\small $X\vphantom{Y}$}}
(1.5,2) node[line width=0pt] (or2) {{\small $Y$}};
\draw (r1) -- (or1); 
\draw (r2) -- (or2); 
\end{tikzpicture}
, \quad
r = 
\begin{tikzpicture}[very thick, scale=1.0, color=blue!50!black, baseline=0.9cm]
\draw[line width=0pt] 
(3,0) node[line width=0pt] (D) {{\small $Y$}}
(2,0) node[line width=0pt] (s) {{\small $X\vphantom{Y}$}}; 
\draw[redirected] (D) .. controls +(0,1) and +(0,1) .. (s);
\draw[line width=0pt] (2.5,0.55) node[line width=0pt] (D) {{\scriptsize$\tilde\varepsilon$}};
\draw[line width=0pt] 
(3.5,0) node[line width=0pt] (r1) {{\small $X\vphantom{Y}$}}
(4.0,0) node[line width=0pt] (r2) {{\small $Y$}}
(3.5,2) node[line width=0pt] (or1) {{\small $X\vphantom{Y}$}}
(4.0,2) node[line width=0pt] (or2) {{\small $Y$}};
\draw (r1) -- (or1); 
\draw (r2) -- (or2); 
\end{tikzpicture}
. 
\ee
Then for any $\phi: X\otimes Y \rightarrow Z$ with $\phi\circ l = \phi \circ r$ we observe
\be
\begin{tikzpicture}[very thick, scale=1.0, color=blue!50!black, baseline=0.9cm]
\draw[line width=0pt] 
(3,0) node[line width=0pt] (D) {{\small $Y$}}
(2,0) node[line width=0pt] (s) {{\small $X\vphantom{Y}$}}; 
\draw[redirected] (D) .. controls +(0,1) and +(0,1) .. (s);
\draw[line width=0pt] (2.5,0.55) node[line width=0pt] (D) {{\scriptsize$\tilde\varepsilon$}};
\draw[line width=0pt] 
(1.0,0) node[line width=0pt] (r1) {{\small $X\vphantom{Y}$}}
(1.5,0) node[line width=0pt] (r2) {{\small $Y$}};
%
\draw (r1) -- (1.0, 1.25); 
\draw (r2) -- (1.5, 1.25); 
\draw
(1.25,1.25) node[rectangle, very thick, blue!50!black,draw,fill=white] 
(phi) {{\small$ \; \phi \; $}};
\draw[line width=0pt] 
(1.25, 2.25) node[line width=0pt] (Z) {{\small $Z$}};
\draw (phi) -- (Z);
\end{tikzpicture}
= 
\begin{tikzpicture}[very thick, scale=1.0, color=blue!50!black, baseline=0.9cm]
\draw[line width=0pt] 
(3,0) node[line width=0pt] (D) {{\small $Y$}}
(2,0) node[line width=0pt] (s) {{\small $X\vphantom{Y}$}}; 
\draw[redirected] (D) .. controls +(0,1) and +(0,1) .. (s);
\draw[line width=0pt] (2.5,0.55) node[line width=0pt] (D) {{\scriptsize$\tilde\varepsilon$}};
\draw[line width=0pt] 
(3.5,0) node[line width=0pt] (r1) {{\small $X\vphantom{Y}$}}
(4.0,0) node[line width=0pt] (r2) {{\small $Y$}};
%
\draw (r1) -- (3.5, 1.25); 
\draw (r2) -- (4.0, 1.25); 
\draw
(3.75,1.25) node[rectangle, very thick, blue!50!black,draw,fill=white] 
(phi) {{\small$ \; \phi \; $}};
\draw[line width=0pt] 
(3.75, 2.25) node[line width=0pt] (Z) {{\small $Z$}};
\draw (phi) -- (Z);
\end{tikzpicture}
\quad
\Longrightarrow
\quad
\begin{tikzpicture}[very thick, scale=1.0, color=blue!50!black, baseline=0.9cm]
\draw[redirected] (3,0) .. controls +(0,0.75) and +(0,0.75) .. (2,0);
\draw[directed] (3,0) .. controls +(0,-0.75) and +(0,-0.75) .. (2,0);
\draw[line width=0pt] 
(1.0,-1) node[line width=0pt] (r1) {{\small $X\vphantom{Y}$}}
(1.5,-1) node[line width=0pt] (r2) {{\small $Y$}};
\draw (r1) -- (1.0, 1.25); 
\draw (r2) -- (1.5, 1.25); 
\draw
(1.25,1.25) node[rectangle, very thick, blue!50!black,draw,fill=white] 
(phi) {{\small$ \; \phi \; $}};
\draw[line width=0pt] 
(1.25, 2.25) node[line width=0pt] (Z) {{\small $Z$}};
\draw (phi) -- (Z);
\draw[line width=0pt] (2.5,0.3) node[line width=0pt] (D) {{\scriptsize$\tilde\varepsilon$}};
\draw[line width=0pt] (2.5,-0.3) node[line width=0pt] (D) {{\scriptsize$\eta$}};
\end{tikzpicture}
= 
\begin{tikzpicture}[very thick, scale=1.0, color=blue!50!black, baseline=0.9cm]
\draw[redirected] (3,0) .. controls +(0,0.75) and +(0,0.75) .. (2,0);
\draw[line width=0pt] 
(3,-1) node[line width=0pt] (D) {{\small $Y$}}
(2,-1) node[line width=0pt] (s) {{\small $X\vphantom{Y}$}}; 
\draw (3, 0) -- (D); 
\draw (2, 0) -- (s); 
\draw (3.5, 0) -- (3.5, 1.25); 
\draw (4.0, 0) -- (4.0, 1.25); 
\draw[redirected] (3.5,0) .. controls +(0,-0.75) and +(0,-0.75) .. (4.0,0);
\draw
(3.75,1.25) node[rectangle, very thick, blue!50!black,draw,fill=white] 
(phi) {{\small$ \; \phi \; $}};
\draw[line width=0pt] 
(3.75, 2.25) node[line width=0pt] (Z) {{\small $Z$}};
\draw (phi) -- (Z);
\draw[line width=0pt] (2.5,0.3) node[line width=0pt] (D) {{\scriptsize$\tilde\varepsilon$}};
\draw[line width=0pt] (3.75,-0.3) node[line width=0pt] (D) {{\scriptsize$\eta$}};
\end{tikzpicture}
\ee
Composing with $(\tilde\varepsilon \circ \eta)^{-1}$ reveals that
\be
\zeta = 
\begin{tikzpicture}[very thick, scale=1.0, color=blue!50!black, baseline=1.1cm]
\draw (3.5, 0.75) -- (3.5, 1.25); 
\draw (4.0, 0.75) -- (4.0, 1.25); 
\draw[redirected] (3.5,0.75) .. controls +(0,-0.5) and +(0,-0.5) .. (4.0,0.75);
\draw
(3.75,1.25) node[rectangle, very thick, blue!50!black,draw,fill=white] 
(phi) {{\small$ \; \phi \; $}};
\draw[line width=0pt] 
(3.75, 2.25) node[line width=0pt] (Z) {{\small $Z$}};
\draw (phi) -- (Z);
\draw[line width=0pt] (3.75,0.6) node[line width=0pt] (D) {{\scriptsize$\eta$}};
\end{tikzpicture}
\, \circ 
(\tilde\varepsilon \circ \eta)^{-1}
\ee
makes~\eqref{eq:coequalisertensor} commute, thus proving $X \otimes_A Y \cong I_b$. 
\item
As we will not use this result explicitly, we will only sketch the proof. We are given an isomorphism $\varepsilon_A : Y \otimes X \to A$ of $A$-$A$-bimodules and an isomorphism $\eta_A : I_b \to X \otimes_A Y$ which form an adjunction $Y \dashv X$ in $\Beq$. This becomes a biadjunction via $\tilde\varepsilon_A := \eta_A^{-1}$ and $\tilde\eta_A := \varepsilon_A^{-1}$. Composing with the spitting maps for $\otimes_A$ produces a biadjunction $Y \dashv X \dashv Y$ in $\B$. For example, $\eta := \xi \circ \eta_A$, where $\xi : X \otimes_A Y \to X \otimes Y$. One now verifies that for the Frobenius algebra structure defined on $Y \otimes X$ via Proposition \ref{prop:algebraXdaggerX},  $\varepsilon_A$ is an isomorphism of Frobenius algebras. Finally, $\tilde\varepsilon \circ \eta = \tilde\varepsilon_A \circ \eta_A = 1_{I_b}$.
\end{enumerate}
\end{proof}

\begin{remark}
Recall that the motivation for Propositions~\ref{prop:algebraXdaggerX} and~\ref{prop:XXdaggerInverseInBeq} lies in the defect construction discussed in Section~\ref{sec:2dTFTwithDefects}. These results may be seen as a variant of the Eilenberg-Moore comparison functor and Beck's monadicity theorem, see e.\,g.~\cite[Sect.\,4.4]{Borceux2} or \cite[Sect.\,2]{BalmerStacksOfGroupRepresentations} for details.\footnote{We thank Paul Balmer and Alexei Davydov for making us aware of this parallel.} 

More precisely, from an adjunction $F \dashv G$ of functors $F: C \rightleftarrows D :G$ one obtains a monad $GF$ and the comparison functor $K: D \rightarrow C^{GF}$. Under the assumptions of the monadicity theorem (i.\,e.~if~$G$ creates coequalisers of $G$-split pairs) $K$ has a left adjoint which is an equivalence that on $M\in C^{GF}$ is the coequaliser of $FGFM \rightrightarrows M$. Hence in our setting (choosing~$\B$ to be the bicategory of categories and functors, where we view~$M$ as a left $GF$-module from the initial category to~$C$) the inverse of the comparison functor~$K$ is precisely $F \otimes_{GF}(-)$, completely analogous to the adjoint equivalence $X^\dagger: (b,I_b) \rightleftarrows (a,A) :X$ of Proposition~\ref{prop:XXdaggerInverseInBeq}. There Beck's coequaliser condition is automatically satisfied due to the existence of horizontal composition in $\Beq$ (cf.~Lemma~\ref{lem:tensorprojector}). 
\end{remark}

\begin{example}
An illustrative application of Propositions~\ref{prop:algebraXdaggerX} and~\ref{prop:XXdaggerInverseInBeq} is to obtain results of classical Galois theory in this setting.\footnote{We thank the anonymous referee for explaining this example to us.} 
Let~$\B$ be the bicategory of rings and bimodules, and let $L/K$ be a finite field extension. Then for $X := {}_K L_L \in \B(L,K)$ and $Y := {}_L L_K \in \B(K,L)$ there is an adjunction $Y \dashv X$ with $\varepsilon: L \otimes_K L \to L$, $\ell \otimes \ell' \mapsto \ell\ell'$ and $\eta: K \to L \otimes_L L$, $k \to k\otimes 1$. If $L/K$ is separable (so the pairing induced by $\tr_{L/K}(-)$ is nondegenerate)  we also have $X \dashv Y$ with adjunction maps 
$\tilde\varepsilon: L \otimes_L L \to K$, $\ell \otimes \ell' \mapsto \tr_{L/K}(\ell\ell')$ and
$\tilde\eta: L \to L \otimes_K L$, $\ell \mapsto \sum_i \ell e_i \otimes e'_i$, where $\{ e_i \}$, $\{ e'_i \}$ are dual $K$-bases of~$L$ with respect to the trace pairing. 

For this biadjunction $Y \dashv X \dashv Y$ we find $
\tilde\varepsilon \circ \eta = \tr_{L/K}(1) = [L:K]$ and $
\varepsilon \circ \tilde\eta = \sum_i e_i e'_i$. Hence after a twist by a 2-automorphism $\ell_0: X \to X$ with $\tr_{L/K}(\ell_0) = 1$, Proposition~\ref{prop:algebraXdaggerX} gives us the structure of a separable Frobenius algebra over~$K$ 
on $Y\otimes X = L\otimes_K L$, see~\eqref{eq:algebra-on-YX}. It is independent of the choice of~$\ell_0$, with 
multiplication $(\ell_1 \otimes \ell_2) \cdot (\ell'_1 \otimes \ell'_2) = \tr_{L/K}(\ell_2 \ell'_1) \ell_1 \otimes \ell'_2$ and unit $\sum_i e_i \otimes e'_i$. 

The separable Frobenius algebra $L \otimes_K L$ can be related to the twisted group ring of $G := \operatorname{Aut}_K(L)$ over~$L$, that is, to~$L\langle G \rangle = \{ \sum_{g \in G} \ell_g\,[g] \, | \, \ell_g \in L \}$ with  multiplication induced from $\ell\,[g] \cdot \ell'\,[g'] = \ell g(\ell')\,[gg']$. There is a group homomorphism $G \to (L \otimes_K L)^\times$ sending $g\in G$ to $\sum_i g(e_i) \otimes e'_i$. 
By the universal property of group rings 
it induces a map of $K$-algebras $L\langle G \rangle \to L \otimes_K L$, $\ell\,[g] \mapsto \sum_i \ell g(e_i) \otimes e'_i$.
If we further assume that $L/K$ is Galois (so $|G| = [L:K]$), this map is a bijection: it is injective since,
if $\sum_{g \in G} \ell_g \, [g]$ is in the kernel, then $\sum_{g \in G} \ell_g g(e_i) = 0$ for all~$i$. But by Dedekind's lemma the $(|G|\times |G|)$-matrix with entries $g(e_i) \in L$ is invertible, so $\ell_g=0$ for all~$g$. Surjectivity follows by dimension count.
	
In conclusion, if $L/K$ is a finite Galois extension, then according to Proposition~\ref{prop:XXdaggerInverseInBeq} we have $(K,I_K) \cong (L, L \otimes_K L)$ in $\Beq$, which by the above discussion is equivalent to $(L,L \langle G \rangle)$. In particular, the categories $\Beq((\mathbb{Z},I_{\mathbb{Z}}),(K,I_K))$ and $\Beq((\mathbb{Z},I_{\mathbb{Z}}),(L,L \langle G \rangle))$ are equivalent, i.\,e.~there is an equivalence $K$-$\operatorname{Vect} \cong \modu(L \langle G \rangle)$ between $K$-vector spaces and $L$-vector spaces with a skew-linear action of $G = \operatorname{Gal}(L/K)$. 
\end{example}

\subsection[$\Beq$ for pivotal bicategories]{$\boldsymbol{\Beq}$ for pivotal bicategories}\label{subsec:Beq-pivot}

In Sections \ref{subsec:Beq} and \ref{subsec:eqBeq} we assumed that the bicategory $\B$ has 1-morphism categories which are idempotent complete. We now demand in addition that $\B$ has adjoints satisfying $\dX  = X^\dagger$ for all 1-morphisms $X$, and that $\B$ is pivotal. 
We will show that under these additional assumptions, $\Beq$ has adjoints (but is not necessarily pivotal\footnote{$\Beq$ is, however, ``pivotal up to the action of Serre functors'' given by Nakayama twists, as follows from Proposition~\ref{prop:perfectlypairedinBeq} below together with an adaptation of the discussion in~\cite[Sect.\,7]{cm1208.1481}.}) and that the preferred adjunctions give rise to symmetric Frobenius algebras.

Recall that for any Frobenius algebra~$A$ there is the \textsl{Nakayama automorphism} 
\be\label{eq:Nakayama}
\gamma_A = 
\begin{tikzpicture}[very thick, scale=0.5,color=green!50!black, baseline=-0.35cm]
\draw (0,0.8) -- (0,2);
\draw[-dot-] (0,0.8) .. controls +(0,-0.5) and +(0,-0.5) .. (-0.75,0.8);
\draw[directedgreen, color=green!50!black] (-0.75,0.8) .. controls +(0,0.5) and +(0,0.5) .. (-1.5,0.8);
\draw[-dot-] (0,-1.8) .. controls +(0,0.5) and +(0,0.5) .. (-0.75,-1.8);
\draw[redirectedgreen, color=green!50!black] (-0.75,-1.8) .. controls +(0,-0.5) and +(0,-0.5) .. (-1.5,-1.8);
\draw (0,-1.8) -- (0,-3);
\draw (-1.5,0.8) -- (-1.5,-1.8);
\draw (-0.375,-0.2) node[Odot] (D) {}; 
\draw (-0.375,0.4) -- (D);
\draw (-0.375,-0.8) node[Odot] (E) {}; 
\draw (-0.375,-1.4) -- (E);
\end{tikzpicture}
\, , \quad
\gamma_A^{-1} = 
\begin{tikzpicture}[very thick, scale=0.5,color=green!50!black, baseline=-0.35cm]
\draw (0,0.8) -- (0,2);
\draw[-dot-] (0,0.8) .. controls +(0,-0.5) and +(0,-0.5) .. (0.75,0.8);
\draw[directedgreen, color=green!50!black] (0.75,0.8) .. controls +(0,0.5) and +(0,0.5) .. (1.5,0.8);
\draw[-dot-] (0,-1.8) .. controls +(0,0.5) and +(0,0.5) .. (0.75,-1.8);
\draw[redirectedgreen, color=green!50!black] (0.75,-1.8) .. controls +(0,-0.5) and +(0,-0.5) .. (1.5,-1.8);
\draw (0,-1.8) -- (0,-3);
\draw (1.5,0.8) -- (1.5,-1.8);
\draw (0.375,-0.2) node[Odot] (D) {}; 
\draw (0.375,0.4) -- (D);
\draw (0.375,-0.8) node[Odot] (E) {}; 
\draw (0.375,-1.4) -- (E);
\end{tikzpicture}
\, . 
\ee
It measures how far a Frobenius algebra is away from being symmetric in the sense that~$A$ is symmetric if and only if $\gamma_A = 1_A$, see e.\,g.~\cite{fs0901.4886}. 

For $X \in \Beq( (a,A), (b,B) )$ we write ${}_{\beta}X_{\alpha}$ for the same underlying 1-morphism~$X$ in~$\B$ whose left and right module actions however are `twisted' by pre-composition with algebra automorphisms~$\beta: B\to B$ and~$\alpha: A\to A$.

\begin{proposition}\label{prop:Beq-has-adjoints}
$\Beq$ has adjoints. The left and right adjoints of $X \in \Beq( (a,A), (b,B) )$ are 
$\deqX := {}_{\gamma_A^{-1}} (\dX)$ and $X^\star := (X^\dagger)_{\gamma_B}$, respectively. 
\end{proposition}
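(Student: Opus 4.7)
The strategy is to promote the existing adjunction data for $\dX \dashv X \dashv X^\dagger$ in $\B$ to adjunction data in $\Beq$, with the Nakayama twists in $\deqX$ and $X^\star$ encoding the compatibility between pivotal duality and the possibly non-symmetric Frobenius structures on $A$ and $B$. The essential point to keep in mind is that the unit of $(a,A)$ in $\Beq$ is $A$ itself (not $I_a$), so every cup and cap in $\Beq$ must land in, or originate from, $A$ or $B$ rather than $I_a$ or $I_b$, and must respect the relevant bimodule actions.

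First I will give explicit formulas for the bimodule structures on $\deqX$ and $X^\star$ in terms of those on $X$ and the pivotal structure of $\B$. For $\deqX$ the underlying 1-morphism is $\dX$; the right $B$-action is pivotally dual to the left $B$-action on $X$, while the left $A$-action is pivotally dual to the right $A$-action on $X$ and then precomposed with $\gamma_A^{-1} \otimes 1_{\dX}$. Symmetrically for $X^\star$, with the twist by $\gamma_B$ applied on the $B$-side. That these really define $A$-$B$- and $B$-$A$-bimodules is immediate because $\gamma_A$ and $\gamma_B$ are algebra automorphisms.

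Next I will define the candidate adjunction maps. For $\deqX \dashv X$ I will take
\[
\ev^{\mathrm{eq}}_X \;=\; \eta_A \circ \ev_X \circ \xi \,, \qquad
\coev^{\mathrm{eq}}_X \;=\; \vartheta \circ \coev_X \circ \varepsilon_B \,,
\]
where $\xi$ and $\vartheta$ are the splitting maps of Lemma~\ref{lem:tensorprojector} for $\deqX \otimes_B X$ and $X \otimes_A \deqX$, and $\eta_A$, $\varepsilon_B$ are the Frobenius (co)units; the maps for $X \dashv X^\star$ will be built analogously from $\tev_X$, $\tcoev_X$ and the (co)units of $B$, $A$. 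The main technical step is then to show that these candidates are bimodule maps. For $\ev^{\mathrm{eq}}_X$, right $A$-linearity amounts to transporting the right $A$-action on $X$ across the cup $\ev_X$ to obtain an action on $\dX$; by construction of the pivotal dual this is the left $A$-action on $\dX$ before the twist, and the precomposition by $\gamma_A^{-1}$ in $\deqX$ is exactly what converts it, via the Frobenius identities, into the regular right action on the target $A$. The same mechanism, with the Frobenius structure of $B$ and the twist by $\gamma_B$ on the $X^\star$-side, takes care of the remaining three adjunction maps; the asymmetry between left and right adjoints is reflected in the fact that the relevant twist lives on the $A$-side for the left adjoint and on the $B$-side for the right.

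Finally, the Zorro moves in $\Beq$ remain to be checked. After composing the candidate (co)evaluations horizontally and inserting the splittings $\xi$, $\vartheta$, each Zorro move reduces to its counterpart in $\B$ together with the unit and separability axioms for $A$ or $B$, which absorb the idempotents $\pi_A$, $\pi_B$ produced by the splittings. The hard part of the proof will be the bimodule-map verification in the previous step: carefully tracking how the Nakayama automorphism interacts with the pivotal structure of $\B$ and the Frobenius structure of $A$, $B$ is what pins down the precise form $\deqX = {}_{\gamma_A^{-1}}(\dX)$ and $X^\star = (X^\dagger)_{\gamma_B}$ and shows that no other twist would work.
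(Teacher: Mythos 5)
Your overall strategy coincides with the paper's: transport the adjunctions $\dX \dashv X \dashv X^\dagger$ from $\B$ to $\Beq$, put the Nakayama twists into the module structures of $\deqX$ and $X^\star$, and reduce the Zorro moves in $\Beq$ to those in $\B$ plus separability and pivotality. The gap is in your explicit candidate adjunction maps. The formulas $\ev^{\mathrm{eq}}_X = \eta_A \circ \ev_X \circ \xi$ and $\coev^{\mathrm{eq}}_X = \vartheta \circ \coev_X \circ \varepsilon_B$ are in general not bimodule maps, hence not even valid 2-morphisms in $\Beq$. Concretely, left $B$-linearity of your coevaluation would force $\varepsilon_B(b b')\,\vartheta(\coev_X) = \varepsilon_B(b')\; b\cdot\vartheta(\coev_X)$ for all $b,b'\in B$; taking $B=A_G$ and $b$ in a twisted component ${}_gI$ with $g\neq e$, $b'$ in ${}_{g^{-1}}I$, the left side is nonzero while the right side vanishes. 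Dually, your evaluation factors through $\eta_A : I_a \to A$ and so only ever reaches the ``unit component'' of $A$, whereas an $A$-$A$-bimodule map $\deqX\otimes_B X \to A$ must in general surject onto more of $A$ (again visible for $A=A_G$).

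The mechanism you describe in words --- transporting the module action across the cup --- is the right one, but it has to be wired into the maps themselves rather than appended as an outer (co)unit. The correct evaluation creates the outgoing $A$-line via $\Delta_A\circ\eta_A$ and lets one leg of the coproduct act on the $X$-strand before the cup closes, and the correct coevaluation feeds the incoming $B$-line into the $X$-strand of the cap via the left $B$-action; this is exactly \eqref{eq:LeftAdjunctionInBeq}. With your formulas the step you defer as ``the main technical step'' (the bimodule-map verification) fails, and the Zorro-move computation would not close up, since composing your unit and counit produces $\varepsilon_B\circ\eta_B$-type factors (which are not $1$ for a separable Frobenius algebra such as $A_G$, where $\varepsilon\circ\eta = |G|$) instead of the idempotents $\pi_A$, $\pi_B$ that separability is supposed to absorb.
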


\begin{proof}  
The adjunction maps for $\deqX \dashv X$ in $\Beq$ are given by
\be\label{eq:LeftAdjunctionInBeq}
\ev_X = \!
\begin{tikzpicture}[very thick,scale=1.0,color=blue!50!black, baseline=.8cm]
\draw[line width=0pt] 
(1.75,1.75) node[line width=0pt, color=green!50!black] (A) {{\small$A\vphantom{\deqX }$}}
(1,0) node[line width=0pt] (D) {{\small$X\vphantom{\deqX }$}}
(0,0) node[line width=0pt] (s) {{\small$\deqX $}}; 
\draw[directed] (D) .. controls +(0,1.5) and +(0,1.5) .. (s);

\draw[color=green!50!black] (1.25,0.55) .. controls +(0.0,0.25) and +(0.25,-0.15) .. (0.86,0.95);
\draw[-dot-, color=green!50!black] (1.25,0.55) .. controls +(0,-0.5) and +(0,-0.5) .. (1.75,0.55);

\draw[color=green!50!black] (1.5,-0.1) node[Odot] (unit) {}; 
\draw[color=green!50!black] (1.5,0.15) -- (unit);

\fill[color=green!50!black] (0.86,0.95) circle (2pt) node (meet) {};

\draw[color=green!50!black] (1.75,0.55) -- (A);
\end{tikzpicture}
\circ \xi
\, , \quad
\coev_X =  \vartheta \circ 
\begin{tikzpicture}[very thick,scale=1.0,color=blue!50!black, baseline=-.8cm,rotate=180]
\draw[line width=0pt] 
(3.21,1.85) node[line width=0pt, color=green!50!black] (B) {{\small$B\vphantom{\deqX }$}}
(3,0) node[line width=0pt] (D) {{\small$X\vphantom{\deqX }$}}
(2,0) node[line width=0pt] (s) {{\small$\deqX $}}; 
\draw[redirected] (D) .. controls +(0,1.5) and +(0,1.5) .. (s);

\fill[color=green!50!black] (2.91,0.85) circle (2pt) node (meet) {};

\draw[color=green!50!black] (2.91,0.85) .. controls +(0.2,0.25) and +(0,-0.75) .. (B);

\end{tikzpicture}
\ee
where $\xi: \deqX \otimes_B X \rightarrow \deqX  \otimes X$ and $\vartheta: X \otimes \deqX  \rightarrow X \otimes_A \deqX $ are the splitting and projection maps, cf.~Lemma~\ref{lem:tensorprojector}. The case of right adjoints $X^\star = (X^\dagger)_{\gamma_B}$ is analogous. 
The $A$-$B$-bimodule structures on~$X^\star$, $\deqX$ are those of $X^\dagger$, $\dX$, e.\,g.
\be
\begin{tikzpicture}[very thick,scale=0.85,color=blue!50!black, baseline=0cm]
\draw[line width=0] 
(-0.5,-1.25) node[line width=0pt, color=green!50!black] (Algebra) {{\small $A\vphantom{X^\dagger}$}}
(0,1.25) node[line width=0pt] (A) {{\small $X^\dagger$}}
(0,-1.25) node[line width=0pt] (A2) {{\small $X^\dagger$}};
\draw (A) -- (A2);
\draw[color=green!50!black] (0,0) .. controls +(-0.5,-0.25) and +(0,1) .. (Algebra);
\fill[color=green!50!black] (0,0) circle (2.5pt) node[left] {};
\end{tikzpicture}
\!=\!
\begin{tikzpicture}[very thick,scale=0.85,color=blue!50!black, baseline=0cm]
\draw[line width=0] 
(0.5,-1.25) node[line width=0pt, color=green!50!black] (Algebra) {{\small $A\vphantom{X^\dagger}$}}
(-1,1.25) node[line width=0pt] (A) {{\small $X^\dagger$}}
(1,-1.25) node[line width=0pt] (A2) {{\small $X^\dagger$}}; 
\draw[redirected] (0,0) .. controls +(0,-1) and +(0,-1) .. (-1,0);
\draw[redirected] (1,0) .. controls +(0,1) and +(0,1) .. (0,0);
\draw[color=green!50!black] (0,0) .. controls +(0.5,-0.25) and +(0,1) .. (Algebra);
\fill[color=green!50!black] (0,0) circle (2.5pt) node[left] {};
\draw (-1,0) -- (A); 
\draw (1,0) -- (A2); 
\end{tikzpicture}
, 
\quad
\begin{tikzpicture}[very thick,scale=0.85,color=blue!50!black, baseline=0cm]
\draw[line width=0] 
(0.5,-1.25) node[line width=0pt, color=green!50!black] (Algebra) {{\small $B\vphantom{X^\dagger}$}}
(0,1.25) node[line width=0pt] (A) {{\small $X^\dagger$}}
(0,-1.25) node[line width=0pt] (A2) {{\small $X^\dagger$}};
\draw (A) -- (A2);
\draw[color=green!50!black] (0,0) .. controls +(0.5,-0.25) and +(0,1) .. (Algebra);
\fill[color=green!50!black] (0,0) circle (2.5pt) node[left] {};
\end{tikzpicture}
\!=\!
\begin{tikzpicture}[very thick,scale=0.85,color=blue!50!black, baseline=0cm]
\draw[line width=0] 
(1.5,-1.25) node[line width=0pt, color=green!50!black] (Algebra) {{\small $B\vphantom{X^\dagger}$}}
(-1,1.25) node[line width=0pt] (A) {{\small $X^\dagger$}}
(1,-1.25) node[line width=0pt] (A2) {{\small $X^\dagger$}}; 
\draw[redirected] (0,0) .. controls +(0,-1) and +(0,-1) .. (-1,0);
\draw[redirected] (1,0) .. controls +(0,1) and +(0,1) .. (0,0);
\fill[color=green!50!black] (0,0) circle (2.5pt) node[left] {};
\draw (-1,0) -- (A); 
\draw (1,0) -- (A2); 
\draw[color=green!50!black] (0,0) .. controls +(0.15,0.15) and +(0.2,0) .. (-0.25,-0.25);
\draw[color=green!50!black] (-0.25,-0.25) .. controls +(-0.6,0) and +(-0.85,0) .. (0.5,1.25);
\draw[color=green!50!black] (0.5,1.25) .. controls +(0.7,0) and +(0,1) .. (1.5,-0.25);
\draw[<-,color=green!50!black] (-0.21,-0.25) -- (-0.22,-0.25);
\draw[<-,color=green!50!black] (0.41,1.25) -- (0.42,1.25);
\draw[color=green!50!black] (1.5,-0.25) -- (Algebra); 
\end{tikzpicture}
, 
\ee
appropriately twisted by the (inverse) of the Nakayama automorphism. 

As an example we verify that the second Zorro move in~\eqref{eq:LeftZorroMoves} holds in $\Beq$. Written in terms of diagrams in~$\B$ its left-hand side is
\be
\begin{tikzpicture}[very thick,scale=1.2,color=blue!50!black, baseline=0cm]
\draw[line width=0] 
(1,1.25) node[line width=0pt] (A) {{\small $\deqX$}}
(-1,-1.25) node[line width=0pt] (A2) {{\small $\deqX$}}; 
\draw[directed] (0,0) .. controls +(0,1) and +(0,1) .. (-1,0);
\draw[directed] (1,0) .. controls +(0,-1) and +(0,-1) .. (0,0);
\draw (-1,0) -- (A2); 
\draw (1,0) -- (A); 
\draw[-dot-, color=green!50!black] (-0.15,0.2) .. controls +(0,-0.5) and +(0,-0.5) .. (-0.85,0.2);
\draw[color=green!50!black] (-0.15,0.2) .. controls +(0,0.25) and +(-0.0025,-0.025) .. (-0.09,0.45);
\draw[color=green!50!black] (-0.85,0.2) .. controls +(0,0.25) and +(0.0025,-0.025) .. (-0.91,0.45);
\fill[color=green!50!black] (-0.09,0.45) circle (1.5pt) node (meet) {};
\fill[color=green!50!black] (-0.91,0.45) circle (1.5pt) node (meet) {};
\draw[color=green!50!black] (-0.5,-0.45) node[Odot] (down) {}; 
\draw[color=green!50!black] (down) -- (-0.5,-0.15); 
\draw[-dot-, color=green!50!black] (0.9,0.1) .. controls +(0,-0.5) and +(0,-0.5) .. (0.1,0.1);
\draw[color=green!50!black] (0.9,0.1) .. controls +(0.0,0.1) and +(-0.0025,-0.025) .. (1.0,0.25);
\draw[color=green!50!black] (0.1,0.1) .. controls +(0,0.1) and +(0.0025,-0.025) .. (-0.02,0.25);
\fill[color=green!50!black] (1.0,0.25) circle (1.5pt) node (meet) {};
\fill[color=green!50!black] (-0.02,0.25) circle (1.5pt) node (meet) {};
\draw[color=green!50!black] (0.5,-0.55) node[Odot] (down) {}; 
\draw[color=green!50!black] (down) -- (0.5,-0.25); 
\end{tikzpicture}
=
\begin{tikzpicture}[very thick,scale=1.2,color=blue!50!black, baseline=0cm]
\draw[line width=0] 
(1,1.25) node[line width=0pt] (A) {{\small $\dX$}}
(-1,-1.25) node[line width=0pt] (A2) {{\small $\dX$}}; 
\draw[directed] (0,0) .. controls +(0,1) and +(0,1) .. (-1,0);
\draw[directed] (1,0) .. controls +(0,-1) and +(0,-1) .. (0,0);
\draw (-1,0) -- (A2); 
\draw (1,0) -- (A); 
\draw[-dot-, color=green!50!black] (-0.15,0.2) .. controls +(0,-0.5) and +(0,-0.5) .. (-0.85,0.2);
\draw[color=green!50!black] (-0.15,0.2) .. controls +(0,0.25) and +(-0.0025,-0.025) .. (-0.09,0.45);
\draw[color=green!50!black] (-0.85,0.2) .. controls +(0,0.25) and +(0.0025,-0.025) .. (-0.91,0.45);
\fill[color=green!50!black] (-0.09,0.45) circle (1.5pt) node (meet) {};
\fill[color=green!50!black] (-0.91,0.45) circle (1.5pt) node (meet) {};
\draw[color=green!50!black] (-0.5,-0.45) node[Odot] (down) {}; 
\draw[color=green!50!black] (down) -- (-0.5,-0.15); 
\draw[-dot-, color=green!50!black] (0.9,0.1) .. controls +(0,-0.5) and +(0,-0.5) .. (0.1,0.1);
\draw[color=green!50!black] (0.9,0.1) .. controls +(0.0,0.1) and +(-0.0025,-0.025) .. (1.0,0.25);
\draw[color=green!50!black] (0.1,0.1) .. controls +(0,0.1) and +(0.0025,-0.025) .. (-0.02,0.25);
\fill[color=green!50!black] (1.0,0.25) circle (1.5pt) node (meet) {};
\fill[color=green!50!black] (-0.02,0.25) circle (1.5pt) node (meet) {};
\draw[color=green!50!black] (0.5,-0.55) node[Odot] (down) {}; 
\draw[color=green!50!black] (down) -- (0.5,-0.25); 
\fill[color=green!50!black] (0.89,0.05) circle (1.5pt) node[left] (phi) {};
\fill[color=green!50!black] (0.98,0.14) circle (0pt) node[left] (phi) {{\small $\gamma_A^{-1}$}};
\end{tikzpicture}
=
\begin{tikzpicture}[very thick,scale=1.2,color=blue!50!black, baseline=0cm]
\draw[line width=0] 
(1,1.25) node[line width=0pt] (A) {{\small $\dX$}}
(-1,-1.25) node[line width=0pt] (A2) {{\small $\dX$}}; 
\draw[directed] (0,0) .. controls +(0,1) and +(0,1) .. (-1,0);
\draw[directed] (1,0) .. controls +(0,-1) and +(0,-1) .. (0,0);
\draw (-1,0) -- (A2); 
\draw (1,0) -- (A); 
\draw[-dot-, color=green!50!black] (-0.15,-0.1) .. controls +(0,-0.25) and +(0,-0.25) .. (-0.55,-0.1);
\draw[color=green!50!black] (-0.15,-0.1) .. controls +(0,0.25) and +(-0.0025,-0.025) .. (-0.00,0.2);
\draw[color=green!50!black] (-0.55,-0.1) .. controls +(0,0.25) and +(0.0025,-0.025) .. (-0.06,0.4);
\fill[color=green!50!black] (-0.00,0.2) circle (1.5pt) node (meet) {};
\fill[color=green!50!black] (-0.06,0.4) circle (1.5pt) node (meet) {};
\draw[color=green!50!black] (-0.35,-0.6) node[Odot] (down) {}; 
\draw[color=green!50!black] (down) -- (-0.35,-0.3); 
\draw[-dot-, color=green!50!black] (0.5,0.5) .. controls +(0,-0.25) and +(0,-0.25) .. (0.1,0.5);
\draw[directedgreen, color=green!50!black] (0.5,0.5) .. controls +(0,0.25) and +(0,0.25) .. (0.9,0.5);
\draw[
	color=green!50!black, decoration={markings, mark=at position 0.66 with {\arrow{>}}}, postaction={decorate}
	]
	 (0.9,0.5) .. controls +(0,-0.7) and +(0.6,-0.8) .. (0.0,-0.15);
\draw[color=green!50!black] (0.1,0.5) .. controls +(0,0.1) and +(0.0025,-0.025) .. (-0.18,0.6);
\fill[color=green!50!black] (0.0,-0.15) circle (1.5pt) node (meet) {};
\fill[color=green!50!black] (-0.18,0.6) circle (1.5pt) node (meet) {};
\draw[color=green!50!black] (0.3,-0.0) node[Odot] (down) {}; 
\draw[color=green!50!black] (down) -- (0.3,0.3); 
%
\fill[color=green!50!black] (0.5,0.5) circle (1.5pt) node[above left] (phi) {};
\fill[color=green!50!black] (0.69,0.43) circle (0pt) node[above left] (phi) {{\small $\gamma_A^{-1}$}};
\end{tikzpicture}
=
\begin{tikzpicture}[very thick,scale=1.2,color=blue!50!black, baseline=0cm]
\draw[line width=0] 
(0,1.25) node[line width=0pt] (A) {{\small $\deqX$}}
(0,-1.25) node[line width=0pt] (A2) {{\small $\deqX$}}; 
\draw (A2) -- (A); 
\end{tikzpicture} 
\ee
where we used the projection and splitting properties of the maps $\pi_A^{\deqX,X}$, $\pi_A^{X,\deqX}$ in~\eqref{eq:piA}, pivotality~\eqref{eq:pivotal}, the fact that~$A$ is separable Frobenius, and finally the Zorro move for~$\dX$ in~$\B$.  
\end{proof}

In the following theorem we summarise the relevant results from Propositions~\ref{prop:algebraXdaggerX} and~\ref{prop:XXdaggerInverseInBeq} and show in addition that $A= X^\dagger \otimes X$ is symmetric.

\begin{theorem}\label{thm:algebraXdaggerX-summary}
Let $X \in \B(a,b)$ have invertible $\dimr(X)$. Then $A:=X^\dagger \otimes X$ is a symmetric separable Frobenius algebra in $\B(a,a)$ and $X: (a,A) \rightleftarrows (b,I_b) : X^\dagger$ is an adjoint equivalence in $\Beq$.
\end{theorem}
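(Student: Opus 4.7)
The plan is to assemble the theorem from Propositions~\ref{prop:algebraXdaggerX} and~\ref{prop:XXdaggerInverseInBeq} after verifying two ingredients: that the biadjunction hypothesis is met with $\tilde\varepsilon\circ\eta$ invertible, and that the resulting Frobenius algebra is symmetric. First, pivotality of $\B$ gives $\dX = X^\dagger$ and provides both adjunctions $X^\dagger \dashv X$ (with $\ev_X, \coev_X$) and $X \dashv X^\dagger$ (with $\tev_X, \tcoev_X$), which is precisely the biadjunction $Y \dashv X \dashv Y$ of Proposition~\ref{prop:algebraXdaggerX}(ii) with $Y = X^\dagger$. Unfolding the definitions, one has $\tilde\varepsilon \circ \eta = \tev_X \circ \coev_X = \dimr(X)$, which is invertible by hypothesis. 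Proposition~\ref{prop:algebraXdaggerX}(iii) therefore equips $A = X^\dagger \otimes X$ with a separable Frobenius structure in which the multiplication $\mu$ and unit $\eta_A$ are the untwisted maps built from $\tev_X$ and $\tcoev_X$, while the comultiplication $\Delta$ and counit $\varepsilon$ carry a factor of $\dimr(X)^{-1}$ coming from the twist $\varphi$.

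Second, to check the symmetry condition~\eqref{eq:Asymmetric} I would expand both sides as string diagrams in $\B$. After substituting the explicit formulas for $\Delta$ and $\varepsilon$, each side reduces to a configuration in which $X$- and $X^\dagger$-strands are bent around via nested cups and caps, weighted by $\dimr(X)^{-1}$. Applying the pivotality identities~\eqref{eq:pivotal} to move the fold from one side of the diagram to the other, together with the Zorro moves~\eqref{eq:LeftZorroMoves} and~\eqref{eq:otherZorro} to straighten the resulting loops, transforms the left-hand diagram into the right-hand one; the $\dimr(X)^{-1}$ scalar appears equally on both sides and cancels. Equivalently, one could verify directly that the Nakayama automorphism~\eqref{eq:Nakayama} equals $1_A$ by unfolding it in terms of $\ev_X, \coev_X, \tev_X, \tcoev_X$ and collapsing using pivotality.

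Finally, with $A$ established as a separable symmetric Frobenius algebra and the biadjunction satisfying the invertibility hypothesis, Proposition~\ref{prop:XXdaggerInverseInBeq}(i) immediately yields the adjoint equivalence $X: (a,A) \rightleftarrows (b,I_b) : X^\dagger$ in $\Beq$ (and by the symmetry of $A$ this equivalence already lives in the sub-bicategory $\Borb \subset \Beq$ to be introduced in the next section). The main obstacle will be the symmetry check: while pivotality intuitively makes $X^\dagger \otimes X$ look like a cyclic, trace-like object for which symmetry is natural, the diagrammatic bookkeeping of orientations and of the placement of the scalar twist $\dimr(X)^{-1}$ requires careful execution.
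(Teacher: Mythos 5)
Your proposal is correct and follows essentially the same route as the paper: invoke Proposition~\ref{prop:algebraXdaggerX} with $Y=X^\dagger$ and $\tilde\varepsilon\circ\eta=\tev_X\circ\coev_X=\dimr(X)$ to get the separable Frobenius structure, check symmetry by a direct string-diagram computation using pivotality~\eqref{eq:pivotal}, and then apply Proposition~\ref{prop:XXdaggerInverseInBeq}(i) for the adjoint equivalence. One small bookkeeping correction: the twist puts the factor $\dimr(X)^{-1}$ on the comultiplication but $\dimr(X)$ (not its inverse) on the counit $\varepsilon'=\varepsilon\circ(1_Y\otimes\varphi^{-1})$ --- though, as you note, this scalar appears on both sides of~\eqref{eq:Asymmetric} and cancels, so the symmetry argument is unaffected.
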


\begin{proof}
Recall from Proposition~\ref{prop:algebraXdaggerX} that the algebra structure on $A$ is given by \eqref{eq:algebra-on-YX} (with $\tilde\varepsilon=\tev_X$ and $\tilde\eta=\tcoev_X$) while the coalgebra structure includes a twisting by the quantum dimension: with 
$\boldsymbol{\bullet} := \dimr(X)$ and 
$\boldsymbol{\star} := \dimr(X)^{-1}$ we have
\be
\begin{tikzpicture}[very thick,scale=0.75,color=green!50!black, baseline=-0.6cm, rotate=180]
\draw[-dot-] (3,0) .. controls +(0,1) and +(0,1) .. (2,0);
\draw (2.5,0.75) -- (2.5,1.5); 
\end{tikzpicture} 
=
\begin{tikzpicture}[very thick,scale=0.9,color=blue!50!black, baseline=-0.9cm, rotate=180]
\draw[line width=0pt] 
(3,0) node[line width=0pt] (D) {{\small$X\vphantom{X^\dagger}$}}
(2,0) node[line width=0pt] (s) {{\small${X^\dagger}$}}; 
\draw[redirected] (D) .. controls +(0,1) and +(0,1) .. (s);
\draw (2.5,1.13) node (D) {{\small$\boldsymbol{\star}$}}; 
\draw[line width=0pt] 
(3.45,0) node[line width=0pt] (re) {{\small${X^\dagger}$}}
(1.55,0) node[line width=0pt] (li) {{\small$X\vphantom{X^\dagger}$}}; 
\draw[line width=0pt] 
(2.7,2) node[line width=0pt] (ore) {{\small${X^\dagger}$}}
(2.3,2) node[line width=0pt] (oli) {{\small$X\vphantom{X^\dagger}$}}; 
\draw (li) .. controls +(0,0.75) and +(0,-0.25) .. (2.3,1.25);
\draw (2.3,1.25) -- (oli);
\draw (re) .. controls +(0,0.75) and +(0,-0.25) .. (2.7,1.25);
\draw (2.7,1.25) -- (ore);
\end{tikzpicture}
\, , \quad 
\begin{tikzpicture}[very thick,scale=0.75,color=green!50!black, baseline, rotate=180]
\draw (0,-0.5) node[Odot] (D) {}; 
\draw (D) -- (0,0.6); 
\end{tikzpicture} 
= 
\begin{tikzpicture}[very thick,scale=1.0,color=blue!50!black, baseline=.4cm]
\draw[line width=0pt] 
(3,0) node[line width=0pt] (D) {{\small$X\vphantom{X^\dagger}$}}
(2,0) node[line width=0pt] (s) {{\small$\dX$}}; 
\draw[directed] (D) .. controls +(0,1) and +(0,1) .. (s);
\draw (2.5,0.4) node (D) {{\small$\boldsymbol{\bullet}$}}; 
\end{tikzpicture}
\ee
It only remains to prove symmetry of $A$. But this is immediate from pivotality:
\be
\begin{tikzpicture}[very thick,scale=0.85,color=green!50!black, baseline=0cm]
\draw[-dot-] (0,0) .. controls +(0,1) and +(0,1) .. (-1,0);
\draw[directedgreen, color=green!50!black] (1,0) .. controls +(0,-1) and +(0,-1) .. (0,0);
\draw (-1,0) -- (-1,-1.5); 
\draw (1,0) -- (1,1.5); 
\draw (-0.5,1.2) node[Odot] (end) {}; 
\draw (-0.5,0.8) -- (end); 
\end{tikzpicture}
= 
\begin{tikzpicture}[very thick,scale=0.65,color=blue!50!black, baseline=0.2cm]
\draw[line width=1pt] 
(-2,-1.5) node[line width=0pt] (X) {}
(-1,-1.5) node[line width=0pt] (Y) {}
(2,2) node[line width=0pt] (XY) {};
\draw (-0.5,1.13) node (D) {{\small$\boldsymbol{\bullet}$}}; 
\draw[redirected] (0,0) .. controls +(0,1) and +(0,1) .. (-1,0);
\draw[directed] (1,0) .. controls +(0,2) and +(0,2) .. (-2,0);
\draw[directed] (2,0) .. controls +(0,-1) and +(0,-1) .. (0.5,0);
\draw (-1,0) -- (Y);
\draw (-2,0) -- (X);
\draw[dotted] (0,0) -- (1,0);
\draw (2,0) -- (XY);
\end{tikzpicture}
\stackrel{\eqref{eq:pivotal}}{=}
\begin{tikzpicture}[very thick,scale=0.65,color=blue!50!black, baseline=0.2cm]
\draw[line width=1pt] 
(2,-1.5) node[line width=0pt] (Y) {}
(3,-1.5) node[line width=0pt] (X) {}
(-1,2) node[line width=0pt] (XY) {};
\draw (1.5,1.13) node (D) {{\small$\boldsymbol{\bullet}$}}; 
\draw[directed] (1,0) .. controls +(0,1) and +(0,1) .. (2,0);
\draw[redirected] (0,0) .. controls +(0,2) and +(0,2) .. (3,0);
\draw[directed] (-1,0) .. controls +(0,-1) and +(0,-1) .. (0.5,0);
\draw (2,0) -- (Y);
\draw (3,0) -- (X);
\draw[dotted] (0,0) -- (1,0);
\draw (-1,0) -- (XY);
\end{tikzpicture}
=
\begin{tikzpicture}[very thick,scale=0.85,color=green!50!black, baseline=0cm]
\draw[redirectedgreen, color=green!50!black] (0,0) .. controls +(0,-1) and +(0,-1) .. (-1,0);
\draw[-dot-] (1,0) .. controls +(0,1) and +(0,1) .. (0,0);
\draw (-1,0) -- (-1,1.5); 
\draw (1,0) -- (1,-1.5); 
\draw (0.5,1.2) node[Odot] (end) {}; 
\draw (0.5,0.8) -- (end); 
\end{tikzpicture}
\, . 
\ee
\end{proof}

\subsection{Nondegenerate pairings}\label{subsec:nondeg_equi}

As in the previous section we let~$\B$ be a pivotal bicategory with idempotent complete 1-morphism categories.
Furthermore we assume that the 1-morphisms categories $\B(a,b)$ are $\C$-linear, and that for all $a\in \B$ we are given a linear map 
\be
\langle - \rangle_a : \End_{\B}(I_a) \lra \C
\ee
called \textsl{bulk correlator}. This allows us to define the \textsl{bulk pairing} $\langle -,- \rangle_a$ via $\langle \phi_1, \phi_2 \rangle_a := \langle \phi_1 \phi_2 \rangle_a$. More generally, for any 1-morphisms $X,Y:a\rightarrow b$ in~$\B$ we can consider the \textsl{defect pairing}
\be
\langle -,- \rangle_X : \Hom(Y,X) \times \Hom(X,Y) \lra \C \, , \quad 
\langle \Psi_1, \Psi_2 \rangle _X = 
\left\langle \, 
\begin{tikzpicture}[very thick,scale=0.6,color=blue!50!black, baseline]
\draw (0,0) circle (1.5);
\draw[<-, very thick] (0.100,-1.5) -- (-0.101,-1.5) node[above] {}; 
\draw[<-, very thick] (-0.100,1.5) -- (0.101,1.5) node[below] {}; 
\fill (-22.5:1.5) circle (3.3pt) node[left] {{\small$\Psi_2$}};
\fill (22.5:1.5) circle (3.3pt) node[left] {{\small$\Psi_1$}};
\fill (270:1.5) circle (0pt) node[above] {{\small$X$}};
\end{tikzpicture} 
\,  \right\rangle_{\raisemath{10pt}{\!\!\!a}} \, . 
\ee
Note that the bulk pairing $\langle -,- \rangle_a$ is the same as $\langle -,- \rangle_{I_a}$. 

If for a pair $X,Y$ the defect pairing in~$\B$ is nondegenerate, then we would like to know if it induces a nondegenerate pairing in $\Beq$. In general this will only be the case if the associated object~$(a,A) \in \Beq$ is contained in a certain full subbicategory $\Borb$ of $\Beq$. We will discuss $\Borb$ in the next section, while in the remainder of the present section we explain how close to nondegeneracy one can get in $\Beq$. 

Let us consider two objects $(a,A)$ and $(b,B)$ and a 1-morphism $X:(a,A) \rightarrow (b,B)$ in $\Beq$. Furthermore we fix Frobenius algebra automorphisms $\alpha \in \operatorname{Aut}(A)$, $\beta\in \operatorname{Aut}(B)$ and define the two operators ${}_\beta P, P_\alpha$ on $\End(X)$ by
\be
{}_\beta P : \Phi \lmt
\begin{tikzpicture}[very thick,scale=0.75,color=blue!50!black, baseline=0cm]

\draw (1.5,-1.5) -- (1.5,1.5); 

\draw (1.5,-1.75) node[right] (X) {};
\draw (1.5,2) node[right] (Xu) {};

\fill (1.5,0.7) circle (2pt) node[right] (phi) {{\small $\Phi$}};

\fill[color=green!50!black] (1.5,0.3) circle (2pt) node (meet) {};
\fill[color=green!50!black] (1.5,1.1) circle (2pt) node (meet) {};
\fill[color=green!50!black] (0.6,0.12) circle (2pt) node[left] (meet) {{\small$\beta$}};

\draw[color=green!50!black] (1,-0.25) .. controls +(0.0,0.25) and +(-0.25,-0.25) .. (1.5,0.3);
\draw[color=green!50!black] (0.5,-0.25) .. controls +(0.0,0.5) and +(-0.25,-0.25) .. (1.5,1.1);

\draw[-dot-, color=green!50!black] (0.5,-0.25) .. controls +(0,-0.5) and +(0,-0.5) .. (1,-0.25);

\draw[color=green!50!black] (0.75,-1.2) node[Odot] (unit) {}; 
\draw[color=green!50!black] (0.75,-0.6) -- (unit);

\end{tikzpicture}
, \quad
P_\alpha : \Phi \lmt
\begin{tikzpicture}[very thick,scale=0.75,color=blue!50!black, baseline=0cm]

\draw (1.5,-1.5) -- (1.5,1.5); 

\draw (1.5,-1.75) node[right] (X) {};
\draw (1.5,2) node[right] (Xu) {};

\fill (1.5,0.7) circle (2pt) node[left] (phi) {{\small $\Phi$}};

\fill[color=green!50!black] (1.5,0.3) circle (2pt) node (meet) {};
\fill[color=green!50!black] (1.5,1.1) circle (2pt) node (meet) {};
\fill[color=green!50!black] (2.4,0.12) circle (2pt) node[right] (meet) {{\small$\alpha$}};

\draw[color=green!50!black] (2,-0.25) .. controls +(0.0,0.25) and +(0.25,-0.25) .. (1.5,0.3);
\draw[color=green!50!black] (2.5,-0.25) .. controls +(0.0,0.5) and +(0.25,-0.25) .. (1.5,1.1);

\draw[-dot-, color=green!50!black] (2.5,-0.25) .. controls +(0,-0.5) and +(0,-0.5) .. (2,-0.25);

\draw[color=green!50!black] (2.25,-1.2) node[Odot] (unit) {}; 
\draw[color=green!50!black] (2.25,-0.6) -- (unit);

\end{tikzpicture}
. 
\ee
Setting $Y = {}_\beta X_\alpha$ in Lemma~\ref{lem:piAproj} we see that $({}_\beta P)^2 = {}_\beta P$ and $(P_\alpha)^2 = P_\alpha$. Note that the special cases ${}_{1_B}P$ and $P_{1_A}$ are precisely the projectors to left and right module maps discussed in Section~\ref{subsec:algebrasbimodules}. Hence we may call elements in the image of ${}_\beta P \circ P_\alpha=P_\alpha \circ {}_\beta P$ \textsl{$\beta$-$\alpha$-twisted $B$-$A$-bimodule maps}. 

The projectors~${}_\beta P$, $P_\alpha$ and the Nakayama automorphism~\eqref{eq:Nakayama} satisfy the following compatibility with defect pairings: 

\begin{proposition}\label{prop:perfectlypairedinBeq}
Let $\B$ and $X:(a,A) \rightarrow (b,B)$ be as above and such that
\be\label{eq:traceorre}
\left\langle \, 
\begin{tikzpicture}[very thick,scale=0.6,color=blue!50!black, baseline]
\draw (0,0) circle (1.5);
\draw[<-, very thick] (0.100,-1.5) -- (-0.101,-1.5) node[above] {}; 
\draw[<-, very thick] (-0.100,1.5) -- (0.101,1.5) node[below] {}; 
\fill (0:1.5) circle (3.3pt) node[left] {{\small$\Psi$}};
\fill (270:1.5) circle (0pt) node[above] {{\small$X$}};
\end{tikzpicture} 
\, \right\rangle_{\raisemath{10pt}{\!\!\!a}}
=
\left\langle \, 
\begin{tikzpicture}[very thick,scale=0.6,color=blue!50!black, baseline]
\draw (0,0) circle (1.5);
\draw[->, very thick] (0.100,-1.5) -- (-0.101,-1.5) node[above] {}; 
\draw[->, very thick] (-0.100,1.5) -- (0.101,1.5) node[below] {}; 
\fill (180:1.5) circle (3.3pt) node[right] {{\small$\Psi$}};
\fill (270:1.5) circle (0pt) node[above] {{\small$X$}};
\end{tikzpicture} 
\, \right\rangle_{\raisemath{10pt}{\!\!\!b}}
\ee
for all $\Psi: X \rightarrow X$ in~$\B$. Then we have
\be\label{eq:projcomppair}
\big\langle {}_\beta P(\Phi_1), \Phi_2 \big\rangle_X = \big\langle \Phi_1, {}_{\beta^{-1} \gamma_B} P(\Phi_2) \big\rangle_X 
\, , \quad
\big\langle P_\alpha(\Phi_1), \Phi_2 \big\rangle_X = \big\langle \Phi_1,  P_{\alpha^{-1} \gamma_A^{-1}}(\Phi_2) \big\rangle_X 
\ee
for all $\Phi_1: Y\rightarrow X$ and $\Phi_2: X\rightarrow Y$. 
\end{proposition}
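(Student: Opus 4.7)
The two identities in~\eqref{eq:projcomppair} are structurally parallel: the first concerns the left $B$-action on $X$ together with the $a$-perspective trace $\tr_l$, while the second concerns the right $A$-action with $\tr_r$, which accounts for the appearance of $\gamma_A^{-1}$ rather than $\gamma_B$. I will concentrate on the first identity; the second follows by the mirror-image argument with $A \leftrightarrow B$ and left $\leftrightarrow$ right.

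My plan is a direct graphical calculation. First, I unfold the left-hand side $\langle {}_\beta P(\Phi_1),\Phi_2\rangle_X$ into a string diagram in~$\B$: by definition of the defect pairing this is $\langle \tr_l(\Psi)\rangle_a$, where~$\Psi : X\to X$ is the composite of $\Phi_1$ and $\Phi_2$ around a closed $X$-loop together with a ``$B$-handle'' sitting on the $b$-side near $\Phi_1$, built from $\Delta_B \circ \eta_B$, two left-action vertices, and an insertion of $\beta$ on one leg. The target expression $\langle \Phi_1,{}_{\beta^{-1}\gamma_B}P(\Phi_2)\rangle_X$ has precisely the same shape, but with the handle sitting near $\Phi_2$ and carrying the twist $\beta^{-1}\gamma_B$ instead.

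The core step is to transport this $B$-handle around the $X$-loop. Using separability and the Frobenius property of~$B$, together with the module-compatibility~\eqref{eq:leftcomp}, I rewrite the handle as a single closed $B$-strand wrapping once around the loop on the $b$-side. Next, I invoke the trace-flip hypothesis~\eqref{eq:traceorre} to pass temporarily from $\langle \tr_l(-)\rangle_a$ to $\langle \tr_r(-)\rangle_b$; under pivotality this amounts to re-expressing the cups and caps of the $X$-loop via the opposite adjunctions, and so repositions the $B$-strand on the $\Phi_2$-side of the diagram. Returning to the $\tr_l$-form and splitting the strand back into $\Delta_B \circ \eta_B$ near $\Phi_2$ then produces a diagram of the shape of ${}_{(?)}P(\Phi_2)$ paired against $\Phi_1$.

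The remaining task, and the main source of technical bookkeeping, is to identify the twist $(?)$ with $\beta^{-1}\gamma_B$. Two effects enter: (a) the $B$-strand has been dragged through the cup/cap structure of the $X$-loop, which by the usual Frobenius identity relating the two possible bendings of a strand in a non-symmetric Frobenius algebra produces a Nakayama automorphism~$\gamma_B$ (cf.~\eqref{eq:Nakayama}); (b) the leg that originally carried~$\beta$ has been reversed in orientation relative to its new attachment point, converting the insertion into $\beta^{-1}$. Composing these two factors on the single $B$-strand yields the required automorphism $\beta^{-1}\gamma_B$. I expect the main obstacle to be precisely the combinatorics of (a) and (b): keeping track of which leg of the comultiplication carries which automorphism at each stage, and verifying that the direction in which the Nakayama twist appears matches the convention fixed in~\eqref{eq:Nakayama}. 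All manipulations are purely graphical and use only the Frobenius/separable structure of $B$, the pivotality of~$\B$, and the hypothesis~\eqref{eq:traceorre}.
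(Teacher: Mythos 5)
Your core computation is the right one and is essentially the paper's: the $B$-handle is transported around the $X$-loop, the Nakayama automorphism $\gamma_B$ appears because the $B$-strand gets bent through cups and caps of a non-symmetric Frobenius algebra as in \eqref{eq:Nakayama}, and the $\beta\mapsto\beta^{-1}$ conversion comes from $\beta$ being an automorphism of Frobenius algebras, hence adjoint to $\beta^{-1}$ with respect to the pairing $\varepsilon\circ\mu$. The paper runs the same computation in the opposite direction --- it first simplifies ${}_{\beta^{-1}\gamma_B}P(\Phi_2)$ using \eqref{eq:Nakayama}, \eqref{eq:Frob} and the automorphism property, and then matches the result against the handle slid around from $\Phi_1$ by cyclicity --- but that is only a difference of bookkeeping.

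Where your proposal goes wrong is in the role of the hypothesis \eqref{eq:traceorre}. For the first identity it is not needed and does not do what you claim: flipping $\langle\tr_{\mathrm{l}}(-)\rangle_a$ into $\langle\tr_{\mathrm{r}}(-)\rangle_b$ changes how the loop is closed but does not move the $B$-handle from $\Phi_1$ to $\Phi_2$; that repositioning is pure cyclicity of the left trace, i.e.\ isotopy inside the loop, which pivotality of $\B$ already provides. Since $B\in\B(b,b)$ acts on the side of $X$ enclosed by the left-trace circle, the whole argument stays within $\langle\tr_{\mathrm{l}}(-)\rangle_a$. Conversely, the second identity is exactly where \eqref{eq:traceorre} is indispensable, and you cannot dismiss it as a formal mirror image: reflecting the diagrams along the $X$-line turns the left trace into the right trace, so the reflected argument proves a statement about $\langle\tr_{\mathrm{r}}(-)\rangle_b$, and \eqref{eq:traceorre} is what converts this back into the pairing $\langle-,-\rangle_X$, which is defined via $\tr_{\mathrm{l}}$ and $\langle-\rangle_a$; this asymmetry is also why $\gamma_A^{-1}$ rather than $\gamma_A$ appears. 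As written, your proof invokes the hypothesis where it is vacuous and omits it where it is essential.
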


\begin{proof}
We compute
\be
{}_{\beta^{-1}\gamma_B}P(\Phi_2) = 
\begin{tikzpicture}[very thick,scale=0.75,color=blue!50!black, baseline=0cm]

\draw (1.5,-1.75) -- (1.5,1.5); 

\draw (1.5,-1.75) node[right] (X) {};
\draw (1.5,2) node[right] (Xu) {};

\fill (1.5,0.7) circle (2pt) node[right] (phi) {{\scriptsize $\Phi_2$}};

\fill[color=green!50!black] (1.5,0.3) circle (2pt) node (meet) {};
\fill[color=green!50!black] (1.5,1.1) circle (2pt) node (meet) {};
\fill[color=green!50!black] (1.0,0.62) circle (2pt) node[left] (meet) {{\scriptsize$\beta^{-1}$}};
\fill[color=green!50!black] (0.5,-0.25) circle (2pt) node[left] (meet) {{\scriptsize$\gamma_B$}};

\draw[color=green!50!black] (1,-0.25) .. controls +(0.0,0.25) and +(-0.25,-0.25) .. (1.5,0.3);
\draw[color=green!50!black] (0.5,-0.25) .. controls +(0.0,0.5) and +(-0.25,-0.25) .. (1.5,1.1);

\draw[-dot-, color=green!50!black] (0.5,-0.25) .. controls +(0,-0.5) and +(0,-0.5) .. (1,-0.25);

\draw[color=green!50!black] (0.75,-1.2) node[Odot] (unit) {}; 
\draw[color=green!50!black] (0.75,-0.6) -- (unit);

\end{tikzpicture}
\stackrel{\eqref{eq:Nakayama}}{=}
\begin{tikzpicture}[very thick,scale=0.75,color=blue!50!black, baseline=0cm]

\draw (1.5,-1.75) -- (1.5,1.5); 

\draw (1.5,-1.75) node[right] (X) {};
\draw (1.5,2) node[right] (Xu) {};

\fill (1.5,0.7) circle (2pt) node[right] (phi) {{\scriptsize $\Phi_2$}};

\fill[color=green!50!black] (1.5,0.3) circle (2pt) node (meet) {};
\fill[color=green!50!black] (1.5,1.1) circle (2pt) node (meet) {};
\fill[color=green!50!black] (0.5,0.62) circle (2pt) node[above] (meet) {};
\draw (0.5,0.9) [green!50!black] node {{\scriptsize$\beta^{-1}$}};

\draw[color=green!50!black] (0.5,0.62) .. controls +(0.0,0.25) and +(-0.25,-0.25) .. (1.5,1.1);

\draw[-dot-, color=green!50!black] (0.5,0.62) .. controls +(0,-0.5) and +(0,-0.5) .. (0,0.62);
\draw[color=green!50!black] (0.25,-0.1) node[Odot] (unit) {}; 
\draw[color=green!50!black] (0.25,0.2) -- (unit);
\draw[directedgreen, color=green!50!black] (0,0.62) .. controls +(0,0.5) and +(0,0.5) .. (-0.5,0.62);
\draw[color=green!50!black] (-0.5,0.62) -- (-0.5,-1.2);
\draw[-dot-, color=green!50!black] (0.5,-1.2) .. controls +(0,0.5) and +(0,0.5) .. (0.,-1.2);
\draw[color=green!50!black] (0.25,-0.45) node[Odot] (unit2) {}; 
\draw[color=green!50!black] (0.25,-0.8) -- (unit2);
\draw[redirectedgreen, color=green!50!black] (0,-1.2) .. controls +(0,-0.5) and +(0,-0.5) .. (-0.5,-1.2);
\draw[-dot-, color=green!50!black] (0.5,-1.2) .. controls +(0,-0.5) and +(0,-0.5) .. (1,-1.2);
\draw[color=green!50!black] (0.75,-2) node[Odot] (unit3) {}; 
\draw[color=green!50!black] (0.75,-1.5) -- (unit3);
\draw[color=green!50!black] (1,-1.2) -- (1,-0.25);
\draw[color=green!50!black] (1,-0.25) .. controls +(0.0,0.25) and +(-0.25,-0.25) .. (1.5,0.3);
\end{tikzpicture}
\stackrel{\eqref{eq:Frob}}{=}
\begin{tikzpicture}[very thick,scale=0.75,color=blue!50!black, baseline=0cm]

\draw (1.5,-1.75) -- (1.5,1.5); 

\draw (1.5,-1.75) node[right] (X) {};
\draw (1.5,2) node[right] (Xu) {};

\fill (1.5,0.7) circle (2pt) node[right] (phi) {{\scriptsize $\Phi_2$}};

\fill[color=green!50!black] (1.5,0.3) circle (2pt) node (meet) {};
\fill[color=green!50!black] (1.5,1.1) circle (2pt) node (meet) {};
\fill[color=green!50!black] (0.5,0.62) circle (2pt) node[above] (meet) {};
\draw (0.5,0.9) [green!50!black] node {{\scriptsize$\beta^{-1}$}};

\draw[color=green!50!black] (0.5,0.62) .. controls +(0.0,0.25) and +(-0.25,-0.25) .. (1.5,1.1);

\draw[-dot-, color=green!50!black] (0.5,0.62) .. controls +(0,-0.5) and +(0,-0.5) .. (0,0.62);
\draw[color=green!50!black] (0.25,-0.1) node[Odot] (unit) {}; 
\draw[color=green!50!black] (0.25,0.2) -- (unit);
\draw[directedgreen, color=green!50!black] (0,0.62) .. controls +(0,0.5) and +(0,0.5) .. (-0.5,0.62);
\draw[color=green!50!black] (-0.5,0.62) -- (-0.5,-1.0);
\draw[directedgreen, color=green!50!black] (-0.5,-1.0) .. controls +(0,-0.5) and +(0,-0.5) .. (1,-1.0);
\draw[color=green!50!black] (1,-1.0) -- (1,-0.25);
\draw[color=green!50!black] (1,-0.25) .. controls +(0.0,0.25) and +(-0.25,-0.25) .. (1.5,0.3);
\end{tikzpicture}
=
\begin{tikzpicture}[very thick,scale=0.75,color=blue!50!black, baseline=0cm]

\draw (1.5,-1.75) -- (1.5,1.5); 

\draw (1.5,-1.75) node[right] (X) {};
\draw (1.5,2) node[right] (Xu) {};

\fill (1.5,0.7) circle (2pt) node[right] (phi) {{\scriptsize $\Phi_2$}};

\fill[color=green!50!black] (1.5,0.3) circle (2pt) node (meet) {};
\fill[color=green!50!black] (1.5,1.1) circle (2pt) node (meet) {};
\fill[color=green!50!black] (0,0.62) circle (2pt) node[right] (meet) {};
\draw (0.25,0.62) [green!50!black] node {{\scriptsize$\beta$}};

\draw[color=green!50!black] (0.5,0.62) .. controls +(0.0,0.25) and +(-0.25,-0.25) .. (1.5,1.1);

\draw[-dot-, color=green!50!black] (0.5,0.62) .. controls +(0,-0.5) and +(0,-0.5) .. (0,0.62);
\draw[color=green!50!black] (0.25,-0.1) node[Odot] (unit) {}; 
\draw[color=green!50!black] (0.25,0.2) -- (unit);
\draw[directedgreen, color=green!50!black] (0,0.62) .. controls +(0,0.5) and +(0,0.5) .. (-0.5,0.62);
\draw[color=green!50!black] (-0.5,0.62) -- (-0.5,-1.0);
\draw[directedgreen, color=green!50!black] (-0.5,-1.0) .. controls +(0,-0.5) and +(0,-0.5) .. (1,-1.0);
\draw[color=green!50!black] (1,-1.0) -- (1,-0.25);
\draw[color=green!50!black] (1,-0.25) .. controls +(0.0,0.25) and +(-0.25,-0.25) .. (1.5,0.3);
\end{tikzpicture}
\ee
where in the last step we used that~$\beta$ is an automorphism of Frobenius algebras. On the other hand, employing cyclicity of the trace and pivotality of~$\B$ we have 
\be
\left\langle
\begin{tikzpicture}[very thick,scale=0.75,color=blue!50!black, baseline]
\draw (0,0) circle (1.5);
\draw[<-, very thick] (0.100,-1.5) -- (-0.101,-1.5) node[above] {}; 
\draw[<-, very thick] (-0.100,1.5) -- (0.101,1.5) node[below] {}; 
\fill (65:1.5) circle (2.5pt) node[above] {{\scriptsize$\Phi_1$}};
\fill (35:1.5) circle (2.5pt) node[right] {{\scriptsize$\Phi_2$}};
\fill (270:1.5) circle (0pt) node[above] {};

\fill[color=green!50!black] (0.3,1.1) circle (0pt) node[left] (meet) {{\scriptsize$\beta$}};
\fill[color=green!50!black] (0.21,1.17) circle (2.5pt) node (meet1) {};

\fill[color=green!50!black] (50:1.5) circle (2.5pt) node (meet1) {};
\fill[color=green!50!black] (80:1.5) circle (2.5pt) node (meet2) {};

\draw[-dot-, color=green!50!black] (50:1.5) .. controls +(-0.5,-0.5) and +(-0.25,-0.25) .. (80:1.5);

\draw[color=green!50!black] (0.418,0.65) node[Odot] (unit) {}; 
\draw[color=green!50!black] (0.418,1.0) -- (unit);

\end{tikzpicture} 
\right\rangle_{\raisemath{15pt}{\!\!\!a}}
=
\left\langle
\begin{tikzpicture}[very thick,scale=0.75,color=blue!50!black, baseline]
\draw (0,0) circle (1.5);
\draw[<-, very thick] (0.100,-1.5) -- (-0.101,-1.5) node[above] {}; 
\draw[<-, very thick] (-0.100,1.5) -- (0.101,1.5) node[below] {}; 
\fill (65:1.5) circle (2.5pt) node[above] {{\scriptsize$\Phi_1$}};
\fill (35:1.5) circle (2.5pt) node[right] {{\scriptsize$\Phi_2$}};
\fill (270:1.5) circle (0pt) node[above] {};

\fill[color=green!50!black] (50:1.5) circle (2.5pt) node (meet1) {};
\fill[color=green!50!black] (20:1.5) circle (2.5pt) node (meet2) {};

\draw[-dot-, color=green!50!black] (0.7,0.5) .. controls +(0,-0.5) and +(0,-0.5) .. (0.2,0.5);
\draw[color=green!50!black] (0.7,0.5) .. controls +(0,0.2) and +(-0.2,-0.2) .. (50:1.5);

\fill[color=green!50!black] (0.11,0.5) circle (0pt) node[right] (meet) {{\scriptsize$\beta$}};
\fill[color=green!50!black] (0.2,0.5) circle (2.5pt) node (meet1) {};

\draw[directedgreen, color=green!50!black] (0.2,0.5) .. controls +(0,0.5) and +(0,0.5) .. (-0.3,0.5);
\draw[color=green!50!black] (-0.3,0.5) -- (-0.3,-0.8);
\draw[redirectedgreen, color=green!50!black] (0.2,-0.8) .. controls +(0,-0.5) and +(0,-0.5) .. (-0.3,-0.8);

\draw[color=green!50!black] (0.2,-0.8) .. controls +(0,0.2) and +(-0.1,-0.5) .. (20:1.5);

\draw[color=green!50!black] (0.447,-0.2) node[Odot] (unit) {}; 
\draw[color=green!50!black] (0.447,0.1) -- (unit);

\end{tikzpicture} 
\right\rangle_{\raisemath{15pt}{\!\!\!a}} \, .
\ee
This proves the first identity in~\eqref{eq:projcomppair}. With the help of~\eqref{eq:traceorre} the second identity follows analogously, basically by reflecting all diagrams above along the vertical $XYX$-line (which requires the assumption~\eqref{eq:traceorre} in the initial step). 
\end{proof}

We recall that projectors compatible with nondegenerate pairings lead to such pairings on the image: 

\begin{lemma}\label{lem:ProjCompPair}
Let $\langle-,-\rangle$ be a nondegenerate pairing of two vector spaces $U,V$, and let $P\in \End(U)$, $Q\in \End(V)$ be idempotents such that $\langle Pu, v \rangle = \langle u, Qv \rangle$ for all $u \in U$ and $v\in V$. Then the induced pairing of $P(U)$ and $Q(V)$ is nondegenerate. 
\end{lemma}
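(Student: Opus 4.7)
The plan is to exploit the compatibility $\langle Pu, v \rangle = \langle u, Qv \rangle$ together with idempotency of $P$ and $Q$ to transfer nondegeneracy from $\langle -,- \rangle$ on $U \times V$ to its restriction to $P(U) \times Q(V)$.

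First I would unpack what has to be shown: the induced pairing sends $(u',v') \in P(U) \times Q(V)$ to $\langle u', v' \rangle$, and I must verify that if $\langle u', v' \rangle = 0$ for all $v' \in Q(V)$ then $u' = 0$, and symmetrically for the other slot. So take $u' \in P(U)$ and write $u' = P(u)$ for some $u \in U$. Idempotency gives $P(u') = P^2(u) = P(u) = u'$. Now for an arbitrary $v \in V$, set $v' = Q(v) \in Q(V)$ and compute
\[
\langle u', v \rangle = \langle P(u'), v \rangle = \langle u', Q(v) \rangle = \langle u', v' \rangle = 0,
\]
using the compatibility hypothesis in the second step. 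Since this holds for every $v \in V$, nondegeneracy of $\langle -,- \rangle$ on $U \times V$ forces $u' = 0$.

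The symmetric argument for the other slot is identical: if $v' \in Q(V)$ pairs to zero with every element of $P(U)$, write $v' = Q(v)$, note $Q(v') = v'$, and for arbitrary $u \in U$ compute $\langle u, v' \rangle = \langle u, Q(v') \rangle = \langle P(u), v' \rangle = 0$, whence $v' = 0$.

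There is no real obstacle here; the only thing to be careful about is to use the compatibility in the correct direction and to exploit $P^2=P$, $Q^2=Q$ so that one can freely insert an extra $P$ (resp.\ $Q$) on an element already in the image. The argument is purely formal and does not require any further structure on $U,V$ beyond being vector spaces with a bilinear pairing.
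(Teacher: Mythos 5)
Your proof is correct and follows the same argument as the paper: insert $P$ (using idempotency) into the pairing with an element of $P(U)$, move it across to a $Q$ via the compatibility hypothesis, and invoke nondegeneracy on all of $U\times V$. The only difference is that you also spell out the symmetric argument for the second slot, which the paper leaves implicit.
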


\begin{proof}
Let $\widetilde u \in P(U)$ be such that $\langle \widetilde u, \widetilde v \rangle = 0$ for all $\widetilde v \in Q(V)$. It follows that $0 = \langle \widetilde u, Qv \rangle = \langle P \widetilde u, v \rangle = \langle \widetilde u, v \rangle$ for all $v\in V$, and hence~$\widetilde u$ must be zero. 
\end{proof}

In our situation this means that if $\langle-,-\rangle_X$ is nondegenerate in~$\B$, then also the subspaces of $\beta$-$\alpha$-twisted and $(\beta^{-1}\gamma_B)$-$(\alpha^{-1}\gamma_A^{-1})$-twisted $B$-$A$-bimodule maps are perfectly paired. Setting $\alpha=1_A$, $\beta=1_B$ we find that $\langle-,-\rangle_X$ is nondegenerate also in $\Beq$ if the Nakayama automorphisms $\gamma_A, \gamma_B$ are identities. This is the case if and only if $A,B$ are both symmetric.

\section{Orbifold bicategory}\label{sec:orbibicat}

In the previous section we saw how far we can take our orbifold construction without asking the defect algebras involved to be symmetric. Symmetry is required for the orbifold construction of Section~\ref{sec:TFT-orbifold} and as we just saw it implies that nondegeneracy is preserved. As an application this will later allow us to prove that all generalised orbifolds of Landau-Ginzburg models give rise to open/closed TFTs. 

\subsection[Definition and properties of $\Borb$]{Definition and properties of $\boldsymbol{\Borb}$}

In the following we assume that~$\B$ is a pivotal bicategory whose categories of 1-morphisms are idempotent complete, so we can consider its equivariant completion $\Beq$. 

\begin{definition}
The \textsl{orbifold completion} $\Borb$ of~$\B$ is the full subbicategory of $\Beq$ whose objects are pairs $(a,A)$ with $a\in \B$ and $A\in \B(a,a)$ a symmetric separable Frobenius algebra. We refer to objects in $\Borb$ as \textsl{(generalised) orbifolds}. 
\end{definition}

The left and right quantum dimensions of the unit 1-morphisms are equal to the identity 2-morphism in any pivotal bicategory. It follows (e.\,g.~by the argument in the proof of Proposition~\ref{prop:AGisgoodalgebra} below) that $I_a$ is symmetric for all $a\in \B$. We thus have
\be
\B \subset \Borb \subset \Beq \, . 
\ee

In the previous section we saw that the equivariant completion $\Beq$ has adjoints $\deqX, X^\star$ which are obtained from the original adjoints $\dX, X^\dagger$ in~$\B$ by twisting them with Nakayama automorphisms. But since the latter are the identity in the case of symmetric Frobenius algebras it follows that the orbifold completion $\Borb$ has the same adjoints $\dX, X^\dagger$ as~$\B$ (while the adjunction maps are of course still different, as in~\eqref{eq:LeftAdjunctionInBeq}). Furthermore, pivotality of $\Borb$ now follows from pivotality of~$\B$. 

The results of Sections \ref{subsec:Beq}--\ref{subsec:Beq-pivot} 
also hold for the orbifold completion: First of all, the same proof as that of Proposition~\ref{prop:eqeqeq} shows that
\be\label{eq:orborborb}
(\Borb)_{\mathrm{orb}} \cong \Borb \, .
\ee
Let  $X:a \rightarrow b$ in~$\B$ be a 1-morphism with $\dimr(X)$ invertible. Since~$\B$ is pivotal, Theorem~\ref{thm:algebraXdaggerX-summary} tells us that $X^\dagger \otimes X$ is a symmetric separable Frobenius algebra, i.\,e.~$(a, X^\dagger \otimes X)$ lies in $\Borb$. And since $\Borb$ is a full subbicategory, again by Theorem~\ref{thm:algebraXdaggerX-summary} we have
\be
(a, X^\dagger \otimes X) 
	\cong 
 (b, I_b) 
\ee
in $\Borb$. 

\begin{remark}
In Section~\ref{sec:2dTFTwithDefects} we discussed the functorial definition of TFTs with defects. In contrast to the well-known case of open/closed TFT (which will be reviewed in Section~\ref{subsec:ocTFT}), a purely algebraic generators-and-relations description of TFTs with arbitrary defects has not yet been found. At the very least such a description would involve a pivotal bicategory~$\B$. Furthermore~$\B$ is expected to be monoidal (with the trivial theory corresponding to the unit object~$0$, and boundary conditions to 1-morphisms with source~$0$), suitably dualisable, and subject to additional constraints like the Cardy condition. 
\end{remark}

\subsection{Nondegenerate pairings}\label{subsec:nondeg}

Let us again assume that $\B(a,b)$ is $\C$-linear and that we have bulk correlators and defect pairings
\be
\langle-\rangle_a : \End_{\B}(I_a) \lra \C
\, , \quad 
\langle \Psi_1, \Psi_2 \rangle _X = 
\left\langle \, 
\begin{tikzpicture}[very thick,scale=0.6,color=blue!50!black, baseline]
\draw (0,0) circle (1.5);
\draw[<-, very thick] (0.100,-1.5) -- (-0.101,-1.5) node[above] {}; 
\draw[<-, very thick] (-0.100,1.5) -- (0.101,1.5) node[below] {}; 
\fill (-22.5:1.5) circle (3.3pt) node[left] {{\small$\Psi_2$}};
\fill (22.5:1.5) circle (3.3pt) node[left] {{\small$\Psi_1$}};
\fill (270:1.5) circle (0pt) node[above] {{\small$X$}};
\end{tikzpicture} 
\,  \right\rangle_{\raisemath{10pt}{\!\!\!a}} \, . 
\ee
as in Section~\ref{subsec:nondeg_equi}. From the discussion there it follows that under the right circumstances nondegenerate defect pairings in~$\B$ induce nondegenerate pairings in $\Borb$: 

\begin{corollary}\label{prop:traceflipgivesnondeg}
Let~$\B$ be as above and $X:(a,A) \rightarrow (b,B)$ in $\Borb$ and such that
\be\label{eq:traceorientationreversal}
\left\langle \, 
\begin{tikzpicture}[very thick,scale=0.6,color=blue!50!black, baseline]
\draw (0,0) circle (1.5);
\draw[<-, very thick] (0.100,-1.5) -- (-0.101,-1.5) node[above] {}; 
\draw[<-, very thick] (-0.100,1.5) -- (0.101,1.5) node[below] {}; 
\fill (0:1.5) circle (3.3pt) node[left] {{\small$\Psi$}};
\fill (270:1.5) circle (0pt) node[above] {{\small$X$}};
\end{tikzpicture} 
\, \right\rangle_{\raisemath{10pt}{\!\!\!a}}
=
\left\langle \, 
\begin{tikzpicture}[very thick,scale=0.6,color=blue!50!black, baseline]
\draw (0,0) circle (1.5);
\draw[->, very thick] (0.100,-1.5) -- (-0.101,-1.5) node[above] {}; 
\draw[->, very thick] (-0.100,1.5) -- (0.101,1.5) node[below] {}; 
\fill (180:1.5) circle (3.3pt) node[right] {{\small$\Psi$}};
\fill (270:1.5) circle (0pt) node[above] {{\small$X$}};
\end{tikzpicture} 
\, \right\rangle_{\raisemath{10pt}{\!\!\!b}}
\ee
for all $\Psi: X \rightarrow X$ in~$\B$. Then if the pairing $\langle -,- \rangle_X$ is nondegenerate in~$\B$, it restricts to a nondegenerate pairing in $\Borb$. 
\end{corollary}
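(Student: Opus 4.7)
The plan is to reduce the statement to Lemma~\ref{lem:ProjCompPair} by identifying the subspace of $B$-$A$-bimodule maps with the image of an idempotent that is self-adjoint with respect to the pairing $\langle -, - \rangle_X$.

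First I would recall from~\eqref{eq:bimodulemapprojector} that, thanks to the separable Frobenius hypothesis, the space of $B$-$A$-bimodule maps $X \to Y$ sits inside $\Hom_{\B(a,b)}(X,Y)$ as the image of the idempotent $\pi_{BA} = {}_{1_B}P \circ P_{1_A}$. Since $\Borb$ is a full subbicategory of $\Beq$, the 2-morphism spaces in $\Borb$ between $X$ and $Y$ coincide with these images. Thus the induced pairing on $\Borb$ is exactly the restriction of $\langle - , - \rangle_X$ to the subspaces $\operatorname{im}(\pi_{BA})\subset\Hom(Y,X)$ and $\operatorname{im}(\pi_{BA})\subset\Hom(X,Y)$.

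Next I would apply Proposition~\ref{prop:perfectlypairedinBeq}, whose hypothesis~\eqref{eq:traceorre} is exactly our assumption~\eqref{eq:traceorientationreversal}. Taking $\alpha = 1_A$ and $\beta = 1_B$, and using that $A$ and $B$ are symmetric (so $\gamma_A = 1_A$ and $\gamma_B = 1_B$ by the characterisation of symmetry recalled just after~\eqref{eq:Nakayama}), the two identities in~\eqref{eq:projcomppair} become
\be
\big\langle {}_{1_B}P(\Phi_1), \Phi_2 \big\rangle_X = \big\langle \Phi_1, {}_{1_B}P(\Phi_2) \big\rangle_X
\, , \quad
\big\langle P_{1_A}(\Phi_1), \Phi_2 \big\rangle_X = \big\langle \Phi_1, P_{1_A}(\Phi_2) \big\rangle_X \, .
\ee
Combining these (and using that ${}_{1_B}P$ and $P_{1_A}$ commute), the full bimodule projector $\pi_{BA}$ is self-adjoint: $\langle \pi_{BA}(\Phi_1), \Phi_2 \rangle_X = \langle \Phi_1, \pi_{BA}(\Phi_2) \rangle_X$.

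Finally, I would invoke Lemma~\ref{lem:ProjCompPair} with $U = \Hom(Y,X)$, $V = \Hom(X,Y)$, and $P = Q = \pi_{BA}$ to conclude that the restriction of $\langle -, - \rangle_X$ to $\operatorname{im}(\pi_{BA})\times\operatorname{im}(\pi_{BA})$ is nondegenerate, which is exactly nondegeneracy of the pairing in $\Borb$. The only substantive step is really the verification that symmetry of $A$ and $B$ makes both Nakayama twists trivialise simultaneously; once that is in place the argument is a one-line bookkeeping exercise combining Proposition~\ref{prop:perfectlypairedinBeq} and Lemma~\ref{lem:ProjCompPair}. There is no serious obstacle, which is why this is stated as a corollary rather than a theorem.
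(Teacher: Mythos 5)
Your argument is correct and is essentially identical to the paper's own proof: symmetry of $A$ and $B$ trivialises the Nakayama automorphisms, so Proposition~\ref{prop:perfectlypairedinBeq} with $\alpha=1_A$, $\beta=1_B$ makes the bimodule projectors self-adjoint, and Lemma~\ref{lem:ProjCompPair} then gives nondegeneracy on their images. The extra bookkeeping you supply (identifying the $\Borb$ morphism spaces with $\operatorname{im}(\pi_{BA})$ and composing the two one-sided projectors) is exactly what the paper leaves implicit.
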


\begin{proof}
Since $A,B$ are symmetric, the Nakayama automorphisms $\gamma_A$ and $\gamma_B$ are identities. Choosing $\alpha=1_A$, $\beta=1_B$ in Proposition~\ref{prop:perfectlypairedinBeq} shows that the conditions of Lemma~\ref{lem:ProjCompPair} hold.
\end{proof}

\section{Landau-Ginzburg models}\label{sec:LGmodels}

From now on we focus on the bicategory $\LG$ of Landau-Ginzburg models. In this section we start by recalling its definition and how all of the assumptions for the general construction of the previous section are satisfied. After an observation on the relation between central charges and invertible quantum dimensions, we review how every object in $\LG$ gives rise to an open/closed TFT, and we prove an analogous result for $\LGorb$. 

\subsection{Bicategory of Landau-Ginzburg models} 

By a Landau-Ginzburg model in this paper we mean the topological B-twist of an $\mathcal N = (2,2)$ supersymmetric Landau-Ginzburg model with affine target $k^n$, see \cite{v1991, ll9112051}, where we can take $k=\C$ or $k=\C[t_1,\ldots, t_d]$.\footnote{Note that as in \cite{cm1208.1481} all of our results hold for arbitrary commutative noetherian $\Q$-algebras~$k$.} The bulk sector of such a theory is described by a \textsl{potential}, i.\,e.~a polynomial~$W$ in the ring $R=k[x_1,\ldots,x_n]$ such that the Jacobi ring $\Jac(W) = R/(\partial_{x_1}W, \ldots, \partial_{x_n}W)$ is a finitely generated free $k$-module and the $\partial_{x_i}W$ form a regular sequence in~$R$. In the case $k=\C$ this simply means that $\dim_\C \Jac(W) < \infty$. 

We want to define the bicategory $\LG$ of Landau-Ginzburg models. From the above it is natural that its objects are given by potentials~$W$ in $R = k[x_1,\ldots,x_n]$ for all $n\in\N$. If we wish to stress which ring~$W$ is an element of, we will denote the associated object in $\LG$ as $(R,W)$. We will often abbreviate $k[x_1,\ldots,x_n]$ as $k[x]$, and our notation will follow \cite{cm1208.1481}. 

Similar to boundary conditions \cite{kl0210, bhls0305, hl0404}, defects in Landau-Ginzburg models are described by matrix factorisations \cite{br0707.0922}; these form the 1-morphisms in $\LG$. Recall that a \textsl{matrix factorisation} of $W\in R$ is a $\Z_2$-graded free $R$-module $X=X^0\oplus X^1$ together with an odd $R$-linear endomorphism~$d_X$, called \textsl{differential}, such that $d_X^2 = W \cdot 1_X$. A \textsl{morphism} between matrix factorisations $(X,d_X)$ and $(Y,d_Y)$ is an even $R$-linear map $\phi: X\rightarrow Y$ that is compatible with the differentials $d_X, d_Y$, i.\,e.~$d_Y \phi = \phi d_X$. Two morphisms $\phi, \psi: X\rightarrow Y$ are \textsl{homotopic} if there is an odd $R$-linear map $\lambda: X\rightarrow Y$ such that $d_Y \lambda + \lambda d_X = \psi - \phi$. 

Matrix factorisations of $W\in R$ are the objects in a category $\HMF(R,W)$, whose arrows are morphisms modulo homotopy relations. The full subcategory in $\HMF(R,W)$ of matrix factorisations whose underlying $R$-modules are of finite rank is denoted $\hmf(R,W)$. Both categories are naturally triangulated \cite{o0302304}, and we write~$[1]$ for their shift functors. For most practical purposes the categories of 1-morphisms in $\LG$ are given by the categories $\hmf(S\otimes_k R, V-W)$; the precise definition below is motivated by the tensor product of matrix factorisations which we discuss next. 

Let $W\in R$, $V\in S$, $U\in T$ be potentials and consider matrix factorisations $X\in \hmf(S\otimes_k R, V-W)$, $Y\in \hmf(T\otimes_k S, U-V)$. From this we define the \textsl{tensor product matrix factorisation} $Y\otimes X \in \HMF(T\otimes_k R, U-W)$ in terms of its underlying $(T\otimes_k R)$-module 
\be
Y\otimes X = \left( (Y^0\otimes_S X^0) \oplus (Y^1 \otimes_S X^1) \right) \oplus \left( (Y^0\otimes_S X^1) \oplus (Y^1 \otimes_S X^0) \right) 
\ee
and differential $d_{Y\otimes X} = d_Y\otimes 1 + 1\otimes d_X$. Whenever $S\neq k$ and $X,Y \neq 0$ this is an infinite-rank matrix factorisation over $T\otimes_k R$. However, as explained in~\cite[Sect.\,12]{dm1102.2957}, $Y\otimes X$ is (naturally isomorphic to) a direct summand of some finite-rank matrix factorisation in $\hmf(T\otimes_k R, U-W)$. 

While the homotopy category of finite-rank matrix factorisations is not necessarily idempotent complete (for an example see \cite[Ex.\,A.5]{keller}), this is the case for $\HMF(R,W)$ (being triangulated and having arbitrary coproducts, see \cite[Prop.\,1.6.8]{NeemanBook}). So to make sure that our categories are closed with respect to the tensor product we are lead to consider the idempotent closure $\hmf(R,W)^\omega$ of $\hmf(R,W)$ in $\HMF(R,W)$. This means that $\hmf(R,W)^\omega$ is the full subcategory of $\HMF(R,W)$ whose objects are (isomorphic to) direct summands of finite-rank matrix factorisations in $\HMF(R,W)$. Accordingly we define 1- and 2-morphisms in $\LG$ via
\be
\LG \big( (R,W), (S,V) \big) = \hmf(S\otimes_k R, V-W)^\omega \, ,
\ee
and the composition~\eqref{eq:cabc} of 1-morphisms is $\LG$ is given by the tensor product. 

To make $\LG$ into an honest bicategory it remains to specify the associator, the unit 1-morphism, and its left and right actions. The former is the obvious natural isomorphism $\alpha_{XYZ}: (X\otimes Y) \otimes Z \rightarrow X \otimes (Y\otimes Z)$ which we will leave implicit in the following. To discuss the unit 1-morphism $I_W$ for a potential $W\in R = k[x_1,\ldots,x_n]$, let us write $\Re = R\otimes_k R = k[x,x']$ and $\widetilde W = W\otimes 1 - 1\otimes W \in \Re$. We also fix~$n$ formal symbols~$\theta_i$ as a basis of $(\Re)^{\oplus n}$. Then the $\Re$-module underlying $I_W\in \hmf(\Re,\widetilde W)$ is the exterior algebra
\be
I_W = \bigwedge \Big( \bigoplus_{i=1}^n \Re \theta_i \Big)
\ee
on which the differential is given by
\be
d_{I_W} = \sum_{i=1}^n \left( (x_i - x'_i) \cdot \theta_i^* + \partial_{[i]} W \cdot \theta_i \wedge (-) \right)
\ee 
where $\partial_{[i]} W = (W(x'_1,\ldots,x'_{i-1}, x_i, \ldots, x_n) - W(x'_1,\ldots,x'_{i}, x_{i+1}, \ldots, x_n))/(x_i - x'_i)$ and~$\theta_i^*$ are contraction operators. We will sometimes simply write~$I$ for~$I_W$ when there is no danger of confusion. It is straightforward to verify that the endomorphisms of $I_W$ in $\hmf(\Re,\widetilde W)$ are given by the bulk space $\Jac(W)$, while $\Hom(I_W,I_W[1])=0$, see e.\,g.~\cite[Sect.\,3]{kr0405232}. 

Finally, the left and right actions of the unit matrix factorisation on $X\in \hmf(S\otimes_k R, V-W)$ are the natural maps
\be
\lambda_X : I_V \otimes X \lra X \, , \quad
\rho_X : X\otimes I_W \lra X
\ee
which are the composition of first projecting~$I$ to its $\theta$-degree zero component and then using the multiplication in the rings~$S$ and~$R$, respectively. While~$\alpha$ is an isomorphism of free modules, $\lambda$ and~$\rho$ are only invertible up to homotopy. Explicit expressions for these homotopy inverses will not be needed in the present paper, but they can be found in \cite[Sect.\,4]{cm1208.1481}. 

In summary, we have specified all the data of the bicategory $\LG$ of Landau-Ginzburg models. That the coherence axioms are indeed satisfied was checked in \cite{McNameethesis, cr0909.4381}. 

\medskip

Now we turn to adjunctions on the level of 1-morphisms in $\LG$. These were first constructed in \cite{cr1006.5609} in the one-variable case and then in a unified way for all of $\LG$ in \cite{cm1208.1481}; see also \cite{brs0909.0696} for earlier work. The evaluation and coevaluation maps for any 1-morphism are explicit expressions in terms of associative Atiyah classes and residues. In an effort to keep the presentation in the present paper compact and clear, we refrain from writing out the adjunction maps and explaining their constituents. For our purposes it will be enough to know that they exist and satisfy the properties we review below. For all further details we refer to \cite{cm1208.1481}. 

Let $W\in R = k[x_1,\ldots,x_n]$ and $V\in S = k[z_1,\ldots,z_m]$ be potentials. The left and right adjoints of $X\in \hmf(S\otimes_k R, V-W)$ in $\LG$ turn out to be the matrix factorisations 
\be\label{eq:leftrightadj}
\dX  = X^\vee \otimes_S S[m] \, , \quad
X^\dagger = R[n] \otimes_R X^\vee 
\ee
in $\hmf(R\otimes_k S, W-V)$. Here $X^\vee = \Hom_{S\otimes_k R}(X,S\otimes_k R)$ is the dual factorisation with differential given by $d_{X^\vee}(\nu) = (-1)^{|\nu| + 1} \nu \circ d_X$ for homogeneous $\nu\in X^\vee$. Similarly, on the level of 2-morphisms $\phi: X\rightarrow Y$ we have
\be\label{eq:adjoint-2-morphisms}
\dphi = \phi^\vee \otimes_S 1_{S[m]} : \dY \lra \dX  \, , \quad 
\phi^\dagger = 1_{R[n]}\otimes_R \phi^\vee : Y^\dagger \lra X^\dagger \, . 
\ee
We summarise the results of \cite[Sect.\,5.3\,\&\,8]{cm1208.1481} relevant for us, using the diagrammatic notation of Section~\ref{subsec:bicatadj}: 

\begin{theorem}\label{thm:LGbicatsummary}
\begin{enumerate}
\item
For any 1-morphism~$X$ in $\LG$ there are canonical adjunctions $\dX \dashv X \dashv X^\dagger$, so in particular $\LG$ has adjoints. 
Moreover, the induced morphisms $\Hom(\dY, \dX) \leftarrow \Hom(X, Y ) \rightarrow Hom(X^\dagger, Y^\dagger)$ are those from~\eqref{eq:adjoint-2-morphisms}: 
\be
\begin{tikzpicture}[very thick,scale=0.75,color=blue!50!black, baseline=.4cm]
\fill (3,0.5) circle (2.5pt) node[right] {{\small $\dphi$}};
\draw (3,0) -- (3,1.5)
node[above] {{{\small$\dX$}}};
\draw (3,0) -- (3,-0.5)
node[below] {{{\small$\dY$}}};
\end{tikzpicture} 
=
\begin{tikzpicture}[very thick,scale=0.75,color=blue!50!black, baseline=.4cm]
\draw[line width=0pt] 
(3,0.5) node[line width=0pt] (D) {}
(2,0.5) node[line width=0pt] (s) {}; 
\draw[directed] (D) .. controls +(0,-1) and +(0,-1) .. (s);
\fill (2,0.5) circle (2.5pt) node[right] {{\small $\phi$}};
\draw (2,0.32) -- (2,0.68);
\draw[line width=0pt] 
(2,0.5) node[line width=0pt] (D) {}
(1,0.5) node[line width=0pt] (s) {}; 
\draw[directed] (D) .. controls +(0,+1) and +(0,+1) .. (s);
\draw (3,0.32) -- (3,1.5)
node[above] {{{\small$\dX$}}};
\draw (1,0.68) -- (1,-0.5)
node[below] {{{\small$\dY$}}};
\end{tikzpicture} 
, \quad 
\begin{tikzpicture}[very thick,scale=0.75,color=blue!50!black, baseline=.4cm]
\fill (3,0.5) circle (2.5pt) node[right] {{\small $\phi^\dagger$}};
\draw (3,0) -- (3,1.5)
node[above] {{{\small$X^\dagger$}}};
\draw (3,0) -- (3,-0.5)
node[below] {{{\small$Y^\dagger$}}};
\end{tikzpicture} 
=
\begin{tikzpicture}[very thick,scale=0.75,color=blue!50!black, baseline=.4cm]
\draw[line width=0pt] 
(3,0.5) node[line width=0pt] (D) {}
(2,0.5) node[line width=0pt] (s) {}; 
\draw[redirected] (D) .. controls +(0,1) and +(0,1) .. (s);
\fill (2,0.5) circle (2.5pt) node[left] {{\small $\phi$}};
\draw (2,0.32) -- (2,0.68);
\draw[line width=0pt] 
(2,0.5) node[line width=0pt] (D) {}
(1,0.5) node[line width=0pt] (s) {}; 
\draw[redirected] (D) .. controls +(0,-1) and +(0,-1) .. (s);
\draw (1,0.32) -- (1,1.5)
node[above] {{{\small$X^\dagger$}}};
\draw (3,0.68) -- (3,-0.5)
node[below] {{{\small$Y^\dagger$}}};
\end{tikzpicture} 
.
\ee
\item 
For $\Phi: X \rightarrow X$ in $\hmf(k[z,x], V-W)$, $\phi \in \End(I_V)$ and $\psi \in \End(I_W)$ we have
\begin{align}
\label{eq:DlMF}
\mathcal D_{\mathrm{l}}^\Phi(X)(\phi) & := 
\begin{tikzpicture}[very thick,scale=0.45,color=blue!50!black, baseline]
\draw (0,0) circle (2);
\draw[<-, very thick] (0.100,-2) -- (-0.101,-2) node[above] {}; 
\draw[<-, very thick] (-0.100,2) -- (0.101,2) node[below] {}; 
\fill (180:0) circle (3.9pt) node[left] {{\small$\phi$}};
\fill (0:2) circle (3.9pt) node[left] {{\small$\Phi$}};
\fill (270:2) circle (0pt) node[above] {{\small$X$}};
\end{tikzpicture} 
 = (-1)^{n+1\choose 2} \Res \left[ \frac{\phi(z) \str( \Phi \, \Lambda_X) \,  \underline{\operatorname{d}\! z}}{\partial_{z_1}V, \ldots, \partial_{z_m}V} \right] , \\
\mathcal D_{\mathrm{r}}^\Phi(X)(\psi) & := 
\begin{tikzpicture}[very thick,scale=0.45,color=blue!50!black, baseline]
\draw (0,0) circle (2);
\draw[->, very thick] (0.100,-2) -- (-0.101,-2) node[above] {}; 
\draw[->, very thick] (0.100,2) -- (0.101,2) node[below] {}; 
\fill (180:0) circle (3.9pt) node[right] {{\small$\psi$}};
\fill (180:2) circle (3.9pt) node[right] {{\small$\Phi$}};
\fill (270:2) circle (0pt) node[above] {{\small$X$}};
\end{tikzpicture} 
 = (-1)^{m+1\choose 2} \Res \left[ \frac{\psi(z) \str( \Phi \, \Lambda_X) \,  \underline{\operatorname{d}\! x}}{\partial_{x_1}W, \ldots, \partial_{x_n}W} \right] \label{eq:rightdecorateddefectaction}
\end{align}
if $m,n \in 2\Z$, where $\Lambda_X = \partial_{x_1}d_X \ldots \partial_{x_n} d_X \, \partial_{z_1}d_X \ldots \partial_{z_m} d_X$. 
\item Write $\mathcal D_h(X) := \mathcal D_h^{1_X}(X)$ for $h\in \{ \mathrm{l}, \mathrm{r} \}$. Then we have
\begin{align}
& \mathcal D_{\mathrm{l}}(I) = 1 = \mathcal D_{\mathrm{r}}(I) \, , \quad
\mathcal D_{\mathrm{l}}(X) = \mathcal D_{\mathrm{r}}(X^\vee) \, , \quad
\mathcal D_{\mathrm{r}}(X) = \mathcal D_{\mathrm{l}}(X^\vee) \, , \\ 
& \mathcal D_{\mathrm{l}}(X) \circ \mathcal D_{\mathrm{l}}(Y) = \mathcal D_{\mathrm{l}}(Y\otimes X) \, , \quad
\mathcal D_{\mathrm{r}}(X) \circ \mathcal D_{\mathrm{r}}(Y) = \mathcal D_{\mathrm{r}}(X\otimes Y) \label{eq:doubledefectwraps}
\end{align}
for all composable 1-morphisms $X,Y$ in $\LG$. 
\end{enumerate}
\end{theorem}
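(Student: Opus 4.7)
The plan is that all three parts are essentially established in \cite{cm1208.1481} (with the one-variable case of (i) going back to \cite{cr1006.5609}); here I outline the strategy one could follow to recover them. The key technical ingredients are the explicit adjunction maps built from associative Atiyah classes of the matrix factorisation~$X$.

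For part (i), I would construct $\ev_X, \coev_X$ (and their tilde variants) by writing them as polynomial expressions in the partial derivatives $\partial_{x_i} d_X$ and $\partial_{z_j} d_X$ acting on the exterior-algebra models of $I_W$ and $I_V$. Verifying $\dX \dashv X \dashv X^\dagger$ then amounts to checking the Zorro moves~\eqref{eq:LeftZorroMoves} and~\eqref{eq:otherZorro}, which reduce to identities among these partial derivatives after projection onto the $\theta$-degree zero component of the unit. The compatibility with the induced action on 2-morphisms is a direct computation: inserting $\phi : X \to Y$ into the right-hand sides of the displayed diagrams and simplifying via $d_Y \phi = \phi d_X$ should produce exactly $\phi^\vee$ twisted by the appropriate shift, as specified in~\eqref{eq:adjoint-2-morphisms}.

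For part (ii), my approach is to evaluate the string diagram defining $\mathcal D_{\mathrm{l}}^\Phi(X)(\phi)$ by plugging in the explicit adjunction maps from (i). After expansion, the closed defect loop becomes a supertrace over the finite-rank free $R$-module underlying~$X$, ornamented by the operator $\Lambda_X = \partial_{x_1}d_X \cdots \partial_{x_n}d_X \, \partial_{z_1}d_X \cdots \partial_{z_m}d_X$, with $\Phi$ sitting at the defect point and $\phi$ encoding the bulk insertion. Projecting the resulting element of $\End(I_V)$ onto a scalar via the bulk pairing is, by the Kapustin-Li / Grothendieck residue formula, precisely the residue in \eqref{eq:DlMF}; a reflected argument yields \eqref{eq:rightdecorateddefectaction}. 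The main obstacle will be bookkeeping of the signs $(-1)^{n+1 \choose 2}$ and $(-1)^{m+1 \choose 2}$, which arise from reordering wedge factors when the cup and cap are paired and depend on the chosen orderings of the variables $x_i$ and $z_j$.

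Part (iii) then follows by formal manipulations. The normalisation $\mathcal D_{\mathrm{l}}(I) = 1 = \mathcal D_{\mathrm{r}}(I)$ is obtained by evaluating \eqref{eq:DlMF} on $X = I_W$: the explicit Koszul-type differential gives $\str(\Lambda_{I_W}) = \prod_i (x_i - x'_i)$ up to sign, whose residue against $\partial_{x_i} W$ is classically $1$. The duality $\mathcal D_{\mathrm{l}}(X) = \mathcal D_{\mathrm{r}}(X^\vee)$ comes from observing that reversing the orientation of the defect loop replaces the adjunction $\dX \dashv X$ by $X \dashv X^\dagger$ applied to $X^\vee$, and that the residue expressions in \eqref{eq:DlMF} and \eqref{eq:rightdecorateddefectaction} are interchanged under the identifications \eqref{eq:leftrightadj}. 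Finally, the composition identities~\eqref{eq:doubledefectwraps} follow by fusing two concentric defect loops into one labelled by the tensor product, using that $(Y \otimes X)^\dagger \cong X^\dagger \otimes Y^\dagger$ together with the compatibility of the adjunction maps with $\otimes$ established in (i); on the level of residues this is the multiplicativity of the supertrace over the tensor product of matrix factorisations.
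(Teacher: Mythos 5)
The paper itself gives no proof of this theorem: it is explicitly introduced as a summary of results from \cite[Sect.\,5.3\,\&\,8]{cm1208.1481} (with the one-variable case in \cite{cr1006.5609}), so there is nothing internal to compare against, and your decision to defer to that reference while sketching its strategy matches the paper's treatment. Your outline of the strategy (explicit adjunction maps built from the differentials/Atiyah classes, Zorro moves, evaluation of the defect loop as a supertrace against $\Lambda_X$ yielding the residue formula, and formal fusion of loops for the composition identities) is consistent with how the cited results are actually established.

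One concrete detail in your sketch of part (iii) is wrong, however: for the unit $I_W$ one does \emph{not} have $\str(\Lambda_{I_W}) = \prod_i (x_i - x'_i)$ up to sign. Already for $n=1$, with $d_{I_W} = (x-x')\theta^* + \partial_{[1]}W\,\theta\wedge(-)$, a direct computation gives $\str(\partial_x d_{I_W}\,\partial_{x'} d_{I_W}) = \big(W'(x)-W'(x')\big)/(x-x')$, i.e.\ a difference quotient of the partials of $W$ (in general a determinant of such difference quotients), and it is the interpolation/transformation property of the residue applied to this expression that yields $\mathcal D_{\mathrm{l}}(I)=1$; the residue of $\prod_i(x_i-x'_i)$ against the $\partial_{x'_i}W$ is generically zero. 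Since the whole argument is ultimately outsourced to \cite{cm1208.1481} this does not invalidate your proposal, but as stated that step of your sketch would not go through.
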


Note that part~(ii) of the above theorem in particular provides us with an explicitly computable expression for quantum dimensions.\footnote{If~$m$ and~$n$ are odd then we only have `pivotality up to shifts' as explained in \cite[Sect.\,7]{cm1208.1481}. In this case one can still define close pendants of quantum dimensions which up to signs (originating from the shifts) are still given by the expressions~\eqref{eq:DlMF} and~\eqref{eq:rightdecorateddefectaction}.\label{ftn:qdims-pivotal-up-to-shifts}}
For a quick review on how to compute with residues we refer to \cite[Sect.\,2.5]{cm1208.1481} and to Section~\ref{subsec:minimalmodels} below; general residue theory is developed in \cite{Lipman}. 

\medskip

It is clear from their definition in~\eqref{eq:leftrightadj} that for $m=n$ mod~2 the left and right adjoints of $X \in \hmf(k[z_1,\ldots,z_m,x_1,\ldots,x_n], V-W)$ are isomorphic and that $X\cong X^{\dagger\dagger}$. However for $m\neq n$ mod~2 we have $\dX \not\cong X^\dagger$ and $X\not\cong X^{\dagger\dagger}$ for most~$X$, such that in general it does not make sense to talk about pivotality or quantum dimensions in the standard sense in $\LG$. Both of these issues have a natural resolution (see \cite[Sect.\,7\,\&\,8]{cm1208.1481}), but at the price of a more sophisticated discussion of shifts and their compatibility with the adjunction maps. 
For example, if $m\neq n$ mod~2 then $D_l^\Phi(X)$ is a map from $\End(I_V)$ to $\Hom(I_W,I_W[1])$, and since the latter space is zero we have $D_l^\Phi(X) = 0$ and similarly $D_r^\Phi(X)=0$. 
To keep our presentation simple and uncluttered we only refer to \cite{cm1208.1481} for a detailed discussion of these points, but we make the following two remarks. 

Firstly, the full subbicategory $\LG'$ whose objects $(R,W)$ depend on an even number of variables is pivotal in the standard sense. Thus all the constructions of Sections~\ref{sec:equibicat} and~\ref{sec:orbibicat} are directly applicable to $\LG'$. Secondly, however, we stress that the restriction on the number of variables to be even can be lifted as the full $\LG$ is ``pivotal up to shifts'', compare \cite[Prop.\,7.1]{cm1208.1481} with~\eqref{eq:pivotal}. Indeed, it is not necessarily pivotality but rather its implications such as the relations~\eqref{eq:doubledefectwraps} that are relevant for our construction. Since these identities hold in all of $\LG$ there is no need for restrictions.  

\medskip

Recall that by Theorem~\ref{thm:algebraXdaggerX-summary} having 1-morphisms $X:a\rightarrow b$ with invertible quantum dimension leads to equivalences in the orbifold completion. If the original bicategory~$\B$ has a ``trivial object''~$0$ then this  implies that the associated category of ``boundary conditions'' $\B(0,b)$ is equivalent to modules over $X^\dagger \otimes X \in \B(a,a)$ in $\B(0,a)$. This is in particular the case for Landau-Ginzburg models: 

\begin{proposition}\label{prop:hmfSVeqmodXX}
Let $X: (R,W) \rightarrow (S,V)$ with invertible $\dimr(X)$ in $\LG$. Then $m = n$ mod~2, $\dX \cong X^\dagger$ and
\be
X^\dagger \otimes (-) : 
\hmf( S, V )^\omega \cong \modu( X^\dagger \otimes X ) 
: X \otimes (-) \, . 
\ee
\end{proposition}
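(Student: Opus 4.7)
The plan is to reduce the statement to Theorem~\ref{thm:algebraXdaggerX-summary} applied to~$X$, which requires first establishing the parity claim and the isomorphism $\dX \cong X^\dagger$, and then unpacking the equivalence of hom-categories in $\Borb$ that it yields.

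For the parity claim, recall from the text preceding this proposition that when $m \not\equiv n \pmod{2}$ the operators $\mathcal{D}_{\mathrm{l}}^\Phi(X)$ and $\mathcal{D}_{\mathrm{r}}^\Phi(X)$ take values in $\Hom(I,I[1])$, which vanishes, so in particular $\diml(X) = \dimr(X) = 0$. Invertibility of $\dimr(X)$ therefore forces $m \equiv n \pmod{2}$, so the $\Z_2$-graded shifts $S[m]$ and $R[n]$ in the definitions~\eqref{eq:leftrightadj} of $\dX$ and $X^\dagger$ agree and give a canonical $\dX \cong X^\dagger$.

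With $\dX \cong X^\dagger$ in hand, Theorem~\ref{thm:algebraXdaggerX-summary} exhibits $A := X^\dagger \otimes X$ as a symmetric separable Frobenius algebra together with an adjoint equivalence $X : ((R,W),A) \rightleftarrows ((S,V),I_V) : X^\dagger$ in $\Borb$. Post-composing with this equivalence gives an equivalence of hom-categories out of the trivial object $((k,0),I_{(k,0)})$,
\be
\Borb\big(((k,0),I_{(k,0)}),\, ((R,W),A)\big) \;\simeq\; \Borb\big(((k,0),I_{(k,0)}),\, ((S,V),I_V)\big) \, ,
\ee
implemented on objects by horizontal composition with $X$ and $X^\dagger$ respectively.

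Finally, we identify both sides. Unwinding the definition of $\Borb$ from Section~\ref{subsec:Beq}, the right-hand category consists of $I_V$-$I_{(k,0)}$-bimodules in $\LG((k,0),(S,V)) = \hmf(S,V)^\omega$; as both algebras act trivially, this is simply $\hmf(S,V)^\omega$. The left-hand category consists of $A$-$I_{(k,0)}$-bimodules in $\hmf(R,W)^\omega$, which is by definition $\modu(A) = \modu(X^\dagger \otimes X)$. The induced functors read off as $X^\dagger \otimes (-)$, equipped with its natural left $A$-action via $\mathrm{ev}_X$, and $X \otimes_A (-)$, the latter being written $X \otimes (-)$ in the statement under the standard abuse by which $\otimes$ silently denotes the relative tensor product when a compatible module action is present. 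The main point requiring care is this last identification, as one must verify via Lemma~\ref{lem:tensorprojector} that $X \otimes_A (-)$ really does take $\modu(A)$ to $\hmf(S,V)^\omega$ and that the induced functors are mutually inverse; after this, the proposition follows directly from the general framework.
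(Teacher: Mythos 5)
Your proposal is correct and follows essentially the same route as the paper's own proof: vanishing of quantum dimensions for $m\neq n$ mod 2 forces the parity claim and hence $\dX\cong X^\dagger$, Theorem~\ref{thm:algebraXdaggerX-summary} supplies the adjoint equivalence $(W,X^\dagger\otimes X)\cong(V,I_V)$ in the (orbifold) completion, and composing with 1-morphisms out of the trivial object $(0,I_0)$ identifies $\hmf(S,V)^\omega$ with $\modu(X^\dagger\otimes X)$. Your extra remarks unpacking the functors as $X^\dagger\otimes(-)$ and $X\otimes_A(-)$ are consistent elaborations of what the paper leaves implicit (modulo the minor point that the left $A$-action on $X^\dagger\otimes N$ is induced by $\tev_X$ rather than $\ev_X$).
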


\begin{proof}
If $m \neq n$ mod~2 then the quantum dimensions of $X$ are zero. For $m = n$ mod~2, by definition $\dX \cong X^\dagger$.
By Theorem~\ref{thm:algebraXdaggerX-summary} we have that in $\LGorb$ 
\be
X: ( W, X^\dagger \otimes X ) \lra ( V, I_V)
\ee
is a 1-isomorphism, and hence 
\begin{align}
\hmf(S,V)^\omega & 
= \LG(0, V) 
= \LGorb\big( (0,I_0) , (V,I_V) \big) 
\cong \LGorb\big( (0,I_0), ( W, X^\dagger \otimes X ) \big) 
\nonumber \\
& = \modu(X^\dagger \otimes X) \, . 
\end{align}
\end{proof}

\subsection{Graded matrix factorisations and central charge}

In order to make more detailed comparisons with conformal field theory one should study R-charge in Landau-Ginzburg models. This is encoded in categories $\hmf(R,W)^{\mathrm{gr}}$ of \textsl{graded matrix factorisations} as follows \cite[App.\,A.4]{cr1006.5609}. Let $R=k[x_1,\ldots,x_n]$ be graded via the assignment of degrees $|x_i|\in \Q_{\geqslant 0}$ to the variables~$x_i$, and let $W \in R$ be homogeneous of degree~2. Objects of $\hmf(R,W)^{\mathrm{gr}}$ are matrix factorisations $(X,d_X)$ where in addition~$X$ is a graded module and~$d_X$ is homogeneous of degree~1, and morphisms are as in $\hmf(R,W)$, but with the additional condition that they must have degree zero. 

\begin{definition}
Let $W \in k[x_1,\ldots,x_n]$ be a homogeneous potential as above. Its \textsl{central charge} is 
\be
\widehat c_W = \sum_{i=1}^n ( 1 - |x_i| ) \, . 
\ee
\end{definition}

In the general construction of Section~\ref{sec:equibicat} the condition of quantum dimensions for 1-morphisms being invertible plays a central role, and it is natural to ask when this condition can be satisfied. In the case of rational CFT it is known (see \cite{ffrs0909.5013} and references therein) that any two theories with identical central charge and identical left and right symmetry algebras are related by the construction of Section~\ref{sec:equibicat}. Furthermore, the idea of computing correlators in one theory by inserting ``islands'' of another theory on the worldsheet, separated by defect lines~$X$, fundamentally hinges on the topological nature of~$X$ as argued in Section~\ref{introduction}, and topological defects exist only between CFTs of the same central charge. In this sense the next result on defects in Landau-Ginzburg models is not unexpected: 

\begin{proposition}\label{prop:centralchargesandqdims}
Let $X \in \hmf(k[z_1,\ldots,z_m,x_1,\ldots,x_n],V-W)^{\mathrm{gr}}$ with $m=n$ mod~2. $X$ can have invertible quantum dimensions only if $\widehat c_W = \widehat c_V$. 
\end{proposition}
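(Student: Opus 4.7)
The plan is to exploit the explicit residue formula for $\dim_r(X)$ from Theorem~\ref{thm:LGbicatsummary}(ii) together with the fact that in the graded setting $\Jac(V)$ is a graded Artinian local $k$-algebra, so homogeneous invertible elements must sit in degree zero.

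First I would establish that $\dim_r(X)$ is a homogeneous element of $\Jac(V)$ and compute its degree. Since $X \in \hmf(k[z,x], V-W)^{\mathrm{gr}}$, the differential $d_X$ is homogeneous of degree $1$, and hence $\partial_{x_i}d_X$ has degree $1-|x_i|$ and $\partial_{z_j}d_X$ has degree $1-|z_j|$. The supertrace $\str(\Lambda_X)$ is therefore homogeneous in $k[z,x]$ of total degree
\be
\textstyle\sum_{i=1}^n(1-|x_i|)+\sum_{j=1}^m(1-|z_j|)=\widehat c_W+\widehat c_V \, .
\ee
The Grothendieck residue in~\eqref{eq:rightdecorateddefectaction} is taken with respect to the regular sequence $\partial_{x_1}W,\ldots,\partial_{x_n}W$ of respective degrees $2-|x_i|$; by the standard graded behaviour of the residue (each $1/\partial_{x_i}W$ contributes $-(2-|x_i|)$ and each $dx_i$ contributes $|x_i|$) the residue shifts the degree by $\sum_i(|x_i|-(2-|x_i|))=-2\widehat c_W$. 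Combining these, $\dim_r(X)\in \Jac(V)$ is homogeneous of degree $\widehat c_V-\widehat c_W$.

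Second, I would invoke a graded Nakayama-type argument. The polynomials $V$ and the generators $\partial_{z_j}V$ are homogeneous with $|z_j|>0$, so $\Jac(V)$ inherits a non-negative $\mathbb{Q}$-grading with $\Jac(V)_0=k$ and is a finite-rank free $k$-module. The irrelevant ideal $\mathfrak m=\bigoplus_{d>0}\Jac(V)_d$ is therefore nilpotent, so any homogeneous element of strictly positive degree is nilpotent and hence non-invertible, while a homogeneous element of strictly negative degree is automatically zero. Invertibility of $\dim_r(X)$ thus forces its degree to vanish, yielding $\widehat c_V=\widehat c_W$. (One can run the same argument with $\dim_l(X)$ and the roles of $V,W$ swapped as a consistency check.)

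The computation in the first step is essentially bookkeeping, and the graded Nakayama statement is standard once one is over a field; the only point that needs a little care is the case $k=\C[t_1,\dots,t_d]$, where ``invertible in $\Jac(V)$'' should still imply invertibility of the degree-zero component after reducing $k$ to a residue field, so the degree-zero constraint persists. I expect this minor technical check, rather than the degree calculation itself, to be the only real obstacle to writing out a complete proof.
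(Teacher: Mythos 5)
Your proposal is correct and follows essentially the same route as the paper: both compute that $\dimr(X)$ is homogeneous of degree $\widehat c_V - \widehat c_W$ from the degrees of $\partial_{x_i}d_X$, $\partial_{z_j}d_X$ and the degree shift $\sum_i(2|x_i|-2)$ of the residue, and then conclude that invertibility forces this degree to vanish. Your explicit graded-Nakayama justification of "homogeneous and invertible in $\Jac(V)$ implies degree zero" is slightly more careful than the paper's terse "invertible iff a nonzero constant in $k$", but it is the same argument in substance.
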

\begin{proof}
By Theorem~\ref{thm:LGbicatsummary}(ii) (and Footnote~\ref{ftn:qdims-pivotal-up-to-shifts}), up to a sign 
the right quantum dimension of~$X$ is the polynomial 
\be
\Res \left[ \frac{\str( \partial_{x_1}d_X \ldots \partial_{x_n} d_X \, \partial_{z_1} d_X \ldots \partial_{z_m} d_X) \, \underline{\operatorname{d}\! x}} {\partial_{x_1}W, \ldots, \partial_{x_n} W} \right] \in k[z] \, . 
\ee
This is invertible if and only if it is a nonzero constant in~$k$, so we have to ask when $\dimr(X)$ has degree zero. Since $|W|=2$ we have $|\partial_{x_i}W| = 2 - |x_i|$, and residue theory \cite[(1.10.5)]{Lipman} tells us that $\Res[ \frac{(-) \, \underline{\operatorname{d}\! x}}{\partial_{x_1}W, \ldots, \partial_{x_n} W}]$ is homogeneous of degree $\sum_{i=1}^n ( 2|x_i| -2 ) $. Furthermore, $|d_X| = 1$ and $\partial_{x_i}d_X$, $\partial_{z_j} d_X$ are homogeneous of degree $1 - |x_i|$, $1 - |z_j|$, respectively. It follows that
\be
	|\dimr(X) \,| = \Big| \sum_{i=1}^n ( 2 |x_i| - 2 ) + \sum_{j=1}^m ( 1 - |z_j| ) + \sum_{i=1}^n ( 1- |x_i| ) \Big| = | \widehat c_V - \widehat c_W | 
\ee
which vanishes if and only if $\widehat c_W = \widehat c_V$. The argument for $\diml(X)$ is exactly the same, thus completing the proof. 
\end{proof}

\subsection{Open/closed topological field theory}\label{subsec:ocTFT}

In this section we establish that generalised orbifolds of Landau-Ginzburg models give rise to open/closed topological field theories (TFTs). We start with a concise review of the generators-and-relations description of two-dimensional TFT, and explain how ordinary (non-orbifolded) Landau-Ginzburg models give rise to this structure. 

Recall from~\cite{l0010269, ms0609042} that one way to present a two-dimensional open/closed TFT (see also Remark~\ref{rem:trivialandproperdefectTFTs}(i)) is by the data of 
\begin{itemize}
\item a commutative Frobenius algebra~$C$, 
\item a Calabi-Yau category~$\mathcal O$ (see \cite[Sect\,7.2]{cw1007.2679}), 
\item \textsl{bulk-boundary maps} $\beta_X: C \rightarrow \End_{\mathcal O}(X)$ and \textsl{boundary-bulk maps} $\beta^X: \End_{\mathcal O}(X) \rightarrow C$ for all $X\in \mathcal O$. 
\end{itemize}
These data are subject to the following conditions. 
\begin{itemize}
\item The bulk-boundary maps $\beta_X$ are morphisms of unital algebras that map into the centre of $\End_{\mathcal O}(X)$. 
\item $\beta_X$ and $\beta^X$ are mutually adjoint with respect to the nondegenerate pairings $\langle -,- \rangle$ on~$C$ and $\langle -,- \rangle_X$ on $\End_{\mathcal O}(X)$ (which are part of the Frobenius and Calabi-Yau structure): 
$$
\big\langle \beta_X(\phi) , \Psi \big\rangle_X = \big\langle \phi, \beta^X(\Psi) \big\rangle
$$
for all $\phi\in C$ and $\Psi\in \End_{\mathcal O}(X)$. 
\item The \textsl{Cardy condition} is satisfied, i.\,e.~we have
$$
\str ( {}_\Psi m_\Phi ) = \big\langle \beta^X(\Phi), \beta^Y(\Psi) \big\rangle
$$
for all $\Phi: X \rightarrow X$, $\Psi: Y \rightarrow Y$ where ${}_\Psi m_\Phi (\alpha) = \Psi \circ \alpha\circ \Phi$ for all $\alpha \in \Hom_{\mathcal O}(X,Y)$. 
\end{itemize}

Every Landau-Ginzburg potential $W\in R$ gives rise to a TFT with closed state space and open sector category
\be\label{eq:COnonorbi}
C = R/(\partial W) \, , \quad
\mathcal O = \hmf(R,W) \, . 
\ee
Given two boundary conditions $X,Y \in \hmf(R,W)$ the space $\Hom^\bullet(X,Y)$ of boundary operators comprises both the even and odd cohomology of the differential $d_Y\circ (-) - (-1)^{|-|} (-) \circ d_X$, while by definition $\Hom(X,Y) := \Hom_{\hmf(R,W)}(X,Y)$ is only the even part. Nevertheless, thanks to its triangulated structure the category $\hmf(R,W)$ is sufficient to describe the full open/closed TFT since
\be
\Hom^1(X,Y) \cong \Hom^0(X,Y[1]) \, . 
\ee

Below we will specify the remaining TFT data. The difficult part in checking that these data satisfy the TFT axioms is to establish the Cardy condition and the nondegeneracy of the open sector pairing. This was first done in \cite{pv1002.2116} and \cite{m0912.1629}, respectively; see also \cite{buchweitzflenner08, dyckmurf, dm1102.2957, cm1208.1481}. 

The bulk pairing for a Landau-Ginzburg model with potential $W\in R = k[x_1,\ldots,x_n]$ is given by \cite{v1991}
\begin{align}
& \langle -,- \rangle_W : R/(\partial W) \times R/(\partial W) \lra \C \, , \nonumber \\
& \langle \phi_1, \phi_2 \rangle_W 
 = \Res \left[ \frac{\phi_1 \phi_2 \, \underline{\operatorname{d}\! x}}{\partial_{x_1} W, \ldots, \partial_{x_n} W} \right] , \label{eq:Vafapairing} 
\end{align}
and we will also write $\langle \phi \rangle_W  = \langle \phi, 1 \rangle_W $. 
The boundary pairings for $X,Y \in \hmf(R,W)$ are \cite{kl0305, hl0404}
\begin{align}
& \langle -,- \rangle_X : \Hom(X,Y) \times \Hom(Y,X)[n] \lra \C \, , \nonumber \\
& \langle \Psi_1, \Psi_2 \rangle_X  
 = \Res \left[ \frac{ \str( \Psi_1 \Psi_2 \, \partial_{x_1} d_X \ldots \partial_{x_n} d_X) \, \underline{\operatorname{d}\! x}}{\partial_{x_1} W, \ldots, \partial_{x_n} W} \right] . \label{eq:KapLi}
\end{align}
Furthermore, the bulk-boundary and boundary-bulk maps are \cite{kr0405232} (with $\Lambda_X = \partial_{x_1} d_X \ldots \partial_{x_n} d_X$)
\be\label{eq:bubos}
\beta_X(\phi) = 
\begin{tikzpicture}[very thick,scale=0.5,color=blue!50!black, baseline]
\draw[very thick] (0,-3) -- (0,0); 
\draw[very thick] (0,0) -- (0,3);
\draw[dashed] (0,-2) .. controls +(-0.5,1) and +(0,-1) .. (-1,0);
\draw[dashed] (-1,0) .. controls +(0,1) and +(-0.5,-1) .. (0,2);
\fill (0,-3) circle (0pt) node[right] {{\small$X$}};
\fill (-1,0) circle (3.3pt) node[left] {{\small$\phi$}};
\fill (-1.1,-1.3) circle (0pt) node {{\small$I_W$}};
\end{tikzpicture} 
 = \phi \cdot 1_X \, , \quad
\beta^X(\Psi) = 
\begin{tikzpicture}[very thick,scale=0.6,color=blue!50!black, baseline]
\draw (0,0) circle (1.5);
\draw[->, very thick] (0.100,-1.5) -- (-0.101,-1.5) node[above] {}; 
\draw[->, very thick] (-0.100,1.5) -- (0.101,1.5) node[below] {}; 
\fill (180:1.5) circle (3.3pt) node[right] {{\small$\Psi$}};
\fill (270:1.5) circle (0pt) node[above] {{\small$X$}};
\end{tikzpicture} 
 = (-1)^{n+1 \choose 2} \str( \Psi \, \Lambda_X) \, .  
\ee

\begin{remark}\label{rem:LGocTFT} 
Conjecturally, Landau-Ginzburg models give rise to a TFT with defects $\tau_{\mathrm{LG}}$, with $\LG$ equivalent to the bicategory of worldsheet phases $\mathcal{D}_{\tau_{\mathrm{LG}}}$ associated to $\tau_{\mathrm{LG}}$ as discussed in Section~\ref{subsec:worldsheetbicategory}. 
Since a generators-and-relations description of general 2d TFTs with defects (as opposed to open/closed TFTs, say) is not known it is difficult to prove this conjecture. 
Und the assumption that it is true, the above construction would be a special case of Remark~\ref{rem:trivialandproperdefectTFTs}(i). 
\end{remark}

We will now show that we can also associate an open/closed TFT to every object $(W,A)$ in the generalised orbifold category $\LGorb$. The natural choices for the bulk space and the boundary category are
\be
C_{\text{orb}} = \End_{AA}(A) \, , \quad
\mathcal O_{\text{orb}} = \LGorb \big( 0, (W,A) \big)
\ee
which of course reduce to the unorbifolded case~\eqref{eq:COnonorbi} in the special case $A = I_W$. Next we specify the bulk and boundary pairings in $\LGorb$. The latter are simply given by~\eqref{eq:KapLi} when restricted to $A$-modules~$X$ and $A$-module maps $\Psi_1, \Psi_2$, while the orbifold bulk pairing is
\be\label{eq:orbibulkpairing}
\langle \phi_1, \phi_2 \rangle_{(W,A)} = 
\left\langle \, 
\begin{tikzpicture}[very thick,scale=0.6,color=green!50!black, baseline]
\draw (0,0) circle (1.5);
\draw[<-, very thick] (0.100,-1.5) -- (-0.101,-1.5) node[above] {}; 
\draw[<-, very thick] (-0.100,1.5) -- (0.101,1.5) node[below] {}; 
\fill (-22.5:1.5) circle (3.3pt) node[left] {{\small$\phi_2$}};
\fill (22.5:1.5) circle (3.3pt) node[left] {{\small$\phi_1$}};
\end{tikzpicture} 
\, \right\rangle_{\raisemath{10pt}{\!\!\!W}} \, . 
\ee
This makes $C_{\text{orb}}$ into a Frobenius algebra; commutativity follows from the bimodule map property.
Finally, the bulk-boundary and boundary-bulk maps in $\LGorb$ are 
\be\label{eq:orbibubos}
\beta_X^{\text{orb}}(\phi) = 
\begin{tikzpicture}[very thick,scale=0.75,color=blue!50!black, baseline]
\draw (0,-1) -- (0,1); 
\draw (0,-1) node[right] (X) {{\small$X$}};
\draw (0,1) node[right] (Xu) {{\small$X$}};

\fill[color=green!50!black] (-0.465,0.0) circle (2.5pt) node[left] {{\small$\phi$}};

\draw[color=green!50!black]  (-0.5,-0.5) node[Odot] (unit) {}; 
\fill[color=green!50!black]  (0,0.6) circle (2.5pt) node (meet) {};
\draw[color=green!50!black]  (unit) .. controls +(0,0.5) and +(-0.5,-0.5) .. (0,0.6);
\end{tikzpicture} 
, \quad
\beta^X_{\text{orb}}(\Psi) = 
\begin{tikzpicture}[very thick,scale=0.6,color=blue!50!black, baseline]
\draw (0,0) circle (1.5);
\draw[->, very thick] (0.100,-1.5) -- (-0.101,-1.5) node[above] {}; 
\draw[->, very thick] (-0.100,1.5) -- (0.101,1.5) node[below] {}; 
\fill (202.5:1.5) circle (3.3pt) node[right] {{\small$\Psi$}};
\fill (270:1.5) circle (0pt) node[above] {{\small$X$}};

\fill[color=green!50!black] (157.5:1.5) circle (3.3pt) node[right] {};

\draw[-dot-, color=green!50!black] (-3,-0.5) .. controls +(0,-1) and +(0,-1) .. (-2,-0.5);

\draw[color=green!50!black] (-2,-0.5) .. controls +(0,0.5) and +(-0.5,-0.5) .. (157.5:1.5);

\draw[color=green!50!black] (-2.5,-1.3) -- (-2.5,-2);
\draw[color=green!50!black] (-3,-0.5) -- (-3,2);

\end{tikzpicture} 
\, . 
\ee

\begin{theorem}\label{thm:LGorbiTFT}
Every $(W,A) \in \LGorb$ gives rise to an open/closed TFT via the above data. 
\end{theorem}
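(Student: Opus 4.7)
The strategy is to exploit that the unorbifolded Landau-Ginzburg model associated to $W \in R = k[x_1,\ldots,x_n]$ is already an open/closed TFT with data $C = R/(\partial W)$, $\mathcal{O} = \hmf(R,W)$ and pairings and bulk-boundary maps \eqref{eq:Vafapairing}, \eqref{eq:KapLi}, \eqref{eq:bubos}, and to transport this structure through the orbifold completion. All of the orbifold data are obtained from the unorbifolded data by the projectors $\pi_A$ and $\pi_{AA}$ of Section~\ref{subsec:algebrasbimodules} together with the symmetric separable Frobenius structure on $A$, so the proof will consist of (a)~checking the algebraic identities by string-diagram manipulation in $\LGorb$, and (b)~deducing nondegeneracy and the Cardy condition from their counterparts in $\LG$.

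First I will verify that $C_{\text{orb}} = \End_{AA}(A)$ is a commutative Frobenius algebra. Commutativity is immediate from the bimodule map condition together with symmetry of $A$ (move one factor around the circle using the two-sided trace property). The pairing $\langle -, - \rangle_{(W,A)}$ of \eqref{eq:orbibulkpairing} equals the defect pairing $\langle -, - \rangle_A$ of $\LG$ restricted to bimodule maps. Nondegeneracy of $\langle -, - \rangle_A$ in $\LG$ follows from Theorem~\ref{thm:LGbicatsummary} combined with the residue formula \eqref{eq:DlMF}--\eqref{eq:rightdecorateddefectaction}, and Corollary~\ref{prop:traceflipgivesnondeg} will lift this to $\Borb$ once the orientation-reversal identity \eqref{eq:traceorientationreversal} is established for $A$. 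The latter holds for every 1-morphism in $\LG$: it is precisely the equality of left and right traces $\mathcal D_{\mathrm{l}}^{\Psi}(A) = \mathcal D_{\mathrm{r}}^{\Psi}(A)$ guaranteed by Theorem~\ref{thm:LGbicatsummary}(ii) because both residues compute the Kapustin-Li trace of $\Psi \Lambda_A$ against the bulk pairing. The same argument gives nondegeneracy of the orbifold boundary pairing on each $\Hom_A(X,Y)$, so $\mathcal{O}_{\text{orb}}$ is Calabi-Yau; cyclicity is inherited from the Kapustin-Li pairing and commutes with the projector $\pi_A$ by separability.

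Next I will check the properties of the bulk-boundary and boundary-bulk maps. That $\beta_X^{\text{orb}}$ is a unital algebra homomorphism into the centre is a direct string-diagram computation: using the Frobenius and separability relations for $A$ one shows that $\beta_X^{\text{orb}}(\phi_1 \phi_2) = \beta_X^{\text{orb}}(\phi_1) \circ \beta_X^{\text{orb}}(\phi_2)$, and centrality follows because any $\alpha \in \End_A(X)$ can be slid past the $A$-action that defines $\beta_X^{\text{orb}}$. The adjointness $\langle \beta_X^{\text{orb}}(\phi), \Psi \rangle_X = \langle \phi, \beta^X_{\text{orb}}(\Psi) \rangle_{(W,A)}$ is established by drawing both sides as closed diagrams on the sphere with a single $A$-loop and an $X$-loop and applying \eqref{eq:leftcomp}, \eqref{eq:Frob} and \eqref{eq:special}. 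These are routine but lengthy diagrammatic computations, entirely analogous to the well-known ones for the unorbifolded case.

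The main obstacle is the Cardy condition in the orbifolded theory. For $\Phi : X \to X$ and $\Psi : Y \to Y$ in $\mathcal{O}_{\text{orb}}$ we must show $\str({}_\Psi m_\Phi) = \langle \beta^X_{\text{orb}}(\Phi), \beta^Y_{\text{orb}}(\Psi) \rangle_{(W,A)}$, where the supertrace on the left is taken over $\Hom_A(X,Y)$, i.\,e.~over $\operatorname{im}(\pi_A^{X,Y})$. The plan is to express this trace as the unorbifolded supertrace of ${}_\Psi m_\Phi \circ \pi_A^{X,Y}$ on the full $\Hom(X,Y)$; by the known (non-equivariant) Cardy condition for Landau-Ginzburg models this equals $\langle \beta^X(\Phi'), \beta^Y(\Psi') \rangle_W$ for appropriately decorated endomorphisms $\Phi', \Psi'$ carrying the $A$-action used by $\pi_A^{X,Y}$. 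A diagrammatic rearrangement, using symmetry and separability of $A$ to pull the $A$-arc onto the boundary of each of the two handlebodies, then exhibits the result as the orbifold pairing of the right-hand side. The delicate point is to show that the $A$-network thus produced coincides with the one implicit in \eqref{eq:orbibulkpairing} and \eqref{eq:orbibubos}; this requires the full strength of $A$ being symmetric separable Frobenius (so that the Nakayama twist disappears), which is exactly where the hypothesis $(W,A) \in \LGorb$ rather than merely $\LGeq$ is used.
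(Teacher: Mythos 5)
Your overall strategy coincides with the paper's: transport the unorbifolded TFT data through the projectors of Section~\ref{subsec:algebrasbimodules}, get nondegeneracy from Proposition~\ref{prop:perfectlypairedinBeq}/Lemma~\ref{lem:ProjCompPair} via Corollary~\ref{prop:traceflipgivesnondeg}, and reduce the Cardy condition to the unorbifolded one with the idempotent $\pi_A^{X,Y}$ inserted into the supertrace. Two of your steps, however, do not go through as written.

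First, the Cardy condition. You claim that $\str_{\Hom_A(X,Y)}({}_\Psi m_\Phi)=\str_{\Hom(X,Y)}({}_\Psi m_\Phi\circ\pi_A^{X,Y})$ can be evaluated ``by the known (non-equivariant) Cardy condition \ldots for appropriately decorated endomorphisms $\Phi',\Psi'$''. Such $\Phi',\Psi'$ do not exist: the operator ${}_\Psi m_\Phi\circ\pi_A^{X,Y}$ sends $\alpha$ to a composite in which a single $A$-defect line (coming from $\Delta\circ\eta$ in \eqref{eq:projA}) joins the $X$-side to the $Y$-side, so it has the form $\alpha\mapsto B\circ(1_A\otimes\alpha)\circ C$ and cannot be factored as $\Psi'\circ\alpha\circ\Phi'$ with $\Phi'\in\End(X)$, $\Psi'\in\End(Y)$. (In the group-orbifold case one could expand the $A$-line as a sum over group elements, but for a general symmetric separable Frobenius $A$ there is no such expansion.) The tool that replaces the plain Cardy condition is the defect-decorated wrapping formula of Theorem~\ref{thm:LGbicatsummary}(ii) together with the multiplicativity \eqref{eq:doubledefectwraps}: the paper rewrites the right-hand side $\bigl\langle\beta^X_{\text{orb}}(\Phi),\beta^Y_{\text{orb}}(\Psi)\bigr\rangle_{(W,A)}$ as a single $X$-wrapping decorated by $\Phi^\vee\otimes\Psi$ composed with the idempotent $\pi_A^{X^\dagger,Y}$ of \eqref{eq:piA}, and then uses the splitting maps $\xi,\vartheta$ to identify this with the right trace of $\Phi^\vee\otimes_A\Psi$ on $X^\vee\otimes_A Y$, which is $\str({}_\Psi m_\Phi)$. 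Your sketch is missing exactly this mechanism for absorbing the connecting $A$-line.

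Second, a smaller but genuine point: your appeal to Corollary~\ref{prop:traceflipgivesnondeg} for the \emph{boundary} pairings is not licensed as stated, because a boundary condition is a 1-morphism $(k,0)\to(R,W)$ and for $n$ odd the left and right adjoints differ by a shift, so $\LG$ (hence $\LGorb$) is not pivotal in the standard sense on these morphisms and the hypotheses of the corollary fail literally. The paper repairs this by observing that the proof only needs the identities \eqref{eq:doubledefectwraps} to hold with an arbitrary 2-morphism inserted on the tensor product, which is supplied by the cited result of [CM]. Your treatment of the bulk pairing and of the adjointness of $\beta_X^{\text{orb}}$ and $\beta^X_{\text{orb}}$ is consistent with the paper's.
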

\begin{proof}
We need to check the nondegeneracy of the bulk and boundary pairings, the adjunction between $\beta_X^{\text{orb}}$ and $\beta^X_{\text{orb}}$, and the Cardy condition; the other axioms are clear. 

That the boundary pairings are nondegenerate in the orbifold completion would be the special case of Corollary~\ref{prop:traceflipgivesnondeg} where the defect~$X$ is of the form $(k,0)\rightarrow (R,W)$, but only if $\LGorb$ was pivotal in the standard sense. This is not the case, but inspection of the proof shows that it is enough to assume instead of pivotality that the identities~\eqref{eq:doubledefectwraps} continue to hold when an arbitrary 2-morphism is inserted on the tensor product. This is the case as follows directly from the proof of \cite[Prop.\,8.5(iii)]{cm1208.1481}. 

To show that the bulk pairings~\eqref{eq:orbibulkpairing} are nondegenerate we use Corollary~\ref{prop:traceflipgivesnondeg}. Its assumptions are satisfied since it follows from Theorem~\ref{thm:LGbicatsummary}(ii) (see also \cite[Cor.\,8.3]{cm1208.1481}) that 
\be
\left\langle \, 
\begin{tikzpicture}[very thick,scale=0.6,color=green!50!black, baseline]
\draw (0,0) circle (1.5);
\draw[<-, very thick] (0.100,-1.5) -- (-0.101,-1.5) node[above] {}; 
\draw[<-, very thick] (-0.100,1.5) -- (0.101,1.5) node[below] {}; 
\fill (-22.5:1.5) circle (3.3pt) node[left] {{\small$\phi_2$}};
\fill (22.5:1.5) circle (3.3pt) node[left] {{\small$\phi_1$}};
\end{tikzpicture} 
\, \right\rangle_{\raisemath{10pt}{\!\!\!W}}
=
\left\langle \, 
\begin{tikzpicture}[very thick,scale=0.6,color=green!50!black, baseline]
\draw (0,0) circle (1.5);
\draw[->, very thick] (0.100,-1.5) -- (-0.101,-1.5) node[above] {}; 
\draw[->, very thick] (-0.100,1.5) -- (0.101,1.5) node[below] {}; 
\fill (157.5:1.5) circle (3.3pt) node[right] {{\small$\phi_1$}};
\fill (202.5:1.5) circle (3.3pt) node[right] {{\small$\phi_2$}};
\end{tikzpicture} 
\, \right\rangle_{\raisemath{10pt}{\!\!\!W}} \, ,
\ee
and furthermore 
\be
\left\langle
\begin{tikzpicture}[very thick,scale=0.6,color=green!50!black, baseline]
\draw (0,0) circle (1.5);
\draw[<-, very thick] (0.100,-1.5) -- (-0.101,-1.5) node[above] {}; 
\draw[<-, very thick] (-0.100,1.5) -- (0.101,1.5) node[below] {}; 
\fill (-22.5:1.5) circle (3.3pt) node[left] {{\small$\phi_2$}};
\fill (22.5:1.5) circle (3.3pt) node[left] {{\small$\phi_1$}};
\end{tikzpicture} 
\right\rangle_{\raisemath{10pt}{\!\!\!W}}
= (-1)^{n+1 \choose 2} \Res\! \left[ \frac{ \str( \phi_1 \phi_2 \, \partial_{x_1} d_A \ldots \partial_{x_n} d_A \, \partial_{x'_1} d_A \ldots \partial_{x'_n} d_A) \, \underline{\operatorname{d}\! x} \underline{\operatorname{d}\! x'}}{\partial_{x_1} W, \ldots, \partial_{x_n} W, \partial_{x'_1} W, \ldots, \partial_{x'_n} W} \right]
\ee
is nondegenerate as a pairing of~$\phi_1$ and~$\phi_2$, because up to a sign it is the boundary pairing for~$A$ viewed as a matrix factorisation of~$\widetilde W$. 

Next we wish to show that the maps  $\beta_X^{\text{orb}}, \beta^X_{\text{orb}}$ in~\eqref{eq:orbibubos} are adjoint in the sense
\be\label{eq:orbibuboadj}
\big\langle \phi, \beta^X_{\text{orb}}(\Psi) \big\rangle_{(W,A)} = 
\big\langle \beta_X^{\text{orb}}(\phi), \Psi \big\rangle_X \, . 
\ee
By definition the left-hand side equals
\be
\left\langle \,
\begin{tikzpicture}[very thick,scale=0.6,color=blue!50!black, baseline]
\draw (0,0) circle (1.5);
\draw[->, very thick] (0.100,-1.5) -- (-0.101,-1.5) node[above] {}; 
\draw[->, very thick] (-0.100,1.5) -- (0.101,1.5) node[below] {}; 
\fill (202.5:1.5) circle (3.3pt) node[right] {{\small$\Psi$}};
\fill (270:1.5) circle (0pt) node[above] {{\small$X$}};

\fill[color=green!50!black] (140:1.5) circle (3.3pt) node[right] {};

\draw[color=green!50!black] (-4,0) circle (1.5);
\coordinate (Psi2) at ($ (-4,0) + (22.5:1.5) $);
\fill[color=green!50!black] (Psi2) circle (3.3pt) node[left] {{\small$\phi$}};

\coordinate (Psi3) at ($ (-4,0) + (-22.5:1.5) $);
\fill[color=green!50!black] (Psi3) circle (3.3pt) node[left] {};

\draw[color=green!50!black] (Psi3) -- (140:1.5);

\draw[<-, very thick, color=green!50!black] (-3.900,-1.5) -- (-3.901,-1.5) node[above] {}; 
\draw[->, very thick, color=green!50!black] (-4.100,1.5) -- (-4.101,1.5) node[below] {}; 

\end{tikzpicture} 
\, \right\rangle_{\raisemath{10pt}{\!\!\!W}}
\, , 
\ee
part of which we compute as follows: 
\be
\begin{tikzpicture}[very thick,scale=0.55,color=green!50!black, baseline]
\draw (-4,0) circle (1.5);
\coordinate (Psi2) at ($ (-4,0) + (22.5:1.5) $);
\fill (Psi2) circle (3.3pt) node[right] {};

\coordinate (Psi3) at ($ (-4,0) + (-22.5:1.5) $);
\fill (Psi3) circle (3.3pt) node[right] {{\small$\phi$}};

\draw (Psi2) .. controls +(0.2,0.1) and +(-0.2,-0.2) .. (-1.5,1.5);

\draw[<-, very thick] (-3.900,-1.5) -- (-3.901,-1.5) node[above] {}; 
\draw[->, very thick] (-4.100,1.5) -- (-4.101,1.5) node[below] {}; 

\end{tikzpicture} 
\stackrel{(1)}{=}
\begin{tikzpicture}[very thick,scale=0.55,color=green!50!black, baseline]

\draw[-dot-] (0,1) .. controls +(0,1) and +(0,1) .. (-1,1);
\draw[directedgreen, color=green!50!black] (1,1) .. controls +(0,-1) and +(0,-1) .. (0,1);
\draw (-0.5,2.2) node[Odot] (end) {}; 
\draw (-0.5,1.8) -- (end); 

\draw[-dot-] (0,-1) .. controls +(0,-1) and +(0,-1) .. (-1,-1);
\draw[redirectedgreen, color=green!50!black] (1,-1) .. controls +(0,1) and +(0,1) .. (0,-1);
\draw (-0.5,-2.2) node[Odot] (end) {}; 
\draw (-0.5,-1.8) -- (end); 

\draw (-1,1) -- (-1,-1);

\draw[redirectedgreen, color=green!50!black] (1,1) .. controls +(0,1) and +(0,1) .. (2,1);

\draw[directedgreen, color=green!50!black] (1,-1) .. controls +(0,-1) and +(0,-1) .. (2,-1);

\draw (2,1) -- (2,-1);

\fill (2,-0.5) circle (3.3pt) node[right] {{\small$\phi$}};

\fill (2,0.5) circle (3.3pt) node[right] {};

\draw (2,0.5) .. controls +(0.2,0.1) and +(-0.2,-0.2) .. (3,1.5);

\end{tikzpicture} 
\stackrel{(2)}{=}
\begin{tikzpicture}[very thick,scale=0.55,color=green!50!black, baseline]

\draw[-dot-] (-5.5,0) .. controls +(0,2) and +(0,2) .. (-2.5,0);
\draw[-dot-] (-5.5,0) .. controls +(0,-2) and +(0,-2) .. (-2.5,0);

\coordinate (Psi2) at ($ (-4,0) + (22.5:1.5) $);
\fill (Psi2) circle (3.3pt) node[right] {};

\coordinate (Psi3) at ($ (-4,0) + (-22.5:1.5) $);
\fill (Psi3) circle (3.3pt) node[right] {{\small$\phi$}};

\draw (Psi2) .. controls +(0.2,0.1) and +(-0.2,-0.2) .. (-1.5,1.5);

\draw (-4,2.2) node[Odot] (end) {}; 
\draw (-4,1.5) -- (end);

\draw (-4,-2.2) node[Odot] (end) {}; 
\draw (-4,-1.5) -- (end);

\end{tikzpicture} 
\stackrel{(3)}{=}
\begin{tikzpicture}[very thick,scale=0.55,color=green!50!black, baseline]

\draw[-dot-] (-1,-1) .. controls +(0,1) and +(0,1) .. (1,-1);
\draw[-dot-] (-1,-1) .. controls +(0,-1) and +(0,-1) .. (1,-1);

\fill (1,-1) circle (3.3pt) node[right] {{\small$\phi$}};

\draw (0,-2.4) node[Odot] (end) {}; 
\draw (0,-1.7) -- (end);

\draw[-dot-] (-1,1.4) .. controls +(0,-1) and +(0,-1) .. (1,1.4);

\draw (-1,2.0) node[Odot] (end) {}; 
\draw (-1,1.4) -- (end);

\draw (0,-0.3) -- (0,0.7);

\draw (1,1.4) .. controls +(0,0.5) and +(-0.2,-0.2) .. (1.8,1.9);

\end{tikzpicture} 
\stackrel{(4)}{=}
\begin{tikzpicture}[very thick,scale=0.55,color=green!50!black, baseline]

\draw[-dot-] (-1,-1) .. controls +(0,1) and +(0,1) .. (1,-1);
\draw[-dot-] (-1,-1) .. controls +(0,-1) and +(0,-1) .. (1,-1);

\fill (0,0.2) circle (3.3pt) node[right] {{\small$\phi$}};

\draw (0,-2.4) node[Odot] (end) {}; 
\draw (0,-1.7) -- (end);

\draw[-dot-] (-1,1.4) .. controls +(0,-1) and +(0,-1) .. (1,1.4);

\draw (-1,2.0) node[Odot] (end) {}; 
\draw (-1,1.4) -- (end);

\draw (0,-0.3) -- (0,0.7);

\draw (1,1.4) .. controls +(0,0.5) and +(-0.2,-0.2) .. (1.8,1.9);

\end{tikzpicture} 
\stackrel{(5)}{=}
\begin{tikzpicture}[very thick,scale=0.55,color=blue!50!black, baseline]
\fill[color=green!50!black] (-0.465,0.0) circle (3.3pt) node[right] {{\small$\phi$}};

\draw[color=green!50!black]  (-0.5,-1.5) node[Odot] (unit) {}; 
\draw[color=green!50!black]  (unit) .. controls +(0,0.5) and +(-0.5,-0.5) .. (0,1.9);
\end{tikzpicture} 
\, . 
\ee
Here we used (1) that~$A$ is symmetric Frobenius, (2) Zorro moves, (3) the Frobenius property of~$A$, (4) that~$\phi$ is a bimodule map, and (5) that~$A$ is a separable Frobenius algebra. Thus the left-hand side of~\eqref{eq:orbibuboadj} is
\be
\left\langle
\begin{tikzpicture}[very thick,scale=0.6,color=blue!50!black, baseline]
\draw (0,0) circle (1.5);
\draw[->, very thick] (0.100,-1.5) -- (-0.101,-1.5) node[above] {}; 
\draw[->, very thick] (-0.100,1.5) -- (0.101,1.5) node[below] {}; 
\fill (202.5:1.5) circle (3.3pt) node[right] {{\small$\Psi$}};
\fill (270:1.5) circle (0pt) node[above] {{\small$X$}};

\fill[color=green!50!black] (140:1.5) circle (3.3pt) node[right] {};

\coordinate (Psi2) at ($ (-4,0) + (-22.5:1.5) $);
\draw[color=green!50!black]  (Psi2) node[Odot] (unit) {}; 

\fill[color=green!50!black] (-2.15,0.2) circle (3.3pt) node[left] {{\small$\phi$}};

\draw[color=green!50!black] (Psi2) .. controls +(0.1,0.5) and +(-0.5,-0.2) .. (140:1.5);

\end{tikzpicture} 
\, \right\rangle_{\raisemath{10pt}{\!\!\!W}}
= \big\langle 1, \beta^X( \beta_X^{\text{orb}}(\phi) \cdot \Psi) \big\rangle_W 
= \big\langle \beta_X^{\text{orb}}(\phi), \Psi \big\rangle_X
\ee
where we used that $\beta^X$ is adjoint to $\beta_X$ in $\LG$ and $\beta_X(1) = 1_X$. 

Finally we want to prove the Cardy condition in $\LGorb$. Thus we compute
\begin{align}
& \big\langle \beta^X_{\text{orb}}(\Phi) , \beta^Y_{\text{orb}}(\Psi) \big\rangle_{(W,A)} = 
\left\langle \,
\begin{tikzpicture}[very thick,scale=0.55,color=blue!50!black, baseline]
\draw[color=green!50!black] (-4,0) circle (1.5);
\draw[<-, very thick, color=green!50!black] (-3.900,-1.5) -- (-3.901,-1.5) node[above] {}; 
\draw[->, very thick, color=green!50!black] (-4.100,1.5) -- (-4.101,1.5) node[below] {}; 
\coordinate (Psi2) at ($ (-4,0) + (22.5:1.5) $);
\fill[color=green!50!black] (Psi2) circle (3.3pt) node[right] {};
\coordinate (Psi3) at ($ (-4,0) + (-22.5:1.5) $);
\fill[color=green!50!black] (Psi3) circle (3.3pt) node[right] {};
\draw (-0.5,0.95) circle (0.75);
\coordinate (meet2) at ($ (-0.5,0.95) + (150:0.75) $);
\fill[color=green!50!black] (meet2) circle (3.3pt) node[right] {};
\draw[color=green!50!black] (Psi2) -- (meet2);
\coordinate (Phi) at ($ (-0.5,0.95) + (210:0.75) $);
\fill (Phi) circle (3.3pt) node[right] {{\small $\Phi$}};
\coordinate (X) at ($ (-0.5,0.95) + (0:0.6) $);
\fill (X) circle (0pt) node[right] {{\small $X$}};
\draw[->, very thick] (-0.5001,1.7) -- (-0.49,1.7) node[above] {}; 
\draw[->, very thick] (-0.51,0.2) -- (-0.5201,0.2) node[above] {}; 
\draw (-0.5,-0.95) circle (0.75);
\coordinate (meet3) at ($ (-0.5,-0.95) + (150:0.75) $);
\fill[color=green!50!black] (meet3) circle (3.3pt) node[right] {};
\draw[color=green!50!black] (Psi3) -- (meet3);
\coordinate (Psi) at ($ (-0.5,-0.95) + (210:0.75) $);
\fill (Psi) circle (3.3pt) node[right] {{\small $\Psi$}};
\coordinate (Y) at ($ (-0.5,-0.95) + (0:0.6) $);
\fill (Y) circle (0pt) node[right] {{\small $Y$}};
\draw[->, very thick] (-0.49,-1.7) -- (-0.5001,-1.7) node[above] {}; 
\draw[->, very thick] (-0.5201,-0.2) -- (-0.51,-0.2) node[above] {}; 
\end{tikzpicture} 
\, \right\rangle_{\raisemath{12pt}{\!\!\!W}}
\nonumber \\
& =
\left\langle \,
\begin{tikzpicture}[very thick,scale=0.6,color=blue!50!black, baseline]
\draw (0,0) circle (1.5);
\draw[->, very thick] (0.100,-1.5) -- (-0.101,-1.5) node[above] {}; 
\draw[->, very thick] (-0.100,1.5) -- (0.101,1.5) node[below] {}; 
\fill (202.5:1.5) circle (3.3pt) node[right] {{\small$\Phi$}};
\fill (270:1.5) circle (0pt) node[above] {{\small$X$}};
\fill[color=green!50!black] (140:1.5) circle (3.3pt) node[right] {};
%
\draw[color=green!50!black] (-6,0) circle (1.0);
\coordinate (Psi2) at ($ (-6,0) + (22.5:1.0) $);
\fill[color=green!50!black] (Psi2) circle (3.3pt) node[right] {};
\coordinate (Psi3) at ($ (-6,0) + (-22.5:1.0) $);
\fill[color=green!50!black] (Psi3) circle (3.3pt) node[left] {};
\draw[color=green!50!black] (Psi2) -- (140:1.5);
\draw[<-, very thick, color=green!50!black] (-5.900,-1.0) -- (-5.901,-1.0) node[above] {}; 
\draw[->, very thick, color=green!50!black] (-6.100,1.0) -- (-6.101,1.0) node[below] {}; 
\draw (-3.3,-0.95) circle (0.75);
\coordinate (meet3) at ($ (-3.3,-0.95) + (150:0.75) $);
\fill[color=green!50!black] (meet3) circle (3.3pt) node[right] {};
\draw[color=green!50!black] (Psi3) -- (meet3);
\coordinate (Psi) at ($ (-3.3,-0.95) + (210:0.75) $);
\fill (Psi) circle (3.3pt) node[right] {{\small $\Psi$}};
\coordinate (Y) at ($ (-3.3,-0.95) + (0:0.6) $);
\fill (Y) circle (0pt) node[right] {{\small $Y$}};
\draw[->, very thick] (-3.29,-1.7) -- (-3.3001,-1.7) node[above] {}; 
\draw[->, very thick] (-3.3201,-0.2) -- (-3.31,-0.2) node[above] {}; 
\end{tikzpicture} 
\, \right\rangle_{\raisemath{13pt}{\!\!\!W}} \nonumber \\
& 
=
\,
\begin{tikzpicture}[very thick,scale=0.6,color=blue!50!black, baseline]
\draw (0,0) circle (2.5);
\draw[<-, very thick] (0.100,-2.5) -- (-0.101,-2.5) node[above] {}; 
\draw[<-, very thick] (-0.100,2.5) -- (0.101,2.5) node[below] {}; 
\fill (-22.5:2.5) circle (3.3pt) node[right] {{\small$\Phi$}};
\fill (270:2.5) circle (0pt) node[above] {{\small$X$}};
\fill[color=green!50!black] (22.5:2.5) circle (3.3pt) node[right] {};
%
\draw[color=green!50!black] (-1,0) circle (1.0);
\coordinate (Psi2) at ($ (-1,0) + (22.5:1.0) $);
\fill[color=green!50!black] (Psi2) circle (3.3pt) node[right] {};
\coordinate (Psi3) at ($ (-1,0) + (-22.5:1.0) $);
\fill[color=green!50!black] (Psi3) circle (3.3pt) node[left] {};
\draw[color=green!50!black] (Psi2) -- (22.5:2.5);
\draw[<-, very thick, color=green!50!black] (-0.900,-1.0) -- (-0.901,-1.0) node[above] {}; 
\draw[->, very thick, color=green!50!black] (-1.100,1.0) -- (-1.101,1.0) node[below] {}; 
\draw (1.3,-0.35) circle (0.75);
\coordinate (meet3) at ($ (1.3,-0.35) + (150:0.75) $);
\fill[color=green!50!black] (meet3) circle (3.3pt) node[right] {};
\draw[color=green!50!black] (Psi3) -- (meet3);
\coordinate (Psi) at ($ (1.3,-0.35) + (210:0.75) $);
\fill (Psi) circle (3.3pt) node[right] {{\small $\Psi$}};
\coordinate (Y) at ($ (1.3,-0.35) + (270:0.8) $);
\fill (Y) circle (0pt) node[below] {{\small $Y$}};
\draw[<-, very thick] (1.29,-1.1) -- (1.3001,-1.1) node[above] {}; 
\draw[<-, very thick] (1.3201,0.4) -- (1.31,0.4) node[above] {}; 
\end{tikzpicture} 
\, =\,
\begin{tikzpicture}[very thick,scale=0.6,color=blue!50!black, baseline]
\draw (0,0) circle (2.5);
\draw[<-, very thick] (0.100,-2.5) -- (-0.101,-2.5) node[above] {}; 
\draw[<-, very thick] (-0.100,2.5) -- (0.101,2.5) node[below] {}; 
\fill (-22.5:2.5) circle (3.3pt) node[right] {{\small$\Phi$}};
\fill (270:2.5) circle (0pt) node[above] {{\small$X$}};
\fill[color=green!50!black] (45:2.5) circle (3.3pt) node[right] {};
%
\coordinate (Psi2) at ($ (-1.5,-1.0) + (0,0) $);
\fill[color=green!50!black] (Psi2) node[Odot] {};
\coordinate (Psi3) at ($ (-2.05,0) + (-22.5:1.0) $);
\fill[color=green!50!black] (Psi3) circle (3.3pt) node[left] {};
\draw[color=green!50!black] (-1.48,-0.95)  .. controls +(0.1,0.9) and +(-0.2,-0.2) .. (45:2.5);
\draw (1.3,-0.35) circle (0.75);
\coordinate (meet3) at ($ (1.3,-0.35) + (150:0.75) $);
\fill[color=green!50!black] (meet3) circle (3.3pt) node[right] {};
\draw[color=green!50!black] (Psi3) -- (meet3);
\coordinate (Psi) at ($ (1.3,-0.35) + (210:0.75) $);
\fill (Psi) circle (3.3pt) node[right] {{\small $\Psi$}};
\coordinate (Y) at ($ (1.3,-0.35) + (270:0.8) $);
\fill (Y) circle (0pt) node[below] {{\small $Y$}};
\draw[<-, very thick] (1.29,-1.1) -- (1.3001,-1.1) node[above] {}; 
\draw[<-, very thick] (1.3201,0.4) -- (1.31,0.4) node[above] {}; 
\end{tikzpicture} 
\,
=
\,
\begin{tikzpicture}[very thick,scale=0.6,color=blue!50!black, baseline]
\draw (0,0) circle (2.5);
\draw[<-, very thick] (0.100,-2.5) -- (-0.101,-2.5) node[above] {}; 
\draw[<-, very thick] (-0.100,2.5) -- (0.101,2.5) node[below] {}; 
\fill (170.8:2.5) circle (3.3pt) node[right] {{\small$\Phi^\vee$}};
\fill (270:2.5) circle (0pt) node[above] {{\small$X$}};
\fill[color=green!50!black] (190:2.5) circle (3.3pt) node[left] {};
\draw[-dot-, color=green!50!black] (-1.75,-0.3) .. controls +(0,-0.7) and +(0,-0.7) .. (-1,-0.3);
\draw (0,0) circle (0.75);
\coordinate (meet3) at ($ (0,0) + (180:0.75) $);
\fill[color=green!50!black] (meet3) circle (3.3pt) node[right] {};
\draw[color=green!50!black] (-1,-0.3)  .. controls +(0.0,0.3) and +(-0.1,-0.1) .. (meet3);
\draw[color=green!50!black] (-1.75,-0.3)  .. controls +(-0.0,0.4) and +(-0.1,0.3) .. (190:2.5);
\coordinate (Psi) at ($ (0,0) + (150:0.75) $);
\fill (Psi) circle (3.3pt) node[left] {{\small $\Psi$}};
\coordinate (Y) at ($ (0,0) + (270:0.8) $);
\fill (Y) circle (0pt) node[below] {{\small $Y$}};
\draw[<-, very thick] (-0.09,-0.75) -- (0.01,-0.75) node[above] {}; 
\draw[<-, very thick] (0.0201,0.75) -- (0.01,0.75) node[above] {}; 
\draw[color=green!50!black]  (-1.375,-1.4) node[Odot] (unit) {}; 
\draw[color=green!50!black] (unit) -- (-1.375,-0.8); 
\end{tikzpicture} 
\,
\end{align}
where we used Theorem~\ref{thm:LGbicatsummary}(ii) together with~\eqref{eq:Vafapairing} in the third step and Theorem~\ref{thm:LGbicatsummary}(i) in the last. Note that after applying the orientation reversal identity~\eqref{eq:traceorientationreversal} we could drop the brackets for the last three expressions since $X: 0\rightarrow W$ and $\langle-\rangle_0 = (-)$. 
Now we observe that
\be
\pi = 
\begin{tikzpicture}[very thick,scale=0.75,color=blue!50!black, baseline]

\draw (-1,-1) node[left] (X) {{\small$X^\dagger$}};
\draw (1,-1) node[right] (Xu) {{\small$Y\vphantom{X^\dagger}$}};

\draw (-1,-1) -- (-1,1); 
\draw (1,-1) -- (1,1); 

\fill[color=green!50!black] (-1,-0.2) circle (2pt) node (meet) {};
\fill[color=green!50!black] (1,0.6) circle (2pt) node (meet) {};

\draw[-dot-, color=green!50!black] (0.35,-0.0) .. controls +(0,-0.5) and +(0,-0.5) .. (-0.35,-0.0);

\draw[color=green!50!black] (0.35,-0.0) .. controls +(0,0.25) and +(-0.25,-0.25) .. (1,0.6);
\draw[color=green!50!black] (-0.35,-0.0) .. controls +(0,0.5) and +(0.25,0.5) .. (-1,-0.2);

\draw[color=green!50!black] (0,-0.75) node[Odot] (down) {}; 
\draw[color=green!50!black] (down) -- (0,-0.35); 

\end{tikzpicture} 
\ee
is the projector $\pi_A^{X^\dagger,Y}$ to $X^\dagger \otimes_A Y$, see~\eqref{eq:piA}. Thus we have splitting maps $\xi : X^\dagger \otimes_A Y \rightarrow X^\dagger \otimes Y$ and $\vartheta: X^\dagger \otimes Y \rightarrow X^\dagger \otimes_A Y$ such that $\xi \vartheta = \pi$ and $\vartheta \xi = 1$, and we can compute
\be
\begin{tikzpicture}[very thick,scale=0.6,color=blue!50!black, baseline]
\draw (0,0) circle (2.5);
\draw[<-, very thick] (0.100,-2.5) -- (-0.101,-2.5) node[above] {}; 
\draw[<-, very thick] (-0.100,2.5) -- (0.101,2.5) node[below] {}; 
\fill (170.8:2.5) circle (3.3pt) node[right] {{\small$\Phi^\vee$}};
\fill (270:2.5) circle (0pt) node[above] {{\small$X$}};
\fill[color=green!50!black] (190:2.5) circle (3.3pt) node[left] {};
\draw[-dot-, color=green!50!black] (-1.75,-0.3) .. controls +(0,-0.7) and +(0,-0.7) .. (-1,-0.3);
\draw (0,0) circle (0.75);
\coordinate (meet3) at ($ (0,0) + (180:0.75) $);
\fill[color=green!50!black] (meet3) circle (3.3pt) node[right] {};
\draw[color=green!50!black] (-1,-0.3)  .. controls +(0.0,0.3) and +(-0.1,-0.1) .. (meet3);
\draw[color=green!50!black] (-1.75,-0.3)  .. controls +(-0.0,0.4) and +(-0.1,0.3) .. (190:2.5);
\coordinate (Psi) at ($ (0,0) + (150:0.75) $);
\fill (Psi) circle (3.3pt) node[left] {{\small $\Psi$}};
\coordinate (Y) at ($ (0,0) + (270:0.8) $);
\fill (Y) circle (0pt) node[below] {{\small $Y$}};
\draw[<-, very thick] (-0.09,-0.75) -- (0.01,-0.75) node[above] {}; 
\draw[<-, very thick] (0.0201,0.75) -- (0.01,0.75) node[above] {}; 
\draw[color=green!50!black]  (-1.375,-1.4) node[Odot] (unit) {}; 
\draw[color=green!50!black] (unit) -- (-1.375,-0.8); 
\end{tikzpicture} 
=
\begin{tikzpicture}[very thick,scale=0.6,color=blue!50!black, baseline]
\draw[directed] (-1,1.5) .. controls +(0,1) and +(0,1) .. (1,1.5);
\draw[redirected] (-2,1.5) .. controls +(0,2) and +(0,2) .. (2,1.5);

\draw[redirected] (-1,-1.5) .. controls +(0,-1) and +(0,-1) .. (1,-1.5);
\draw[directed] (-2,-1.5) .. controls +(0,-2) and +(0,-2) .. (2,-1.5);

\draw (-1,1.5) -- (-1,-1.5);
\draw (-2,1.5) -- (-2,-1.5);
\draw (1,1.5) -- (1,-1.5);
\draw (2,1.5) -- (2,-1.5);

\draw
(-1.5,0.75) node[rectangle, very thick, blue!50!black,draw,fill=white] 
(alpha) {{\small$ \Phi^\vee \otimes \Psi $}} ;
\draw
(-1.5,-0.75) node[rectangle, very thick, blue!50!black,draw,fill=white] 
(alpha) {{\small$ \;\; \pi \;\; $}} ;

\fill (2.2,0) circle (0pt) node[left] {{\small $Y$}};
\fill (1.2,0) circle (0pt) node[left] {{\small $X$}};

\end{tikzpicture} 
=
\begin{tikzpicture}[very thick,scale=0.6,color=blue!50!black, baseline]
\draw (0,0) circle (1.5);
\draw[->, very thick] (0.100,-1.5) -- (-0.101,-1.5) node[above] {}; 
\draw[->, very thick] (-0.100,1.5) -- (0.101,1.5) node[below] {}; 
\fill (180:1.5) circle (3.3pt) node[right] {{\small$\Phi^\vee \otimes_A \Psi$}};
\fill (270:1.5) circle (0pt) node[below] {{\small$X^\vee \otimes_A Y$}};
\end{tikzpicture} 
\ee
which equals $\str({}_\Psi m_\Phi)$, 
where in the last step we used the morphism migration properties of Theorem~\ref{thm:LGbicatsummary}(i) and $\Psi^\vee \otimes_A \Phi = \vartheta \circ (\Psi^\vee \otimes \Phi) \circ \xi$. 
\end{proof}

\section{Examples of Landau-Ginzburg orbifolds}\label{sec:exLGorbi}

In this section we give several concrete examples of (generalised) orbifolds. We start with an explanation of how conventional orbifolds of Landau-Ginzburg models fit into our formalism. Then we go on to show that the phenomenon of Kn\"orrer periodicity can be viewed as a generalised orbifold equivalence. Finally we explicitly construct the generalised orbifold between A- and D-type singularities, and comment on further applications. 

\subsection{Equivariant matrix factorisations}\label{subsec:equiMF}

To speak of generalised orbifolds in a meaningful way, our first example should be to explain how conventional orbifolds of Landau-Ginzburg models and matrix factorisations can be recovered from the constructions of Sections~\ref{sec:equibicat}--\ref{sec:LGmodels}. We will do so in this section, by recalling the standard notion of $G$-equivariant matrix factorisations and subsequently showing how it embeds into the generalised orbifold formalism. 

Let $W\in R = k[x_1, \ldots, x_n]$ be a potential and let~$G$ be a \textsl{symmetry group of~$W$}, i.\,e.~a finite subgroup of those $R$-automorphisms that leave~$W$ invariant. From the symmetry group~$G$ we construct the \textsl{category of $G$-equivariant matrix factorisations} $\hmf(R,W)^G$ as follows \cite{add0401}. Denote by ${}_g(-)$ the functor that sends an $R$-module~$X$ to the $R$-module which as a set equals~$X$, but whose $R$-action is twisted by $g\in \operatorname{Aut}(R)$ in the sense that $(r,m) \mapsto g^{-1}(r).m$ for all $r\in R$, $m\in X$. Objects in $\hmf(R,W)^G$ are objects $(X, d_X)$ in $\hmf(R,W)$ together with a set of isomorphisms $\{ \varphi_g : {}_g X \rightarrow X \}_{g\in G}$ such that $\varphi_e = 1_X$ and the diagram
\be\label{eq:phi_g-condition}
\begin{tikzpicture}[
			     baseline=(current bounding box.base), 
			     >=stealth,
			     descr/.style={fill=white,inner sep=2.5pt}, 
			     normal line/.style={->}
			     ] 
\matrix (m) [matrix of math nodes, row sep=3em, column sep=2.5em, text height=1.5ex, text depth=0.25ex] {%
{}_{gh} X && {}_g X && X \\
};
\path[font=\scriptsize] (m-1-1) edge[->] node[auto] {$ {}_g (\varphi_h) $} (m-1-3)
				  (m-1-3) edge[->] node[auto] {$ \varphi_g $} (m-1-5); 
\path[font=\scriptsize] 
				 (m-1-1) edge[->, bend right=35] node[auto] {$ \varphi_{gh} $} (m-1-5); 
\end{tikzpicture}
\ee
commutes. Morphisms in $\hmf(R,W)^G$  are morphisms $\Psi: X \rightarrow Y$ in $\hmf(R,W)$  that make the following diagram commute: 
\be
\begin{tikzpicture}[
			     baseline=(current bounding box.base), 
			     >=stealth,
			     descr/.style={fill=white,inner sep=2.5pt}, 
			     normal line/.style={->}
			     ] 
\matrix (m) [matrix of math nodes, row sep=3em, column sep=2.5em, text height=1.5ex, text depth=0.25ex] {%
X && Y\\
{}_g X && {}_g Y \\
};
\path[font=\scriptsize] (m-1-1) edge[->] node[auto] {$ \Psi $} (m-1-3);
\path[font=\scriptsize] (m-2-1) edge[->] node[auto] {$ {}_g \Psi $} (m-2-3) 
				  (m-2-1) edge[->]  node[auto] {$ \varphi_{g}^{(X)} $} (m-1-1); 
\path[font=\scriptsize] (m-2-3) edge[->] node[auto, swap] {$ \varphi_{g}^{(Y)} $} (m-1-3); 
\end{tikzpicture}
\ee

We now claim that $\hmf(R,W)^G$ is equivalent to the category of modules over a particular algebra object in $\hmf(\Re,\widetilde W)$. Its underlying matrix factorisation is given by
\be
A_G = \bigoplus_{g\in G} {}_g I
\ee
where ${}_g I$ is the identity defect twisted by the group element~$g$ as explained above. 
From this one finds 
\be
	\diml ( {}_g I ) = \det(g^{-1}) 
\, , \quad 
	\dimr ( {}_g I ) = \det(g) 
\, ,
\ee
where $\det(g)$ is the determinant of the matrix representing the $g$-action on the variables $x_1, \ldots, x_n$
\cite[Sect.\,3.1]{BCP1}.

To make $A_G$ into an algebra, we specify the multiplication
\be
\mu = \sum_{g,h\in G} \mu_{g,h}: A_G \otimes A_G \lra A_G \, , \quad
\mu_{g,h} = {}_g(\lambda_{{}_h I}): {}_g I \otimes {}_h I \lra {}_{gh} I
\ee
in terms of the unit isomorphism $\lambda_{{}_h I}: I \otimes {}_h I \lra {}_{h} I$, together with the obvious unit $I \hookrightarrow A_G$. Furthermore, $A_G$ is a coalgebra with comultiplication 
\be
\Delta = \frac{1}{|G|} \sum_{g,h\in G} \Delta_{g,h}: A_G \lra A_G \otimes A_G \, , \quad 
\Delta_{g,h} = {}_g (\lambda^{-1}_{{}_h I}) : {}_{gh} I \lra {}_g I \otimes {}_h I \, , 
\ee
and counit given by the projection $A_G \twoheadrightarrow I$ multiplied by $|G|$. 

\begin{proposition}\label{prop:AGisgoodalgebra}
\begin{enumerate}
\item
$A_G$ is a separable Frobenius algebra, hence $(W,A_G) \in \LGeq$. 
\item 
If $\dimr( {}_g I ) = 1$ for all $g \in G$, then $A_G$ is also symmetric, and $(W,A_G) \in \LGorb$. 
\end{enumerate}
\end{proposition}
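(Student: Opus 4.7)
The plan is to verify the defining properties of a separable (symmetric) Frobenius algebra summand-by-summand, exploiting that each~${}_g I$ is an invertible 1-morphism and that $\mu, \Delta$ are built from the unit isomorphisms $\lambda_{{}_h I}$ and their inverses, twisted by the functor ${}_g(-)$.

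First I would establish the algebra and coalgebra axioms for~$A_G$. Associativity of $\mu$ on ${}_g I \otimes {}_h I \otimes {}_k I$ reduces, after applying ${}_g(-)$, to naturality of $\lambda$ together with the standard coherence between~$\lambda$ and the associator; unitality is immediate from $\mu_{e,h} = \lambda_{{}_h I}$. The coalgebra axioms are dual, and the factor $1/|G|$ in~$\Delta$ cancels against the factor $|G|$ in the counit in every counit diagram. Separability is then a one-line computation: on a single summand ${}_k I$ we have
\begin{equation*}
(\mu\circ\Delta)\big|_{{}_k I} = \frac{1}{|G|}\sum_{gh=k} \mu_{g,h}\circ \Delta_{g,h} = \frac{1}{|G|}\sum_{gh=k} {}_g(\lambda_{{}_h I})\circ {}_g(\lambda^{-1}_{{}_h I}) = 1_{{}_k I}\, .
\end{equation*}
For the Frobenius property, I would check $(1\otimes\mu)\circ(\Delta\otimes 1) = \Delta\circ\mu$ on ${}_g I\otimes {}_h I$, which again reduces via the twist to a coherence identity for~$\lambda$. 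Together these checks establish~(i) and give $(W,A_G)\in\LGeq$.

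For part~(ii) I would use the diagrammatic characterisation of symmetry in Definition~\ref{def:algebra-properties}(iii), viewed as an equality of two 2-morphisms $A_G\to A_G^\dagger$. Decomposing with respect to $A_G = \bigoplus_g {}_g I$, the counit (which is supported on the ${}_e I$ summand) kills every term except the one where the input ${}_g I$ is paired via $\mu_{g,g^{-1}}$ with the ${}_{g^{-1}} I$ summand. After this reduction the two sides of~\eqref{eq:Asymmetric} collapse to the comparison of two canonical maps ${}_g I \to ({}_{g^{-1}} I)^\dagger$, one constructed from $\coev_{{}_g I},\ev_{{}_g I}$ and the other from $\tcoev_{{}_g I},\tev_{{}_g I}$. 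The defect between these two maps is precisely the quantum dimension $\dimr({}_g I)=\det(g)$; together with $\diml({}_g I)\,\dimr({}_g I)=\det(g^{-1})\det(g)=1$, the hypothesis $\dimr({}_g I)=1$ forces both canonical maps to agree on every summand, so $A_G$ is symmetric and $(W,A_G)\in\LGorb$.

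The main obstacle will be the careful bookkeeping in part~(ii): one must identify how the adjunction data of each ${}_g I$ are obtained from those of~$I$ by twisting, keep track of the determinantal scalars this produces, and ensure the $|G|$-factors in $\varepsilon$ and $\Delta$ cancel consistently throughout the diagrammatic manipulations. The algebra and separability parts in~(i) are essentially formal consequences of invertibility of~${}_g I$ and the coherence of~$\lambda$.
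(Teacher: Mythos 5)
Your proposal is correct and follows essentially the same route as the paper: part (i) via naturality of $\lambda$ and the summand-by-summand computation $\mu\circ\Delta = 1_{A_G}$, and part (ii) by reducing the symmetry condition on each summand to the comparison of two canonical maps whose discrepancy is exactly $\dimr({}_g I)$ (the paper composes with the isomorphisms $\Delta_{g,g^{-1}}$ and $\tev_{{}_g I}$ and uses the Frobenius property on one side and a Zorro move plus separability on the other to get $\dimr({}_g I)$ versus $1$). The invocation of $\diml({}_g I)\,\dimr({}_g I)=1$ is not needed for the argument, but it is harmless.
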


\begin{proof}
We first check that $A_G$ is an algebra. It is clear that it is unital, and the associativity of the product~$\mu$ amounts to the commutativity of the diagram
\be
\begin{tikzpicture}[
			     baseline=(current bounding box.base), 
			     >=stealth,
			     descr/.style={fill=white,inner sep=2.5pt}, 
			     normal line/.style={->}
			     ] 
\matrix (m) [matrix of math nodes, row sep=3em, column sep=2.5em, text height=1.5ex, text depth=0.25ex] {%
{}_g I \otimes {}_h I \otimes {}_k I && {}_g ( I \otimes {}_{hk} I ) \\
&& \\
{}_{gh} I \otimes {}_k I && {}_{ghk} I \, .  \\
};
\path[font=\scriptsize] (m-1-1) edge[->] node[auto] {$ {}_g ( 1_I \otimes {}_h (\lambda_{{}_k I}) ) $} (m-1-3)
				  (m-1-1) edge[->] node[above, sloped] {$ {}_g ( \lambda_{{}_h I}) \otimes 1_{{}_k I} $} (m-3-1)
				  (m-1-1) edge[->] node[below, swap, sloped] {$ = {}_g ( \lambda_{{}_h I \otimes {}_k I}) $} (m-3-1);
\path[font=\scriptsize] (m-1-3) edge[->] node[above, sloped] {$ {}_g ( \lambda_{{}_{hk} I} ) $} (m-3-3);
\path[font=\scriptsize] (m-3-1) edge[->] node[below, sloped] {$ {}_g ( {}_h ( \lambda_{{}_k I} ) ) $} (m-3-3);
\end{tikzpicture}
\ee
But this is ${}_g(-)$ applied to
\be
\begin{tikzpicture}[
			     baseline=(current bounding box.base), 
			     >=stealth,
			     descr/.style={fill=white,inner sep=2.5pt}, 
			     normal line/.style={->}
			     ] 
\matrix (m) [matrix of math nodes, row sep=3em, column sep=2.5em, text height=1.5ex, text depth=0.25ex] {%
I \otimes {}_h I \otimes {}_k I &&  I \otimes {}_{hk} I \\
&& \\
{}_{h} I \otimes {}_k I && {}_{hk} I \, \\
};
\path[font=\scriptsize] (m-1-1) edge[->] node[auto] {$ 1_I \otimes {}_h (\lambda_{{}_k I}) $} (m-1-3)
				  (m-1-1) edge[->] node[above, sloped] {$ \lambda_{{}_h I} \otimes 1_{{}_k I} $} (m-3-1)
				  (m-1-1) edge[->] node[below, swap, sloped] {$ = \lambda_{{}_h I \otimes {}_k I} $} (m-3-1);
\path[font=\scriptsize] (m-1-3) edge[->] node[above, sloped] {$ \lambda_{{}_{hk} I} $} (m-3-3);
\path[font=\scriptsize] (m-3-1) edge[->] node[below, sloped] {$ {}_h ( \lambda_{{}_k I} ) $} (m-3-3);
\end{tikzpicture}
\ee
which commutes by naturality of~$\lambda$. Similarly, it follows that $A_G$ is a coalgebra by reversing all arrows above. 

The fact that $A_G$ is separable is manifest in the definition of its (co)algebra structure, 
\begin{align}
\begin{tikzpicture}[very thick,scale=0.85,color=green!50!black, baseline=0cm]
\draw[-dot-] (0,0) .. controls +(0,-1) and +(0,-1) .. (1,0);
\draw[-dot-] (0,0) .. controls +(0,1) and +(0,1) .. (1,0);
\draw (0.5,-0.8) -- (0.5,-1.2); 
\draw (0.5,0.8) -- (0.5,1.2); 
\end{tikzpicture}
& = \frac{1}{|G|} \sum_{g,h\in G} {}_g ( \lambda_{{}_{g^{-1}h} I} ) \circ {}_g (\lambda_{{}_{g^{-1}h} I}^{-1} )
= \frac{1}{|G|} \sum_{g,h\in G} 1_{{}_h I} = 1_{A_G} \, . 
\end{align}

For the Frobenius property we observe that
\be
\begin{tikzpicture}[very thick,scale=0.85,color=green!50!black, baseline=0cm]
\draw[line width=0pt] 
(-1,1.5) node[line width=0pt] (g) {{\small$ {}_g I $}}
(0.5,1.5) node[line width=0pt] (hk) {{\small$ {}_{hk} I $}}
(1,-1.5) node[line width=0pt] (k) {{\small$ {}_k I $}}
(-0.5,-1.5) node[line width=0pt] (gh) {{\small$ {}_{gh} I $}}; 
\draw[-dot-] (0,0) .. controls +(0,-1) and +(0,-1) .. (-1,0);
\draw[-dot-] (1,0) .. controls +(0,1) and +(0,1) .. (0,0);
\draw (-1,0) -- (g); 
\draw (1,0) -- (k); 
\draw (0.5,0.8) -- (hk); 
\draw (-0.5,-0.8) -- (gh); 
\end{tikzpicture}
=
\begin{tikzpicture}[very thick,scale=0.85,color=green!50!black, baseline=0cm]
\draw[line width=0pt] 
(0,1.5) node[line width=0pt] (g) {{\small$ {}_g I $}}
(1,1.5) node[line width=0pt] (hk) {{\small$ {}_{hk} I $}}
(1,-1.5) node[line width=0pt] (k) {{\small$ {}_k I $}}
(0,-1.5) node[line width=0pt] (gh) {{\small$ {}_{gh} I $}}; 
\draw[-dot-] (g) .. controls +(0,-1) and +(0,-1) .. (hk);
\draw[-dot-] (gh) .. controls +(0,1) and +(0,1) .. (k);
\draw (0.5,-0.7) -- (0.5,0.7); 
\end{tikzpicture}
\ee
is equivalent to the commutativity of
\be
\begin{tikzpicture}[
			     baseline=(current bounding box.base), 
			     >=stealth,
			     descr/.style={fill=white,inner sep=2.5pt}, 
			     normal line/.style={->}
			     ] 
\matrix (m) [matrix of math nodes, row sep=3em, column sep=2.5em, text height=1.5ex, text depth=0.25ex] {%
{}_{gh} I \otimes {}_k I &&&& {}_g I \otimes {}_h I \otimes {}_k I \\
& {}_g ( {}_h I \otimes {}_k I ) && {}_g ( I \otimes {}_h I \otimes {}_k I ) & \\
&&&& \\
& {}_g ( {}_{hk} I ) && {}_g ( I \otimes {}_{hk} I ) & \\
{}_{ghk} I &&&& {}_g I \otimes {}_{hk} I \\
};
\path[font=\scriptsize] (m-1-1) edge[->] node[auto, sloped] {$ {}_{g} (\lambda_{{}_h I}^{-1}) \otimes {}_k I $} (m-1-5)
				  (m-1-1) edge[double equal sign distance] node[auto] {} (m-2-2)
				  (m-1-1) edge[->] node[below, sloped] {$ {}_{gh} (\lambda_{{}_k I}) $} (m-5-1);
\path[font=\scriptsize] (m-1-5) edge[->] node[above, sloped] {$ 1_{{}_g I} \otimes {}_h (\lambda_{{}_k I}) $} (m-5-5)
				  (m-1-5) edge[double equal sign distance] node[auto] {} (m-2-4);
\path[font=\scriptsize] (m-2-2) edge[->] node[below, sloped] {$ {}_g ( {}_h ( \lambda_{{}_k I}) ) $} (m-4-2);
\path[font=\scriptsize] (m-5-1) edge[->] node[below, sloped] {$ {}_g ( \lambda_{{}_{hk} I}^{-1}) $} (m-5-5)
				  (m-5-1) edge[double equal sign distance] node[auto] {} (m-4-2);
\path[font=\scriptsize] (m-2-4) edge[->] node[above, sloped] {$ {}_g (\lambda_{{}_h I \otimes {}_k I}) $} (m-2-2)
				  (m-2-4) edge[->] node[above, sloped] {$ {}_{g} ( 1_I \otimes {}_h (\lambda_{{}_k I}) ) $} (m-4-4);
\path[font=\scriptsize] (m-4-4) edge[->] node[below, sloped] {$ {}_g ( \lambda_{{}_{hk} I}) $} (m-4-2)
				  (m-4-4) edge[double equal sign distance] node[auto] {} (m-5-5);
\end{tikzpicture}
\ee
which again holds since~$\lambda$ is natural. Similarly one proves the other Frobenius property
\be
\begin{tikzpicture}[very thick,scale=0.85,color=green!50!black, baseline=0cm]
\draw[line width=0pt] 
(-0.5,1.5) node[line width=0pt] (g) {{\small$ {}_{gh} I $}}
(0.5,-1.5) node[line width=0pt] (hk) {{\small$ {}_{hk} I $}}
(1,1.5) node[line width=0pt] (k) {{\small$ {}_{k} I $}}
(-1,-1.5) node[line width=0pt] (gh) {{\small$ {}_{g} I $}}; 
\draw[-dot-] (0,0) .. controls +(0,1) and +(0,1) .. (-1,0);
\draw[-dot-] (1,0) .. controls +(0,-1) and +(0,-1) .. (0,0);
\draw (-1,0) -- (gh); 
\draw (1,0) -- (k); 
\draw (0.5,-0.8) -- (hk); 
\draw (-0.5,0.8) -- (g); 
\end{tikzpicture}
=
\begin{tikzpicture}[very thick,scale=0.85,color=green!50!black, baseline=0cm]
\draw[line width=0pt] 
(0,1.5) node[line width=0pt] (g) {{\small$ {}_{gh} I $}}
(1,1.5) node[line width=0pt] (hk) {{\small$ {}_{k} I $}}
(1,-1.5) node[line width=0pt] (k) {{\small$ {}_{hk} I $}}
(0,-1.5) node[line width=0pt] (gh) {{\small$ {}_{g} I $}}; 
\draw[-dot-] (g) .. controls +(0,-1) and +(0,-1) .. (hk);
\draw[-dot-] (gh) .. controls +(0,1) and +(0,1) .. (k);
\draw (0.5,-0.7) -- (0.5,0.7); 
\end{tikzpicture}
. 
\ee

Finally we have to show that $A_G$ is symmetric if $\dim( {}_g I ) = 1$ for all $g \in G$. As follows from the definition, $A_G$ is symmetric if the two maps
\be
L= 
\begin{tikzpicture}[very thick,scale=0.85,color=green!50!black, baseline=0cm]
\draw[-dot-] (0,0) .. controls +(0,1) and +(0,1) .. (-1,0);
\draw[line width=0pt] 
(1,1.5) node[line width=0pt] (target) {{\small$ ({}_g I)^\dagger $}}
(-1,-1.5) node[line width=0pt] (source) {{\small$ {}_{g^{-1}} I $}};
\draw[directedgreen, color=green!50!black] (1,0) .. controls +(0,-1) and +(0,-1) .. (0,0);
\draw (-1,0) -- (source); 
\draw (1,0) -- (target); 
\draw (-0.5,1.2) node[Odot] (end) {}; 
\draw (-0.5,0.8) -- (end); 
\end{tikzpicture}
, \quad
R= 
\begin{tikzpicture}[very thick,scale=0.85,color=green!50!black, baseline=0cm]
\draw[redirectedgreen, color=green!50!black] (0,0) .. controls +(0,-1) and +(0,-1) .. (-1,0);
\draw[-dot-] (1,0) .. controls +(0,1) and +(0,1) .. (0,0);
\draw[line width=0pt] 
(-1,1.5) node[line width=0pt] (target) {{\small$ ({}_g I)^\dagger $}}
(1,-1.5) node[line width=0pt] (source) {{\small$ {}_{g^{-1}} I $}};
\draw (-1,0) -- (target); 
\draw (1,0) -- (source); 
\draw (0.5,1.2) node[Odot] (end) {}; 
\draw (0.5,0.8) -- (end); 
\end{tikzpicture}
\ee
are identical for all $g\in G$. Since $\Delta_{g,g^{-1}}$ and $\tev_{{}_g I}$ are isomorphisms, the identity $L=R$ is equivalent to $\tev_{{}_g I} \circ (1_{{}_g I} \otimes L) \circ \Delta_{g,g^{-1}} = \tev_{{}_g I} \circ (1_{{}_g I} \otimes R) \circ \Delta_{g,g^{-1}}$. Using the Frobenius property we find that the left-hand side of the latter identity is $\dimr({}_g I)$, while a Zorro move together with separability reveal that the right-hand side is unity. 
\end{proof}

We are now ready to recover equivariant matrix factorisations as the category $\modu(A_G) = \LGeq( (0,I), (W,A_G) )$ in our framework of equivariant completion. This justifies our choice of nomenclature in Section~\ref{sec:equibicat}. 

\begin{theorem}
$\hmf(R,W)^G \cong \modu(A_G)$. 
\end{theorem}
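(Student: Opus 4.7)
The plan is to construct mutually inverse functors
$$F: \hmf(R,W)^G \lra \modu(A_G), \qquad F^{-1}: \modu(A_G) \lra \hmf(R,W)^G$$
by packaging, respectively unpackaging, the family $\{\varphi_g\}_{g\in G}$ into a single left $A_G$-action. The bridge between the two descriptions is the canonical isomorphism $\sigma_g^X := {}_g(\lambda_X) : {}_g I \otimes X \lra {}_g X$ in $\hmf(R,W)^\omega$, valid for any $X$ and any $g \in G$, obtained by applying the twist functor ${}_g(-)$ to the left unit action $\lambda_X : I \otimes X \to X$ (and using that twisting commutes with tensor product in the obvious way).

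Given $(X, \{\varphi_g\}) \in \hmf(R,W)^G$, I define $\rho: A_G \otimes X \to X$ as the composite
$$\rho \;=\; \sum_{g\in G}\, \varphi_g \circ \sigma_g^X : \bigoplus_{g \in G} {}_g I \otimes X \lra X.$$
Unitality of $\rho$ is immediate from $\varphi_e = 1_X$ and $\sigma_e^X = \lambda_X$. Associativity of $\rho$ with respect to $\mu_{g,h} = {}_g(\lambda_{{}_h I})$, when restricted to the $(g,h)$-summand of $A_G \otimes A_G \otimes X$, reduces by naturality of $\lambda$ to the equality $\varphi_{gh} = \varphi_g \circ {}_g(\varphi_h)$, which is precisely the commutativity of \eqref{eq:phi_g-condition}. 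Hence $\rho$ is a well-defined $A_G$-module structure on $X$, and $F$ is well-defined on objects.

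Conversely, given $(X,\rho) \in \modu(A_G)$, I would decompose $\rho = \bigoplus_g \rho_g$ with $\rho_g: {}_g I \otimes X \to X$ and set $\varphi_g := \rho_g \circ (\sigma_g^X)^{-1} : {}_g X \to X$. Unitality of $\rho$ produces $\varphi_e = 1_X$, and the restricted associativity constraint dual to the computation above gives the cocycle condition, so $F^{-1}(X,\rho) := (X,\{\varphi_g\})$ lies in $\hmf(R,W)^G$. On morphisms, both categories impose the same condition: a 2-morphism $\Psi: X \to Y$ in $\hmf(R,W)^\omega$ is an $A_G$-module map iff it intertwines each $\rho_g$, iff (via $\sigma_g$ and naturality of $\lambda$) it intertwines each $\varphi_g$, which is exactly the defining condition of a morphism in $\hmf(R,W)^G$. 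By construction $F$ and $F^{-1}$ are mutually inverse on objects and morphisms.

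The main obstacle is purely notational rather than conceptual: one must track carefully how the twist functor ${}_g(-)$ interacts with $\otimes$ and $\lambda$ to verify that the multiplication $\mu_{g,h} = {}_g(\lambda_{{}_h I})$ on $A_G$ translates, under $F$, into precisely the composite ${}_{gh} X \xrightarrow{{}_g(\varphi_h)} {}_g X \xrightarrow{\varphi_g} X$ of the equivariant cocycle condition. Once this identification is in place, everything else (unitality, compatibility of morphisms, and the inverse functor) follows by the same bookkeeping. No further assumption on $A_G$ beyond its algebra structure is needed for the equivalence.
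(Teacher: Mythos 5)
Your proposal is correct and follows essentially the same route as the paper's own proof: the $A_G$-action is built as $\rho_g = \varphi_g \circ {}_g(\lambda_X)$, associativity is reduced to the cocycle condition \eqref{eq:phi_g-condition} via naturality of $\lambda$, and the morphism conditions are matched by the same naturality argument. The only difference is cosmetic — you spell out the inverse functor explicitly where the paper simply says to invert the argument.
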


\begin{proof}
Let $X \in \hmf(R,W)^G$ with isomorphisms $\{ \varphi_g : {}_g X \rightarrow X \}_{g\in G}$. We define 
\be
\rho = \sum_{g\in G} \!\!
\begin{tikzpicture}[
			     baseline=(current bounding box.base)
			     ] 
\matrix (m) [matrix of math nodes, row sep=3em, column sep=2.5em, text height=1.5ex, text depth=0.25ex] {%
\Big( A_G \otimes X & {}_g I \otimes X & {}_g X & X \Big) \\
};
\path[font=\scriptsize] (m-1-1) edge[->>] node[auto] {} (m-1-2);
\path[font=\scriptsize] (m-1-2) edge[->] node[auto] {$ {}_g (\lambda_X) $} (m-1-3);
\path[font=\scriptsize] (m-1-3) edge[->] node[auto] {$ \varphi_g $} (m-1-4);
\end{tikzpicture}
,
\ee
that is, $\rho = \bigoplus_{g\in G} \rho_g$ with $\rho_g = \varphi_g \circ {}_g(\lambda_X)$. The map~$\rho$ satisfies $\rho \circ (\mu \otimes 1_X) = \rho \circ ( 1_{A_G} \otimes \rho)$ and thus is a left action of $A_G$ on~$X$. This follows from the commutativity of the diagram
\be
\begin{tikzpicture}[
			     baseline=(current bounding box.base), 
			     >=stealth,
			     descr/.style={fill=white,inner sep=2.5pt}, 
			     normal line/.style={->}
			     ] 
\matrix (m) [matrix of math nodes, row sep=3em, column sep=2.5em, text height=1.5ex, text depth=0.25ex] {%
{}_g I \otimes {}_h I \otimes X &&&&&& {}_g I \otimes X \\
&&& {}_g I \otimes {}_h X &&& \\
&&&&& {}_g X & \\
&&& {}_{gh} X &&& \\
{}_{gh} I \otimes X &&&&&& X \\
};
\path[font=\scriptsize] (m-1-1) edge[->] node[auto] {$ 1_{{}_g I} \otimes \rho_h $} (m-1-7)
				  (m-1-1) edge[->] node[above, sloped] {$ 1_{{}_g I} \otimes {}_h (\lambda_X) $} (m-2-4)
				  (m-1-1) edge[->] node[below, swap, sloped] {$ = {}_g (1_I \otimes {}_h(\lambda_X)) $} (m-2-4)
				  (m-1-1) edge[->] node[below, sloped] {$ \mu_{g,h} \otimes 1_X $} (m-5-1)
				  (m-1-1) edge[->, out = -45, in=45] node[above, sloped] {$ {}_g (\lambda_{{}_h I} \otimes1_X) $} (m-5-1);
\path[font=\scriptsize] (m-2-4) edge[->] node[above, sloped] {$ 1_{{}_g I} \otimes \varphi_h $} (m-1-7)
				  (m-2-4) edge[->] node[below, sloped] {$ = {}_g (1_I \otimes \varphi_h) $} (m-1-7)
				  (m-2-4) edge[->] node[below, sloped] {$ {}_g (\lambda_{{}_h X})$} (m-4-4);
\path[font=\scriptsize] (m-1-7) edge[->, out = -120, in=90] node[above, sloped] {$ {}_g (\lambda_X) $} (m-3-6)
				  (m-1-7) edge[->] node[above, sloped] {$ \rho_g $} (m-5-7);
\path[font=\scriptsize] (m-3-6) edge[->, out = -90, in=120] node[above, sloped] {$ \varphi_g $} (m-5-7);
\path[font=\scriptsize] (m-4-4) edge[->] node[above, sloped] {$ {}_g (\varphi_h) $} (m-3-6)
				  (m-4-4) edge[->] node[above, sloped] {$ \varphi_{gh} $} (m-5-7);
\path[font=\scriptsize] (m-5-1) edge[->] node[above, sloped] {$ {}_g ( {}_h (\lambda_X)) $} (m-4-4)
				  (m-5-1) edge[->] node[above, sloped] {$ \rho_{gh} $} (m-5-7);
\end{tikzpicture}
\ee
where each subdiagram commutes either by construction or by naturality of~$\lambda$ and the coherence theorem. Furthermore, $\rho_e = \varphi_e \circ {}_e (\lambda_X) = \lambda_X$, so that the conditions~\eqref{eq:leftcomp} are satisfied and the equivariant matrix factorisation~$X$ is endowed with an $A_G$-module structure. Conversely an object in $\modu(A_G)$ is made into one in $\hmf(R,W)^G$ by inverting the above argument. 

So far we have constructed two mutually inverse functors $\hmf(R,W)^G \leftrightarrow \modu(A_G)$ on objects, and it remains to check that they are well-defined on morphisms. Thus we have to show that one of the two diagrams
\be
\begin{tikzpicture}[
			     baseline=(current bounding box.base)
			     ] 
\matrix (m) [matrix of math nodes, row sep=3em, column sep=2.5em, text height=1.5ex, text depth=0.25ex] {%
X && Y\\
{}_g X && {}_g Y \\
};
\path[font=\scriptsize] (m-1-1) edge[->] node[auto] {$ \Psi $} (m-1-3);
\path[font=\scriptsize] (m-2-1) edge[->] node[auto] {$ {}_g \Psi $} (m-2-3) 
				  (m-2-1) edge[->]  node[auto] {$ \varphi_{g}^{(X)} $} (m-1-1); 
\path[font=\scriptsize] (m-2-3) edge[->] node[auto, swap] {$ \varphi_{g}^{(Y)} $} (m-1-3); 
\end{tikzpicture}
\, , \quad
\begin{tikzpicture}[
			     baseline=(current bounding box.base)
			     ] 
\matrix (m) [matrix of math nodes, row sep=3em, column sep=2.5em, text height=1.5ex, text depth=0.25ex] {%
A_G \otimes X && A_G \otimes Y\\
X && Y \\
};
\path[font=\scriptsize] (m-1-1) edge[->] node[auto] {$ 1\otimes \Psi $} (m-1-3);
\path[font=\scriptsize] (m-2-1) edge[->] node[auto] {$ \Psi $} (m-2-3) 
				  (m-1-1) edge[->]  node[auto, swap] {$ \rho^{(X)} $} (m-2-1); 
\path[font=\scriptsize] (m-1-3) edge[->] node[auto] {$ \rho^{(Y)} $} (m-2-3); 
\end{tikzpicture}
\ee
commutes if and only if the other does. This follows from the naturality of~$\lambda$, by which the middle square of the diagram 
\be
\begin{tikzpicture}[
			     baseline=(current bounding box.base)
			     ] 
\matrix (m) [matrix of math nodes, row sep=3em, column sep=2.5em, text height=1.5ex, text depth=0.25ex] {%
{}_g I \otimes X && {}_g I \otimes Y\\
{}_g( I \otimes X) && {}_g (I \otimes Y) \\
{}_g X && {}_g Y\\
 X && Y\\
};
\path[font=\scriptsize] (m-1-1) edge[->] node[auto] {$ 1_{{}_g I} \otimes \Psi $} (m-1-3)
				  (m-1-1) edge[double equal sign distance] node[auto] {} (m-2-1);
\path[font=\scriptsize] (m-1-3) edge[double equal sign distance] node[auto] {} (m-2-3);
\path[font=\scriptsize] (m-2-1) edge[->] node[auto] {$ {}_g ( 1_I \otimes \Psi) $} (m-2-3)
				  (m-2-1) edge[->] node[auto, swap] {$ {}_g ( \lambda_X ) $} (m-3-1); 
\path[font=\scriptsize] (m-2-3) edge[->] node[auto] {$ {}_g(\lambda_Y) $} (m-3-3);
\path[font=\scriptsize] (m-3-1) edge[->] node[auto] {$ {}_g \Psi $} (m-3-3)
				  (m-3-1) edge[->] node[auto, swap] {$ \varphi_g^{(X)}  $} (m-4-1); 
\path[font=\scriptsize] (m-3-3) edge[->] node[auto] {$ \varphi_g^{(Y)}  $} (m-4-3); 
\path[font=\scriptsize] (m-4-1) edge[->] node[auto] {$ \Psi $} (m-4-3); 
\end{tikzpicture}
\ee
commutes. 
\end{proof}

\begin{remark}
Similarly, the category $\hmf(\Re,\widetilde W)^G$ of $G$-equivariant defects studied in \cite{br0712.0188} is obtained as $\operatorname{bimod}(A_G)$, and the bulk fields of \cite{v1989, IntriligatorVafa1990} are described as $\End_{A_G A_G}(A_G)$. Hence the complete conventional equivariant theory of Landau-Ginzburg models embeds into $\LGeq$. 
\end{remark}

Whenever $(W,A_G)$ is an object in the orbifold completion $\LGorb$ the general theory of the previous sections applies. This is the case when $A_G$ is symmetric which by Proposition~\ref{prop:AGisgoodalgebra} is equivalent to the condition that $\dim {}_g I = 1$ for all ${}_g I$.
As an immediate consequence this proves that for symmetric $A_G$, $\hmf(R,W)^G$ is a Calabi-Yau category.  

\begin{theorem}\label{thm:WAGocTFT}
Let~$W$ be a homogeneous potential in a graded ring~$R$, and let~$G$ be a symmetry group of~$W$ such that $\dim( {}_g I ) = 1$ for all $g\in G$. Then $(W,A_G)$ gives an open/closed TFT. In particular, the Cardy condition holds for equivariant matrix factorisations, and the Kapustin-Li pairing
\be\label{eq:G-equiv-Cardy}
\langle \Psi_1, \Psi_2 \rangle_X  = 
\Res \left[ \frac{ \str( \Psi_1 \Psi_2 \, \partial_{x_1} d_X \ldots \partial_{x_n} d_X) \, \underline{\operatorname{d}\! x}}{\partial_{x_1} W, \ldots, \partial_{x_n} W} \right]
\ee
is nondegenerate when restricted to $G$-equivariant morphisms $\Psi_1: Y\rightarrow X[n]$, $\Psi_2: X\rightarrow Y$ in $\hmf(R,W)^G$. 
\end{theorem}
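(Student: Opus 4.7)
The strategy is to reduce this essentially to a direct application of Theorem \ref{thm:LGorbiTFT}, using the identifications already built up in this section. The plan proceeds in three short steps.

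First, I would invoke Proposition \ref{prop:AGisgoodalgebra}(ii): the hypothesis $\dimr({}_g I) = 1$ for all $g \in G$ (which under the homogeneity assumption on $W$ coincides with $\diml({}_g I) = 1$) ensures that $A_G$ is a \emph{symmetric} separable Frobenius algebra, not merely separable Frobenius. Consequently $(W, A_G)$ is an object of $\LGorb$, and not merely of $\LGeq$. Thus the entire machinery of Section~\ref{sec:orbibicat} applies, and in particular Theorem \ref{thm:LGorbiTFT} applies directly.

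Second, I would read off the open/closed TFT data. The closed state space is $C_{\mathrm{orb}} = \End_{A_G A_G}(A_G)$, and the boundary category is
\[
\LGorb\bigl(0,(W,A_G)\bigr) \;=\; \modu(A_G) \;\cong\; \hmf(R,W)^G
\]
by the theorem just established. Bulk-boundary and boundary-bulk maps are given by the formulas \eqref{eq:orbibubos}, and the closed/open pairings are \eqref{eq:orbibulkpairing} and the restriction of \eqref{eq:KapLi} to $A_G$-module maps. Theorem \ref{thm:LGorbiTFT} then delivers nondegeneracy of both pairings, the bulk-boundary/boundary-bulk adjunction, and the Cardy condition for free.

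Third, it remains only to observe that, under the equivalence $\hmf(R,W)^G \cong \modu(A_G)$ constructed above, the subspace of $A_G$-module maps $X \to Y$ coincides with the subspace of $G$-equivariant morphisms in the sense of \eqref{eq:phi_g-condition}. This is immediate from the definition of the $A_G$-action $\rho_g = \varphi_g \circ {}_g(\lambda_X)$: a 2-morphism $\Psi:X\to Y$ satisfies $\rho^{(Y)} \circ (1_{A_G}\otimes \Psi) = \Psi \circ \rho^{(X)}$ on each summand ${}_g I \otimes X$ if and only if $\varphi_g^{(Y)} \circ {}_g\Psi = \Psi \circ \varphi_g^{(X)}$, which is the equivariance condition. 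Therefore the nondegenerate boundary pairing produced by Theorem \ref{thm:LGorbiTFT} is exactly the Kapustin-Li pairing \eqref{eq:G-equiv-Cardy} restricted to $G$-equivariant morphisms.

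There is essentially no hard step: the whole point of having developed $\LGorb$ and Theorem \ref{thm:LGorbiTFT} is that equivariant statements like this become corollaries. The only place where one must be slightly careful is the symmetry hypothesis on $A_G$ — this is precisely why the assumption $\dim({}_g I) = 1$ is imposed, since without it one would only have $(W,A_G) \in \LGeq$ and the nondegeneracy argument of Corollary \ref{prop:traceflipgivesnondeg} (which relied on vanishing Nakayama automorphisms) would fail.
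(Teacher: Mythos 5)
Your proposal is correct and follows exactly the paper's route: the paper's entire proof is the single sentence that the theorem follows from Theorem~\ref{thm:LGorbiTFT} and Proposition~\ref{prop:AGisgoodalgebra}, and your three steps simply make explicit the identifications (symmetry of $A_G$, the equivalence $\hmf(R,W)^G \cong \modu(A_G)$, and the matching of $A_G$-module maps with $G$-equivariant morphisms) that the paper leaves implicit.
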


\begin{proof}
This follows directly from Theorem~\ref{thm:LGorbiTFT} and Proposition~\ref{prop:AGisgoodalgebra}. 
\end{proof}

\begin{remark}
\begin{enumerate}
\item 
The $G$-equivariant Cardy condition \eqref{eq:G-equiv-Cardy} was already proved in \cite{pv1002.2116} by different methods and without any assumption on $\dimr ( {}_g I )$. A proof from the perspective of the present paper and equally without the assumption that $\dimr ( {}_g I )=1$ was given later in \cite[Prop.\,3.16]{BCP1}. 
\item 
It is not true that the Kapustin-Li pairing always induces nondegenerate pairings on $\hmf(R,W)^G$. A counterexample is $W=x^d$ ($d \geqslant 3$) with the action of the symmetry group $G=\Z_d$ generated by $g_0 : x \mapsto \eta x$, where $\eta = \E^{2\pi\I/d}$. Namely, consider the equivariant matrix factorisation $(X=\C[x]^{\oplus 2}, d_X = (\begin{smallmatrix} 0 & x^n \\ x^{d-n} & 0 \end{smallmatrix}))$ with 
$\varphi_{g_0} = (\begin{smallmatrix}g_0(-) & 0 \\ 0 & \eta^{-n}g_0(-) \end{smallmatrix}): {}_{g_0} X \rightarrow X$ (this fixes the remaining $\varphi_g$ uniquely via \eqref{eq:phi_g-condition}).
The only equivariant endomorphisms of $X$ are proportional to the identity, but $\langle 1_X, 1_X \rangle_X = 0$. This is consistent with the fact that from Theorem~\ref{thm:LGbicatsummary}(ii) (or \cite[(3.25--26)]{cr1006.5609}) one finds $\diml({}_{g_0} I) = \eta^{-1}$ and $\dimr({}_{g_0} I) = \eta$ which are both $\neq 1$. 
\item
On the other hand, we note that the condition $\dim( {}_g I ) = 1$ is naturally satisfied for a large class of models. In particular this is the case in the CY/LG correspondence of \cite{o0503632, hhp0803.2045}, which states that for a quasi-homogeneous potential $W \in R = k[x_1,\ldots,x_n]$ of degree~$d$ the bounded derived category of coherent sheaves on the hypersurface $\mathcal X = \{ W = 0\}$ in weighted projective space is equivalent to $(\hmf(R,W)^{\mathrm{gr}})^G$ if~$\mathcal X$ is a Calabi-Yau variety. Here~$G = \Z_d$ acts diagonally as $x_i \mapsto \E^{2\pi\I |x_i|/d} x_i$, and the Calabi-Yau condition on~$\mathcal X$ is $\sum_{i=1}^n |x_i| = d$. This condition implies $\dim( {}_g I ) = 1$ for all $g \in G$; in the Fermat case this follows directly from part~(i) and the fact that quantum dimensions are multiplicative, while the general case can be reduced to $\dim(I)=1$ (after a variable rescaling in~\eqref{eq:rightdecorateddefectaction} and application of the Calabi-Yau condition).
\end{enumerate}
\end{remark}

\subsection{Kn\"orrer periodicity}\label{subsec:Knoerrer}

Next we turn to the classical result \cite{knoe1987} that for an algebraically closed field~$k$ and a potential $W\in k[x] = k[x_1,\ldots,x_n]$, there is an equivalence $\hmf(k[x], W)^\omega \cong \hmf(k[x,u,v], W + u^2 - v^2)^\omega$. To understand this from the perspective of our construction in Section~\ref{sec:equibicat}, we will show that $(k[x],W) \cong (k[x,u,v], W + u^2 - v^2)$ in $\LG$. 

Consider the Koszul matrix factorisation $K = k[u,v]^{\oplus 2}$ with differential $d_K = (\begin{smallmatrix} 0 & u-v \\ u+v & 0 \end{smallmatrix})$ and define
\be
X = I_W \otimes_k K \in \LG(W, W + u^2 - v^2) \, . 
\ee
Using the explicit expressions in Theorem~\ref{thm:LGbicatsummary} one finds $\diml(K) = -\frac12$ and $\dimr(K)=-2$. From these expressions it is also straightforward to check that quantum dimensions behave multiplicatively under external products $\otimes_k$ (up to signs if odd numbers of variables are included), so that $\dim_{\mathrm{l}/\mathrm{r}}(X) = \pm\dim_{\mathrm{l}/\mathrm{r}}(K)$.
In particular both quantum dimensions are invertible. Thus by Proposition~\ref{prop:hmfSVeqmodXX} we know that $\hmf(k[x,u,v], W + u^2 - v^2)^\omega \cong \modu(A)$ where $A=X^\dagger \otimes X$, and it remains to show that $A\cong I_W$ since $\hmf(k[x],W)^\omega = \modu(I_W)$. For this we observe
\be
K^\vee \otimes_{k[u,v]} K 
\cong ( K^\vee \otimes_{k[u]} K) \otimes_{k[v]^{\text{e}}} k[v] 
\cong I^\vee_{-v^2} \otimes_{k[v]^{\text{e}}} k[v] 
\cong I_0 \, , 
\ee
where the second equivalence follows since $K$ is the identity matrix factorisation for~$u^2$. 
Thus $X^\dagger \otimes X \cong I_W$, and we recover Kn\"orrer periodicity from the defect~$X$ in our setting. 

\subsection{Orbifold equivalences between minimal models}\label{subsec:minimalmodels}

Simple singularities have an ADE classification \cite[Sect.\,15.1]{AGV-book}.
For an even number of variables, the associated polynomials  are
\begin{align}
W^{(\mathrm{A}_{d-1})} & = u^d - v^2 \, , \quad 
W^{(\mathrm{D}_{d+1})} = x^d - xy^2 \, , \\
W^{(\mathrm{E}_6)} & = x^3 + y^4 \, , \quad
W^{(\mathrm{E}_7)} = x^3 + xy^3 \, , \quad
W^{(\mathrm{E}_8)} = x^3 + y^5 \, . 
\end{align}
Landau-Ginzburg models with these potentials are believed to correspond to $\mathcal N=2$ minimal conformal field theories \cite{m1989, vw1989, howewest, kl0306001, bg0506208, kr0610175, cr0909.4381}. These rational conformal field theories are known to be (generalised) orbifolds of each other \cite{g0812.1318, ffrs0909.5013}. Inspired by this fact in this section we will obtain similar results for matrix factorisations. 

Let us consider the matrix factorisation $X = k[u,v,x,y]^{\oplus 4}$ with differential
\be
d_{X} = 
\begin{pmatrix}
0 & x - u^2 \\
\frac{x^d - u^{2d}}{x-u^2} - y^2 & 0
\end{pmatrix}
\otimes_k 
\begin{pmatrix}
0 & v - uy \\
v + uy & 0
\end{pmatrix}
\ee
which we view as a 1-morphism in $\LG( W^{(\mathrm{A}_{2d-1})}, W^{(\mathrm{D}_{d+1})} )$, i.\,e.~a defect between minimal models of type~A and~D. Put differently, $X$ is the stabilisation of the module $k[u, v, x, y]/(x - u^2, v - uy)$. We claim that this defect implements an orbifold equivalence
between the two theories. Invoking Theorem~\ref{thm:algebraXdaggerX-summary} all we have to do to prove this is to check that~$X$ has invertible (right) quantum dimension. 

By Theorem~\ref{thm:LGbicatsummary} the left and right quantum dimensions are given by
\begin{align}
\diml(X) & = - \Res \left[ \frac{\str \big( \partial_u d_X \partial_v d_X \partial_x d_X \partial_y d_X \big) \operatorname{d}\! x \operatorname{d}\! y}{\partial_x W^{(\mathrm{D}_{d+1})}, \, \partial_y W^{(\mathrm{D}_{d+1})} } \right] , \label{eq:ADleftdim} \\ 
\dimr(X) & = - \Res \left[ \frac{\str \big( \partial_u d_X \partial_v d_X \partial_x d_X \partial_y d_X \big) \operatorname{d}\! u \operatorname{d}\! v}{\partial_u W^{(\mathrm{A}_{2d-1})}, \, \partial_v W^{(\mathrm{A}_{2d-1})} } \right] .
\end{align}
A direct computation yields 
\be
\str \big( \partial_u d_X \partial_v d_X \partial_x d_X \partial_y d_X \big) 
= 4y^2 + \sum_{i=0}^{d-2} 4 d u^{2i + 2} x^{d-2-i} \, ,
\ee
and with $\partial_u W^{(\mathrm{A}_{2d-1})} = 2du^{2d-1}$, $\partial_v W^{(\mathrm{A}_{2d-1})} = -2v$ we find that $\dimr(X) = 1$, which is invertible. As an exercise we also compute the left quantum dimension, for which we use the transformation formula 
\be
\Res \left[ \frac{\phi \, \underline{\operatorname{d}\! x}}{f_1, \ldots, f_n} \right] 
= 
\Res \left[ \frac{\det(C) \phi \, \underline{\operatorname{d}\! x}}{g_1, \ldots, g_n} \right] 
\quad \text{if }
g_i = \sum_{j=1}^n C_{ij} f_j 
\ee
to convert the residue in~\eqref{eq:ADleftdim} to one with only monomials in the denominator. Indeed, if we set $C = (\begin{smallmatrix} 2x & -y \\ 2y & dx^{d-2}\end{smallmatrix})$ and $f_1 = \partial_x  W^{(\mathrm{D}_{d+1})}$, $f_2 = \partial_y  W^{(\mathrm{D}_{d+1})}$, then 
\be
g_1 = \sum_{j=1}^2 C_{1j} f_j = 2dx^d \, , \quad 
g_2 = \sum_{j=1}^2 C_{2j} f_j = -2y^3 \, , 
\ee
from which we find that $\diml(X) = 2$. Thus both quantum dimensions of~$X$ are invertible, but already from the invertibility of $\dimr(X)$ we conclude: 

\begin{theorem}\label{thm:ADorbifold}
With $A_d := X^\dagger \otimes X$ we have $(W^{(\mathrm{D}_{d+1})}, I_{W^{(\mathrm{D}_{d+1})}}) \cong (W^{(\mathrm{A}_{2d-1})}, A_d)$ in $\LGorb$. In particular, 
\begin{align}
\hmf(k[x,y], W^{(\mathrm{D}_{d+1})} )^\omega & \cong \modu(A_d) \, , \label{eq:ADorbi-categories} \\
\Jac(W^{(\mathrm{D}_{d+1})}) & \cong \End_{A_d A_d}(A_d) \, . \label{eq:ADorbi-bulkspace}
\end{align}
\end{theorem}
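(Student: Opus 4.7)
The bulk of the work is already done: Section~7.3 computes $\dimr(X) = 1$, which is trivially invertible. Thus the plan is essentially to invoke the general machinery assembled in the preceding sections and unpack what it says for this particular $X$.

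First I would apply Theorem~\ref{thm:algebraXdaggerX-summary} to $X \in \LG\bigl(W^{(\mathrm{A}_{2d-1})}, W^{(\mathrm{D}_{d+1})}\bigr)$. Invertibility of $\dimr(X)$ immediately yields that $A_d = X^\dagger \otimes X$ is a symmetric separable Frobenius algebra in $\LG\bigl(W^{(\mathrm{A}_{2d-1})}, W^{(\mathrm{A}_{2d-1})}\bigr)$, so that $\bigl(W^{(\mathrm{A}_{2d-1})}, A_d\bigr)$ is indeed an object of $\LGorb$ (not merely $\LGeq$), and that
\[
X \colon \bigl(W^{(\mathrm{A}_{2d-1})}, A_d\bigr) \xrightarrow{\;\sim\;} \bigl(W^{(\mathrm{D}_{d+1})}, I_{W^{(\mathrm{D}_{d+1})}}\bigr)
\]
is an adjoint equivalence in $\LGorb$ (with quasi-inverse $X^\dagger$). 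Reading this backwards proves the main assertion of the theorem.

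For part~(i) I would invoke Proposition~\ref{prop:hmfSVeqmodXX} with $(R,W) = \bigl(k[u,v], W^{(\mathrm{A}_{2d-1})}\bigr)$ and $(S,V) = \bigl(k[x,y], W^{(\mathrm{D}_{d+1})}\bigr)$; since $\dimr(X)$ is invertible, its conclusion is precisely the equivalence \eqref{eq:ADorbi-categories}. Equivalently, applying the Hom-functor $\LGorb\bigl((0,I_0),-\bigr)$ to the adjoint equivalence above gives the same statement: the left-hand side is $\LG\bigl(0, W^{(\mathrm{D}_{d+1})}\bigr) = \hmf(k[x,y], W^{(\mathrm{D}_{d+1})})^\omega$, while the right-hand side is $\modu(A_d)$ by definition of 1-morphisms in $\LGorb$ with source $(0,I_0)$.

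For part~(ii) I would apply the functor $\End_{\LGorb}(\mathrm{id}_{(-)})$ to both sides of the equivalence. The unit 1-morphism of $\bigl(W^{(\mathrm{D}_{d+1})}, I_{W^{(\mathrm{D}_{d+1})}}\bigr)$ in $\LGorb$ is $I_{W^{(\mathrm{D}_{d+1})}}$ itself, whose endomorphism ring in $\LG$ is $\Jac(W^{(\mathrm{D}_{d+1})})$ by the standard computation recalled after the definition of $I_W$ in Section~6.1. The unit 1-morphism of $\bigl(W^{(\mathrm{A}_{2d-1})}, A_d\bigr)$ is $A_d$ viewed as an $A_d$-$A_d$-bimodule, whose endomorphism ring in $\LGorb$ is by definition $\End_{A_d A_d}(A_d)$. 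Since an equivalence of bicategories induces isomorphisms of endomorphism algebras of units, \eqref{eq:ADorbi-bulkspace} follows.

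There is essentially no obstacle beyond verifying that the abstract inputs are indeed in force here; in particular no further residue computations are needed, and no separate check of the Frobenius or symmetry axioms for $A_d$ is required, as these are automatic consequences of Theorem~\ref{thm:algebraXdaggerX-summary} once invertibility of $\dimr(X)$ has been established. The only mild care needed is to distinguish $\LGorb$ from $\LGeq$: symmetry of $A_d$ (equivalently, pivotality of $\LG'$ in the even-variable case at hand, $m=n=2$) ensures that the equivalence lives in $\LGorb$, so that Proposition~\ref{prop:hmfSVeqmodXX} and Theorem~\ref{thm:WAGocTFT}-style consequences all apply.
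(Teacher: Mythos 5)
Your proposal is correct and follows exactly the paper's route: the paper's proof of this theorem is the one-line invocation of Theorem~\ref{thm:algebraXdaggerX-summary} and Proposition~\ref{prop:hmfSVeqmodXX}, with the residue computation of $\dimr(X)=1$ already done in the surrounding text. Your additional unpacking of \eqref{eq:ADorbi-bulkspace} via endomorphisms of unit 1-morphisms is a correct elaboration of what the paper leaves implicit.
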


\begin{proof}
This follows from Theorem~\ref{thm:algebraXdaggerX-summary} and Proposition~\ref{prop:hmfSVeqmodXX}. 
\end{proof}

We have computed that
\be
A_d \cong I_{W^{(\mathrm{A}_{2d-1})}} \oplus J_d 
\quad\text{with}\quad 
J_d \otimes J_d \cong I_{W^{(\mathrm{A}_{2d-1})}}
\ee
where $J_d = P_{\{d\}}[1] \otimes (\begin{smallmatrix} 0&v-v'\\v+v'&0\end{smallmatrix})$ in the notation of \cite[(6.2)]{br0707.0922} for $d\in \{ 2,3,\ldots,10 \}$, and we believe both relations to hold in general. This would be in accordance with the situation in CFT, making the equivalence $(W^{(\mathrm{D}_{d+1})}, I_{W^{(\mathrm{D}_{d+1})}}) \cong (W^{(\mathrm{A}_{2d-1})}, A_d)$ into a $\Z_2$-orbifold. Also note that checking the equivalences~\eqref{eq:ADorbi-categories}, \eqref{eq:ADorbi-bulkspace} directly would be a rather painful enterprise. By our general construction in Section~\ref{sec:equibicat} all we had to do is produce a matrix factorisation~$X$ of $W^{(\mathrm{D}_{d+1})} - W^{(\mathrm{A}_{2d-1})}$ with invertible quantum dimension, a condition that is easily checked thanks to the explicit residue expressions.\footnote{Results similar to~\eqref{eq:ADorbi-categories}, with the appearance of skew group algebras instead of orbifolds, are \cite[\S\,2.1]{ReitenRiedtmann} and \cite[Thm.\,1]{d0902.1390}, the relation to matrix factorisation being due to \cite[Thm.\,3.1]{kst0511155}. We thank Bernhard Keller and Daniel Murfet for pointing this out.}

\medskip

It would be very useful to have a constructive method of producing 1-morphisms~$X$ in $\LG$ with invertible quantum dimension between any given pair of potentials $V,W$ whenever they exist. More ambitiously one could even aim for a classification of such matrix factorisations. For many potentials there will be obstructions to the existence of such~$X$ (as for example the condition on central charges in Proposition~\ref{prop:centralchargesandqdims}), but any matrix factorisation with invertible quantum dimension could potentially give rise to previously unknown equivalences between triangulated categories. 

From the above result and from the analogous situation in rational CFT, we conjecture that that there are also 1-morphisms with invertible quantum dimension between minimal models of type~A and~E which produce the following equivalences: 
\begin{align}
\hmf( k[x,y], W^{(\mathrm{E}_6)} )^\omega & \cong \modu( A_{6} ) \, , \label{eq:E6A11} \\ 
\hmf( k[x,y], W^{(\mathrm{E}_7)} )^\omega & \cong \modu( A_{9} ) \, , \label{eq:E7A17} \\
\hmf( k[x,y], W^{(\mathrm{E}_8)} )^\omega & \cong \modu( A_{15} ) \, , \label{eq:E8A29}
\end{align}
both in the $\Z_2$- and in the $\Z$-graded situation, where the $A_d$-modules are taken in $\hmf(k[u,v] , W^{(\mathrm{A}_{2d-1})})$. While naive attempts at constructing matrix factorisations with invertible quantum dimension of say 
$W^{(\mathrm{E}_6)} - W^{(\mathrm{A}_{11})}$ 
have failed so far, we are confident that a more systematic approach will successfully establish the above equivalences.\footnote{This conjecture is proven in \cite{CRCR}.} 
By the central charge condition (Proposition~\ref{prop:centralchargesandqdims}),  in the $\Z$-graded case the equivalences~\eqref{eq:E6A11}--\eqref{eq:E8A29} between A- and E-models (and the corresponding D-models) together with those between A- and D- models treated in Theorem~\ref{thm:ADorbifold} would exhaust all generalised orbifold equivalences~$X$ between minimal models. 
We also note that for such~$X$ Proposition~\ref{prop:hmfSVeqmodXX} says that the module categories $\modu(X^\dagger \otimes X)$ are always of finite type, because minimal models are. 

Looking further ahead we stress that there is no reason to believe that interesting equivalences are confined to simple minimal models. To the contrary, our construction applies to all Landau-Ginzburg models, including (but not limited to) those that are related to Calabi-Yau hypersurfaces as in \cite{o0503632, hhp0803.2045}.

\end{document}